\tikzstyle{arrow}=[draw, -latex]
\newtheoremstyle{dotless}{}{}{\itshape}{}{\bfseries}{}{}{}
\theoremstyle{dotless}
\theoremstyle{plain}
\newtheorem{thm}{Theorem}[section]
\newtheorem{lem}[thm]{Lemma}
\newtheorem{prop}[thm]{Proposition}
\newtheorem{cor}[thm]{Corollary}
\theoremstyle{definition}
\newtheorem{defn}[thm]{Definition}
\newtheorem{rem}[thm]{Remark}
\newtheorem{exa}[thm]{Example}
\newcommand{\N} {\mathbb{N}}
\newcommand{\R} {\mathbb{R}}
\newcommand{\C} {\mathbb{C}}
\newcommand{\h} {\mathbb{H}}
\DeclareMathOperator{\re}{Re}
\DeclareMathOperator{\im}{Im}
\providecommand{\differential}{\mathrm{d}}
\renewcommand{\d}{\differential}
\newcommand{\vertiii}[1]{{\left\vert\kern-0.25ex\left\vert\kern-0.25ex\left\vert #1 
    \right\vert\kern-0.25ex\right\vert\kern-0.25ex\right\vert}}  
\begin{document}

\title[Asymptotic Fourier and Laplace transforms]{Asymptotic Fourier and Laplace transforms 
for vector-valued hyperfunctions}
\author[K.~Kruse]{Karsten Kruse}
\thanks{K.~Kruse acknowledges the support by the Deutsche Forschungsgemeinschaft (DFG) within the Research Training
 Group GRK 2583 \glqq{}Modeling, Simulation and Optimization of Fluid Dynamic Applications\grqq{}.}
\address{Hamburg University of Technology\\ Institute of Mathematics \\
Am Schwarzenberg-Campus~3 \\
21073 Hamburg \\
Germany}
\email{karsten.kruse@tuhh.de}

\subjclass[2010]{Primary 44A10, 42A38, Secondary 46F15} 

\keywords{Asymptotic Fourier transform, asymptotic Laplace transform, vector-valued hyperfunction} 

\date{\today}
\begin{abstract}
We study Fourier and Laplace transforms for Fourier hyperfunctions with values in a 
complex locally convex Hausdorff space. Since any hyperfunction with values in a wide class of 
locally convex Hausdorff spaces can be extended to a Fourier hyperfunction, this gives simple notions 
of asymptotic Fourier and Laplace transforms for vector-valued hyperfunctions, 
which improves the existing models of Komatsu, B\"aumer, Lumer and Neubrander and Langenbruch.  
\end{abstract}

\maketitle

\section{Introduction}
The Fourier and Laplace transforms are frequently used in engineering, physics and analysis, in particular, 
in the theory of partial differential and convolution equations and in the theory of semigroups, see the works of 
Paley and Wiener \cite{paleywiener1934}, Titchmarsh \cite{titchmarsh1948}, 
Schwartz \cite{schwartz1947_48,schwartz1952}, Bracewell \cite{bracewell2000}, 
H\"ormander \cite{H1}, Doetsch \cite{doetsch1950,doetsch1972,doetsch1973}, Widder \cite{widder1946}, 
Sebasti\~{a}o e Silva \cite{silva1958}, Ku\v{c}era \cite{kucera1969}, \={O}uchi \cite{Ouchi2}, 
\v{Z}harinov \cite{zharinov1980}, Kunstmann \cite{kunstmann2001}, Arendt, Batty, Hieber and Neubrander \cite{arendt2011} 
and the references given in Deakin \cite{deakin1992}.

The main obstacle to apply the Fourier or Laplace transform is that they require exponential bounds, for instance, 
for the Laplace transform  
\[
L(f)(\zeta):=\int_{0}^{\infty}f(t)e^{-t\zeta } \d t
\]
of a locally integrable function $f\in L_{\operatorname{loc}}^{1}([0,\infty[)$ to converge 
(absolutely on the right halfplane), it is needed that $|f(t)|\leq Ce^{Ht}$ on $[0,\infty[$ for some constants $C,H>0$. 
In order to circumvent this difficulty and to use these tools with no restrictions on the growth of the functions, 
several different approaches have been made to extend the transforms to asymptotic versions without losing the 
properties that are necessary in applications.  

Vignaux \cite{vignaux1939} introduced in 1939 the concept of an asymptotic Laplace transform which was further
investigated by Ditkin \cite{ditkin1958}, Berg \cite{berg1962}, Lyubich \cite{lyubich1966} and Deakin \cite{deakin1994}. 
Based on these works, B\"aumer \cite{baeumer1997,baeumer2001,baeumer2003}, Lumer and Neubrander
\cite{lumer1999,lumer2001} and Mihai \cite{mihai2004} developed and investigated an asymptotic Laplace transform 
$\mathfrak{L}_{\operatorname{LN}}$ on the space $L_{\operatorname{loc}}^{1}([0,\infty[,E)$ 
of locally Bochner integrable functions on $[0,\infty[$ with values in a complex Banach space $(E,\|\cdot\|_{E})$ 
by studying the asymptotic behaviour of the local Laplace transforms
\begin{equation}\label{eq:asymptotic_behav_laplace}
L_{j}(f)(\zeta):=\int_{0}^{j}f(t)e^{-t\zeta } \d t,\quad j\to\infty.
\end{equation}

Another approach is due to Komatsu \cite{Kom3,Kom4,komatsu1988,Kom5,komatsu1994,Kom6}. Let $\mathcal{O}(\Omega,E)$ 
be the space of holomorphic functions on an open set $\Omega\subset\C$ with values in a complex locally convex 
Hausdorff space $E$.
Denoting by $\mathcal{D}:=\C\cup\{\infty e^{i\varphi}\;|\;|\varphi|\leq\pi\}$ 
the radial compactification of $\C$, he defined for $a\in\R$ and a complex Banach space $E$ 
a Laplace transform on the space $\mathcal{H}^{\operatorname{exp}}(\mathcal{D}\setminus [a,\infty],E)$ 
of $E$-valued holomorphic functions $F$ on $\C\setminus[a,\infty[$ which are of exponential type on 
each closed sector $\Sigma\subset\C\setminus[a,\infty[$, i.e.\ 
for each $\Sigma$ there are some constants $C,H>0$ such that $\|F(z)\|_{E}\leq Ce^{H|z|}$ for all $z\in\Sigma$.
This Laplace transform is given by 
\begin{equation}\label{eq:laplace_kom}
\mathfrak{L}_{\operatorname{Kom},a}(F)(\zeta):=\int_{\Gamma_{a}}F(z)e^{-z\zeta} \d z
\end{equation}
and converges near the half-circle $S_{\infty}:=\{\infty e^{i\varphi}\;|\;|\varphi|<\tfrac{\pi}{2}\}$ at $\infty$ 
for $F\in\mathcal{H}^{\operatorname{exp}}(\mathcal{D}\setminus [a,\infty],E)$ where $\Gamma_{a}$ is a path composed 
of the ray from $\infty e^{i\alpha}$, $-\tfrac{\pi}{2}<\alpha<0$, to a point $c<a$ and the ray from $c$ to 
$\infty e^{i\beta}$, $0<\beta<\tfrac{\pi}{2}$ (see \cite[p.\ 216]{Kom5}).
\begin{center}
\begin{minipage}{\linewidth}
\centering
\begin{tikzpicture}
\draw[->,thick,blue] (6,-2) -- (2,-6/7); 
\draw[thick,blue] (2,-6/7) -- (-1,0);
\draw[->,thick,blue] (-1,0) -- (2,6/7);
\draw[thick,blue] (2,6/7) -- (6,2);
\draw (1,0.57) arc(15.94:-15.94:2.08);
\node[anchor=north east] (A) at (5.5,1.75) {\color{blue}$\Gamma_{a}$};
\node[anchor=north east] (B) at (1,0.525) {$\beta$};
\node[anchor=north east] (C) at (1,-0.05) {$\alpha$};
\draw(-1 cm,1pt)--(-1cm,-1pt)node[anchor=north] {$c$};
\draw(2 cm,1pt)--(2cm,-1pt)node[anchor=north] {$a$};
\draw[->] (-2,0) -- (6,0) node[right] {$\re(z)$} coordinate (x axis);
\draw[red,thick] (2,0)--(6,0);
\draw[->] (0,-2) -- (0,2) node[above] {$\im(z)$} coordinate (y axis);
\end{tikzpicture}
\end{minipage}
\captionsetup{type=figure}
\caption{Path $\Gamma_{a}$ for $a\in\R$}
\end{center}

Due to Cauchy's integral theorem the kernel of this map is the space $\mathcal{H}^{\operatorname{exp}}(\mathcal{D},E)$
of entire $E$-valued functions which are of exponential type on each closed sector $\Sigma\subset\C$, and denoting by 
$\mathcal{B}_{[a,\infty]}^{\operatorname{exp}}(E):=\mathcal{H}^{\operatorname{exp}}(\mathcal{D}\setminus [a,\infty],E)/
\mathcal{H}^{\operatorname{exp}}(\mathcal{D},E)$ the space of Komatsu's Laplace hyperfunctions with support 
in $[a,\infty]$ the map
\[
\mathfrak{L}_{\operatorname{Kom},a}\colon \mathcal{B}_{[a,\infty]}^{\operatorname{exp}}(E)\to 
\mathfrak{L}_{\operatorname{Kom}}\mathcal{B}^{\operatorname{exp}}_{[a,\infty]}(E),\;
\mathfrak{L}_{\operatorname{Kom},a}([F]):=\mathfrak{L}_{\operatorname{Kom},a}(F),
\]
becomes a linear isomorphism where the range 
$\mathfrak{L}_{\operatorname{Kom}}\mathcal{B}^{\operatorname{exp}}_{[a,\infty]}(E)$ 
is the space of germs of holomorphic functions of exponential type $0$ near the half-circle $S_{\infty}$.
Setting $\mathcal{H}^{\operatorname{exp}}(\mathcal{D}\setminus\{\infty\},E):=
\mathcal{H}^{\operatorname{exp}}(\mathcal{D}\setminus [0,\infty],E)\cap\mathcal{O}(\C,E)$ and the space 
$\mathcal{B}_{\{\infty\}}^{\operatorname{exp}}(E):=\mathcal{H}^{\operatorname{exp}}(\mathcal{D}\setminus\{\infty\},E)/
\mathcal{H}^{\operatorname{exp}}(\mathcal{D},E)$ of Laplace hyperfunctions with support at $\infty$,
Komatsu proved that every Banach-valued hyperfunction with support in $[0,\infty[$ can be extended to a 
Laplace hyperfunction with support in $[0,\infty]$ and that such an extension is unique only up to 
$\mathcal{B}_{\{\infty\}}^{\operatorname{exp}}(E)$, i.e.\ the canonical (restriction) map
\[
\rho_{0}\colon \mathcal{B}_{[0,\infty]}^{\operatorname{exp}}(E)/\mathcal{B}_{\{\infty\}}^{\operatorname{exp}}(E)
\to \mathcal{B}([0,\infty[,E),\; [F]\mapsto [F],
\]
is a linear isomorphism where $\mathcal{B}([0,\infty[,E):=\mathcal{O}(\C\setminus [0,\infty[,E)/
\mathcal{O}(\C,E)$ is the space of $E$-valued hyperfunctions with support in $[0,\infty[$. 
Therefore his asymptotic Laplace transform 
\begin{gather*}
\mathfrak{L}_{\operatorname{Kom}}\colon \mathcal{B}([0,\infty[,E)\to 
\mathfrak{L}_{\operatorname{Kom}}\mathcal{B}^{\operatorname{exp}}_{[0,\infty]}(E)/
\mathfrak{L}_{\operatorname{Kom}}\mathcal{B}^{\operatorname{exp}}_{\{\infty\}}(E),\\
\mathfrak{L}_{\operatorname{Kom}}(f):=[(\mathfrak{L}_{\operatorname{Kom},0}\circ \rho_{0}^{-1})(f)],
\end{gather*}
is a linear isomorphism where 
$\mathfrak{L}_{\operatorname{Kom}}\mathcal{B}^{\operatorname{exp}}_{\{\infty\}}(E)$ 
is the space of exponentially decreasing holomorphic germs near $S_{\infty}$ 
(see \prettyref{sect:Asymptotic_Laplace_transform_Lan} for the precise definitions of the spaces 
$\mathfrak{L}_{\operatorname{Kom}}\mathcal{B}^{\operatorname{exp}}_{[0,\infty]}(E)$ and 
$\mathfrak{L}_{\operatorname{Kom}}\mathcal{B}^{\operatorname{exp}}_{\{\infty\}}(E)$). 
By \cite[Eq.\ (15), p.\ 157]{lumer2001} the two asymptotic Laplace transforms $\mathfrak{L}_{\operatorname{LN}}$ and 
$\mathfrak{L}_{\operatorname{Kom}}$ coincide on $L_{\operatorname{loc}}^{1}([0,\infty[,E)$.
The advantage of Komatsu's asymptotic Laplace transform is that it can be applied to any Banach-valued hyperfunction 
with support in $[0,\infty[$ and thus to a wider class of (generalised) 
functions, e.g.\ to distributions $\mathcal{D}'([0,\infty[,E)$ with support in $[0,\infty[$ too. 

A third approach is due to Langenbruch \cite{langenbruch2011}. Distilling from the discussions in 
\cite[Section 4]{komatsu1988}, \cite[Section 2]{lumer2001} (we may add \cite[p.\ 80-81]{baeumer1997}) 
and \cite[p.\ 429]{kunstmann2008} on the appropriate way to define an asymptotic Laplace transform, 
he states in \cite[p.\ 42]{langenbruch2011} that a satisfactory theory should meet the following conditions:
\begin{enumerate}
\item[(I)] The model contains a wide class of generalized functions and is based on an elementary version of the 
Laplace transform defined on a space of generalized functions which has a simple topological structure.
\item[(II)] For (generalized) functions with compact support, the Laplace transform should coincide with the 
Fourier-Laplace transform. Moreover, the Laplace transform should be compatible with convolution and multiplication 
by (a large class of) functions.
\item[(III)] The Laplace transform should be \emph{asymptotic}, i.e.\ in applications calculations should be needed 
near $S_{\infty}$ only.
\end{enumerate}

One of the reasons to include condition (III) is \={O}uchi's characterisation of the existence of 
hyperfunction fundamental solutions for abstract Cauchy problems in locally convex spaces 
\cite[Theorem 2, p.\ 139]{Ouchi1}
(see Komatsu's \cite[Theorem 4, p.\ 221]{Kom5} and Doma\'nski's and Langenbruch's 
\cite[Theorem 4.6, p.\ 262]{domanskilangenbruch2012} as well).
Lumer and Neubrander's asymptotic Laplace transform $\mathfrak{L}_{\operatorname{LN}}$ does not fulfil condition (I)
since it is restricted to $L_{\operatorname{loc}}^{1}([0,\infty[,E)$ even though it is sketched in 
\cite[Section 4]{lumer2001} how to extend $\mathfrak{L}_{\operatorname{LN}}$ to $\mathcal{B}([0,\infty[,E)$ by 
first interptreting \eqref{eq:asymptotic_behav_laplace} for hyperfunctions with compact support and then using 
the flabbiness of the sheaf of Banach-valued hyperfunctions. 
Komatsu's asymptotic Laplace transform $\mathfrak{L}_{\operatorname{Kom}}$ fulfils condition (II) by 
\cite[Theorem 3.3, p.\ 818]{Kom3} (compact support), \cite[Section 3]{Kom4}, 
\cite[Section 4, 5]{Kom5} (convolution), \cite[Section 2, 3]{komatsu1994} (multiplication) and (III) as discussed above. 
But it only fulfils (I) partially because the spaces $\mathcal{H}^{\operatorname{exp}}(\cdot,E)$ 
have no simple topological structure (we even did not specify a topology on them). 

Langenbruch's approach fulfils all three conditions. We only describe it briefly and go into the details in 
\prettyref{sect:Asymptotic_Laplace_transform_Lan}. Instead of exponentially increasing holomorphic 
functions Langenbruch uses exponentially decreasing functions of type $-\infty$ as key components. 
Namely, he considers the space 
\[
\mathcal{G}([0,\infty]):=\mathcal{H}_{-\infty}(\overline{\C}\setminus [0,\infty])/
\mathcal{H}_{-\infty}(\overline{\C}).
\]
of exponentially decreasing hyperfunctions of type $-\infty$ supported in
$[0,\infty]$ where for $K=[0,\infty]$ or $K=\emptyset$
\[
\mathcal{H}_{-\infty}(\overline{\C}\setminus K):= \{f\in\mathcal{O}(\C\setminus K)
\; | \; \forall\;n\in\N:\;\sup_{z\in W_{n}(K)}|f(z)|e^{n|\re(z)|} < \infty\}
\]
with $\overline{\C}:= [-\infty,\infty]+i\R$ and 
\[
W_{n}(K):=\{z\in\C\;|\;|\im(z)|\leq n,\,\operatorname{dist}(z,K\cap\C)>\tfrac{1}{n}\}.
\]
The Laplace transform $\mathfrak{L}$ of an element 
$f=[F]\in\mathcal{G}([0,\infty])$ is then defined by formula \eqref{eq:laplace_kom} 
where the path $\Gamma_{0}$ is replaced by the path $\gamma_{[0,\infty]}$ depicted below.
\begin{center}
\begin{minipage}{\linewidth}
\centering
\begin{tikzpicture}
\draw[blue,very thick]  (0,1.5) arc (90:270:15mm);
\draw[blue,very thick] (2,1.5) -- (4,1.5) node [above=0.1pt] {\color{blue}$\gamma_{[0,\infty]}$};
\draw[->,blue,very thick] (0,1.5) -- (2.2,1.5);
\draw[blue,very thick] (2,-1.5) -- (0,-1.5);
\draw[->,blue,very thick] (4,-1.5) -- (1.8,-1.5);
\draw(-1.5 cm,1pt)--(-1.5cm,-1pt)node[anchor=north east] {$-c$};
\draw(1pt,1.5 cm)--(-1pt,1.5 cm)node[anchor=south east] {$c$};
\draw(1pt,-1.5 cm)--(-1pt,-1.5 cm)node[anchor=north east] {$-c$};
\draw[->] (-4,0) -- (4,0) node[right] {$\re(z)$} coordinate (x axis);
\draw[red,thick] (0,0)--(4,0);
\draw[->] (0,-2) -- (0,2) node[above] {$\im(z)$} coordinate (y axis);
\end{tikzpicture}
\end{minipage}
\captionsetup{type=figure}
\caption{Path $\gamma_{[0,\infty]}$ with $c>0$}
\end{center}
The Laplace transform
\[
\mathfrak{L}\colon \mathcal{G}([0,\infty])\to \mathfrak{L}\mathcal{G}_{[0,\infty]}
\]
is a topological isomorphism by \cite[Theorem 4.1, p.\ 53]{langenbruch2011} where 
the Laplace range $\mathfrak{L}\mathcal{G}_{[0,\infty]}$ is given by 
\[
\mathfrak{L}\mathcal{G}_{[0,\infty]}:=\{f\in\mathcal{O}(\C)\;|\;\forall\;k\in\N:\; 
\sup_{\re(z)\geq -k}|f(z)|e^{-\frac{1}{k}|z|}<\infty\}.
\]
Every hyperfunction with support in $[0,\infty[$ can be extended to an exponentially decreasing hyperfunction 
of type $-\infty$ supported in $[0,\infty]$ and such an extension is unique only up to $\mathcal{G}(\{\infty\})$ 
by \cite[Theorem 5.1, p.\ 54]{langenbruch2011}, i.e.\ the canonical (restriction) map 
\[
R_{+}\colon \mathcal{H}_{-\infty}(\overline{\C}\setminus [0,\infty])/
\mathcal{H}_{-\infty}(\overline{\C}\setminus\{\infty\})
\to \mathcal{B}([0,\infty[),\;[F]\mapsto [F],
\]
is a linear isomorphism where 
$\mathcal{H}_{-\infty}(\overline{\C}\setminus\{\infty\}):= 
\mathcal{H}_{-\infty}(\overline{\C}\setminus [0,\infty])\cap \mathcal{O}(\C)$ and 
$\mathcal{G}(\{\infty\}):=\mathcal{H}_{-\infty}(\overline{\C}\setminus\{\infty\})/\mathcal{H}_{-\infty}(\overline{\C})$ 
is the space of exponentially decreasing hyperfunctions of type $-\infty$ supported at $\infty$.
The Laplace transform 
\[
\mathfrak{L}\colon \mathcal{G}(\{\infty\})\to \mathfrak{L}\mathcal{G}_{\{\infty\}}
\]
is also a topological isomorphism by \cite[Proposition 5.2, p.\ 55]{langenbruch2011} where
\[
\mathfrak{L}\mathcal{G}_{\{\infty\}}:=\{f\in\mathcal{O}(\C)\;|\;\forall\;k\in\N:\; 
\sup_{\re(z)\geq -k}|f(z)|e^{k|\re(z)|-\frac{1}{k}|z|}<\infty\}.
\]
Combining these results, Langenbruch's asymptotic Laplace transform
\[
\mathfrak{L}_{\mathcal{B}}\colon \mathcal{B}([0,\infty[,\C)\to 
\mathfrak{L}\mathcal{G}_{[0,\infty]}/\mathfrak{L}\mathcal{G}_{\{\infty\}},\;
\mathfrak{L}_{\mathcal{B}}(f):=[(\mathfrak{L}\circ R_{+}^{-1})(f)],
\]
is a linear isomorphism by \cite[Theorem 5.3, p.\ 55]{langenbruch2011}.

This asymptotic Laplace transform fulfils condition (I). In particular, the spaces 
$\mathcal{H}_{-\infty}(\overline{\C}\setminus K)$ are nuclear Fr\'echet spaces for $K=\emptyset$, $\{\infty\}$ 
and $[0,\infty]$. It fulfils condition (II) by \cite[Section 5]{langenbruch2011} and also condition (III) because 
it coincides up to a linear isomorphism with Komatsu's asymptotic Laplace transform $\mathfrak{L}_{\operatorname{Kom}}$
for $E=\C$ by \cite[Theorem 6.3, p.\ 59]{langenbruch2011}. 

By using the $\pi$-tensor product for nuclear Fr\'echet spaces, Langenbruch's asymptotic Laplace transform
may be extended to the space $\mathcal{B}([0,\infty[,E)$ of $E$-valued hyperfunctions with support in $[0,\infty[$ 
where $E$ is a complex Fr\'echet space (see \cite[p.\ 61]{langenbruch2011}). 
This is needed for applications to the abstract Cauchy problem in Fr\'echet spaces 
(see \cite[Section 7]{langenbruch2011}). 
Since abstract Cauchy problems are of interest in more general locally convex spaces $E$, like distributions, as well 
(see \cite{albanese2016,domanskilangenbruch2012,jacob2015_1,jacob2015_2,komatsu1964,komura1968,
lobanov1994,Ouchi1,Ouchi2,teichmann2002} and the references therein), 
we may add a fourth condition that a satisfactory theory of Laplace transforms should meet:
\begin{enumerate}
\item[(IV)] The Laplace transform should be applicable to (generalized) functions with values in a wide class 
of locally convex Hausdorff spaces, containing most of the common spaces in analysis. 
\end{enumerate}
This fourth condition is only partially fulfilled by the asymptotic Laplace transforms 
$\mathfrak{L}_{\operatorname{LN}}$, $\mathfrak{L}_{\operatorname{Kom}}$ and $\mathfrak{L}_{\mathcal{B}}$ 
which are restricted to Banach and Fr\'echet spaces, respectively.

It is indicated in \cite[p.\ 42]{langenbruch2011} that instead of Komatsu's Laplace hyperfunctions 
(or exponentially decreasing hyperfunctions of type $-\infty$) other hyperfunctions may be used to develop 
a satisfactory theory, like Kawai's Fourier hyperfunctions \cite{Kawai}, 
Saburi's modified Fourier hyperfunctions \cite{saburi1985},
Kunstmann's bounded hyperfunctions \cite{kunstmann2008}
or, we may add, Park's and Morimoto's Fourier ultra-hyperfunctions \cite{park1973}. 
Kawai's Fourier hyperfunctions are the route we take in the present paper.  
They were introduced by Kawai \cite{Kawai} in the complex-valued case and then extended 
by Ito and Nagamachi to Fourier hyperfunctions with values in a separable Hilbert space 
\cite{ItoNag1975_1,ItoNag1975_2} (in a general Hilbert space \cite{Ito1992_2}), by Junker to Fourier hyperfunctions
with values in a Fr\'echet space \cite{J} and then beyond Fr\'echet spaces by us in \cite{ich,kruse2019_5}.
Following in Langenbruch's footsteps \cite{langenbruch2011}, we develop a theory that satisfies all four conditions. 
In \prettyref{sect:Notation} we recall the necessary notation and results from the theory of vector-valued 
Fourier hyperfunctions that are needed. In \prettyref{sect:Fourier_half_line} and \prettyref{sect:Fourier_real_comp} 
we study the Fourier transform of vector-valued Fourier hyperfunctions supported in extended half-lines and real compact 
sets, respectively. In \prettyref{sect:Asymptotic_Fourier_transform} we introduce our asymptotic Fourier transform 
for vector-valued hyperfunctions and show that it coincides (up to a linear isomorphism) with 
Langenbruch's asymptotic Fourier transform in the case $E=\C$ in \prettyref{sect:Asymptotic_Fourier_transform_Lan}.
We use our Fourier transform to define and develop our Laplace and asymptotic Laplace transforms 
$\mathcal{L}^{\mathcal{B}}$ in \prettyref{sect:Laplace_and_Asymptotic_Laplace}. 
In \prettyref{sect:Asymptotic_Laplace_transform_Lan} we clarify the relation between the asymptotic 
Laplace transforms considered so far. Namely, we show that 
$\mathfrak{L}_{\mathcal{B}}$ and $\mathcal{L}^{\mathcal{B}}$ coincide (up to a linear isomorphism) 
for complex Fr\'echet spaces $E$, that both coincide with $\mathfrak{L}_{\operatorname{Kom}}$ for complex Banach 
spaces $E$ and thus all three coincide on $L_{\operatorname{loc}}^{1}([0,\infty[,E)$ with 
$\mathfrak{L}_{\operatorname{LN}}$. 
\section{Notation and Preliminaries}
\label{sect:Notation}
In the following $E$ is always a locally convex Hausdorff space over $\C$ equipped with a directed 
system of seminorms $(p_{\alpha})_{\alpha\in\mathfrak{A}}$, in short, $E$ is a $\C$-lcHs.
We set $(p_{\alpha})_{\alpha\in \mathfrak{A}}:=\{|\cdot|\}$ if $E=\C$ and $|\cdot|$ the Euclidean norm on $\C$.  
We recall that the space $E$ is called locally 
complete if every closed disk $D\subset E$ is a Banach disk (see \cite[10.2.1 Proposition, p.\ 197]{Jarchow}). 
In particular, every sequentially complete $\C$-lcHs is locally complete.
Further, we denote by $L(F,E)$ the space of continuous linear maps from 
a $\C$-lcHs $F$ to $E$ and we sometimes use the notion $\langle T, f\rangle:=T(f)$, $f\in F$,  
for $T\in L(F,E)$. If $E=\C$, we write $F':=L(F,\C)$ for the dual space of $F$. 
If $F$ and $E$ are (linearly) topologically isomorphic, we write $F\cong E$.
We denote by $L_{t}(F,E)$ the space $L(F,E)$ equipped with the locally convex topology of uniform convergence 
on the absolutely convex compact subsets of $F$ if $t=\kappa$, and on the bounded subsets of $F$ if $t=b$. 
We recall that the $\varepsilon$-product of Schwartz is defined by 
$F\varepsilon E:=L_{e}(F_{\kappa}',E)$ where $L(F_{\kappa}',E)$ is equipped with the topology of uniform convergence 
on the equicontinuous subsets of $F'$.

If $F(\Omega,E)$ is a space of (equivalence classes) of functions from a set $\Omega$ to $E$, 
we use the convention $F(\Omega):=F(\Omega,\C)$. 
We denote by $\mathcal{O}(\Omega,E)$ the space of $E$-valued holomorphic functions on an open 
set $\Omega\subset\C$ and by $\mathcal{C}^{\infty}(\Omega,E)$ the space of $E$-valued infinitely continuously 
partially differentiable functions on an open set $\Omega\subset\R^{2}=\C$. 
We denote by $\partial^{\beta}f$ the partial derivative of $f\in \mathcal{C}^{\infty}(\Omega, E)$ for a multiindex 
$\beta\in\N_{0}^{2}$. If $E$ is locally complete, then a function $f\in\mathcal{O}(\Omega,E)$ is infinitely complex 
differentiable on $\Omega$ and we write $\partial^{n}f:=f^{(n)}$ for its $n$th complex derivative where $n\in\N_{0}$. 
If we want to emphasize that we consider derivatives w.r.t.\ a complex variable, 
we write $\partial_{\C}^{n}f$ instead of just $\partial^{n}f$.
We use the symbol $\mathcal{C}(\Omega,E)$ for the space of $E$-valued continuous functions on a set $\Omega\subset\C$. 
We denote by $\overline{\R}:=\R\cup\{\pm \infty\}$ the two-point compactification of $\R$ and set 
$\overline{\C}:=\overline{\R}+i\R$. We define the distance of $z\in\C$ to a set $M\subset\C$ w.r.t.\ $|\cdot|$ 
via $\d(z,M):=\inf_{w\in M}|z-w|$ if $M\neq\emptyset$, and $\d(z,M):=\infty$ if $M=\emptyset$.
For a compact set $K\subset \overline{\R}$ and $c>0$ we define the sets
\begin{align*}
  U_{\frac{1}{c}}(K) 
&:=\phantom{\cup} \{ z \in \C \:| \: \d(z,K\cap\C)< c\} \\
&\phantom{:=}\cup
\begin{cases}
\emptyset &,\; K \subset \R, \\
]\nicefrac{1}{c},\infty[+i ]-c, c[ &,\; \infty \in K, \, -\infty \notin K, \\
]-\infty, -\nicefrac{1}{c}[+i ]-c, c[ &, \; \infty \notin K, \,  -\infty \in K,\\ 
\left(]-\infty, -\nicefrac{1}{c}[\cup ]\nicefrac{1}{c},\infty[\right)+i ]-c, c[ &,\; \infty \in K, \, -\infty \in K,
\end{cases}
\end{align*}
and for $n\in\N$ 
\[
S_{n}(K):= \left(\C\setminus\overline{U_{n}(K)}\right) \cap \{z\in\C\;|\;|\im(z)|<n\}. 
\]
We remark that $S_{n}(K)$ coincides with $W_{n}(K)$ from the introduction for $K=\varnothing$, $[0,\infty]$. 

\begin{defn}
Let $K\subset\overline{{\R}}$ be compact. The space of rapidly decreasing holomorphic germs near $K$ is defined as 
the inductive limit
\[
\mathcal{P}_{\ast}(K) := \lim\limits_{\substack{\longrightarrow\\n\in \N}}\,\mathcal{O}_{n}(U_{n}(K))
\]
with restrictions as linking and spectral maps where
\[
\mathcal{O}_{n}(U_{n}(K)) :=
\{ f \in \mathcal{O}(U_{n}(K))\cap\mathcal{C}(\overline{U_{n}(K)}) 
\; | \; \|f\|_{n,K}:=\sup_{z \in \overline{U_{n}(K)}}|f(z)|e^{\frac{1}{n}|\re(z)|} < \infty \}
\]
if $K\neq\emptyset$. Further, we set $\mathcal{P}_{\ast}(\emptyset):=0$.
\end{defn}

$\mathcal{P}_{\ast}(K)$ is a DFS-space by \cite[p.\ 469]{Kawai} resp.\ \cite[1.11 Satz, p.\ 11]{J}. 
Another common symbol for $\mathcal{P}_{\ast}(K)$ 
is $\utilde{\mathcal{O}}(K)$ and for the special case $\mathcal{P}_{\ast}(\overline{\R})$ 
the symbol $\mathcal{P}_{\ast}$ is used as well (see \cite[Definition 1.1.3, p.\ 468-469]{Kawai}). 
We remark that $\mathcal{P}_{\ast}(K)\cong\mathcal{A}(K)$ if $K\subset\R$ where $\mathcal{A}(K)$ 
is the space of germs of real analytic functions on $K$ with its inductive limit topology.

\begin{defn}[{\cite[3.2 Definition, p.\ 12-13]{ich}}]
Let $E$ be a $\C$-lcHs and $K\subset\overline{{\R}}$ be compact.
\begin{enumerate}
\item [a)] The space of vector-valued slowly increasing infinitely continuously partially differentiable 
functions outside $K$ is defined as
\[
\mathcal{E}^{exp}(\overline{\C}\setminus K, E):= \{f\in\mathcal{C}^{\infty}(\C\setminus K,E)
\; | \; \forall\;n\in\N,\,m\in\N_{0},\,\alpha\in\mathfrak{A}:\;\vertiii{f}_{n,m,\alpha,K} < \infty\}
\]
where
\[
\vertiii{f}_{n,m, \alpha,K}:=\sup_{\substack{z\in S_{n}(K)\\ \beta\in\N_{0}^{2}, |\beta|\leq m}}
p_{\alpha}(\partial^{\beta}f(z))
e^{-\frac{1}{n}|\re(z)|}.
\]
 \item [b)] The space of vector-valued slowly increasing holomorphic functions outside $K$ is defined as
\[
\mathcal{O}^{exp}(\overline{\C}\setminus K,E):= \{f\in\mathcal{O}(\C\setminus K,E)
\; | \; \forall\;n\in\N,\,\alpha\in\mathfrak{A}:\; \vertiii{f}_{n,\alpha,K} < \infty \}
\]
where
\[
\vertiii{f}_{n, \alpha,K}:=\sup_{z \in S_{n}(K)}p_{\alpha}(f(z))e^{-\frac{1}{n}|\re(z)|}.
\]
Furthermore, we denote by $\vertiii{f}_{n, \alpha,K}^{\wedge}$ the usual seminorms on the quotient space 
\[
bv_{K}(E):=\mathcal{O}^{exp}(\overline{\C}\setminus K, E)/\mathcal{O}^{exp}(\overline{\C}, E).
\]
\end{enumerate}
In the Banach-valued, particularly, scalar-valued, case the subscript $\alpha$ in the notation of the seminorms 
is omitted.
\end{defn}

We note that $S_{1}(\overline{\R})=\emptyset$ and 
$\vertiii{f}_{1,m, \alpha,\overline{\R}}=-\infty=\vertiii{f}_{1, \alpha,\overline{\R}}$ 
for any $f\colon\C\setminus\R\to E$, $m\in\N_{0}$ and $\alpha\in\mathfrak{A}$.
Other common symbols for the spaces $\mathcal{E}^{exp}(\overline{\C}\setminus K,E)$ resp.\ 
$\mathcal{O}^{exp}(\overline{\C}\setminus K,E)$ are $\widetilde{\mathcal{E}}(\overline{\C}\setminus K,E)$ resp.\ 
$\widetilde{\mathcal{O}}(\overline{\C}\setminus K,E)$ (see \cite[1.2 Definition, p.\ 5]{J}). 

\begin{defn}[{(strictly) admissible, \cite[p.\ 55]{ich}}]
A $\C$-lcHs $E$ is called \emph{admissible}, if the Cauchy-Riemann operator
\[
\overline{\partial}\colon \mathcal{E}^{exp}(\overline{\C}\setminus K,E)\to \mathcal{E}^{exp}(\overline{\C}\setminus K,E)
\]
is surjective for any compact set $K\subset\overline{\R}$. $E$ is called \emph{strictly admissible} 
if $E$ is admissible and if, in addition,
\[
\overline{\partial}\colon \mathcal{C}^{\infty}(\Omega,E)\to\mathcal{C}^{\infty}(\Omega,E)
\]
is surjective for any open set $\Omega\subset\C$.
\end{defn}

If $E$ is strictly admissible and sequentially complete, then the sheaf of $E$-valued Fourier 
hyperfunctions is flabby and can be represented on the one hand by boundary values of exponentially slowly increasing 
holomorphic functions and on the other by equivalence classes of $E$-valued $\mathcal{P}_{\ast}$-functionals 
(see \cite[Theorem 5.9, p.\ 33]{kruse2019_5}). In particular, its subsheaf of $E$-valued hyperfunctions 
is flabby under this condition as well. Moreover, we may regard $bv_{K}(E)$ as the space of 
$E$-valued Fourier hyperfunctions with support in $K\subset\overline{\R}$ under this condition 
by \cite[5.11 Lemma, p.\ 44]{kruse2019_5}.

\begin{thm}[{\cite[5.25 Theorem, p.\ 98]{ich}}]\label{thm:examples_strictly_admiss}
If
\begin{enumerate}
\item [a)] $E$ is a $\C$-Fr\'echet space, or if
\item [b)] $E:=F_{b}'$ where $F$ is a $\C$-Fr\'echet space satisfying $(DN)$, or if
\item [c)] $E$ is a complex ultrabornological PLS-space satisfying $(PA)$, 
\end{enumerate}
then $E$ is strictly admissible.
\end{thm}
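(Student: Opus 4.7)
My plan is to reduce the surjectivity of $\overline{\partial}$ on the two vector-valued target spaces to (i) scalar surjectivity, combined with (ii) an abstract splitting/lifting property of the scalar kernel that depends on the structural hypothesis on $E$.

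\emph{Step 1 (scalar surjectivity).} For $\mathcal{C}^{\infty}(\Omega,\C)$ the result is the classical $\overline{\partial}$-Poincar\'e lemma. For $\mathcal{E}^{exp}(\overline{\C}\setminus K,\C)$ I would solve $\overline{\partial}u=f$ by exhausting $\C\setminus K$ and applying H\"ormander's weighted $L^{2}$-estimates with a plurisubharmonic weight matched to the rate $\tfrac{1}{n}|\re(z)|$ on $S_{n}(K)$, then promote the $L^{2}$ bounds to the seminorms $\vertiii{\cdot}_{n,m,\alpha,K}$ by interior elliptic regularity and Cauchy's estimates; the buffer between $S_{n}$ and $S_{n+1}$ absorbs the loss from this regularity upgrade.

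\emph{Step 2 (vectorization).} Each vector-valued space is identified with an $\varepsilon$-product, namely $\mathcal{C}^{\infty}(\Omega,E)\cong\mathcal{C}^{\infty}(\Omega)\varepsilon E$ and $\mathcal{E}^{exp}(\overline{\C}\setminus K,E)\cong\mathcal{E}^{exp}(\overline{\C}\setminus K)\varepsilon E$ for sufficiently complete $E$. Surjectivity of $\overline{\partial}$ on the vector-valued side is then equivalent to the exactness after applying $\varepsilon E$ of the short exact sequence
\[
0\longrightarrow\ker\overline{\partial}\longrightarrow X\xrightarrow{\overline{\partial}}X\longrightarrow 0,
\]
where $X$ is the scalar space from Step 1 and $\ker\overline{\partial}$ is $\mathcal{O}(\Omega)$ or $\mathcal{O}^{exp}(\overline{\C}\setminus K)$.

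\emph{Step 3 (homological splitting).} The obstruction to this exactness is an $\operatorname{Ext}^{1}(E,\ker\overline{\partial})$-type term, or equivalently the vanishing of $\operatorname{Proj}^{1}$ on a suitable spectrum built from $E$ and a fundamental system of $\ker\overline{\partial}$. In case (a), the kernel is a nuclear Fr\'echet--Schwartz space and $E$ is Fr\'echet, so the claim follows from the Mittag-Leffler/Palamodov vanishing criterion. In case (b), writing $E=F_{b}'$ with $F$ Fr\'echet satisfying $(DN)$, I would apply Vogt's $(DN)$--$(\Omega)$ splitting theorem after dualising, using that the kernel is of power-series type with property $(\Omega)$. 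In case (c), the PLS-splitting theory of Doma\'nski, Vogt and Wengenroth applies, the condition $(PA)$ being precisely tailored to force the relevant $\operatorname{Proj}^{1}$ to vanish against kernels of $\overline{\partial}$-type operators. The main obstacle is Step 3, particularly case (c): one must verify the abstract $\operatorname{Proj}^{1}$-vanishing with the specific inductive-projective presentation of $\mathcal{E}^{exp}$ and propagate the exponential weights consistently through the splitting construction. The weighted character of $\mathcal{E}^{exp}$ also demands some care in Step 1, since standard unweighted $\overline{\partial}$-solutions need not have the required growth behaviour near $K$ or at infinity.
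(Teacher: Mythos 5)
You should first note that the paper does not prove \prettyref{thm:examples_strictly_admiss} at all: it is quoted verbatim from \cite[5.25 Theorem, p.~98]{ich}, so there is no in-paper argument to compare against. What the surrounding text does reveal is where the intended proof lives: the remarks following the theorem (the equivalence of strict admissibility, $2$-admissibility and $(PA)$ for ultrabornological PLS-spaces via \cite[Theorem 8.9]{D/L}, and the definition of $(PA)$ in \cite{Dom1}) place it squarely in the Doma\'nski--Langenbruch--Vogt splitting/$\operatorname{Proj}^{1}$ framework, and the paper's own use of $\mathcal{O}^{exp}(\overline{\C}\setminus K,E)\cong\mathcal{O}^{exp}(\overline{\C}\setminus K)\varepsilon E$ confirms the $\varepsilon$-product reduction. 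Your three-step architecture --- scalar $\overline{\partial}$-solvability in the weighted spaces, vectorization by $\varepsilon$-products, then homological vanishing keyed to the hypothesis on $E$ --- is therefore the right one and matches both the cited source's setting and the appendix of this paper, where exactly such a weighted $L^{2}$ argument (\prettyref{lem:surj_dbar_L2}, \prettyref{cor:surj_dbar_E_frechet}) is carried out for a closely related space.

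That said, as a proof your proposal has two genuine gaps, both in Step 3. First, for cases b) and c) everything hinges on linear-topological invariants of the kernel $\mathcal{O}^{exp}(\overline{\C}\setminus K)$ that you assert rather than verify: it is not ``of power-series type'', and establishing that it (and $\mathcal{O}(\Omega)$ in the $\mathcal{C}^{\infty}$ half of the definition) has property $(\Omega)$, respectively the PLS-dual condition matching $(PA)$ so that the relevant $\operatorname{Proj}^{1}$ vanishes, is the actual mathematical content of the theorem; without it the $(DN)$--$(\Omega)$ splitting theorem and the PLS splitting theory have nothing to bite on. One also needs the standard but nontrivial reduction from ``$\overline{\partial}\,\varepsilon\operatorname{id}_{E}$ surjective'' to ``$\operatorname{Ext}^{1}=0$'', which requires the scalar quotient map to lift bounded sets (automatic here only because the kernel and the spaces are nuclear Fr\'echet, a fact you should record, cf.\ \prettyref{prop:bv_LO_nuclear}). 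Second, case a) does not need any splitting theory: for Fr\'echet $E$ the identification $X\varepsilon E\cong X\widehat{\otimes}_{\pi}E$ for nuclear Fr\'echet $X$ together with the classical fact that $\widehat{\otimes}_{\pi}E$ preserves surjections of Fr\'echet spaces already gives the result, which is both simpler and avoids any Mittag-Leffler argument. Your Step 1 is essentially sound (and is how \prettyref{cor:surj_dbar_E_scalar} proceeds), but the weight $e^{-\frac{1}{n}|\re(z)|}$ together with the shrinking exclusion of $U_{n}(K)$ means the plurisubharmonic weight must simultaneously control infra-exponential growth at infinity and blow-up near $K$; this is doable but is not a routine application of the unweighted Poincar\'e lemma.
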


The definitions of the topological invariants $(DN)$ and $(PA)$ are given in 
\cite[Chap.\ 29, Definition, p.\ 359]{meisevogt1997} and \cite[Section 4, Eq.\ (24), p.\ 577]{Dom1}, respectively. 
Besides every $\C$-Fr\'echet space, the theorem above covers the space $E=\mathcal{S}(\R^{d})'$ 
of tempered distributions, the space $\mathcal{D}(V)'$ of distributions and the space $\mathcal{D}_{(\omega)}(V)'$ 
of ultradistributions of Beurling type and many more spaces given in \cite{Dom1}, 
\cite[Corollary 4.8, p.\ 1116]{D/L}, \cite[Example 4.4, p.\ 14-15]{kruse2019_5} and \cite{vogt1983}.
Due to \cite[Theorem 6.9, p.\ 1125]{D/L} another sufficient condition for the flabbiness of the sheaf 
of $E$-valued hyperfunctions is that $E$ is complete and $2$\emph{-admissible} which means that the 
$2$-dimensional Laplace operator 
\[
\Delta\colon \mathcal{C}^{\infty}(\Omega,E)\to\mathcal{C}^{\infty}(\Omega,E)
\]
is surjective for any open set $\Omega\subset\R^{2}$ (see \cite[p.\ 1112]{D/L}). 
If $E$ is an ultrabornological PLS-space, then the three conditions strictly admissible, $2$-admissible and $(PA)$
are equivalent and actually necessary for a reasonable theory of $E$-valued (Fourier) hyperfunctions 
by \cite[Theorem 8.9, p.\ 1139]{D/L} and \cite[Theorem 5.12, p.\ 45-46]{kruse2019_5}.

The introduced spaces $bv_{K}(E)$ and $\mathcal{P}_{\ast}(K)$ 
are connected by duality in the following way, which is a generalisation
of \cite[4.1 Theorem, p.\ 41]{ich} where $E$ is complete, and the idea of its proof comes 
from \cite[Theorem 3.3, p.\ 85-86]{Mori2} where $K=[a,\infty]$, $a\in\R$, and $E=\C$.

\begin{thm}[{\cite[3.15 Corollary (i), (iii), p.\ 21]{kruse2019_2}}]\label{thm:duality}
Let $E$ be a $\C$-lcHs and $K\subset\overline{\R}$ a non-empty compact set. If 
\begin{enumerate}
\item[(i)] $K\subset\R$ and $E$ is locally complete, or if 
\item[(ii)] $E$ is sequentially complete,
\end{enumerate}
then the map
\begin{gather*}
\mathscr{H}_{K}\colon bv_{K}(E)\to L_{b}(\mathcal{P}_{\ast}(K), E),\\
\mathscr{H}_{K}([F])(\varphi):=\int_{\gamma_{K,n}}{F(\zeta)\varphi(\zeta)\d\zeta},
\end{gather*}
for $[F]\in bv_{K}(E)$ and $\varphi\in\mathcal{O}_{n}(U_{n}(K))$, $n\in\N$, is 
a topological isomorphism where $\gamma_{K,n}$ is a suitable path 
around $K$ in $U_{n}(K)$ 
and the integral is a Pettis integral. If $K$ is an interval, we may choose 
$\gamma_{K,n}$ as the boundary of $U_{\nicefrac{1}{c}}(K)$, $0<c<\tfrac{1}{n}$, with clockwise orientation. 
Its inverse 
\[
 \mathscr{H}_{K}^{-1}\colon  L_{b}(\mathcal{P}_{\ast}(K),E)\to bv_{K}(E)
\]
is given by  
\[
 \mathscr{H}_{K}^{-1}(T)=\Bigl[\C\setminus K \ni z\longmapsto 
 \frac{i}{2\pi}\bigl\langle T,\frac{e^{-(z-\cdot)^2}}{z-\cdot}\bigr\rangle\Bigr],
 \quad T\in L_{b}(\mathcal{P}_{\ast}(K),E).
\] 
\end{thm}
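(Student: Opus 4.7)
My plan is to follow the standard duality between hyperfunctions and real-analytic germs, adapted to Fourier hyperfunctions via exponential weights and to vector-valued $E$ by Pettis integration combined with scalar Hahn--Banach reduction, along the lines of the scalar argument of \cite{Mori2} for $K=[a,\infty]$.

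First, for $\varphi\in\mathcal{O}_{m}(U_{m}(K))$ and a representative $F\in\mathcal{O}^{exp}(\overline{\C}\setminus K,E)$ of $[F]\in bv_{K}(E)$, choose $n>2m$ so that $\gamma_{K,n}$ may be run inside $S_{n}(K)\cap U_{m}(K)$ around $K$. Then
\[
p_{\alpha}(F(\zeta))|\varphi(\zeta)|\leq \vertiii{F}_{n,\alpha,K}\,\|\varphi\|_{m,K}\,e^{(\frac{1}{n}-\frac{1}{m})|\re\zeta|}
\]
along $\gamma_{K,n}$, which is exponentially summable. Under either hypothesis (i) or (ii), the weak integrability upgrades to Pettis integrability because the closed absolutely convex hull of the integrand is a Banach disk. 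Cauchy's theorem applied to the scalar integrals $e'\circ F\cdot\varphi$, $e'\in E'$, yields independence of $n$ and of the precise shape of $\gamma_{K,n}$; independence of the representative follows by pushing the contour out to $|\re\zeta|\to\infty$ for a representative of $0$ and using the exponential weights. The same estimate gives continuity of $\mathscr{H}_{K}$: as $\mathcal{P}_{\ast}(K)$ is a DFS-space, every bounded set $B$ is contained and bounded in some $\mathcal{O}_{m}(U_{m}(K))$, whence $\sup_{\varphi\in B}p_{\alpha}(\mathscr{H}_{K}([F])(\varphi))\leq C_{B}\vertiii{F}_{n,\alpha,K}^{\wedge}$.

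To construct the inverse I verify that for each $z\in\C\setminus K$ the Gauss-regularized Cauchy kernel
\[
\varphi_{z}(w):=\frac{e^{-(z-w)^{2}}}{z-w}
\]
lies in $\mathcal{P}_{\ast}(K)$: the Gaussian factor provides super-exponential decay in $|\re w|$ on horizontal strips, absorbing the points $\pm\infty$ possibly in $K$, while the only singularity at $w=z\notin K$ lies outside $U_{n}(K)$ for large $n$. A short computation shows that $z\mapsto\varphi_{z}$ is holomorphic $\C\setminus K\to\mathcal{P}_{\ast}(K)$ with $\|\varphi_{z}\|_{n,K}$ controlled by a slow exponential in $|\re z|$; consequently $\Phi(z):=\tfrac{i}{2\pi}\langle T,\varphi_{z}\rangle$ lies in $\mathcal{O}^{exp}(\overline{\C}\setminus K,E)$ and $T\mapsto[\Phi]$ is continuous $L_{b}(\mathcal{P}_{\ast}(K),E)\to bv_{K}(E)$.

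Finally, mutual inversion rests on two Cauchy-type identities. For $\mathscr{H}_{K}\mathscr{H}_{K}^{-1}(T)(\varphi)$ one interchanges $T$ with the Pettis integral along $\gamma_{K,n}$, which is legitimate because $\zeta\mapsto\varphi_{\zeta}\varphi(\zeta)$ is a continuous integrable curve in $\mathcal{P}_{\ast}(K)$; this reduces matters to the scalar identity
\[
\frac{i}{2\pi}\int_{\gamma_{K,n}}\frac{e^{-(\zeta-w)^{2}}}{\zeta-w}\varphi(\zeta)\d\zeta=\varphi(w)
\]
for $w$ enclosed by the path, which is Cauchy's formula together with the value $1$ of the Gaussian at $\zeta=w$. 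For $\mathscr{H}_{K}^{-1}\mathscr{H}_{K}([F])$, pairing against $e'\in E'$ reduces to the same Cauchy formula applied to $e'\circ F$: the residue at $\zeta=z$ recovers $e'\circ F(z)$, while the remaining contribution is entire in $z$ with slow exponential growth and hence lies in $\mathcal{O}^{exp}(\overline{\C})$, leaving the class in $bv_{K}(E)$ unchanged. I expect the main obstacle to be precisely this interchange of $T$ with an $E$-valued path integral in the locally convex, non-Banach setting; it is handled by representing both sides weakly via $E'$ and invoking Pettis integrability, which is exactly where hypotheses (i) and (ii) on local completeness and sequential completeness enter.
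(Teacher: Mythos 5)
Your outline is correct and follows essentially the same route as the proof this paper relies on: the theorem is not proved here but imported from \cite[3.15 Corollary, p.~21]{kruse2019_2} (modulo the orientation of $\gamma_{K,n}$), whose argument is precisely the Morimoto-type duality you describe --- exponential-weight estimates along a contour running in the annular region where both the growth bound on $F$ and the decay bound on $\varphi$ apply, the Gauss-regularized Cauchy kernel $e^{-(z-\cdot)^{2}}/(z-\cdot)$ as the inverse, and scalarization via $E'$ together with Pettis integrability to handle vector values. The one point worth sharpening is the role of the two hypotheses, which your single remark about Banach disks glosses over: for $K\subset\R$ the contour is compact, so local completeness suffices for the $E$-valued Riemann/Pettis integrals, whereas for $K$ containing $\pm\infty$ the contour is unbounded and sequential completeness is what guarantees convergence of the improper $E$-valued integrals.
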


We note that in \cite[3.15 Corollary (i), (iii), p.\ 21]{kruse2019_2} the orientation of $\gamma_{K,n}$ 
is chosen to be counterclockwise but that only changes the sign. 
For our purpose the clockwise orientation is more suitable. 
Let
\[
\mathcal{B}(A,E):=\mathcal{O}(\C\setminus A,E)/\mathcal{O}(\C,E)
\]
be the space of hyperfunctions with values in a $\C$-lcHs E and support in a closed set $A\subset\R$. 
If $K\subset\R$ is compact, then we equip $\mathcal{O}(\C\setminus K,E)$ resp.\ $\mathcal{O}(\C,E)$ 
with the topology of uniform convergence 
on compact subsets of $\C\setminus K$ resp.\ $\C$ and $\mathcal{B}(K,E)$ with the induced quotient topology.
We note the following well-known corollary of the theorem above for the space $\mathcal{B}(K,E)$
whose $E$-valued version is unfortunatelly not contained in literature (to the best of our knowledge). 
The $\C$-valued version can be found in \cite[Theorem 2.1.3, p.\ 25]{Mori1}.

\begin{cor}\label{cor:duality_real_comp}
Let $E$ be a locally complete $\C$-lcHs and $K\subset\R$ a non-empty compact set. Then
$\mathscr{H}_{K}\colon \mathcal{B}(K,E)\to L_{b}(\mathcal{A}(K),E)$ is a topological isomorphism, 
with inverse $\mathscr{H}_{K}^{-1}$ as above. 
\end{cor}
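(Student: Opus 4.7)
The plan is to reduce the corollary to \prettyref{thm:duality}. Two ingredients enter: the topological isomorphism $\mathcal{P}_{\ast}(K)\cong\mathcal{A}(K)$ for $K\subset\R$ recorded in the excerpt, and a natural topological isomorphism $\iota\colon bv_K(E)\to\mathcal{B}(K,E)$, $[F]\mapsto[F]$. Granted $\iota$, the map $\mathscr{H}_K$ on $\mathcal{B}(K,E)$ agrees with $\mathscr{H}_K$ on $bv_K(E)$ under these identifications and is therefore a topological isomorphism by \prettyref{thm:duality}(i); the inverse formula $\mathscr{H}_K^{-1}(T)=\bigl[z\mapsto\tfrac{i}{2\pi}\langle T,e^{-(z-\cdot)^{2}}/(z-\cdot)\rangle\bigr]$ carries over verbatim, since the kernel lies in $\mathcal{P}_{\ast}(K)=\mathcal{A}(K)$ for every $z\in\C\setminus K$.

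The map $\iota$ is well defined because $\mathcal{O}^{exp}(\overline{\C}\setminus K,E)\subset\mathcal{O}(\C\setminus K,E)$ and $\mathcal{O}^{exp}(\overline{\C},E)\subset\mathcal{O}(\C,E)$. For injectivity, if $F\in\mathcal{O}^{exp}(\overline{\C}\setminus K,E)$ extends to an entire $G\in\mathcal{O}(\C,E)$, then $K\subset\R$ being compact means that for each $n\in\N$ the set $\{z\;|\;|\im(z)|<n,\;\d(z,K)\le 1/n\}$ is bounded, so $G$ is bounded there by continuity, while its complement in the strip $S_n(\emptyset)=\{|\im(z)|<n\}$ lies inside $S_n(K)$ where $G=F$ satisfies $\vertiii{F}_{n,\alpha,K}<\infty$; thus $G\in\mathcal{O}^{exp}(\overline{\C},E)$ and $[F]=0$. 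For surjectivity, given $f\in\mathcal{O}(\C\setminus K,E)$, pick $R_0>0$ with $K\subset\{|z|<R_0\}$. On the unbounded annulus $\{|z|>R_0\}$, the vector-valued Laurent expansion (legitimised by local completeness of $E$, since the coefficients $a_n$ are produced by Cauchy/Pettis integrals) decomposes $f=f_++f_-$ with $f_+(z)=\sum_{n\ge 0}a_nz^n$ and $f_-(z)=\sum_{n\ge 1}a_{-n}z^{-n}$. Convergence of $f_+$ on the whole exterior forces its radius of convergence to be infinite, so $f_+\in\mathcal{O}(\C,E)$; and $f_-(\infty)=0$ makes $f_-$ bounded on $\{|z|\ge R_0\}$. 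Setting $F:=f-f_+$ yields $\iota([F])=[f]$, and $F$ is slowly increasing on every $S_n(K)$ (bounded via $f_-$ on $\{|z|>R_0\}\cap S_n(K)$ and continuous on the remaining compact slice).

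The main obstacle I anticipate is the topological half. Continuity of $\iota$ is immediate: every compact subset of $\C\setminus K$ lies in some $S_n(K)$, so the compact-open seminorms on $\mathcal{O}(\C\setminus K,E)$ are dominated by the $\vertiii{\cdot}_{n,\alpha,K}$. For continuity of $\iota^{-1}$ I would realise it as the quotient of the explicit splitting $\sigma\colon f\mapsto f-f_+$ of the preceding paragraph: $\sigma$ annihilates $\mathcal{O}(\C,E)$ (for entire $f$ the Laurent expansion at $\infty$ is just the Taylor series at $0$, whence $f_+=f$ and $f_-=0$) and therefore descends to a map $\tilde\sigma\colon\mathcal{B}(K,E)\to bv_K(E)$ which one checks is inverse to $\iota$. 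Its continuity reduces to the observation that the Laurent coefficients of $f$ are produced by a single Cauchy integral over $\{|\zeta|=R_0+1\}$, depending continuously on $f$ in the compact-open topology, and that the corresponding estimates control $\vertiii{\sigma(f)}_{n,\alpha,K}$ uniformly in terms of compact-open seminorms of $f$. This yields the topological isomorphism $\iota$ and completes the reduction.
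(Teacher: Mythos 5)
Your proposal is correct, but it reverses the logical order the paper uses, so it is a genuinely different route rather than the same proof. The paper's own argument works with $\mathscr{H}_{K}$ directly on $\mathcal{B}(K,E)$: continuity and injectivity are checked on arbitrary representatives $F\in\mathcal{O}(\C\setminus K,E)$, surjectivity is obtained by pushing a preimage from \prettyref{thm:duality} through the restriction map $bv_{K}(E)\to\mathcal{B}(K,E)$, and continuity of the inverse comes from the explicit kernel; the fact that this restriction map is itself a topological isomorphism is only deduced \emph{afterwards}, in \prettyref{thm:extension_real_comp}, as a consequence of the corollary. You instead prove that isomorphism first, by hand, via the Laurent splitting $f=f_{+}+f_{-}$ at infinity and the continuous section $\sigma(f)=f-f_{+}$, and then transport \prettyref{thm:duality}; this is non-circular and in fact delivers \prettyref{thm:extension_real_comp} as a by-product with a more explicit inverse (the classical standard defining function vanishing at infinity) than the $\Psi_{K}$-kernel. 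The price is the extra care you correctly flag around vector-valued Laurent and Cauchy integrals, all of which local completeness of $E$ does cover: the Pettis integrals over the circle exist, $f_{+}$ converges on all of $\C$ because the coefficient sets $\{a_{n}R^{n}\}$ are weakly bounded hence bounded, and $f_{-}$ is bounded near infinity by Mackey's theorem from the scalar decay $e'(f_{-}(z))\to 0$. Two small points you should write out when finalising: the identity $\tilde\sigma\circ\iota=\operatorname{id}$ needs $F_{+}\in\mathcal{O}^{exp}(\overline{\C},E)$ for $F\in\mathcal{O}^{exp}(\overline{\C}\setminus K,E)$, which follows from $F_{+}=F-F_{-}$ with $F_{-}$ bounded at infinity; and the two descriptions of $\mathscr{H}_{K}$ on $\mathcal{B}(K,E)$ agree because $\int_{\gamma_{K,n}}F_{+}\varphi\,\d\zeta=0$ by Cauchy's theorem.
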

\begin{proof}
We only sketch the proof. The continuity and injectivity of $\mathscr{H}_{K}$ are easily checked 
(like in the proof of \prettyref{thm:duality} or \cite[Theorem 2.1.3, p.\ 25]{Mori1}), the surjectivity 
follows directly from \prettyref{thm:duality} and $\mathcal{A}(K)\cong\mathcal{P}_{\ast}(K)$ for $K\subset\R$  
and actually gives that every $f\in\mathcal{B}(K,E)$ has a representing function in
$\mathcal{O}^{exp}(\overline{\C}\setminus K,E)$. The continuity of $\mathscr{H}_{K}^{-1}$ follows like in the proof 
of \prettyref{thm:duality}. 
\end{proof}

In the proof of \cite[Theorem 2.1.3, p.\ 25]{Mori1} the inverse of $\mathscr{H}_{K}$ is given 
in \cite[Eq.\ (1.6), p.\ 26]{Mori1} as
\[
 \mathscr{H}_{K}^{-1}(T)=\Bigl[\C\setminus K \ni z\longmapsto 
 \frac{i}{2\pi}\bigl\langle T,\frac{1}{z-\cdot}\bigr\rangle\Bigr],
 \quad T\in L_{b}(\mathcal{A}(K),E).
\] 
This is no contradiction to our inverse because the difference 
\[
\C\setminus K \ni z\longmapsto \frac{i}{2\pi}\bigl\langle T,\frac{e^{-(z-\cdot)^2}}{z-\cdot}\bigr\rangle
-\frac{i}{2\pi}\bigl\langle T,\frac{1}{z-\cdot}\bigr\rangle
\]
extends to a function in $\mathcal{O}(\C,E)$ (cf.\ \cite[Eq.\ (2.9), p.\ 47]{langenbruch2011}).

Next, we define the Fourier transform on $L_{b}(\mathcal{P}_{\ast}(\overline{\R}),E)$ 
which goes back to Kawai \cite{Kawai} in the case $E=\C$.
By \cite[Proposition 3.2.4, p.\ 483]{Kawai} (cf.\ \cite[Proposition 8.2.2, p.\ 376]{Kan}) 
the Fourier transform 
$\mathcal{F}\colon \mathcal{P}_{\ast}(\overline{\R})\to\mathcal{P}_{\ast}(\overline{\R})$ defined by
\[
\mathcal{F}(\varphi)(\zeta):=\widehat{\varphi}(\zeta):=\int_{\R}{\varphi(x)e^{-ix\zeta}\d x},
\quad\varphi\in\mathcal{O}_{n}(U_{n}(\overline{\R})),\,\zeta\in U_{k}(\overline{\R}),\, k>n,
\]
is a topological isomorphism and $\widehat{\varphi}\in\mathcal{O}_{k}(U_{k}(\overline{\R}))$ with $k>n$ for 
$\varphi\in\mathcal{O}_{n}(U_{n}(\overline{\R}))$. Here we follow the convention from 
\cite[Proposition 8.2.2, p.\ 376]{Kan} and use $e^{-ix\zeta}$ in the definition of $\mathcal{F}$ instead 
of $e^{ix\zeta}$ as in \cite[Proposition 3.2.4, p.\ 483]{Kawai}. 
The Fourier transform on $L_{b}(\mathcal{P}_{\ast}(\overline{\R}),E)$ is now defined by transposition and we obtain 
(cf.\ \cite[3.14 Folgerung, p.\ 45]{J}, \cite[Definition 3.2.5, p.\ 483]{Kawai}, \cite[4.6 Theorem, p.\ 53]{ich} 
and \cite[3.10 Corollary, p.\ 13]{kruse2019_5}):

\begin{thm}\label{thm:fourier_isom_extended_reals}
Let $E$ be a $\C$-lcHs. The Fourier transform
$\mathcal{F}_{\star}\colon L_{b}(\mathcal{P}_{\ast}(\overline{\R}),E)\to L_{b}(\mathcal{P}_{\ast}(\overline{\R}),E)$ 
defined by
\[
\mathcal{F}_{\star}(T)(\varphi):=\langle T,\mathcal{F}(\varphi)\rangle,
\quad T\in L(\mathcal{P}_{\ast}(\overline{\R}),E),\;\varphi\in\mathcal{P}_{\ast}(\overline{\R}),
\]
is a topological isomorphism with inverse given by
\[
\mathcal{F}_{\star}^{-1}(T)(\varphi)=\langle T,\mathcal{F}^{-1}(\varphi)\rangle,
\quad T\in L(\mathcal{P}_{\ast}(\overline{\R}),E),\;\varphi\in\mathcal{P}_{\ast}(\overline{\R}).
\]
\end{thm}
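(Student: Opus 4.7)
The plan is to obtain $\mathcal{F}_{\star}$ directly as the transpose (in the topological-tensor-product sense) of Kawai's scalar Fourier isomorphism $\mathcal{F}\colon\mathcal{P}_{\ast}(\overline{\R})\to\mathcal{P}_{\ast}(\overline{\R})$, which is cited just above the statement as a topological isomorphism of DFS-spaces. Concretely, for $T\in L(\mathcal{P}_{\ast}(\overline{\R}),E)$ I would set $\mathcal{F}_{\star}(T):=T\circ\mathcal{F}$, which matches the formula $\mathcal{F}_{\star}(T)(\varphi)=\langle T,\mathcal{F}(\varphi)\rangle$ in the statement. Linearity of $\mathcal{F}_{\star}(T)$ and linearity of $\mathcal{F}_{\star}$ are immediate, and continuity of $\mathcal{F}_{\star}(T)$ as a map into $E$ is the composition of two continuous linear maps.

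Next I would check that $\mathcal{F}_{\star}$ is continuous for the strong topologies. Fix a bounded set $B\subset\mathcal{P}_{\ast}(\overline{\R})$ and a seminorm $p_{\alpha}$ on $E$. Since $\mathcal{F}$ is continuous and linear, the image $\mathcal{F}(B)$ is bounded in $\mathcal{P}_{\ast}(\overline{\R})$, hence
\[
\sup_{\varphi\in B}p_{\alpha}\bigl(\mathcal{F}_{\star}(T)(\varphi)\bigr)
=\sup_{\varphi\in B}p_{\alpha}\bigl(T(\mathcal{F}(\varphi))\bigr)
=\sup_{\psi\in \mathcal{F}(B)}p_{\alpha}(T(\psi)),
\]
which is precisely one of the defining seminorms of $L_{b}(\mathcal{P}_{\ast}(\overline{\R}),E)$ applied to $T$. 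This yields the desired continuity estimate.

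For the inverse, I would define $\mathcal{G}(T):=T\circ\mathcal{F}^{-1}$ and verify by direct composition that $\mathcal{G}\circ\mathcal{F}_{\star}=\mathcal{F}_{\star}\circ\mathcal{G}=\operatorname{id}$ on $L_{b}(\mathcal{P}_{\ast}(\overline{\R}),E)$; continuity of $\mathcal{G}$ is obtained by the identical argument, using that $\mathcal{F}^{-1}$ is also continuous and hence maps bounded sets to bounded sets. This then identifies $\mathcal{G}$ with the formula $\mathcal{F}_{\star}^{-1}(T)(\varphi)=\langle T,\mathcal{F}^{-1}(\varphi)\rangle$ given in the statement.

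The argument is essentially formal transposition and no step is a genuine obstacle; the only conceptual point that one should not gloss over is that the strong topology on $L_{b}$ is defined via bounded subsets of $\mathcal{P}_{\ast}(\overline{\R})$, so what is actually used is that $\mathcal{F}$ and $\mathcal{F}^{-1}$ map bounded sets to bounded sets. Since $\mathcal{P}_{\ast}(\overline{\R})$ is a DFS-space and $\mathcal{F}$ is a topological isomorphism on it, this holds for free, so the proof reduces to the cited scalar result together with bookkeeping. No completeness or admissibility hypothesis on $E$ is required.
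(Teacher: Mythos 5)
Your proposal is correct and is exactly the argument the paper intends: the theorem is obtained "by transposition" of Kawai's scalar isomorphism $\mathcal{F}$ on the DFS-space $\mathcal{P}_{\ast}(\overline{\R})$, i.e.\ $\mathcal{F}_{\star}(T)=T\circ\mathcal{F}$ with inverse $T\mapsto T\circ\mathcal{F}^{-1}$, and continuity for the strong topologies follows because $\mathcal{F}$ and $\mathcal{F}^{-1}$ map bounded sets to bounded sets. Your explicit bookkeeping, including the observation that no completeness or admissibility hypothesis on $E$ is needed, matches the paper's (unwritten) proof.
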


Thus we have the following reasonable definition of the Fourier transform on 
$bv_{\overline{\R}}(E)$.

\begin{cor}\label{cor:fourier_isom_extended_reals}
Let $E$ be a sequentially complete $\C$-lcHs. The Fourier transform 
$\mathcal{F}_{\overline{\R}}\colon bv_{\overline{\R}}(E)\to bv_{\overline{\R}}(E)$ 
defined by 
$\mathcal{F}_{\overline{\R}}:=\mathscr{H}_{\overline{\R}}^{-1}\circ\mathcal{F}_{\star}\circ \mathscr{H}_{\overline{\R}}$
is a topological isomorphism with inverse given by 
$\mathcal{F}_{\overline{\R}}^{-1}
=\mathscr{H}_{\overline{\R}}^{-1}\circ\mathcal{F}_{\star}^{-1}\circ \mathscr{H}_{\overline{\R}}$.
\end{cor}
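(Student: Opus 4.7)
The statement is essentially a direct consequence of the two results just cited, so my plan is to assemble them rather than to do any new work. First I would note that since $E$ is sequentially complete and $\overline{\R}$ is a non-empty compact subset of $\overline{\R}$, hypothesis (ii) of \prettyref{thm:duality} applies, so $\mathscr{H}_{\overline{\R}}\colon bv_{\overline{\R}}(E)\to L_b(\mathcal{P}_{\ast}(\overline{\R}),E)$ is a topological isomorphism with the stated inverse $\mathscr{H}_{\overline{\R}}^{-1}$.

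Next I would invoke \prettyref{thm:fourier_isom_extended_reals}, which gives that $\mathcal{F}_{\star}$ is a topological isomorphism of $L_b(\mathcal{P}_{\ast}(\overline{\R}),E)$ onto itself with explicit inverse $\mathcal{F}_{\star}^{-1}$ obtained by transposing $\mathcal{F}^{-1}$ on $\mathcal{P}_{\ast}(\overline{\R})$. Since the composition of topological isomorphisms is again a topological isomorphism, the map
\[
\mathcal{F}_{\overline{\R}} = \mathscr{H}_{\overline{\R}}^{-1}\circ\mathcal{F}_{\star}\circ \mathscr{H}_{\overline{\R}}
\]
is a topological isomorphism from $bv_{\overline{\R}}(E)$ to itself.

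Finally, the formula for the inverse is a one-line computation: from the definition,
\[
\bigl(\mathscr{H}_{\overline{\R}}^{-1}\circ\mathcal{F}_{\star}^{-1}\circ \mathscr{H}_{\overline{\R}}\bigr)\circ \mathcal{F}_{\overline{\R}}
= \mathscr{H}_{\overline{\R}}^{-1}\circ\mathcal{F}_{\star}^{-1}\circ\mathcal{F}_{\star}\circ \mathscr{H}_{\overline{\R}}
= \mathrm{id}_{bv_{\overline{\R}}(E)},
\]
and similarly on the other side, so $\mathcal{F}_{\overline{\R}}^{-1} = \mathscr{H}_{\overline{\R}}^{-1}\circ\mathcal{F}_{\star}^{-1}\circ \mathscr{H}_{\overline{\R}}$, as claimed.

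Since every component has already been established in the preceding theorems, there is no substantive obstacle; the only thing one has to check is that the sequential completeness hypothesis is indeed the one needed to apply \prettyref{thm:duality}(ii) for the non-interval compact set $\overline{\R}\subset\overline{\R}$, which it is. Thus the proof reduces to a remark rather than a calculation.
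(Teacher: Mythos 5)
Your proposal is correct and matches the paper's (implicit) argument exactly: the corollary is stated as an immediate consequence of \prettyref{thm:duality}(ii) and \prettyref{thm:fourier_isom_extended_reals}, obtained by composing the three topological isomorphisms, which is precisely what you do. One trivial remark: $\overline{\R}=[-\infty,\infty]$ is in fact a compact interval, not a \emph{non-interval} compact set, but this has no bearing on the applicability of \prettyref{thm:duality}(ii), which only requires $K$ non-empty and compact and $E$ sequentially complete.
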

\section{Fourier transform of Fourier hyperfunctions with support in an extended half-line}
\label{sect:Fourier_half_line}
In this section we study the Fourier transform $\mathcal{F}_{K}$ of vector-valued Fourier hyperunctions 
with suppport in the extended half-lines $K=[a,\infty]$ or $K=[-\infty,a]$ with $a\in\overline{\R}$ 
but $K\neq \overline{\R}$. We will use the duality given by the map $\mathscr{H}_{K}$ and Kawai's Fourier transform 
$\mathcal{F}_{\overline{\R}}$ on the whole extended real line $\overline{\R}$. 
To characterise the range of $\mathcal{F}_{K}$ we need the supporting function of $K$ and some of its properties, 
whose simple proof we omit. 

\begin{prop}\label{prop:supporting_function}
For a compact interval $K\subsetneq\overline{\R}$ such that $K\neq\pm\{\infty\}$ and $\eta\in\R$ we define the 
\emph{supporting function}
\[
 H_{K}(\eta):=\sup\limits_{x\in K} x\eta  .
\]
Let $a,b,c,d\in\R$. Then we have
\begin{enumerate}
 \item [a)] $H_{[a,b]}(\eta)=\max(a\eta,b\eta)$ for $\eta\in\R$,
 \item [b)] $H_{[a,\infty]}(\eta)=a\eta=-a|\eta|$ for $\eta<0$,
 \item [c)] $H_{[-\infty,a]}(\eta)=a\eta=a|\eta|$ for $\eta>0$,
 \item [d)] $H_{[a,b]+[c,d]}(\eta)=H_{[a,b]}(\eta)+H_{[c,d]}(\eta)$ for $\eta\in\R$. 
\end{enumerate}
\end{prop}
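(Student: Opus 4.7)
The plan is to exploit the fact that for each fixed $\eta\in\R$ the map $x\mapsto x\eta$ is affine, hence monotone, on the relevant interval, so every supremum reduces to the evaluation at (at most) the two endpoints. I would then sort out the sign of $\eta$ to handle the unbounded cases, and use the identity $[a,b]+[c,d]=[a+c,b+d]$ for the Minkowski-sum statement.

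For part (a), monotonicity of $x\mapsto x\eta$ on the compact interval $[a,b]\subset\R$ immediately yields $\sup_{x\in[a,b]}x\eta=\max(a\eta,b\eta)$. For parts (b) and (c), I first observe that the formal value at $\pm\infty$ of $x\mapsto x\eta$ equals $-\infty$ under the stated sign assumption (for (b) we have $\eta<0$, so $x\eta\to-\infty$ as $x\to\infty$; for (c) we have $\eta>0$, so $x\eta\to-\infty$ as $x\to-\infty$), and therefore the point at infinity cannot realise the supremum. By monotonicity the supremum is attained at the finite endpoint $a$, giving $H_{[a,\infty]}(\eta)=a\eta$ and $H_{[-\infty,a]}(\eta)=a\eta$. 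The alternative forms $a\eta=-a|\eta|$ for $\eta<0$ and $a\eta=a|\eta|$ for $\eta>0$ are then just a rewriting via $|\eta|=\mp\eta$ in the respective signs.

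For part (d), I would first note that the Minkowski sum of two compact real intervals is $[a,b]+[c,d]=[a+c,b+d]$, so by part (a)
\[
H_{[a,b]+[c,d]}(\eta)=\max\bigl((a+c)\eta,(b+d)\eta\bigr),\qquad
H_{[a,b]}(\eta)+H_{[c,d]}(\eta)=\max(a\eta,b\eta)+\max(c\eta,d\eta).
\]
Splitting into the cases $\eta\geq 0$ and $\eta\leq 0$, both maxima on the right are attained at the same endpoint in each case (the upper one for $\eta\geq 0$, the lower one for $\eta\leq 0$), so the right-hand side equals $(b+d)\eta$ respectively $(a+c)\eta$, matching the left-hand side.

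There is really no substantial obstacle here; the only point requiring a moment of care is the bookkeeping at the boundary $\pm\infty$ in (b) and (c), which is resolved by noting that the sign condition on $\eta$ forces $x\eta\to-\infty$ at the relevant infinite endpoint and therefore removes that endpoint from consideration in the supremum. This is presumably why the paper dispenses with the proof.
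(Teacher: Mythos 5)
Your proof is correct; the paper explicitly omits the proof of this proposition as simple, and your argument (monotonicity of $x\mapsto x\eta$ reducing the supremum to endpoint evaluation, the sign condition on $\eta$ ruling out the infinite endpoint in (b) and (c), and the Minkowski-sum identity $[a,b]+[c,d]=[a+c,b+d]$ for (d)) is exactly the intended routine verification. Nothing is missing.
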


\begin{defn} 
 Let $E$ be a $\C$-lcHs and $\h:=\{z\in\C\;|\;\im(z)>0\}$.
 \begin{enumerate}
 \item [a)] For $-\infty<a\leq\infty$ we define the space
 \[
  \mathcal{FO}_{[a,\infty]}(E):=\{f\in\mathcal{O}(-\h,E)\;|\;\forall\;k\in\N,\,\alpha\in\mathfrak{A}:\;
   |f|_{k,\alpha,[a,\infty]}<\infty\}
 \]
 where
 \[
  |f|_{k,\alpha,[a,\infty]}:=\sup_{\im(z)\leq -\frac{1}{k}}p_{\alpha}(f(z))
  e^{-\frac{1}{k}|z|-w^{+}_{a}(\im(z))}
 \]
and for $x<0$
 \[
   w^{+}_{a}(x):=
   \begin{cases}
    H_{[a,\infty]}(x)=-a|x|&,\; a\neq \infty,\\
    H_{[k,\infty]}(x)=-k|x|&,\; a=\infty.
   \end{cases}
 \]
 \item [b)] For $-\infty\leq a <\infty$ we define the space
 \[
  \mathcal{FO}_{[-\infty,a]}(E):=\{f\in\mathcal{O}(\h,E)\;|\;\forall\;k\in\N,\,\alpha\in\mathfrak{A}:\;
   |f|_{k,\alpha,[-\infty,a]}<\infty\},
 \]
 where
 \[
  |f|_{k,\alpha,[-\infty,a]}:=\sup_{\im(z)\geq \frac{1}{k}}p_{\alpha}(f(z))
  e^{-\frac{1}{k}|z|-w^{-}_{a}(\im(z))}
 \]
and for $x>0$
 \[
   w^{-}_{a}(x):=
   \begin{cases}
    H_{[-\infty,a]}(x)=a|x|&,\; a\neq -\infty,\\
    H_{[-\infty,-k]}(x)=-k|x|&,\; a=-\infty.
   \end{cases}
 \]
 \end{enumerate}
\end{defn}

We may consider the elements of $\mathcal{FO}_{[a,\infty]}(E)$ resp.\ $\mathcal{FO}_{[-\infty,a]}(E)$ 
as elements of $\mathcal{O}^{exp}(\overline{\C}\setminus\overline{\R},E)$ by trivial extension.

\begin{prop}\label{prop:upper_lower_extension}
Let $E$ be a $\C$-lcHs.
\begin{enumerate}
 \item [a)] Let $a\in\R\cup\{\infty\}$, $f\in\mathcal{FO}_{[a,\infty]}(E)$ and set $f(z):=0$ for $\im(z)>0$. 
 Then $f\in\mathcal{O}^{exp}(\overline{\C}\setminus\overline{\R},E)$.
 \item [b)] Let $a\in\R\cup\{-\infty\}$, $f\in\mathcal{FO}_{[-\infty,a]}(E)$ and set $f(z):=0$ for $\im(z)<0$. 
 Then $f\in\mathcal{O}^{exp}(\overline{\C}\setminus\overline{\R},E)$.
\end{enumerate}
\end{prop}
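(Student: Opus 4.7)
I treat case (a); case (b) is completely analogous by reflection $z\mapsto -\overline{z}$. The plan has two ingredients: (i) make the domain $S_{n}(\overline{\R})$ concrete, and (ii) estimate the exponential factors by splitting $|z|\le|\re(z)|+|\im(z)|$.

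First I compute $U_{n}(\overline{\R})$: since $\pm\infty\in\overline{\R}$ and $\overline{\R}\cap\C=\R$, the unbounded pieces in the definition of $U_{1/c}(K)$ are contained in the first piece $\{\d(z,\R)<c\}$, so
\[
U_{n}(\overline{\R})=\{z\in\C\;|\;|\im(z)|<\tfrac{1}{n}\}\quad\text{and}\quad
S_{n}(\overline{\R})=\{z\in\C\;|\;\tfrac{1}{n}<|\im(z)|<n\}.
\]
Holomorphicity of the trivially extended $f$ on $\C\setminus\R$ is clear, so the only thing to check is that for every $n\in\N$ and $\alpha\in\mathfrak{A}$,
\[
\vertiii{f}_{n,\alpha,\overline{\R}}=\sup_{z\in S_{n}(\overline{\R})}p_{\alpha}(f(z))e^{-\frac{1}{n}|\re(z)|}<\infty.
\]
Because $f\equiv 0$ on $\h$, the supremum needs only to be controlled on the lower part $\{z\;|\;\tfrac{1}{n}<-\im(z)<n\}$.

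On this set I invoke the defining seminorm of $\mathcal{FO}_{[a,\infty]}(E)$ with $k=n$: since $\im(z)\le -\tfrac{1}{n}$,
\[
p_{\alpha}(f(z))\le|f|_{n,\alpha,[a,\infty]}\,e^{\frac{1}{n}|z|+w_{a}^{+}(\im(z))}.
\]
The splitting $|z|\le|\re(z)|+|\im(z)|$ kills the $e^{-|\re(z)|/n}$ factor and yields
\[
p_{\alpha}(f(z))e^{-\frac{1}{n}|\re(z)|}\le|f|_{n,\alpha,[a,\infty]}\,e^{\frac{1}{n}|\im(z)|+w_{a}^{+}(\im(z))}.
\]
If $a\in\R$, then $w_{a}^{+}(\im(z))=-a|\im(z)|$, so the exponent is $(\tfrac{1}{n}-a)|\im(z)|$ which is bounded by $|\tfrac{1}{n}-a|\,n$ on $S_{n}(\overline{\R})$. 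If $a=\infty$, then by the convention $w_{a}^{+}(\im(z))=-n|\im(z)|$ (with the same $k=n$ used in the seminorm), so the exponent $(\tfrac{1}{n}-n)|\im(z)|\le 0$ for $n\ge 1$. In either case the supremum is finite, which is exactly $\vertiii{f}_{n,\alpha,\overline{\R}}<\infty$, as required.

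The only step that really needs attention is the bookkeeping around the case $a=\infty$, where $w_{a}^{+}$ itself depends on the index $k$; the proof requires choosing $k=n$ consistently so that the linear growth produced by $\tfrac{1}{n}|\im(z)|$ is absorbed by the decay $-n|\im(z)|$. The rest reduces to the elementary triangle-type bound $|z|\le|\re(z)|+|\im(z)|$ and the boundedness of $|\im(z)|$ on $S_{n}(\overline{\R})$.
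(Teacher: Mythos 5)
Your proof is correct and follows essentially the same route as the paper, which simply records the estimate $\vertiii{f}_{k,\alpha,\overline{\R}}\leq e^{|a|k}|f|_{k,\alpha,K}$ (resp.\ $e^{k^{2}}|f|_{k,\alpha,K}$ for $a=\pm\infty$) without further detail; your write-up just makes explicit the identification of $S_{n}(\overline{\R})$, the splitting $|z|\leq|\re(z)|+|\im(z)|$, and the choice $k=n$ in the $a=\infty$ case. Nothing further is needed.
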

\begin{proof}
 Let $K=[a,\infty]$ or $K=[-\infty,a]$ but $K\neq\overline{\R}.$ In both parts the extended $f$ is holomorphic 
 on $\C\setminus\R$ and 
 \[
 \vertiii{f}_{k,\alpha,\overline{\R}}\leq
 \begin{cases}
  e^{|a|k}|f|_{k,\alpha,K}&,\;a\in\R,\\
  e^{k^{2}}|f|_{k,\alpha,K}&,\;a=\pm\infty,
  \end{cases}
 \]
 for $k\in\N$ and $\alpha\in\mathfrak{A}$.
\end{proof}

\begin{lem}\label{lem:fourier_unbounded_interval}
Let $E$ be a sequentially complete $\C$-lcHs, $a\in\overline{\R}$ and $K:=[a,\infty]$ or $K:=[-\infty,a]$ 
but $K\neq\overline{\R}$. Then the map 
\begin{gather*}
 \mathcal{F}\colon bv_{K}(E) \to \mathcal{FO}_{K}(E),\\
 \mathcal{F}([F])(\zeta):=\langle \mathscr{H}_{K}([F]), e^{-i(\cdot)\zeta}\rangle
 =\int_{\gamma_{K}}{F(z)e^{-iz\zeta}\d z},
\end{gather*}
where $\gamma_{K}$ is the path along the boundary of $U_{\nicefrac{1}{c}}(K)$ with clockwise orientation, 
does not depend on the choice of $c>0$, is well-defined and continuous. 
Further, the equations
\begin{equation}\label{P.0.1.1}
 \mathscr{H}_{\overline{\R}}([\mathcal{F}([F])])
 =-\mathcal{F}_{\star}(\mathscr{H}_{[a,\infty]}([F]))\;\text{and}\;
  \mathscr{H}_{\overline{\R}}([\mathcal{F}([F])])
 =\mathcal{F}_{\star}(\mathscr{H}_{[-\infty,a]}([F]))
\end{equation}
hold on $\mathcal{P}_{\ast}(\overline{\R})$ where $\mathcal{F}([F])$ 
is considered as an element of $\mathcal{O}^{exp}(\overline{\C}\setminus\overline{\R},E)$ 
in these equations according to \prettyref{prop:upper_lower_extension}.
\end{lem}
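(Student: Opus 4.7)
The plan is to read the integral defining $\mathcal{F}([F])(\zeta)$ as the pairing $\langle \mathscr{H}_K([F]),e^{-i(\cdot)\zeta}\rangle$ provided by \prettyref{thm:duality}. First I will verify that for fixed $\zeta\in -\h$ (resp.\ $\zeta\in\h$) the function $z\mapsto e^{-iz\zeta}$ belongs to $\mathcal{O}_n(U_n(K))$ for every sufficiently large $n$; then \prettyref{thm:duality} gives, for any $0<c<1/n$, the identity $\int_{\gamma_K}F(z)e^{-iz\zeta}\d z = \mathscr{H}_K([F])(e^{-i(\cdot)\zeta})$. Since the right-hand side knows only the equivalence class $[F]$ and not $c$, this simultaneously yields independence of the representative $F$ and of the choice of $c$.

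Next I would show that $\mathcal{F}([F])$ lies in $\mathcal{FO}_K(E)$. Holomorphy on $-\h$ (resp.\ $\h$) follows either by differentiation under the integral or from the holomorphic dependence of $\zeta\mapsto e^{-i(\cdot)\zeta}\in\mathcal{P}_\ast(K)$ composed with the continuous linear map $\mathscr{H}_K([F])$. For the crucial seminorm estimate $|\mathcal{F}([F])|_{k,\alpha,K}<\infty$, I would pick $n>k$, use continuity of $\mathscr{H}_K([F])\colon\mathcal{O}_n(U_n(K))\to E$ to bound $p_\alpha(\mathcal{F}([F])(\zeta))\le C_{n,\alpha}\|e^{-i(\cdot)\zeta}\|_{n,K}$, and then estimate the latter directly on $\overline{U_n(K)}$. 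Writing $z=x+iy$, $\zeta=\xi+i\eta$, so that $|e^{-iz\zeta}|=e^{x\eta+y\xi}$, and using the geometry of $\overline{U_n(K)}$ (for $K=[a,\infty]$ with $a\in\R$ and large $n$ one has $|y|\le 1/n$ and $x\ge a-1/n$), the worst case is attained at $x=a-1/n$, producing precisely the weight $a\eta=w^+_a(\eta)$ together with terms controlled by $|\zeta|/k$. The cases $a=\pm\infty$ use the additional tail piece $]n,\infty[+i\,]-1/n,1/n[$ (or its reflection) encoded in $U_n(K)$, and the mirror case $K=[-\infty,a]$ is completely symmetric. Read against the topological isomorphism $\mathscr{H}_K\colon bv_K(E)\to L_b(\mathcal{P}_\ast(K),E)$, the same estimate gives continuity of $\mathcal{F}$.

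For the identities in \eqref{P.0.1.1} I would test against an arbitrary $\varphi\in\mathcal{P}_\ast(\overline{\R})$. By \prettyref{prop:upper_lower_extension} the element $\mathcal{F}([F])$, extended by $0$ across $\R$, lies in $\mathcal{O}^{exp}(\overline{\C}\setminus\overline{\R},E)$, so \prettyref{thm:duality} for $K=\overline{\R}$ presents $\mathscr{H}_{\overline{\R}}([\mathcal{F}([F])])(\varphi)$ as an integral along $\gamma_{\overline{\R}}$, of which only the horizontal piece in the half-plane where $\mathcal{F}([F])$ is nonzero survives. Substituting the integral representation of $\mathcal{F}([F])(\zeta)$ yields a double integral in $(z,\zeta)$ which I will swap by Fubini; absolute convergence is secured by the exponential estimate on $|e^{-iz\zeta}|$ together with the decay $|\varphi(\zeta)|\le C e^{-|\re\zeta|/m}$, provided $m$ is taken large enough relative to the levels chosen for $\gamma_K$ and $\gamma_{\overline{\R}}$. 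For each fixed $z$ on the piece of $\gamma_K$ lying in the half-plane opposite to the contributing piece of $\gamma_{\overline{\R}}$, Cauchy's theorem collapses the inner $\zeta$-integral to the real axis, producing $\pm\widehat\varphi(z)$; summing the contributions of the upper and lower halves of $\gamma_K$ gives $\pm\int_{\gamma_K}F(z)\widehat\varphi(z)\d z=\pm\mathscr{H}_K([F])(\widehat\varphi)=\pm\mathcal{F}_\star(\mathscr{H}_K([F]))(\varphi)$. Keeping track of the clockwise orientations forces the minus sign in the first identity (case $K=[a,\infty]$) and the plus sign in the second ($K=[-\infty,a]$).

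The main obstacle will be tuning all parameters simultaneously: the parameter $c$ for $\gamma_K$, the level of the horizontal pieces of $\gamma_{\overline{\R}}$, the index $n$ in $\|\cdot\|_{n,K}$, and the index $m$ for $\varphi\in\mathcal{O}_m(U_m(\overline{\R}))$ have to line up so that the three separate exponential estimates (absolute integrability for Fubini, the final contour deformation to $\R$, and the decisive $-a|\eta|$ contribution in the seminorm of $\mathcal{F}([F])$) hold simultaneously. Once a chain of inequalities such as $0<c<1/n$, $n>k$, and $m$ sufficiently large is fixed, everything else reduces to bookkeeping of orientations and signs.
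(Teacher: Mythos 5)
Your proposal is correct, and for the first half of the lemma it takes a genuinely different route from the paper. The paper proves well-definedness and continuity by a direct, explicit estimate of the contour integral $\int_{\gamma_K}F(z)e^{-iz\zeta}\d z$, splitting $\gamma_K$ into an arc near $a$ and two rays and bounding each piece against $\vertiii{F}_{2(k+1),\alpha,K}$; independence of $c$ and of the representative is then obtained from Cauchy's theorem. You instead push everything through the duality $\mathscr{H}_K\colon bv_K(E)\cong L_b(\mathcal{P}_\ast(K),E)$ of \prettyref{thm:duality}: you verify $e^{-i(\cdot)\zeta}\in\mathcal{O}_n(U_n(K))$ for $n>k$ when $\im(\zeta)\le-\tfrac{1}{k}$, which makes independence of $c$ and of the representative automatic, and you obtain the seminorm bound by estimating $\|e^{-i(\cdot)\zeta}\|_{n,K}$ on $\overline{U_n(K)}$; the worst case $\re(z)=a-\tfrac1n$ indeed produces the weight $w^+_a(\im\zeta)$ up to a harmless $e^{C|\zeta|/n}$ factor. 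This dualized argument is shorter and cleaner, at the price of leaning on the cited isomorphism; note only that continuity of $\mathcal{F}$ (as opposed to membership of $\mathcal{F}([F])$ in $\mathcal{FO}_K(E)$) requires you to observe that the normalized family $\{e^{-i(\cdot)\zeta}e^{-\frac1k|\zeta|-w^+_a(\im\zeta)}\}$ is a \emph{bounded} subset of $\mathcal{P}_\ast(K)$, so that $|\cdot|_{k,\alpha,K}\circ\mathcal{F}$ is a continuous seminorm on $L_b(\mathcal{P}_\ast(K),E)$ — your one-line appeal to $L_b$ covers this but it is the one step worth spelling out. For the identities \eqref{P.0.1.1} your argument (only one horizontal piece of $\gamma_{\overline{\R}}$ survives, Fubini, deformation of the $\zeta$-integral to $\R$ producing $\widehat\varphi$, sign from the clockwise orientation) is the same as the paper's, which additionally scalarizes with $e'\in E'$ and invokes Hahn--Banach to justify Fubini for the Pettis integrals — a technical step you should include.
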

\begin{proof}
Let $K:=[a,\infty]$, $F\in\mathcal{O}^{exp}(\overline{\C}\setminus[a,\infty],E)$ and $\alpha\in\mathfrak{A}$.
Let $a\in\R$ and define $a_{+}:=\max(0,a)$. For $k\in\N$ let $\zeta\in\C$ such that 
$\im(\zeta)\leq -\frac{1}{k}$. 
We choose $c:=\tfrac{1}{2k}$ and get $\tfrac{1}{2(k+1)}<c<2(k+1)$ as well as
\begin{equation}\label{P.0.0.0}
a_{+}\im(\zeta)\leq -a |\im(\zeta)|\quad\text{and}\quad
 \sup_{z\in\widetilde{\gamma}_{K}}e^{\re(z)\im(\zeta)}
=e^{-\inf_{z\in\widetilde{\gamma}_{K}}{\re(z)}|\im(\zeta)|}=e^{-(a-c)|\im(\zeta)|},
\end{equation}
since $\im(\zeta)<0$, where $\widetilde{\gamma}_{K}$ is the part of $\gamma_{K}$ as depicted in \prettyref{fig:path_1}. 
\begin{center}
\begin{minipage}{\linewidth}
\centering
\begin{tikzpicture}
\draw[dashed,blue,very thick]  (-1,1.5) arc (90:270:15mm);
\path  (-1,1.5) arc (90:145:15mm) node [above=5pt] (A) {\color{blue}$\widetilde{\gamma}_{K}$};
\draw[dashed,blue,very thick] (0,1.5) -- (-1,1.5);
\draw[dashed,blue,very thick] (0,-1.5) -- (-1,-1.5);
\draw[blue,very thick] (2,1.5) -- (4,1.5) node [above=0.1pt] {\color{blue}$\gamma_{K}$};
\draw[->,blue,very thick] (0,1.5) -- (2.2,1.5);
\draw[blue,very thick] (2,-1.5) -- (0,-1.5);
\draw[->,blue,very thick] (4,-1.5) -- (1.8,-1.5);
\draw(-1 cm,1pt)--(-1cm,-1pt)node[anchor=north] {$a$};
\draw(-2.5 cm,1pt)--(-2.5cm,-1pt)node[anchor=north east] {$a-c$};
\coordinate (0,0) node[below=1pt] {$a_{+}$};
\draw(1pt,1.5 cm)--(-1pt,1.5 cm)node[anchor=south east] {$c$};
\draw(1pt,-1.5 cm)--(-1pt,-1.5 cm)node[anchor=north east] {$-c$};
\draw[->] (-4,0) -- (4,0) node[right] {$\re(z)$} coordinate (x axis);
\draw[red,thick] (-1,0)--(4,0);
\draw[->] (0,-2) -- (0,2) node[above] {$\im(z)$} coordinate (y axis);
\end{tikzpicture}
\end{minipage}
\captionsetup{type=figure}
\caption{Path $\gamma_{K}$ for $K=[a,\infty]$, $a\in\R$}
\label{fig:path_1}
\end{center}
Furthermore, we have 
 \begin{equation}\label{P.0.0.1}
 \frac{1}{2(k+1)}+\im(\zeta)\leq \frac{1}{2(k+1)}-\frac{1}{k}=-\frac{k+2}{2k(k+1)}<0.
 \end{equation}
 Moreover, we obtain
 \begin{flalign}\label{eq:cont_fourier_unbounded_interval}
  &\hspace{0.35cm} p_{\alpha}(\mathcal{F}([F])(\zeta))\notag\\
  &=p_{\alpha}\bigl(\int_{\gamma_{K}}{F(z)e^{-iz\zeta}\d z}\bigr)\notag\\
  &\leq (\pi c+2(a_{+}-a))\sup_{z\in\widetilde{\gamma}_{K}}p_{\alpha}(F(z))e^{-\re(iz\zeta)}
  +\int_{a_{+}}^{\infty}{p_{\alpha}(F(t+ic))e^{-\re(i(t+ic)\zeta)}\d t}\notag\\
  &\phantom{\leq}+\int_{a_{+}}^{\infty}{p_{\alpha}(F(t-ic))e^{-\re(i(t-ic)\zeta)}\d t}\notag\\
  &\leq C_{0}\vertiii{F}_{2(k+1),\alpha,[a,\infty]}\sup_{z\in\widetilde{\gamma}}e^{\frac{1}{2(k+1)}|\re(z)|}
   \sup_{z\in\widetilde{\gamma}}e^{\re(z)\im(\zeta)+\im(z)\re(\zeta)}\notag\\
  &\phantom{\leq}+\vertiii{F}_{2(k+1),\alpha,[a,\infty]}
   \int_{a_{+}}^{\infty}{e^{\frac{1}{2(k+1)}|t|+t\im(\zeta)+c\re(\zeta)}\d t}\notag\\
  &\phantom{\leq}+\vertiii{F}_{2(k+1),\alpha,[a,\infty]}
   \int_{a_{+}}^{\infty}{e^{\frac{1}{2(k+1)}|t|+t\im(\zeta)-c\re(\zeta)}\d t}\notag\\
  &\;\;\mathclap{\underset{\eqref{P.0.0.0}}{\leq}}\;\;\vertiii{F}_{2(k+1),\alpha,[a,\infty]}C_{0}e^{\frac{1}{2(k+1)}
   \max{(|a-c|,a_{+})}}e^{(c-a)|\im(\zeta)|}e^{c|\re(\zeta)|}\notag\\
  &\phantom{\leq}+2\vertiii{F}_{2(k+1),\alpha,[a,\infty]}e^{c|\re(\zeta)|}
   \int_{a_{+}}^{\infty}{e^{\left(\frac{1}{2(k+1)}+\im(\zeta)\right)t}\d t}\notag\\
  &=\vertiii{F}_{2(k+1),\alpha,[a,\infty]}e^{c|\re(\zeta)|}\Bigl(C_{1}e^{(c-a)|\im(\zeta)|}
   +\frac{2}{\bigl|\frac{1}{2(k+1)}+\im(\zeta)\bigr|}e^{\left(\frac{1}{2(k+1)}+\im(\zeta)\right)a_{+}}\Bigr)\notag\\
  &\;\;\mathclap{\underset{\eqref{P.0.0.0}}{\leq}}\;\;\vertiii{F}_{2(k+1),\alpha,[a,\infty]}\Bigl(C_{1}
   +\frac{2e^{\frac{1}{2(k+1)}a_{+}}}{\bigl|\frac{1}{2(k+1)}+\im(\zeta)\bigr|}\Bigr)e^{c(|\re(\zeta)|+|\im(\zeta)|)
   -a|\im(\zeta)|}\notag\\
  &\;\;\mathclap{\underset{\eqref{P.0.0.1}}{\leq}}\;\;\vertiii{F}_{2(k+1),\alpha,[a,\infty]}\Bigl(C_{1}+
   \frac{4k(k+1)e^{\frac{1}{2(k+1)}a_{+}}}{k+2}\Bigr)e^{2c|\zeta|-a|\im(\zeta)|}\notag\\
  &=C_{2}\vertiii{F}_{2(k+1),\alpha,[a,\infty]}e^{\frac{1}{k}|\zeta|-a|\im(\zeta)|}
 \end{flalign}
and therefore
\[
|\mathcal{F}([F])|_{k,\alpha,[a,\infty]}\leq C_{2}\vertiii{[F]}_{2(k+1),\alpha,[a,\infty]}^{\wedge}.
\]
Now, we consider the case $a:=\infty$. We replace \prettyref{fig:path_1} by:
\begin{center}
\begin{minipage}{\linewidth}
\centering
\begin{tikzpicture}
\draw[dashed,blue,very thick]  (2,1.5) -- (2,0.7) node [left] (A) {\color{blue}$\widetilde{\gamma}_{K}$};
\draw[->,dashed,blue,very thick]  (2,-1.5) -- (2,0.7);
\draw[blue,very thick] (7,1.5) -- (4.5,1.5) node [above=0.1pt] {\color{blue}$\gamma_{K}$};
\draw[->,blue,very thick] (2,1.5) -- (4.7,1.5);
\draw[blue,very thick] (2,-1.5)-- (4.5,-1.5);
\draw[->,blue,very thick] (7,-1.5) -- (4.3,-1.5);
\node[anchor=north east] (A) at (2,0) {$\frac{1}{c}$};
\draw(1pt,1.5 cm)--(-1pt,1.5 cm)node[anchor=south east] {$c$};
\draw(1pt,-1.5 cm)--(-1pt,-1.5 cm)node[anchor=north east] {$-c$};
\draw[->] (-1,0) -- (7,0) node[right] {$\re(z)$} coordinate (x axis);
\draw[->] (0,-2) -- (0,2) node[above] {$\im(z)$} coordinate (y axis);
\end{tikzpicture}
\end{minipage}
\captionsetup{type=figure}
\caption{Path $\gamma_{K}$ for $K=\{\infty\}$}
\end{center}
If we choose $k$ like before, we get
\[
\sup_{z\in\widetilde{\gamma}_{K}}e^{\re(z)\im(\zeta)}=e^{-\frac{1}{c}|\im(\zeta)|}
\]
and 
\begin{align*}
 p_{\alpha}(\mathcal{F}([F])(\zeta))
&\leq\vertiii{F}_{2(k+1),\alpha,\{\infty\}}
 \left(\frac{2}{c}e^{\frac{1}{2(k+1)c}}+\frac{4k(k+1)e^{\frac{1}{2(k+1)c}}}{k+2}\right)
 e^{c|\re(\zeta)|-\frac{1}{c}|\im(\zeta)|}\\
&\leq C_{2}\vertiii{F}_{2(k+1),\alpha,\{\infty\}}e^{\frac{1}{k}|\zeta|-k|\im(\zeta)|}.
\end{align*}
Moreover, the definition of the Fourier transform does not depend on $c>0$ or the choice of the representative 
by virtue of Cauchy's integral theorem. Hence the Fourier transform is well-defined and continuous.

Let us turn to \eqref{P.0.1.1}. We only consider the case $K=[a,\infty]$ with $a\in\R$. 
By \prettyref{prop:upper_lower_extension} we regard $\mathcal{F}([F])$ as an element 
of $\mathcal{O}^{exp}(\overline{\C}\setminus\overline{\R},E)$. 
Let $n\in\N$ and $\varphi\in\mathcal{O}_{n}(U_{n}(\overline{\R}))$. We choose $\delta>0$ and $k\in\N$ such that 
$\tfrac{1}{k}\leq \delta< \tfrac{1}{n}$. 
Due to the estimate \eqref{eq:cont_fourier_unbounded_interval} we have for every $e'\in E'$
\begin{flalign*}
&\hspace{0.35cm}\int_{-\infty}^{\infty}\Bigl(\int_{\pi/2}^{3\pi/2}|(-1)^2cie^{ict}e'(F(a+ce^{it}))
  e^{-i(a+ce^{it})(s-i\delta)}\varphi(s-i\delta)|\d t\\
&\phantom{\int_{-\infty}^{\infty}\Bigl(}
 +\int_{a}^{\infty}|(-1)e'(F(t+ic))e^{-i(t+ic)(s-i\delta)}\varphi(s-i\delta)|\d t\\
&\phantom{\int_{-\infty}^{\infty}\Bigl(}
 +\int_{a}^{\infty}|(-1)^2e'(F(t-ic))e^{-i(t-ic)(s-i\delta)}\varphi(s-i\delta)|\d t\Bigr)\d s\\
&\leq C_{2}\vertiii{e'\circ F}_{2(k+1),[a,\infty]}
 \int_{-\infty}^{\infty}e^{\frac{1}{k}|s-i\delta|-a|\im(s-i\delta)|}|\varphi(s-i\delta)|\d s\\
&\leq 2C_{2}e^{\frac{\delta}{k}-a\delta}\vertiii{e'\circ F}_{2(k+1),[a,\infty]}\|\varphi\|_{n,\overline{\R}}
 \int_{0}^{\infty} e^{(\frac{1}{k}-\frac{1}{n})s}\d s\\
&= 2C_{2}e^{\frac{\delta}{k}-a\delta}\vertiii{e'\circ F}_{2(k+1),[a,\infty]}\|\varphi\|_{n,\overline{\R}}
 \frac{1}{\frac{1}{n}-\frac{1}{k}}<\infty.
\end{flalign*}
It follows from the Fubini-Tonelli theorem that we may change the order of integration and obtain
\begin{align*}
  \langle e', \mathscr{H}_{\overline{\R}}([\mathcal{F}([F])])(\varphi)\rangle 
&=\mathscr{H}_{\overline{\R}}([\mathcal{F}([e'\circ F])])(\varphi)\\
&=-\int_{\im(\zeta)=-\delta}\int_{\gamma_{[a,\infty]}}(e'\circ F)(z)e^{-iz\zeta}\varphi(\zeta)\d z\;\d\zeta\\
&=-\int_{\gamma_{[a,\infty]}}(e'\circ F)(z)\int_{\im(\zeta)=-\delta}\varphi(\zeta)e^{-iz\zeta}\d\zeta\;\d z\\
&=-\int_{\gamma_{[a,\infty]}}(e'\circ F)(z)\int_{\R}\varphi(\zeta)e^{-iz\zeta}\d\zeta\;\d z\\
&=-\mathscr{H}_{[a,\infty]}([e'\circ F])(\widehat{\varphi})\\
&= \langle e', -\mathcal{F}_{\star}(\mathscr{H}_{[a,\infty]}([F]))(\varphi)\rangle
\end{align*}
for all $e'\in E'$, where we used Cauchy's integral theorem in the fourth equation. 
Thus the Hahn-Banach theorem implies the first equation in \eqref{P.0.1.1}.
The proof in the case $K=[-\infty,a]$ is analogous.
\end{proof}

If we want to emphasize that $\mathcal{F}$ depends on $K$, we write $\mathcal{F}_{K}$ instead.

\begin{thm}\label{thm:fourier_unbounded_interval} 
Let $E$ be a sequentially complete $\C$-lcHs, $a\in\overline{\R}$ and $K:=[a,\infty]$ or $K:=[-\infty,a]$ 
but $K\neq\overline{\R}$. Then the Fourier transform $\mathcal{F}$ in \prettyref{lem:fourier_unbounded_interval} 
is a topological isomorphism.
\end{thm}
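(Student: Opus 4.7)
The strategy is to promote the commutation identity of the preceding lemma to a full isomorphism statement. Writing $\iota\colon\mathcal{FO}_K(E)\to bv_{\overline{\R}}(E)$ for the trivial-extension map (well-defined by \prettyref{prop:upper_lower_extension}) and $\rho_K^\ast(T):=T\circ\rho_K$ for the transpose of the natural restriction of germs $\rho_K\colon\mathcal{P}_\ast(\overline{\R})\to\mathcal{P}_\ast(K)$, the identity \eqref{P.0.1.1} for $K=[a,\infty]$ reads
\[
\mathscr{H}_{\overline{\R}}\circ\iota\circ\mathcal{F}_K \;=\; -\,\mathcal{F}_\star\circ\rho_K^\ast\circ\mathscr{H}_K,
\]
with the opposite sign for $K=[-\infty,a]$. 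Since $\mathscr{H}_K$, $\mathscr{H}_{\overline{\R}}$ and $\mathcal{F}_\star$ are already topological isomorphisms (by \prettyref{thm:duality}, \prettyref{cor:fourier_isom_extended_reals} and \prettyref{thm:fourier_isom_extended_reals}), the task reduces to understanding $\iota$ and $\rho_K^\ast$.

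Injectivity and the topological-embedding property of $\iota$ are routine: an $\iota$-preimage of zero has its trivial extension entire by \prettyref{prop:upper_lower_extension} and vanishes on a half-plane, hence identically by the identity theorem; the two-sided seminorm comparison between $|\cdot|_{k,\alpha,K}$ and $\vertiii{\cdot}_{k,\alpha,\overline{\R}}$ of the trivial extension follows from \prettyref{prop:upper_lower_extension} and the trivial converse. For the scalar case of $\rho_K^\ast$, Cauchy's theorem allows the path $\gamma_K$ to be deformed to $\gamma_{\overline{\R}}$ through the region where $F\varphi'$ (for a representative $F$ of a hyperfunction and $\varphi'\in\mathcal{P}_\ast(\overline{\R})$) is holomorphic, identifying $\mathscr{H}_{\overline{\R}}^{-1}\circ\rho_K^\ast\circ\mathscr{H}_K\colon bv_K(\C)\to bv_{\overline{\R}}(\C)$ with the natural inclusion $[F]\mapsto [F]$; this inclusion is injective by the identity theorem applied to representatives on the connected set $\overline{\C}\setminus K$. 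The vector-valued case reduces to the scalar one by composing with $e'\in E'$, yielding injectivity of $\rho_K^\ast$ and hence of $\mathcal{F}_K$.

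The principal obstacle is surjectivity of $\mathcal{F}_K$. Given $g\in\mathcal{FO}_K(E)$, the preimage is constructed as $[h]\in bv_K(E)$, where $h\in\mathcal{O}^{\exp}(\overline{\C}\setminus K,E)$ is produced by an inverse-Fourier-type integral of $g$: for each $z\in\overline{\C}\setminus K$ one chooses a contour $\Gamma_z\subset-\h$ (for $K=[a,\infty]$) adapted to the direction in which $z$ lies relative to $K$, and sets, say, $h(z):=\tfrac{1}{2\pi}\int_{\Gamma_z}g(\zeta)e^{iz\zeta}\,\d\zeta$. The exponential weight $e^{-w^{+}_{a}(\im\zeta)}$ built into the seminorms of $\mathcal{FO}_K(E)$ is precisely what makes these integrals converge absolutely on each component of $\overline{\C}\setminus K$ and what provides uniform estimates placing $h$ in $\mathcal{O}^{\exp}(\overline{\C}\setminus K,E)$ with the relevant seminorms dominated by those of $g$. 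The identity $\mathcal{F}_K([h])=g$ is then verified by inserting the definition, interchanging the order of integration via the same Fubini--Tonelli argument as at the end of the proof of the preceding lemma, and using the Fourier inversion on $\R$. Carrying out these contour deformations and quantitative uniform seminorm estimates -- matching the weight $e^{-w^{+}_a(\im\zeta)}$ to the asymptotic behaviour of $e^{iz\zeta}$ on $\Gamma_z$ -- is the delicate core of the argument. Once surjectivity is in hand, continuity of $\mathcal{F}_K^{-1}$ is automatic from the commutation identity and the isomorphism properties of the other three factors, and the case $K=[-\infty,a]$ is handled identically up to sign conventions.
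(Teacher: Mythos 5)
Your factorisation of the problem through the commutation identity, and your injectivity argument, match the paper's. The trouble is surjectivity and the continuity of the inverse, which together are the entire content of the theorem beyond \prettyref{lem:fourier_unbounded_interval}, and which your proposal does not actually establish. For surjectivity the paper does \emph{not} build a defining function by a direct inverse--Fourier contour integral; it sets $\nu:=\mathcal{F}_\star^{-1}(\mathscr{H}_{\overline{\R}}([f]))\in L(\mathcal{P}_\ast(\overline{\R}),E)$, proves the key quantitative estimate \eqref{P.0.0.1.1} (that $\nu$ is continuous for the weaker seminorms $\|\cdot\|_{n,K}$), extends $\nu$ to $\widetilde{\nu}\in L(\mathcal{P}_\ast(K),E)$ using Kawai's density of $\mathcal{P}_\ast(\overline{\R})$ in $\mathcal{P}_\ast(K)$, and takes $-\mathscr{H}_K^{-1}(\widetilde{\nu})$ as the preimage. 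Your alternative --- defining $h(z)=\tfrac{1}{2\pi}\int_{\Gamma_z}g(\zeta)e^{iz\zeta}\,\d\zeta$ with $z$-dependent contours --- is not implausible, but everything you defer as ``the delicate core'' is exactly where the work lies: a single horizontal contour $\im\zeta=-\delta$ diverges, since $|g(\xi-i\delta)e^{iz(\xi-i\delta)}|\sim e^{\frac{1}{k}|\xi|-\xi\im(z)}$ blows up as $\xi\to-\infty$ or $\xi\to+\infty$ depending on the sign of $\im(z)$, and for real $z<a$ in both directions; so one is forced into precisely the decomposition into half-line integrals, the vertical shifts, and the limits $\eta\to\infty$ that occupy the estimates \eqref{P.0.0.4}--\eqref{P.0.0.9}. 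You would additionally have to show that the direction-dependent pieces glue to a single function holomorphic on the connected set $\C\setminus[a,\infty[$ and satisfy the $\mathcal{O}^{exp}$ bounds uniformly as $z$ approaches $K$. None of this is carried out, so the proof is incomplete at its central step.

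The second genuine error is the claim that continuity of $\mathcal{F}_K^{-1}$ is ``automatic from the commutation identity and the isomorphism properties of the other three factors''. In your factorisation $\mathscr{H}_{\overline{\R}}\circ\iota\circ\mathcal{F}_K=-\mathcal{F}_\star\circ\rho_K^\ast\circ\mathscr{H}_K$ the maps $\iota$ and $\rho_K^\ast$ are injective but very far from isomorphisms: $\rho_K^\ast$ is the transpose of a dense non-surjective inclusion, and $\iota$ is not a topological embedding --- the seminorms $|\cdot|_{k,\alpha,[a,\infty]}$ encode decay of order $e^{-a|\im(\zeta)|}$ as $\im(\zeta)\to-\infty$ which the seminorms $\vertiii{\cdot}_{n,\alpha,\overline{\R}}$ (taken over the bounded strips $S_n(\overline{\R})$) cannot recover, so your ``trivial converse'' seminorm comparison is false. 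Consequently nothing about $\mathcal{F}_K^{-1}$ can be read off from the outer isomorphisms alone; no open mapping theorem is available for general $E$ either. The paper obtains continuity of the inverse from the estimate \eqref{P.0.0.1.1} combined with the DFS structure of $\mathcal{P}_\ast(K)$ and the continuity of $\mathscr{H}_K^{-1}$. If you insist on the direct contour construction, the substitute would be explicit bounds $\vertiii{h}_{n,\alpha,K}\leq C|g|_{k,\alpha,K}$ from the integral formula --- which you assert but do not prove, and which amount to the same contour-shifting estimates.
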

\begin{proof}
Let $F\in\mathcal{O}^{exp}(\overline{\C}\setminus K,E)$ and $\mathcal{F}([F])=0$.
By \prettyref{prop:upper_lower_extension} we consider $\mathcal{F}([F])$ as an element of 
$\mathcal{O}^{exp}(\overline{\C}\setminus\overline{\R},E)$.
By virtue of \eqref{P.0.1.1} we obtain
\[
0=\mathscr{H}_{\overline{\R}}([\mathcal{F}([F])])(\varphi)
 =\langle \mathcal{F}_{\star}(\mathscr{H}_{K}([F])),\varphi\rangle,\;\varphi\in\mathcal{P}_{\ast}(\overline{\R}),
\]
implying $[F]=0$ since $\mathcal{F}_{\star}$ and $\mathscr{H}_{K}$ are isomorphisms. 
Therefore the Fourier transform $\mathcal{F}$ is injective.

The idea how to prove the surjectivity of $\mathcal{F}$ can be found in the proof of 
\cite[Theorem 3.3.2, p.\ 485-487]{Kawai} and we adapt the notation that is used there. 
Let $f\in\mathcal{FO}_{K}(E)$ and again we consider $f$ as an element of
$\mathcal{O}^{exp}(\overline{\C}\setminus\overline{\R},E)$ by \prettyref{prop:upper_lower_extension}. 
We define 
$\nu :=\mathcal{F}_{\star}^{-1}(\mathscr{H}_{\overline{\R}}([f]))\in L(\mathcal{P}_{\ast}(\overline{\R}),E)$ 
and next we show the following:
\begin{equation}\label{P.0.0.1.1}
\forall\;\alpha\in\mathfrak{A},\,n\in\N\;\exists\; k\in\N,\,C>0\;\forall\;
\varphi\in\mathcal{O}_{n}(U_{n}(\overline{\R})):\;
 p_{\alpha}(\nu(\varphi))\leq C|f|_{k,\alpha,K}\|\varphi\|_{n,K}
\end{equation}
Since $\mathcal{P}_{\ast}(K)$ is dense in $\mathcal{P}_{\ast}(\overline{\R})$ 
by \cite[Theorem 2.2.1, p.\ 474]{Kawai} (see the correction of its proof in \cite[Remark, p.\ 247-248]{saburi1985}), 
this implies that there is an unique, well-defined 
extension $\widetilde{\nu}\in L(\mathcal{P}_{\ast}(K),E)$ of $\nu$ given by 
$\widetilde{\nu}(\varphi):=\lim_{m\in\mathcal{N}}\nu(\varphi_{m})$ 
where $(\varphi_{m})_{m\in\mathcal{N}}$ is a net in $\mathcal{P}_{\ast}(\overline{\R})$ 
converging to $\varphi\in\mathcal{P}_{\ast}(K)$ due to \cite[3.4.2 Theorem, p.\ 61]{Jarchow}.

We choose $-\tfrac{1}{n}<\widetilde{c}<\tfrac{1}{n}$ and have for $\varphi\in\mathcal{O}_{n}(U_{n}(\overline{\R}))$  
\[
 \mathcal{F}^{-1}\varphi(\zeta)=\frac{1}{2\pi}\int_{\im(z)=\widetilde{c}}{\varphi(z)e^{iz\zeta}\d z}.
\]
Due to Cauchy's integral theorem and the growth of $\varphi$ the definition of $\mathcal{F}^{-1}\varphi(\zeta)$ 
does not depend on the choice of $\widetilde{c}$. Let $K:=[a,\infty]$.
By virtue of \prettyref{prop:upper_lower_extension} we consider $f$ as an element of 
$\mathcal{O}^{exp}(\overline{\C}\setminus\overline{\R},E)$ and we have for $\delta >0$ 
by \prettyref{thm:fourier_isom_extended_reals}
\begin{align*}
  \nu(\varphi)
&=\mathcal{F}_{\star}^{-1}(\mathscr{H}_{\overline{\R}}([f]))(\varphi)
 =\mathscr{H}_{\overline{\R}}([f])(\mathcal{F}^{-1}\varphi)
 =\int_{\gamma_{\overline{\R}}}{f(\zeta)\mathcal{F}^{-1}\varphi(\zeta)\d\zeta}\\
&=-\frac{1}{2\pi}\int_{\im(\zeta)=-\delta}{f(\zeta)\int_{\im(z)=\widetilde{c}}{\varphi(z)e^{iz\zeta}\d z}\,\d\zeta}.
\end{align*}
First, we consider the case $a\in\R$. We choose $\delta,\delta'>0$ such that
\begin{equation}\label{P.0.0.2}
  (\delta ')^{2}+\delta^{2}<\frac{1}{n^{2}},\quad\text{implying}\quad 0<\delta,\delta'<\frac{1}{n},
\end{equation}
and define
\[
I_{+}:=\int_{0-i\delta}^{\infty-i\delta}{f\left(\zeta\right)\int_{\im(z)=\delta}{\varphi(z)e^{iz\zeta}\d z}\,\d\zeta}
\quad\text{and}\quad
I_{-}:=\int_{-\infty-i\delta}^{0-i\delta}{f\left(\zeta\right)\int_{\im(z)=-\delta}{\varphi(z)e^{iz\zeta}dz}\,\d\zeta}.
\]
Furthermore, we set 
\[
 J_{++}:=\int_{0-i\delta}^{\infty-i\delta}{f(\zeta)\int_{a-\delta'+i\delta}^{\infty+i\delta}
 {\varphi(z)e^{iz\zeta}dz}\:d\zeta}
  \quad\text{and}\quad
 J_{+-}:=\int_{0-i\delta}^{\infty-i\delta}{f(\zeta)\int_{-\infty+i\delta}^{a-\delta'+i\delta}
 {\varphi(z)e^{iz\zeta}dz}\,\d\zeta}
\]
as well as
\[
 J_{-+}:=\int_{-\infty-i\delta}^{0-i\delta}{f\left(\zeta\right)\int_{a-\delta'-i\delta}^{\infty-i\delta}
 {\varphi(z)e^{iz\zeta}\d z}\,\d\zeta}
  \;\;\,\text{and}\;\;\,
 J_{--}:=\int_{-\infty-i\delta}^{0-i\delta}{f(\zeta)\int_{-\infty-i\delta}^{a-\delta'-i\delta}
 {\varphi(z)e^{iz\zeta}\d z}\,\d\zeta}.
\]
Then we have $-2\pi\nu(\varphi)=I_{+}+I_{-}$ and $I_{+}=J_{++}+J_{+-}$ as well as $I_{-}=J_{-+}+J_{--}$. 
Now, we set $a_{0}:=\max(0,a-\delta')$ and choose $k\in\N$ such that
\begin{equation}\label{P.0.0.3}
\frac{1}{k}<\delta.
\end{equation}
We obtain the following estimates for $\alpha\in\mathfrak{A}$
\begin{flalign}\label{P.0.0.4}
&\hspace{0.35cm}p_{\alpha}(J_{++})\notag\\
&\leq\int_{0}^{\infty}{p_{\alpha}(f(t-i\delta))
\int_{a-\delta'}^{\infty}{|\varphi(s+i\delta)|e^{\re(i(s+i\delta)(t-i\delta))}\d s}\,\d t}\notag\\
&\;\;\mathclap{\underset{\eqref{P.0.0.2},\eqref{P.0.0.3}}{\leq}}\quad\int_{0}^{\infty}{|f|_{k,\alpha,K}
 e^{\frac{1}{k}|t-i\delta|-a\delta}
 \int_{a-\delta'}^{\infty}{\|\varphi\|_{n,K}e^{-\frac{1}{n}|s|}e^{s\delta-t\delta}\d s}\,\d t}\notag\\
&\leq e^{(\frac{1}{k}-a)\delta} |f|_{k,\alpha,K}\|\varphi\|_{n,K}
 \int_{0}^{\infty}{e^{(\frac{1}{k}-\delta)t}\d t}
 \int_{a-\delta'}^{\infty}{e^{-\frac{1}{n}|s|+\delta s}\d s}\notag\\
&\;\;\mathclap{\underset{\eqref{P.0.0.3}}{=}} \;\; e^{(\frac{1}{k}-a)\delta}|f|_{k,\alpha,K}\|\varphi\|_{n,K}
 \frac{1}{\delta-\frac{1}{k}}\Bigl(\int_{a-\delta'}^{a_{0}}{e^{(\frac{1}{n}+\delta)s}\d s}
 +\int_{a_{0}}^{\infty}{e^{(\delta-\frac{1}{n})s}\d s}\Bigr)\notag\\
&\;\;\mathclap{\underset{\eqref{P.0.0.2}}{=}} \;\; \frac{1}{\delta-\frac{1}{k}}
 \Bigl(\frac{1}{\frac{1}{n}+\delta} \bigl(e^{(\frac{1}{n}+\delta)a_{0}}-e^{(\frac{1}{n}+\delta)(a-\delta')}\bigr)
 +\frac{1}{\frac{1}{n}-\delta}e^{(\delta-\frac{1}{n})a_{0}}\Bigr)e^{(\frac{1}{k}-a)\delta}
 |f|_{k,\alpha,K}\|\varphi\|_{n,K}
\end{flalign}
and analogously
\begin{flalign}\label{P.0.0.5}
&\hspace{0.35cm}p_{\alpha}(J_{-+})\notag\\
&\leq\int_{-\infty}^{0}{ |f|_{k,\alpha,K}e^{\frac{1}{k}|t-i\delta|-a\delta}
 \int_{a-\delta'}^{\infty}{\|\varphi\|_{n,K}e^{-\frac{1}{n}|s|}e^{s\delta+t\delta}\d s}\;\d t}\notag\\
&\;\;\mathclap{\underset{\eqref{P.0.0.2},\eqref{P.0.0.3}}{\leq}}\quad e^{(\frac{1}{k}-a)\delta}
 |f|_{k,\alpha,K}\|\varphi\|_{n,K}
 \int_{-\infty}^{0}{ e^{\frac{1}{k}|t|+\delta t}\d t}
 \int_{a-\delta'}^{\infty}{e^{-\frac{1}{n}|s|+\delta s}\d s}\notag\\
&=e^{(\frac{1}{k}-a)\delta}|f|_{k,\alpha,K}\|\varphi\|_{n,K}
 \int_{-\infty}^{0}{ e^{(\delta-\frac{1}{k}) t}\d t}
 \int_{a-\delta'}^{\infty}{e^{-\frac{1}{n}|s|+\delta s}\d s}\notag\\
&\;\;\mathclap{\underset{\eqref{P.0.0.2},\eqref{P.0.0.3}}{=}}\quad \frac{1}{\delta-\frac{1}{k}}
 \Bigl(\frac{1}{\frac{1}{n}+\delta}\bigl(e^{(\frac{1}{n}+\delta)a_{0}}-e^{(\frac{1}{n}+\delta)(a-\delta')}\bigr)
 +\frac{1}{\frac{1}{n}-\delta}e^{(\delta-\frac{1}{n})a_{0}}\Bigr)e^{(\frac{1}{k}-a)\delta} 
 |f|_{k,\alpha,K}\|\varphi\|_{n,K}.
\end{flalign}
Let $\eta >0$ and set
\[
 J(-\eta;I):=\int_{0-i\eta}^{\infty-i\eta}{f(\zeta)J_{I}(\zeta)\d\zeta}
 \quad\text{and}\quad
 J(-\eta;II):=\int_{-\infty-i\eta}^{0-i\eta}{f(\zeta)J_{II}(\zeta)\d\zeta}
\]
where
\[
 J_{I}(\zeta):=\int_{-\infty+i\delta}^{a-\delta'+i\delta}{\varphi(z)e^{iz\zeta}\d z}
 \quad\text{and}\quad
 J_{II}(\zeta):=\int_{-\infty-i\delta}^{a-\delta'-i\delta}{\varphi(z)e^{iz\zeta}\d z}
\]
as well as
\[
 J(\downarrow ;I):=\int_{-i\delta}^{-i\eta}{f(\zeta)J_{I}(\zeta)\d\zeta}
\quad\text{and}\quad
 J(\uparrow ; II):=\int_{-i\eta}^{-i\delta}{f(\zeta)J_{II}(\zeta)\d\zeta}.
\]
We remark that $J_{+-}=J(-\delta;I)$ and $J_{--}=J(-\delta;II)$.
\begin{center}
\begin{minipage}{\linewidth}
\centering
\begin{tikzpicture}
\draw[blue,very thick] (1.8,-1) -- (4,-1) node [above=0.1pt] {\color{blue}$(-\delta;I)$};
\draw[->,blue,very thick] (0,-1) -- (2,-1);
\draw[->,green!50!black,very thick] (0,-2) -- (2,-2);
\draw[green!50!black,very thick] (1.8,-2) -- (4,-2) node [below=0.1pt] {\color{green!50!black}$(-\eta;I)$};
\draw[->,dashed,very thick] (3,-2) -- (3,-1.5) node [right] {$\gamma_{0}$};
\draw[dashed,very thick] (3,-1.5) -- (3,-1);
\draw[->,cyan,very thick] (-4,-1)node [above=0.1pt] {\color{cyan}$(-\delta;II)$} -- (-2,-1) ;
\draw[cyan,very thick] (-2.2,-1) -- (0,-1);
\draw[->,teal!50!white, very thick] (-0.03,-2) -- (-0.03,-1.5) node [left] {\color{teal!50!white}$\uparrow$};
\draw[teal!50!white, very thick] (-0.03,-1.7) -- (-0.03,-1);
\draw[->,green!50!black, very thick] (0.03,-1) -- (0.03,-1.5) node [right] {\color{green!50!black}$\downarrow$};
\draw[green!50!black, very thick] (0.03,-1.3) -- (0.03,-2);
\draw[->,teal!50!white,very thick] (-4,-2)node [below=0.1pt] {\color{teal!50!white}$(-\eta;II)$} -- (-2,-2);
\draw[teal!50!white,very thick] (-2.2,-2) -- (0,-2) ;
\draw[->,dashed,very thick] (-3,-1) -- (-3,-1.5) node [left] {$\gamma_{1}$};
\draw[dashed,very thick] (-3,-1.5) -- (-3,-2);
\draw(-3 cm,1pt)--(-3cm,-1pt)node[anchor=north] {$-r$};
\draw(3cm,1pt)--(3cm,-1pt)node[anchor=north] {$r$};
\draw(1pt,-1cm)--(-1pt,-1cm)node[anchor=south east] {$-\delta$};
\draw(1pt,-2cm)--(-1pt,-2cm)node[anchor=north east] {$-\eta$};
\draw[->] (-4,0) -- (4,0) node[right] {$\re(\zeta)$} coordinate (x axis);
\draw[->] (0,-3) -- (0,1) node[above] {$\im(\zeta)$} coordinate (y axis);
\end{tikzpicture}
\end{minipage}
\captionsetup{type=figure}
\caption{Paths of integration in $\zeta$-plane for $K=[a,\infty]$ (cf.\ \cite[p.\ 487]{Kawai})}
\label{fig:paths_zeta_plane}
\end{center} 
\begin{center}
\begin{minipage}{\linewidth}
\centering
\begin{tikzpicture}
\def\mypath{
 (2,2) -- (-1,2) arc (90:270:20mm) -- (2,-2)}
\fill[fill=black!10,draw=black,very thick] \mypath;
\node (A) at (1.5,1.5) {$U_{n}(K)$};
\draw[->,dotted,very thick] (-1.5,1) -- (1,1);
\draw[dotted,very thick] (1,1) -- (2,1) node[anchor=north east]{$++$};
\draw[->,dotted,very thick] (-1.5,-1) -- (1,-1);
\draw[dotted,very thick] (1,-1) -- (2,-1) node[anchor=south east]{$-+$};
\draw[->,blue,very thick] (-4.5,1)node [above=0.1pt] {\color{blue}$I$} -- (-2.9,1);
\draw[blue,very thick] (-3,1) -- (-1.5,1);
\draw[->,green!50!black, very thick] (-1.5,-1) -- (-1.5,-0.5);
\draw[green!50!black, very thick] (-1.5,-0.5) -- (-1.5,1) node [anchor=north east] {\color{green!50!black}$III$};
\draw[->,teal!50!white,very thick] (-4.5,-1)node [below=0.1pt] {\color{teal!50!white}$II$} -- (-2.9,-1);
\draw[teal!50!white,very thick] (-3,-1) -- (-1.5,-1) ;
\draw[->,dashed,very thick] (-4,1) -- (-4,0.5) node [left] {$\gamma_{2}$};
\draw[dashed,very thick] (-4,0.5) -- (-4,-1);
\draw(-4 cm,1pt)--(-4cm,-1pt)node[anchor=north east] {$-r$};
\draw(-1.5cm,1pt)--(-1.5cm,-1pt)node[anchor=north east] {$a-\delta'$};
\draw(1pt,-1cm)--(-1pt,-1cm)node[anchor=north east] {$-\delta$};
\draw(1pt,1cm)--(-1pt,1cm)node[anchor=south east] {$\delta$};
\draw(-1 cm,1pt)--(-1cm,-1pt)node[anchor=north] {$a$};
\draw(1pt,2 cm)--(-1pt,2 cm)node[anchor=south east] {$\nicefrac{1}{n}$};
\draw(1pt,-2 cm)--(-1pt,-2 cm)node[anchor=north east] {$-\nicefrac{1}{n}$};
\draw[->] (-4.5,0) -- (2,0) node[right] {$\re(z)$} coordinate (x axis);
\draw[red,thick] (-1,0)--(2,0);
\draw[->] (0,-2.5) -- (0,2.5) node[above] {$\im(z)$} coordinate (y axis);
\end{tikzpicture}
\end{minipage}
\captionsetup{type=figure}
\caption{Paths of integration in $z$-plane for $K=[a,\infty]$, $a\in\R$ (cf.\ \cite[p.\ 487]{Kawai})}
\label{fig:paths_z_plane}
\end{center}
Next, we show that: 
\begin{enumerate}
 \item [(i)] $\operatorname{lim}_{\eta\to\infty}J\left(-\eta;I\right)
 =\operatorname{lim}_{\eta\to\infty}J\left(-\eta;II\right)=0$
 \item [(ii)] $J\left(-\eta;I\right)+J\left(\downarrow;I\right)=J_{+-}$ and 
 $J\left(-\eta;II\right)+J\left( \uparrow ;I\right)=J_{-+}$ for $\eta>\delta$
\end{enumerate}
(i) We choose $k\in\N$ such that 
\begin{equation}\label{P.0.0.6}
 \frac{1}{k}<\delta'.
\end{equation}
Further, we observe that for $b_{0}\geq 0$
\begin{equation}\label{P.0.0.7}
 -b_{0}|x|\leq b_{0}x,\;x\in\R.
\end{equation}
Then we get the following estimates for $\eta>\delta>\tfrac{1}{k}$
\begin{align*}
 p_{\alpha}(J(-\eta; I))
&\;\;\;\mathclap{\underset{\eqref{P.0.0.2},\eqref{P.0.0.3}}{\leq}}\quad 
 \int_{0}^{\infty}{|f|_{k,\alpha,K}e^{\frac{1}{k}|t-i\eta|-a\eta}
 \int_{-\infty}^{a-\delta'}{\|\varphi\|_{n,\overline{\R}}e^{-\frac{1}{n}|s|}e^{s\eta-t\delta}\d s}\,\d t}\\
&\;\;\;\mathclap{\underset{\eqref{P.0.0.7}}{\leq}} \;\; e^{(\frac{1}{k}-a)\eta}
 |f|_{k,\alpha,K}\|\varphi\|_{n,\overline{\R}}
 \int_{0}^{\infty}{e^{(\frac{1}{k}-\delta)t}\d t}
 \int_{-\infty}^{a-\delta'}{e^{(\frac{1}{n}+\eta) s}\d s}\\
&\;\;\;\mathclap{\underset{\eqref{P.0.0.3}}{=}} \;\;e^{\frac{1}{n}(a-\delta')}
 |f|_{k,\alpha,K}\|\varphi\|_{n,\overline{\R}}\frac{1}{(\delta-\frac{1}{k})(\frac{1}{n}+\eta)}
 e^{(\frac{1}{k}-\delta')\eta}\\
&\;\;\;\mathclap{\underset{\eqref{P.0.0.6}}{\to}}\quad 0,\;\eta\to \infty,
\end{align*}
and
\begin{align*}
 p_{\alpha}(J(-\eta; II))
&\;\;\mathclap{\underset{\eqref{P.0.0.2},\eqref{P.0.0.3}}{\leq}}\quad\int_{-\infty}^{0}{|f|_{k,\alpha,K}
 e^{\frac{1}{k}|t-i\eta|-a\eta}\int_{-\infty}^{a-\delta'}{\|\varphi\|_{n,\overline{\R}}
 e^{-\frac{1}{n}|s|}e^{s\eta+t\delta}\d s}\,\d t}\\
&\;\;\mathclap{\underset{\eqref{P.0.0.7}}{\leq}}\;\;e^{(\frac{1}{k}-a)\eta}|f|_{k,\alpha,K}\|\varphi\|_{n,\overline{\R}}
 \int_{-\infty}^{0}{e^{(\delta-\frac{1}{k})t}\d t}
 \int_{-\infty}^{a-\delta'}{e^{(\frac{1}{n}+\eta) s}\d s}\\
&\;\;\mathclap{\underset{\eqref{P.0.0.3}}{=}}\;\; e^{\frac{1}{n}(a-\delta')}
 |f|_{k,\alpha,K}\|\varphi\|_{n,\overline{\R}}\frac{1}{(\delta-\frac{1}{k})(\frac{1}{n}+\eta)}
 e^{(\frac{1}{k}-\delta')\eta}\\
&\;\;\mathclap{\underset{\eqref{P.0.0.6}}{\to}}\quad 0,\;\eta\to \infty,
\end{align*}
for every $\alpha\in\mathfrak{A}$, implying statement (i).
 
(ii) The function defined by $G_{0}(\zeta):=f(\zeta)J_{I}(\zeta)$ is holomorphic on the lower halfplane and 
with the path $\gamma_{0}$ from \prettyref{fig:paths_zeta_plane} we have
\begin{align*}
 p_{\alpha}\bigl(\int_{\gamma_{0}}{G_{0}(\zeta)\d\zeta}\bigr)
&=p_{\alpha}\bigl(-\int_{\delta}^{\eta}{G_{0}(r-it)\cdot(-i)\d t}\bigr)\\
&\;\;\mathclap{\underset{\eqref{P.0.0.2},\eqref{P.0.0.3}}{\leq}}\quad e^{\frac{1}{k}r}
 |f|_{k,\alpha,K}\|\varphi\|_{n,\overline{\R}}
 \int_{\delta}^{\eta}{e^{(\frac{1}{k}-a)t}
 \int_{-\infty}^{a-\delta'}{e^{-\frac{1}{n}|s|}e^{st-\delta r}\d s}\,\d t}\\
&\;\;\mathclap{\underset{\eqref{P.0.0.7}}{\leq}}\;\;e^{(\frac{1}{k}-\delta) r}
 |f|_{k,\alpha,K}\|\varphi\|_{n,\overline{\R}}
 \int_{\delta}^{\eta}{e^{(\frac{1}{k}-a)t}\frac{1}{\frac{1}{n}+t}e^{(\frac{1}{n}+t)(a-\delta')}\d t}\\
&\leq (\eta-\delta)\frac{1}{\frac{1}{n}+\delta}e^{\frac{1}{k}\eta+\frac{1}{n}a}
 |f|_{k,\alpha,K}\|\varphi\|_{n,\overline{\R}}e^{(\frac{1}{k}-\delta)r}\\
&\;\;\mathclap{\underset{\eqref{P.0.0.3}}{\to}}\quad 0,\;r\to \infty,
\end{align*}
for all $\alpha\in\mathfrak{A}$. 
The function defined by $G_{1}(\zeta):=f(\zeta)J_{II}(\zeta)$ is also holomorphic on the lower halfplane 
and like above we have 
\begin{align*}
 p_{\alpha}\bigl(\int_{\gamma_{1}}{G_{1}(\zeta)\d\zeta}\bigr)
&=p_{\alpha}\bigl(\int_{\delta}^{\eta}{G_{1}(-r-it)\cdot(-i)\d t}\bigr)\\
&\leq (\eta-\delta)\frac{1}{\frac{1}{n}+\delta}e^{\frac{1}{k}\eta+\frac{1}{n}a}
 |f|_{k,\alpha,K}\|\varphi\|_{n,\overline{\R}}e^{(\frac{1}{k}-\delta )r}\\
&\;\;\mathclap{\underset{\eqref{P.0.0.3}}{\to}}\quad 0,\;r\to \infty.
\end{align*}
Hence Cauchy's integral theorem proves claim (ii).

Further, we have
\begin{flalign*}
&\hspace{0.35cm} J_{+-}+J_{--}-(J(-\eta;I)+J(-\eta;I))\\
&\;\;\mathclap{\underset{(ii)}{=}}\;\;J(\downarrow;I)+J(\uparrow;I)
 =\int_{-i\delta}^{-i\eta}{G_{0}(\zeta)\d\zeta}+\int_{-i\eta}^{-i\delta}{G_{1}(\zeta)\d\zeta}
 =\int_{-i\delta}^{-i\eta}{G_{0}(\zeta)-G_{1}(\zeta)\d\zeta}\\
& =\int_{-i\delta}^{-i\eta}{f(\zeta)(J_{I}(\zeta)-J_{II}(\zeta))\d\zeta}.
\end{flalign*}
Now, we claim that
\begin{equation}\label{P.0.0.7a}
 J_{I}(\zeta)-J_{II}(\zeta)
=\int_{a-\delta'-i\delta}^{a-\delta'+i\delta}{\varphi(z)e^{iz\zeta}\d z}=:J_{III}(\zeta),\;\zeta\in[-i\eta,-i\delta].
\end{equation}
Since for $-r<a-\delta'$ and the path $\gamma_{2}$ from \prettyref{fig:paths_z_plane} it holds that
\begin{align*}
 \bigl|\int_{\gamma_{2}}{\varphi(z)e^{iz\zeta}\d z}\bigr|
&=\bigl|-\int_{-\delta}^{\delta}{\varphi(-r+it)e^{i(-r+it)\zeta}i\d z}\bigr|\\
&\;\;\mathclap{\underset{\eqref{P.0.0.3}}{\leq}} \;\; \int_{-\delta}^{\delta}{\|\varphi\|_{n,\overline{\R}}
 e^{-\frac{1}{n}|-r|}e^{r\im(\zeta)-t\re(\zeta)}\d t}\\
&\;\;\mathclap{\underset{\re\left(\zeta\right)=0}{=}} \quad  e^{(\im(\zeta)-\frac{1}{n})r}
 \|\varphi\|_{n,\overline{\R}}\int_{-\delta}^{\delta}{1\d t}\\
&\;\;\mathclap{\underset{\im\left(\zeta\right)<0}{=}} \quad 2\delta\|\varphi\|_{n,\overline{\R}}
 e^{-(|\im(\zeta)|+\frac{1}{n})r}\\
&\to 0,\;r\to\infty,
\end{align*}
the claim follows again by Cauchy's theorem. Thus we get
\[
 J(\downarrow;I)+J(\uparrow;II)
 =\int_{-i\delta}^{-i\eta}{f(\zeta)J_{III}(\zeta)\d\zeta}
 =:J(-\eta;III),
\]
yielding
\[
 J_{+-}+J_{--}-(J(-\eta;I)+J(-\eta;II))\underset{(ii)}{=}J(\downarrow;I)+J(\uparrow;II)=J(-\eta;III)
\]
and
\begin{equation}\label{P.0.0.8}
 J_{+-}+J_{--}\underset{(i)}{=}\lim_{\eta\to\infty}(J_{+-}+J_{--}-(J(-\eta;I)+J(-\eta;II)))
=\lim_{\eta\to\infty}J(-\eta;III).
\end{equation}
Now, we estimate the right-hand side of this equation and obtain for all $\alpha\in\mathfrak{A}$
\begin{align}\label{P.0.0.9}
 p_{\alpha}(J(-\eta;III))
&=p_{\alpha}\bigl(\int_{\delta}^{\eta}{f(-it)\cdot(-i)\int_{-\delta}^{\delta}{\varphi(a-\delta'+is)
 e^{i(a-\delta'+is)(-it)}i\d s}\,\d t}\bigr)\notag\\
&\;\;\mathclap{\underset{\eqref{P.0.0.2},\eqref{P.0.0.3}}{\leq}}\quad \; \int_{\delta}^{\eta}{
 |f|_{k,\alpha,K}e^{\frac{1}{k}t-at}
 \int_{-\delta}^{\delta}{\|\varphi\|_{n,K}e^{-\frac{1}{n}|a-\delta'|}e^{(a-\delta')t}\d s}\,\d t}\notag\\
&\leq 2\delta|f|_{k,\alpha,K}\|\varphi\|_{n,K} \int_{\delta}^{\eta}{e^{(\frac{1}{k}-\delta') t}\d t}\notag\\
&= 2\delta|f|_{k,\alpha,K}\|\varphi\|_{n,K}\frac{1}{\frac{1}{k}-\delta'}
 (e^{(\frac{1}{k}-\delta')\eta}-e^{(\frac{1}{k}-\delta')\delta})\notag\\
&\;\;\mathclap{\underset{\eqref{P.0.0.6}}{\to}}\quad\frac{2\delta}{\delta'-\frac{1}{k}}
 e^{(\frac{1}{k}-\delta')\delta}|f|_{k,\alpha,K}\|\varphi\|_{n,K},\;\eta\to \infty.
\end{align}
Therefore we get for all $\alpha\in\mathfrak{A}$
\begin{align*}
 2\pi p_{\alpha}(\nu(\varphi))
&=p_{\alpha}(J_{++}+J_{+-}+J_{-+}+J_{--})
 \leq p_{\alpha}(J_{++})+p_{\alpha}(J_{-+})+p_{\alpha}(J_{+-}+J_{--})\\
&\;\;\mathclap{\underset{\eqref{P.0.0.8}}{=}}\;\;\; p_{\alpha}(J_{++})+p_{\alpha}(J_{-+})
 +p_{\alpha}(\lim_{\eta\to\infty}J(-\eta;III))\\
&\;\;\mathclap{\underset{\eqref{P.0.0.4},\eqref{P.0.0.5},\eqref{P.0.0.9}}{\leq}}\qquad\;
 \frac{2}{\delta-\frac{1}{k}}\Bigl(\frac{1}{\frac{1}{n}+\delta}\bigl(e^{(\frac{1}{n}+\delta)a_{0}}
 -e^{(\frac{1}{n}+\delta)(a-\delta')}\bigr)
 +\frac{1}{\frac{1}{n}-\delta}e^{(\delta-\frac{1}{n})a_{0}}\Bigr)e^{(\frac{1}{k}-a)\delta}\\
&\phantom{\leq}\cdot |f|_{k,\alpha,K}\|\varphi\|_{n,K} 
 +\frac{2\delta}{\delta'-\frac{1}{k}}e^{(\frac{1}{k}-\delta')\delta}|f|_{k,\alpha,K}\|\varphi\|_{n,K}.
\end{align*}
Now, we consider the case $a=\infty$. The proof is quite similar if we replace \prettyref{fig:paths_z_plane} by
\begin{center}
\begin{minipage}{\linewidth}
\centering
\begin{tikzpicture}
\def\mypath{
 (3.5,2) -- (1,2) -- (1,-2) -- (3.5,-2)}
\fill[fill=black!10,draw=black,very thick] \mypath;
\node (A) at (3,1.5) {$U_{n}(K)$};
\draw[->,dotted,very thick] (2,1) -- (2.75,1);
\draw[dotted,very thick] (2.75,1) -- (3.5,1) node[anchor=north east]{$++$};
\draw[->,dotted,very thick] (2,-1) -- (2.75,-1);
\draw[dotted,very thick] (2.75,-1) -- (3.5,-1) node[anchor=south east]{$-+$};
\draw[->,blue,very thick] (-4.5,1)node [above=0.1pt] {\color{blue}$I$} -- (-1,1);
\draw[blue,very thick] (-1.1,1) -- (2,1);
\draw[->,green!50!black, very thick] (2,-1) -- (2,-0.5);
\draw[green!50!black, very thick] (2,-0.5) -- (2,1) node [anchor=north east] {\color{green!50!black}$III$};
\draw[->,teal!50!white,very thick] (-4.5,-1)node [below=0.1pt] {\color{teal!50!white}$II$} -- (-1,-1);
\draw[teal!50!white,very thick] (-1.1,-1) -- (2,-1) ;
\draw[->,dashed,very thick] (-4,1) -- (-4,0.5) node [left] {$\gamma_{2}$};
\draw[dashed,very thick] (-4,0.5) -- (-4,-1);
\draw(-4 cm,1pt)--(-4cm,-1pt)node[anchor=north east] {$-r$};
\draw(1cm,1pt)--(1cm,-1pt)node[anchor=north west] {$n$};
\draw(2cm,1pt)--(2cm,-1pt)node[anchor=north west] {$k-\delta'$};
\draw(1pt,-1cm)--(-1pt,-1cm)node[anchor= north east] {$-\delta$};
\draw(1pt,1cm)--(-1pt,1cm)node[anchor=south east] {$\delta$};
\draw(1pt,2 cm)--(-1pt,2 cm)node[anchor=south east] {$\nicefrac{1}{n}$};
\draw(1pt,-2 cm)--(-1pt,-2 cm)node[anchor=north east] {$-\nicefrac{1}{n}$};
\draw[->] (-4.5,0) -- (3.5,0) node[right] {$\re(z)$} coordinate (x axis);
\draw[->] (0,-2.5) -- (0,2.5) node[above] {$\im(z)$} coordinate (y axis);
\end{tikzpicture}
\end{minipage}
\captionsetup{type=figure}
\caption{Paths of integration in $z$-plane for $K=\{\infty\}$}
\end{center}
and thus replace in the definition of the $J$-integrals $a-\delta'$ by $k-\delta'$. 
We obtain for the choice $\tfrac{1}{k}<\delta,\delta'<\tfrac{1}{n}$ and $\alpha\in\mathfrak{A}$ the estimate
\[
 p_{\alpha}(J_{++})
\leq \frac{1}{(\delta-\frac{1}{k})(\frac{1}{n}-\delta)}e^{(\delta-\frac{1}{n})(k-\delta')}e^{(\frac{1}{k}-k)\delta}
 |f|_{k,\alpha,K}\|\varphi\|_{n,K}
\]
and the same for $p_{\alpha}(J_{-+})$. 
Furthermore, for $\eta>\delta$ and $\alpha\in\mathfrak{A}$
\begin{align*}
  p_{\alpha}(J(-\eta;I))
&\leq e^{\frac{1}{n}(k-\delta')}|f|_{k,\alpha,K}\|\varphi\|_{n,\overline{\R}}
 \frac{1}{(\delta-\frac{1}{k})(\frac{1}{n}+\eta)}e^{(\frac{1}{k}-\delta')\eta}\\
&\to 0,\;\eta\to \infty,
\end{align*}
and the same for $ p_{\alpha}(J(-\eta; II))$. Moreover, we have for $\alpha\in\mathfrak{A}$
\begin{align*}
 p_{\alpha}\bigl(\int_{\gamma_{0}}{G_{0}(\zeta)\d\zeta}\bigr)
&\leq (\eta-\delta)\frac{1}{\frac{1}{n}+\delta}e^{\frac{1}{k}\eta+\frac{1}{n}k}
 |f|_{k,\alpha,K}\|\varphi\|_{n,\overline{\R}}e^{(\frac{1}{k}-\delta )r}\\
&\to 0,\;r\to \infty,
\end{align*}
and the same for $p_{\alpha}(\int_{\gamma_{1}}{G_{1}(\zeta)\d\zeta})$. 
The proof of $J_{I}(\zeta)-J_{II}(\zeta)=J_{III}(\zeta)$ for all $\zeta\in[-i\eta,-i\delta]$ needs no adjustment 
and we still have 
\[
 \lim_{\eta\to\infty} p_{\alpha}(J(-\eta;III))
\leq \frac{2\delta}{\delta'-\frac{1}{k}}e^{(\frac{1}{k}-\delta')\delta}|f|_{k,\alpha,K}\|\varphi\|_{n,K}.
\]
Altogether we get for $\alpha\in\mathfrak{A}$
\begin{align*}
 2\pi p_{\alpha}(\nu(\varphi))
&\leq\frac{2}{(\delta-\frac{1}{k})(\frac{1}{n}-\delta)}e^{(\delta-\frac{1}{n})(k-\delta')}e^{(\frac{1}{k}-k)\delta}
 |f|_{k,\alpha,K}\|\varphi\|_{n,K}\\
&\phantom{\leq}+\frac{2\delta}{\delta'-\frac{1}{k}}e^{(\frac{1}{k}-\delta')\delta}|f|_{k,\alpha,K}\|\varphi\|_{n,K}
\end{align*}
and thus \eqref{P.0.0.1.1} in both cases.

Now, we consider $\mathcal{F}(\mathscr{H}^{-1}_{K}(\widetilde{\nu}))$ as an element of 
$\mathcal{O}^{exp}(\overline{\C}\setminus\overline{\R},E)$ by \prettyref{prop:upper_lower_extension}.
We get for every $\varphi\in\mathcal{P}_{\ast}(\overline{\R})$
\begin{align*}
 -\mathscr{H}_{\overline{\R}}([\mathcal{F}(\mathscr{H}^{-1}_{K}(\widetilde{\nu}))])(\varphi)
&\underset{\eqref{P.0.1.1}}{=}
 \langle (\mathcal{F}_{\star}\circ \mathscr{H}_{K} \circ \mathscr{H}^{-1}_{K})(\widetilde{\nu}),\varphi\rangle
 =\langle\mathcal{F}_{\star}(\widetilde{\nu}),\varphi\rangle
 =\langle\widetilde{\nu},\mathcal{F}\varphi\rangle\\
&\:=\langle\nu,\mathcal{F}\varphi\rangle
 =\langle\mathcal{F}^{-1}_{\star}(\mathscr{H}_{\overline{\R}}([f])),\mathcal{F}\varphi\rangle
 =\mathscr{H}_{\overline{\R}}([f])(\varphi),
\end{align*}
implying $\mathcal{F}(-\mathscr{H}^{-1}_{K}(\widetilde{\nu}))-f\in\mathcal{O}^{exp}(\overline{\C},E)$ 
because $\mathscr{H}_{\overline{\R}}$ is an isomorphism. 
Since $\mathcal{F}(-\mathscr{H}^{-1}_{K}(\widetilde{\nu}))-f$ is in particular an 
entire function and 
\begin{equation}\label{eq:surjectivity_unbounded}
\mathcal{F}(-\mathscr{H}^{-1}_{K}(\widetilde{\nu}))-f=0
\end{equation} 
on the upper halfplane, we obtain $f=\mathcal{F}(-\mathscr{H}^{-1}_{K}(\widetilde{\nu}))$ by the identity theorem. 
Thus the Fourier transform $\mathcal{F}$ is surjective.

Let $\beta\in\mathfrak{A}$ and $q\in\N$. Due to the continuity of $\mathscr{H}^{-1}_{K}$ there are 
a bounded set $M\subset\mathcal{P}_{\ast}(K)$, $\alpha\in\mathfrak{A}$ and $C_{0}>0$ such that
\[
 \vertiii{\mathcal{F}^{-1}(f)}_{q,\beta,K}^{\wedge}
=\vertiii{\mathscr{H}^{-1}_{K}(\widetilde{\nu})}_{q,\beta,K}^{\wedge}
\leq C_{0}\sup_{\varphi\in M}p_{\alpha}(\widetilde{\nu}(\varphi)).
\]
Since $\mathcal{P}_{\ast}(K)$ is a DFS-space, there are $n\in\N$ and $\lambda>0$ such that $M\subset\lambda B_{n}$ 
where $B_{n}$ is the closed unit ball of $\mathcal{O}_{n}(U_{n}(K))$ 
by \cite[Proposition 25.19 (2), p.\ 303]{meisevogt1997}. Let $(\varphi_{m})_{m\in\mathcal{N}}$ be a net in 
$\mathcal{P}_{\ast}(\overline{\R})$ converging to $\varphi\in M$. Then there are $k\in\N$ and $C>0$ such that
\[
 p_{\alpha}(\widetilde{\nu}(\varphi))
=\lim_{m\in\mathcal{N}}p_{\alpha}(\nu(\varphi_{m}))
\underset{\eqref{P.0.0.1.1}}{\leq}C|f|_{k,\alpha,K}\lim_{m\in\mathcal{N}}\|\varphi_{m}\|_{n,K}
=C|f|_{k,\alpha,K}\|\varphi\|_{n,K},
\]
implying
\[
 \vertiii{\mathcal{F}^{-1}(f)}_{q,\beta,K}^{\wedge}
\leq C_{0}C|f|_{k,\alpha,K}\sup_{\varphi\in M}\|\varphi\|_{n,K}
\leq C_{0}C\lambda |f|_{k,\alpha,K}.
\]
Thus $\mathcal{F}^{-1}$ is continuous and by \prettyref{lem:fourier_unbounded_interval} $\mathcal{F}$ as well. 
The proof for the remaining case $K=[-\infty,a]$ is analogous.
\end{proof}

\prettyref{thm:fourier_unbounded_interval} improves \cite[Theorem 3.3.1, 3.3.2, p.\ 485]{Kawai} (in one variable) 
since the latter theorem only covers the case $a=0$ and $E=\C$ and, more importantly, only shows that 
$\mathcal{F}$ is a linear isomorphism. 
We kept track of the estimates to show that $\mathcal{F}$ is actually a topological isomorphism. 

In \prettyref{sect:Asymptotic_Fourier_transform} we treat the asymptotic Fourier transform 
on the space of hyperfunctions $\mathcal{B}(\R,E)$ which also needs a characterisation of the Fourier transform 
of Fourier hyperfunctions with suppport in the union of two disjoint extended half-lines.

\begin{defn} 
 Let $E$ be a $\C$-lcHs, $-\infty\leq a<b\leq\infty$ and set $f+g:=f$ on $\h$ resp.\ $f+g:=g$ on $-\h$ 
 for $f\in\mathcal{FO}_{[-\infty,a]}(E)$ and $g\in\mathcal{FO}_{[b,\infty]}(E)$. We define the space 
 \[
   \mathcal{FO}_{[-\infty,a]\cup [b,\infty]}(E)
 :=\mathcal{FO}_{[-\infty,a]}(E)\oplus\mathcal{FO}_{[b,\infty]}(E).
 \]
\end{defn}

\begin{thm}\label{thm:fourier_union_unbounded_intervals} 
Let $E$ be a sequentially complete $\C$-lcHs and $-\infty\leq a<b\leq\infty$. 
Then the Fourier transform 
\begin{gather*}
 \mathcal{F}\colon bv_{[-\infty,a]\cup [b,\infty]}(E) \to \mathcal{FO}_{[-\infty,a]\cup [b,\infty]}(E),\\
 \mathcal{F}([F])(\zeta):=\mathcal{F}_{[-\infty,a]}([F])(\zeta)+\mathcal{F}_{[b,\infty]}([F])(\zeta),
 \;\zeta\in\C\setminus\R,
\end{gather*}
with $0<c<\tfrac{b-a}{2}$ is a topological isomorphism and
\begin{equation}\label{eq:fourier_union_unbounded_intervals}
 \mathscr{H}_{\overline{\R}}([\mathcal{F}([F])])
=\mathcal{F}_{\star}(\mathscr{H}_{[-\infty,a]}([F]))-\mathcal{F}_{\star}(\mathscr{H}_{[b,\infty]}([F])).
\end{equation}
\end{thm}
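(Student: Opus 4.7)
The strategy is to reduce the claim to the single-interval case (\prettyref{thm:fourier_unbounded_interval}) via a direct-sum decomposition of the source. Since $[-\infty,a]$ and $[b,\infty]$ are disjoint compact subsets of $\overline{\R}$, for every $n\in\N$ with $\tfrac{1}{n}<\tfrac{b-a}{2}$ the neighbourhood $U_{n}([-\infty,a]\cup[b,\infty])$ is the disjoint union $U_{n}([-\infty,a])\cup U_{n}([b,\infty])$, so $\mathcal{P}_{\ast}([-\infty,a]\cup[b,\infty])\cong\mathcal{P}_{\ast}([-\infty,a])\oplus\mathcal{P}_{\ast}([b,\infty])$ topologically. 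Dualising via \prettyref{thm:duality} yields
\[
bv_{[-\infty,a]\cup[b,\infty]}(E)\cong bv_{[-\infty,a]}(E)\oplus bv_{[b,\infty]}(E),
\]
and correspondingly every representative $F\in\mathcal{O}^{exp}(\overline{\C}\setminus([-\infty,a]\cup[b,\infty]),E)$ admits a decomposition $F=F_{1}+F_{2}+H$ with $F_{1}\in\mathcal{O}^{exp}(\overline{\C}\setminus[-\infty,a],E)$, $F_{2}\in\mathcal{O}^{exp}(\overline{\C}\setminus[b,\infty],E)$ and $H\in\mathcal{O}^{exp}(\overline{\C},E)$.

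Next I would show that the two summands of the defining formula reduce to the single-interval transforms applied to the corresponding components, i.e.\ $\mathcal{F}_{[-\infty,a]}([F])=\mathcal{F}_{[-\infty,a]}([F_{1}])$ and $\mathcal{F}_{[b,\infty]}([F])=\mathcal{F}_{[b,\infty]}([F_{2}])$. For the first identity this amounts to $\int_{\gamma_{[-\infty,a]}}(F_{2}(z)+H(z))e^{-iz\zeta}\d z=0$ for $\zeta\in\h$: since $F_{2}+H$ is holomorphic in a neighbourhood of $[-\infty,a]$ (because $c<\tfrac{b-a}{2}$ keeps $\gamma_{[-\infty,a]}$ away from $[b,\infty]$), Cauchy's theorem permits closing the path by a vertical segment at $\re(z)=-R$, and the slow-increase bound of type $e^{|z|/n}$ combined with the decay factor $e^{\re(z)\im(\zeta)}$ (valid for $\im(\zeta)>0$ once $\tfrac{1}{n}<\im(\zeta)$) forces the closing contribution to vanish as $R\to\infty$. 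The analogous closing at $\re(z)=+R$ handles the second identity. Consequently, under the above identification, $\mathcal{F}$ coincides with the direct sum $\mathcal{F}_{[-\infty,a]}\oplus\mathcal{F}_{[b,\infty]}$ of two topological isomorphisms from \prettyref{thm:fourier_unbounded_interval}, hence is itself a topological isomorphism onto $\mathcal{FO}_{[-\infty,a]}(E)\oplus\mathcal{FO}_{[b,\infty]}(E)=\mathcal{FO}_{[-\infty,a]\cup[b,\infty]}(E)$.

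To obtain \eqref{eq:fourier_union_unbounded_intervals}, I would apply the two cases of \eqref{P.0.1.1} to $[F_{1}]\in bv_{[-\infty,a]}(E)$ and $[F_{2}]\in bv_{[b,\infty]}(E)$ and sum them. The decomposition of the first paragraph gives $\mathscr{H}_{[-\infty,a]}([F])=\mathscr{H}_{[-\infty,a]}([F_{1}])$ and $\mathscr{H}_{[b,\infty]}([F])=\mathscr{H}_{[b,\infty]}([F_{2}])$, since test functions in $\mathcal{P}_{\ast}([-\infty,a])$ (resp.\ $\mathcal{P}_{\ast}([b,\infty])$) are insensitive to the other direct summand. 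Linearity of $\mathscr{H}_{\overline{\R}}$ then yields
\[
\mathscr{H}_{\overline{\R}}([\mathcal{F}([F])])
=\mathcal{F}_{\star}(\mathscr{H}_{[-\infty,a]}([F]))-\mathcal{F}_{\star}(\mathscr{H}_{[b,\infty]}([F])),
\]
which is the stated identity.

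The main technical obstacle is the Cauchy-deformation step identifying $\mathcal{F}_{K_{i}}([F])$ with $\mathcal{F}_{K_{i}}([F_{i}])$: this is where the condition $c<\tfrac{b-a}{2}$ is essential, as it guarantees that $\gamma_{K_{i}}$ lies in the common holomorphy domain of $F$ and of the \emph{wrong} summand $F_{j}+H$, and where the slow-increase bounds have to be balanced against $\im(\zeta)$ by choosing a sufficiently large seminorm index $n$; beyond this point the argument is a formal consequence of \prettyref{thm:fourier_unbounded_interval}, \prettyref{lem:fourier_unbounded_interval} and the duality of \prettyref{thm:duality}.
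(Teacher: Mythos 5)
Your proposal is correct, and it rests on the same three pillars as the paper's proof --- \prettyref{thm:fourier_unbounded_interval}, Cauchy's theorem to annihilate the cross terms, and the duality of \prettyref{thm:duality} --- but you organize them differently. You front-load a direct-sum decomposition $bv_{[-\infty,a]\cup[b,\infty]}(E)\cong bv_{[-\infty,a]}(E)\oplus bv_{[b,\infty]}(E)$, obtained by dualising $\mathcal{P}_{\ast}([-\infty,a]\cup[b,\infty])\cong\mathcal{P}_{\ast}([-\infty,a])\oplus\mathcal{P}_{\ast}([b,\infty])$, and then exhibit $\mathcal{F}$ as the direct sum $\mathcal{F}_{[-\infty,a]}\oplus\mathcal{F}_{[b,\infty]}$, from which bijectivity, continuity of the inverse and \eqref{eq:fourier_union_unbounded_intervals} all follow at once. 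The paper instead keeps the representative $F$ undecomposed for the forward direction: it re-runs the estimates of \prettyref{lem:fourier_unbounded_interval} with $c$ shrunk so that each path avoids the other half-line, observes that the compatibility equations \eqref{eq:compat_fourier_union} hold for such $F$ without any splitting (which yields \eqref{eq:fourier_union_unbounded_intervals} immediately), and proves injectivity by summing the two functionals and invoking the density of $\mathcal{P}_{\ast}(\overline{\R})$ in $\mathcal{P}_{\ast}([-\infty,a]\cup[b,\infty])$; only in the surjectivity argument does a decomposition appear, and there it is the target, not the source, that is split. Your route buys a cleaner structural statement, but at the price of verifying that the abstract duality decomposition is compatible with a representative-level splitting $F=F_{1}+F_{2}+H$ and that $\mathscr{H}_{K_{i}}([F])=\mathscr{H}_{K_{i}}([F_{i}])$ --- routine Cauchy arguments of exactly the kind you describe for the cross terms, each requiring the same care with the truncation at $\re(z)=\mp R$ and the balance of the growth $e^{|z|/n}$ against the decay $e^{\re(z)\im(\zeta)}$ through a $\zeta$-dependent choice of $n$.
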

\begin{proof}
$\mathcal{F}$ being well-defined and its continuity follow from adjusting the proof of
\prettyref{lem:fourier_unbounded_interval}. Namely, 
if $a\neq-\infty$ and $b\neq\infty$, we have to adjust $c$ and choose $c:=\tfrac{1}{2mk}$ with $m\in\N$, 
$m\geq \tfrac{1}{(b-a)k}$, which guarantees that 
$\gamma_{[b,\infty]}$ does not intersect $[-\infty,a]$ and $\gamma_{[-\infty,a]}$ does not intersect $[b,\infty]$. 
Then we have 
\[
\mathcal{F}_{[b,\infty]}([F])(\zeta)\leq 
C_{2}\vertiii{F}_{2m(k+1),\alpha,[-\infty,a]\cup [b,\infty]}e^{\frac{1}{k}|\zeta|-b|\im(\zeta)|},\;\im(\zeta)<0,
\]
and
\[
\mathcal{F}_{[-\infty,a]}([F])(\zeta)\leq 
C_{2}\vertiii{F}_{2m(k+1),\alpha,[-\infty,a]\cup [b,\infty]}e^{\frac{1}{k}|\zeta|+a|\im(\zeta)|},\;\im(\zeta)>0,
\]
for every $\alpha\in\mathfrak{A}$ even if $F\in\mathcal{O}^{exp}(\overline{\C}\setminus([-\infty,a]\cup [b,\infty]),E)$. 
If $a=-\infty$ or $b=\infty$, no adjustment in the choice of $c$ is needed in the proof 
of \prettyref{lem:fourier_unbounded_interval}. The proof of \prettyref{lem:fourier_unbounded_interval} shows that
the equations
\begin{align}\label{eq:compat_fourier_union}
 \mathscr{H}_{\overline{\R}}([\mathcal{F}_{[-\infty,a]}([F])])
 &=\mathcal{F}_{\star}(\mathscr{H}_{[-\infty,a]}([F])),\notag \\ 
 \mathscr{H}_{\overline{\R}}([\mathcal{F}_{[b,\infty]}([F])])
 &=-\mathcal{F}_{\star}(\mathscr{H}_{[b,\infty]}([F]))
\end{align}
even hold for $F\in\mathcal{O}^{exp}(\overline{\C}\setminus ([-\infty,a]\cup [b,\infty]),E)$. 
This implies that \eqref{eq:fourier_union_unbounded_intervals} is valid as 
$\mathcal{F}=\mathcal{F}_{[-\infty,a]}+\mathcal{F}_{[b,\infty]}$.

Let us turn to injectivity. Let $F\in\mathcal{O}^{exp}(\overline{\C}\setminus ([-\infty,a]\cup [b,\infty]),E)$ such 
that $\mathcal{F}([F])=0$. This implies that $\mathcal{F}_{[-\infty,a]}([F])=0$ and $\mathcal{F}_{[b,\infty]}([F])=0$ 
by definition of $\mathcal{F}$. It follows from the equations above that 
\[
0=\mathscr{H}_{\overline{\R}}(\mathcal{F}_{[-\infty,a]}([F]))(\varphi)
 =\langle \mathcal{F}_{\star}(\mathscr{H}_{[-\infty,a]}([F])),\varphi\rangle,
 \;\varphi\in\mathcal{P}_{\ast}(\overline{\R}),
\]
and 
\[
0=-\mathscr{H}_{\overline{\R}}(\mathcal{F}_{[b,\infty]}([F]))(\varphi)
 =\langle \mathcal{F}_{\star}(\mathscr{H}_{[b,\infty]}([F])),\varphi\rangle,
 \;\varphi\in\mathcal{P}_{\ast}(\overline{\R}). 
\]
Since $\mathcal{F}_{\star}$ is an isomorphism, we obtain 
\[
\langle \mathscr{H}_{[-\infty,a]}([F]),\varphi\rangle=0
\quad\text{and}\quad
\langle\mathscr{H}_{[b,\infty]}([F]),\varphi\rangle=0,\;\varphi\in\mathcal{P}_{\ast}(\overline{\R}). 
\]
Summing both equations, we have 
\[
0=\langle\mathscr{H}_{[-\infty,a]}([F]),\varphi\rangle+\langle\mathscr{H}_{[b,\infty]}([F]),\varphi\rangle
 =\langle\mathscr{H}_{[-\infty,a]\cup [b,\infty]}([F]),\varphi\rangle,\;\varphi\in\mathcal{P}_{\ast}(\overline{\R}).
\]
The map $\mathscr{H}_{[-\infty,a]\cup [b,\infty]}\colon bv_{[-\infty,a]\cup [b,\infty]}(E)\to 
L_{b}(\mathcal{P}_{\ast}([-\infty,a]\cup [b,\infty]),E)$ is a topological isomorphism by \prettyref{thm:duality} and 
$\mathcal{P}_{\ast}(\overline{\R})$ is dense in $\mathcal{P}_{\ast}([-\infty,a]\cup [b,\infty])$ 
by \cite[Theorem 2.2.1, p.\ 474]{Kawai}. Thus we obtain from the equation above 
that $F\in\mathcal{O}^{exp}(\overline{\C},E)$, which implies the injectivity of $\mathcal{F}$.

Next, we prove that $\mathcal{F}$ is surjective with continuous inverse. 
Let $f=f_{1}+f_{2}\in\mathcal{FO}_{[-\infty,a]}(E)\oplus\mathcal{FO}_{[b,\infty]}(E)$. Due to 
\prettyref{thm:fourier_unbounded_interval} we have 
$g_{1}:=\mathcal{F}_{[-\infty,a]}^{-1}(f_{1})\in bv_{[-\infty,a]}(E)$ and 
$g_{2}:=\mathcal{F}_{[b,\infty]}^{-1}(f_{2})\in bv_{[b,\infty]}(E)$. 
Then $g_{1}+g_{2}\in bv_{[-\infty,a]\cup [b,\infty]}(E)$ and 
\begin{align*}
 \mathcal{F}(g_{1}+g_{2})(\zeta)
&=\mathcal{F}_{[-\infty,a]}(g_{1}+g_{2})(\zeta)+\mathcal{F}_{[b,\infty]}(g_{1}+g_{2})(\zeta)\\
&=\langle \mathscr{H}_{[-\infty,a]}(g_{1}),e^{-i(\cdot)\zeta}\rangle 
 +\langle \mathscr{H}_{[-\infty,a]}(g_{2}),e^{-i(\cdot)\zeta}\rangle \\
&\phantom{=}+\langle \mathscr{H}_{[b,\infty]}(g_{2}),e^{-i(\cdot)\zeta}\rangle 
 +\langle \mathscr{H}_{[b,\infty]}(g_{2}),e^{-i(\cdot)\zeta}\rangle \\
&=\langle \mathscr{H}_{[-\infty,a]}(g_{1}),e^{-i(\cdot)\zeta}\rangle 
 +\langle \mathscr{H}_{[b,\infty]}(g_{2}),e^{-i(\cdot)\zeta}\rangle \\
&=\mathcal{F}_{[-\infty,a]}(g_{1})(\zeta)+\mathcal{F}_{[b,\infty]}(g_{2})(\zeta)
 =f_{1}(\zeta)+f_{2}(\zeta)
 =f(\zeta)
\end{align*}
for $\zeta\in\C\setminus\R$. 
Hence $\mathcal{F}^{-1}(f)=\mathcal{F}_{[-\infty,a]}^{-1}(f_{1})+\mathcal{F}_{[b,\infty]}^{-1}(f_{2})$, 
yielding the surjectivity of $\mathcal{F}$ and the continuity of $\mathcal{F}^{-1}$.
\end{proof}
\section{Fourier transform of Fourier hyperfunctions with real compact support}
\label{sect:Fourier_real_comp}
In this section we take a look at the Fourier transform $\mathcal{F}_{K}$ for a real compact set $K$. We want to 
characterise the range of the Fourier transform on $bv_{K}(E)$ for an lcHs $E$. 
The supporting function $H_{K}$ is defined for general real compact sets $K$ as well but it cannot distinguish 
a compact set $K_{1}\subset\R$ from a compact set $K_{2}\subset\R$ if their convex hulls coincide. 
Therefore we restrict our considerations to real compact convex sets, i.e.\ to intervals $K=[a,b]\subset\R$. 
Here a corresponding Paley-Wiener theorem \cite[Theorem 8.1.1, p.\ 368-369]{Kan} is already known 
for $E=\C$ but we augment it by a topological aspect, i.e.\ show that $\mathcal{F}_{K}$ is a topological isomorphism, 
and consider general locally complete $\C$-lcHs $E$. This is done by adapting the 
proof of \prettyref{thm:fourier_unbounded_interval}. First, we define the range space.

\begin{defn} 
Let $E$ be a $\C$-lcHs and $-\infty<a\leq b<\infty$. We define the space
\[
\mathcal{FO}_{[a,b]}(E):=\{f\in\mathcal{O}(\C,E)\;|\;\forall\;k\in\N,\,\alpha\in\mathfrak{A}:\;
|f|_{k,\alpha,[a,b]}<\infty\}
\]
where
\[
|f|_{k,\alpha,[a,b]}:=\sup_{z\in\C}p_{\alpha}(f(z))e^{-\frac{1}{k}|z|-H_{[a,b]}(\im(z))}.
\]
\end{defn}

\begin{lem}\label{lem:fourier_bounded_interval}
Let $E$ be a locally complete $\C$-lcHs, $-\infty<a\leq b<\infty$ and $K:=[a,b]$. Then the map
\begin{gather*}
 \mathcal{F}\colon bv_{K}(E)\to \mathcal{FO}_{K}(E),\\
 \mathcal{F}([F])(\zeta):=\langle\mathscr{H}_{K}([F]),e^{-i(\cdot)\zeta}\rangle=\int_{\gamma_{K}}{F(z)e^{-iz\zeta}\d z},
\end{gather*}
where $\gamma_{K}$ is the path along the boundary of $U_{\nicefrac{1}{c}}(K)$ with clockwise orientation, 
does not depend on the choice of $c>0$, is well-defined and continuous. 
Further, the equations
\begin{gather}\label{P.0.0.10a}
\begin{split}
 \mathcal{F}_{[a,b]}([F])(\zeta)=\mathcal{F}_{[a,\infty]}([F])(\zeta),\quad \im(\zeta)<0,\\
 \mathcal{F}_{[a,b]}([F])(\zeta)=\mathcal{F}_{[-\infty,b]}([F])(\zeta),\quad \im(\zeta)>0,
\end{split}
\end{gather}
hold for $F\in\mathcal{O}^{exp}(\overline{\C}\setminus K,E)$.
\end{lem}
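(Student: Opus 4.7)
The plan is to follow the strategy of the proof of \prettyref{lem:fourier_unbounded_interval}, with the simplification that for $K=[a,b]$ bounded, the contour $\gamma_K$ — the boundary of the stadium $U_{\nicefrac{1}{c}}([a,b])=\{z:\d(z,[a,b])<c\}$ — is a compact closed curve. Independence of the choice of $c>0$ is immediate from Cauchy's integral theorem applied to the holomorphic integrand $z\mapsto F(z)e^{-iz\zeta}$ on the annular region between two such stadium contours (this region lies in $\C\setminus[a,b]$, where $F$ is holomorphic). Likewise, if $F\in\mathcal{O}^{exp}(\overline{\C},E)$, then $z\mapsto F(z)e^{-iz\zeta}$ is entire, so $\int_{\gamma_K}F(z)e^{-iz\zeta}\,\d z=0$ and $\mathcal{F}$ descends to the quotient $bv_K(E)$. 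Holomorphy in $\zeta$ on all of $\C$ is standard: the integrand is entire in $\zeta$, $\gamma_K$ is compact, and local completeness of $E$ makes the Pettis integral well-behaved under complex differentiation in $\zeta$.

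For the growth bound giving $\mathcal{F}([F])\in\mathcal{FO}_K(E)$, fix $k\in\N$ and set $c:=\tfrac{1}{2k}$. Split $\gamma_K$ into four pieces: the two horizontal segments $z=t\pm ic$, $t\in[a,b]$, and the two half-circles of radius $c$ centred at $a$ and $b$. Using the identity $\re(-iz\zeta)=\re(z)\im(\zeta)+\im(z)\re(\zeta)$ together with \prettyref{prop:supporting_function}(a), a case analysis on the sign of $\im(\zeta)$ shows
\[
\sup_{z\in\gamma_K}|e^{-iz\zeta}|\leq e^{H_{[a,b]}(\im(\zeta))+c(|\re(\zeta)|+|\im(\zeta)|)}\leq e^{H_{[a,b]}(\im(\zeta))+(1/k)|\zeta|}.
\]
On the other hand, every point of $\gamma_K$ lies in $S_{2k}([a,b])$ (since $\d(z,[a,b])=c=\tfrac{1}{2k}$ and $|\im(z)|\leq c<2k$), so $p_\alpha(F(z))\leq\vertiii{F}_{2k,\alpha,[a,b]}\,e^{(\max(|a|,|b|)+c)/(2k)}$ uniformly on $\gamma_K$. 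Multiplying by the length $L(\gamma_K)=2(b-a)+2\pi c$ gives
\[
|\mathcal{F}([F])|_{k,\alpha,[a,b]}\leq C_{k,a,b}\,\vertiii{[F]}_{2k,\alpha,[a,b]}^{\wedge},
\]
which establishes both well-definedness of $\mathcal{F}$ into $\mathcal{FO}_K(E)$ and its continuity.

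For the identities in \eqref{P.0.0.10a} I treat the case $\im(\zeta)<0$; the case $\im(\zeta)>0$ is symmetric, working with $\gamma_{[-R,b]}$ and sending $R\to\infty$. For $R>b+c$, the stadium contour $\gamma_{[a,R]}$ lies in $\C\setminus[a,b]$ and is freely homotopic there to $\gamma_{[a,b]}$ (both encircle $[a,b]$ once with clockwise orientation), so Cauchy's theorem yields $\int_{\gamma_{[a,b]}}F(z)e^{-iz\zeta}\,\d z=\int_{\gamma_{[a,R]}}F(z)e^{-iz\zeta}\,\d z$. Next, compare $\gamma_{[a,R]}$ with the truncation of $\gamma_{[a,\infty]}$ at $\re(z)=R$ closed by the vertical segment $\{R+it:|t|\leq c\}$: the two closed contours differ only in the right-hand closure (half-circle at $R$ versus the vertical segment), and together these two closures bound a half-disk at $R$ contained in $\C\setminus[a,b]$. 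Cauchy's theorem on that half-disk therefore gives $\int_{\gamma_{[a,R]}}F(z)e^{-iz\zeta}\,\d z=\int_{\gamma_{[a,\infty]}^{R}}F(z)e^{-iz\zeta}\,\d z+I_R(\zeta)$, where $\gamma_{[a,\infty]}^{R}$ denotes $\gamma_{[a,\infty]}$ truncated at $\re(z)=R$ and $I_R(\zeta)$ is the integral over the vertical segment. Using $p_\alpha(F(R+it))\leq\vertiii{F}_{n,\alpha,[a,b]}e^{R/n}$ and $|e^{-i(R+it)\zeta}|\leq e^{R\im(\zeta)+c|\re(\zeta)|}$, one estimates $p_\alpha(I_R(\zeta))\leq 2c\,C_\zeta\,e^{R(1/n+\im(\zeta))}$, which tends to $0$ as $R\to\infty$ as soon as $n$ is chosen so that $1/n<-\im(\zeta)$. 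Letting $R\to\infty$ yields $\mathcal{F}_{[a,b]}([F])(\zeta)=\mathcal{F}_{[a,\infty]}([F])(\zeta)$.

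The main obstacle is the contour surgery needed for \eqref{P.0.0.10a}: one has to align the bounded stadium $\gamma_{[a,R]}$ with the truncated infinite contour $\gamma_{[a,\infty]}^{R}$ so that their difference bounds a region inside $\C\setminus[a,b]$, and then verify that the closing integral $I_R(\zeta)$ vanishes in the limit. This is exactly where the sign condition on $\im(\zeta)$ enters, since the exponential decay of $e^{-iz\zeta}$ on the vertical segment at $\re(z)=R$ as $R\to\infty$ only holds when $\im(\zeta)<0$ (respectively when $\im(\zeta)>0$ for the closure on the left with $[-\infty,b]$). Everything else is bookkeeping with the stadium parameters $c$, $n$, and $k$.
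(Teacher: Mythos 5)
Your proposal is correct and follows essentially the same route as the paper: the same choice $c=\tfrac{1}{2k}$ combined with the supporting-function estimate for the growth bound, and Cauchy's integral theorem for the independence of $c$, the well-definedness on the quotient, and the contour deformation yielding \eqref{P.0.0.10a} (which the paper only states tersely, while you spell out the truncation at $\re(z)=R$ and the vanishing of the closing vertical integral, valid precisely when $\im(\zeta)<0$). One cosmetic remark: points of $\gamma_K$ have distance exactly $\tfrac{1}{2k}$ from $K$ and therefore lie on the boundary of $S_{2k}(K)$ rather than in it, so you should either extend the seminorm bound to the closure by continuity of $F$ on $\C\setminus K$ or use the index $2(k+1)$ as the paper does.
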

\begin{proof}
Let $F\in\mathcal{O}^{exp}(\overline{\C}\setminus K,E)$, $k\in\N$, $\alpha\in\mathfrak{A}$ and $\zeta\in\C$. 
We choose $c:=\tfrac{1}{2k}$ and get $\tfrac{1}{2(k+1)}<c<2(k+1)$ as well as
\begin{align}\label{eq:support_func_estimate}
 \sup_{z\in\gamma_{K}}e^{\re(z)\im(\zeta)}
&=e^{\sup_{z\in\gamma_{K}}{\re(z)\im(\zeta)}}
 =e^{H_{[a-c,b+c]}(\im(\zeta))}=e^{H_{[a,b]}(\im(\zeta))+H_{[-c,c]}(\im(\zeta))}\notag\\
&=e^{H_{[a,b]}(\im(\zeta))+\max(\pm c\im(\zeta))}
 \leq e^{c|\im(\zeta)|}e^{H_{[a,b]}(\im(\zeta))}
\end{align}
by \prettyref{prop:supporting_function}.
\begin{center}
\begin{minipage}{\linewidth}
\centering
\begin{tikzpicture}
\draw[blue,very thick]  (-1,1.5) arc (90:270:15mm);
\draw[blue,very thick]  (2,-1.5) arc (270:450:15mm);
\draw[blue,very thick] (0.8,1.5) -- (2,1.5) node [above=0.1pt] {\color{blue}$\gamma_{K}$};
\draw[->,blue,very thick] (-1,1.5) -- (1,1.5);
\draw[blue,very thick] (1.2,-1.5) -- (-1,-1.5);
\draw[->,blue,very thick] (2,-1.5) -- (1,-1.5);
\draw(-1 cm,1pt)--(-1cm,-1pt)node[anchor=north] {$a$};
\draw(2 cm,1pt)--(2cm,-1pt)node[anchor=north] {$b$};
\draw(-2.5 cm,1pt)--(-2.5cm,-1pt)node[anchor=north east] {$a-c$};
\draw(3.5 cm,1pt)--(3.5cm,-1pt)node[anchor=north west] {$b+c$};
\draw(1pt,1.5 cm)--(-1pt,1.5 cm)node[anchor=south east] {$c$};
\draw(1pt,-1.5 cm)--(-1pt,-1.5 cm)node[anchor=north east] {$-c$};
\draw[->] (-4,0) -- (4,0) node[right] {$\re(z)$} coordinate (x axis);
\draw[red,thick] (-1,0)--(2,0);
\draw[->] (0,-2) -- (0,2) node[above] {$\im(z)$} coordinate (y axis);
\end{tikzpicture}
\end{minipage}
\captionsetup{type=figure}
\caption{Path $\gamma_{K}$ for $K=[a,b]$}
\end{center}
We obtain
\begin{align*}
 p_{\alpha}(\mathcal{F}([F])(\zeta))
&=p_{\alpha}\bigl(\int_{\gamma_{K}}{F(z)e^{-iz\zeta}\d z}\bigr)
 \leq 2(\pi c+b-a)\sup_{z\in\gamma_{K}}p_{\alpha}(F(z))e^{-\re(iz\zeta)}\\
&\leq C_{0}\vertiii{F}_{2(k+1),\alpha,K}\sup_{z\in\gamma_{K}}e^{\frac{1}{2(k+1)}|\re(z)|}
 \sup_{z\in\gamma_{K}}e^{\re(z)\im(\zeta)+\im(z)\re(\zeta)}\\
&\leq\vertiii{F}_{2(k+1),\alpha,K}C_{0}e^{\frac{1}{2(k+1)}\max{(|a-c|,|b+c|)}}
 e^{c|\re(\zeta)|}\sup_{z\in\gamma_{K}}e^{\re(z)\im(\zeta)}\\
&\underset{\mathclap{\eqref{eq:support_func_estimate}}}{\leq} C_{1}\vertiii{F}_{2(k+1),\alpha,K}e^{2c|\zeta|}e^{H_{K}(\im(\zeta))}
\end{align*}
and therefore
\[
|\mathcal{F}([F])|_{k,\alpha,K}\leq C_{2}\vertiii{[F]}_{2(k+1),\alpha,K}^{\wedge}.
\]
Moreover, the definition of the Fourier transform does not depend on $c>0$ or the choice of the representative 
by virtue of Cauchy's integral theorem. Hence the Fourier transform is well-defined and continuous.

Due to Cauchy's integral theorem we can deform the path of integration 
for $F\in \mathcal{O}^{exp}(\overline{\C}\setminus [a,b],E)$ and obtain the remaining equations \eqref{P.0.0.10a}.
\end{proof}

\begin{thm}\label{thm:fourier_bounded_interval}
Let $E$ be a locally complete $\C$-lcHs, $-\infty<a\leq b<\infty$ and $K:=[a,b]$. 
Then the Fourier transform $\mathcal{F}$ in \prettyref{lem:fourier_bounded_interval}
is a topological isomorphism.
\end{thm}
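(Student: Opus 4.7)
The well-definedness and continuity of $\mathcal{F}$ are already established in the preceding lemma, so I focus on injectivity and continuous invertibility. My plan is to bootstrap on \prettyref{thm:fourier_unbounded_interval}, exploiting $[a,b]=[a,\infty]\cap[-\infty,b]$ together with the compatibility relations \eqref{P.0.0.10a}. For injectivity, if $F\in\mathcal{O}^{exp}(\overline{\C}\setminus[a,b],E)$ satisfies $\mathcal{F}_{[a,b]}([F])=0$, then the inclusion $\C\setminus[a,\infty]\subset\C\setminus[a,b]$ (together with the corresponding seminorm inequality) lets me view $F$ as a representative in $bv_{[a,\infty]}(E)$. Relation \eqref{P.0.0.10a} gives $\mathcal{F}_{[a,\infty]}([F])=0$, whence \prettyref{thm:fourier_unbounded_interval} produces $G\in\mathcal{O}^{exp}(\overline{\C},E)$ with $F=G$ on $\C\setminus[a,\infty]$. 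The identity theorem on the connected set $\C\setminus[a,b]$ extends this equality, so $[F]=0$ in $bv_{[a,b]}(E)$.

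For surjectivity, given $f\in\mathcal{FO}_{[a,b]}(E)$, a seminorm comparison using \prettyref{prop:supporting_function} shows $f|_{-\h}\in\mathcal{FO}_{[a,\infty]}(E)$ and $f|_{\h}\in\mathcal{FO}_{[-\infty,b]}(E)$, so \prettyref{thm:fourier_unbounded_interval} yields representatives $F_{1}$ and $F_{2}$ with $\mathcal{F}_{[a,\infty]}([F_{1}])=f|_{-\h}$ and $\mathcal{F}_{[-\infty,b]}([F_{2}])=f|_{\h}$. Writing $g^{\pm}$ for $f|_{\pm\h}$ extended by zero (\prettyref{prop:upper_lower_extension}), applying \eqref{P.0.1.1} to each of $F_{1}$ and $F_{2}$ and summing yields
\[
\mathscr{H}_{\overline{\R}}([g^{-}+g^{+}])=-\mathcal{F}_{\star}(\mathscr{H}_{[a,\infty]}([F_{1}]))+\mathcal{F}_{\star}(\mathscr{H}_{[-\infty,b]}([F_{2}]))
\]
on $\mathcal{P}_{\ast}(\overline{\R})$. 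Since $f$ is entire with $|f|_{k,\alpha,[a,b]}<\infty$ for every $k$, a direct seminorm estimate places $f\in\mathcal{O}^{exp}(\overline{\C},E)$, so $[g^{-}+g^{+}]=[f]=0$ in $bv_{\overline{\R}}(E)$ and the left-hand side vanishes. Invertibility of $\mathcal{F}_{\star}$ (\prettyref{thm:fourier_isom_extended_reals}) then forces $\mathscr{H}_{[a,\infty]}([F_{1}])=\mathscr{H}_{[-\infty,b]}([F_{2}])$ on $\mathcal{P}_{\ast}(\overline{\R})$, and by \prettyref{thm:duality} together with the naturality of $\mathscr{H}$ under $[a,\infty],\,[-\infty,b]\subset\overline{\R}$ this identifies the images of $[F_{1}]$ and $[F_{2}]$ in $bv_{\overline{\R}}(E)$; equivalently, there is $H\in\mathcal{O}^{exp}(\overline{\C},E)$ with $F_{1}-F_{2}=H$ on $\C\setminus\R$.

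Setting $F:=F_{1}$ on $\C\setminus[a,\infty]$ and $F:=F_{2}+H$ on $\C\setminus[-\infty,b]$ defines the same holomorphic function on the overlap $\C\setminus\R$, so $F$ is a well-defined holomorphic function on the union $\C\setminus[a,b]$. A case analysis splitting $S_{n}([a,b])$ into subregions contained in $S_{n'}([a,\infty])$ or in $S_{n'}([-\infty,b])$ (for a suitably enlarged $n'$), combined with the exponential bounds on $F_{1}$, $F_{2}$, and $H$, yields $F\in\mathcal{O}^{exp}(\overline{\C}\setminus[a,b],E)$. Because $[F]=[F_{1}]$ in $bv_{[a,\infty]}(E)$ and $[F]=[F_{2}]$ in $bv_{[-\infty,b]}(E)$, relation \eqref{P.0.0.10a} delivers $\mathcal{F}_{[a,b]}([F])=f$ off the real axis, and the identity theorem extends the equality to all of $\C$.

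Continuity of $\mathcal{F}^{-1}$ should follow from the continuity of each ingredient in this construction: the restrictions $f\mapsto f|_{\pm\h}$, the continuous inverses supplied by \prettyref{thm:fourier_unbounded_interval}, and the gluing $([F_{1}],[F_{2}])\mapsto[F]$. The main obstacle I expect is the last step: turning the existential statement about $H$ into a quantitative bound, which requires using the explicit formula for $\mathscr{H}_{\overline{\R}}^{-1}$ from \prettyref{thm:duality} together with the geometric comparison of $S_{n}([a,b])$ against $S_{n'}([a,\infty])$ and $S_{n'}([-\infty,b])$ to obtain an estimate $\vertiii{F}_{n,\alpha,[a,b]}^{\wedge}\leq C|f|_{k,\alpha,[a,b]}$ with $k$ and $C$ depending only on $n$, $\alpha$, $a$, $b$.
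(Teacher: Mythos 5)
Your route is genuinely different from the paper's. The paper's surjectivity argument does not pass through the surjectivity of the half-line transforms at all: it sets $f_{-}:=f$ on the lower half-plane, $f_{-}:=0$ on the upper, and reruns the Kawai-type contour estimates to establish the quantitative bound \eqref{P.0.0.1.1} directly for $K=[a,b]$ (this occupies essentially the entire proof), inverting afterwards through the duality $\mathscr{H}_{[a,b]}$ and invoking \prettyref{thm:fourier_unbounded_interval} only to identify the result with $f_{-}$ on the lower half-plane. The algebraic part of your alternative is sound: by \prettyref{prop:supporting_function} one has $|f|_{k,\alpha,[a,\infty]}\leq|f|_{k,\alpha,[a,b]}$ and $|f|_{k,\alpha,[-\infty,b]}\leq|f|_{k,\alpha,[a,b]}$; $f$ itself lies in $\mathcal{O}^{exp}(\overline{\C},E)$ so $[g^{-}+g^{+}]=0$ in $bv_{\overline{\R}}(E)$; and $S_{n}([a,b])\subset S_{2n}([a,\infty])\cup S_{2n}([-\infty,b])$ puts the glued $F$ into $\mathcal{O}^{exp}(\overline{\C}\setminus[a,b],E)$. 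This yields a linear isomorphism.

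The genuine gap is precisely the one you flag, and it cannot be waved away: continuity of $\mathcal{F}_{[a,b]}^{-1}$ does not ``follow from the continuity of each ingredient''. The function $H$ is produced by the injectivity of $\mathscr{H}_{\overline{\R}}$, which gives existence with no bound; moreover the continuity of the inverses in \prettyref{thm:fourier_unbounded_interval} only controls the quotient seminorms $\vertiii{[F_{i}]}^{\wedge}$, so for each pair $(n,\alpha)$ you must choose near-optimal representatives and then bound $H=F_{1}-F_{2}$ on the parts of $S_{n}([a,\infty])$ and $S_{n}([-\infty,b])$ that hug the real axis, where neither $\vertiii{F_{1}}_{\cdot,\alpha,[a,\infty]}$ nor $\vertiii{F_{2}}_{\cdot,\alpha,[-\infty,b]}$ gives any information. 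Turning this into the required estimate $\vertiii{[F]}^{\wedge}_{n,\alpha,[a,b]}\leq C|f|_{k,\alpha,[a,b]}$ needs a Phragm\'en--Lindel\"of argument in a strip applied to $e'\circ H$ for $e'$ in the polars of the $p_{\alpha}$-unit balls, with the weights $e^{-\frac{1}{n}|\re(z)|}$; since $E$ is an arbitrary lcHs, no open mapping theorem can substitute for this. That missing quantitative content is exactly what the paper's estimate \eqref{P.0.0.1.1} for $K=[a,b]$ supplies in one stroke. A secondary issue: every tool you rely on (\prettyref{thm:fourier_unbounded_interval}, $\mathcal{F}_{\star}$ and $\mathscr{H}_{\overline{\R}}$ on $bv_{\overline{\R}}(E)$) is stated under sequential completeness, whereas the theorem is claimed for merely locally complete $E$; as written, your argument only covers the sequentially complete case, and this hypothesis mismatch should at least be acknowledged.
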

\begin{proof}
Let $F\in\mathcal{O}^{exp}(\overline{\C}\setminus [a,b],E)$ and $\mathcal{F}_{[a,b]}([F])=0$.
By equation \eqref{P.0.0.10a} we get $\mathcal{F}_{[a,\infty]}([F])=0$ and hence $\mathcal{F}_{[a,b]}$ 
is injective by \prettyref{thm:fourier_unbounded_interval}.

We prove the surjectivity of the Fourier transform like in \prettyref{thm:fourier_unbounded_interval}. 
Let $K:=[a,b]$, $f\in\mathcal{FO}_{K}(E)$ and set $f_{-}:=f$ on the lower and $f_{-}:=0$ on the upper halfplane. 
We define 
$\nu :=\mathcal{F}_{\star}^{-1}(\mathscr{H}_{\overline{\R}}([f_{-}]))\in L(\mathcal{P}_{\ast}(\overline{\R}),E)$ 
and next we show that \eqref{P.0.0.1.1} holds for $K=[a,b]$.
We choose $-\tfrac{1}{n}<\widetilde{c}<\tfrac{1}{n}$ and have for $\varphi\in\mathcal{O}_{n}(U_{n}(\overline{\R}))$ 
and $\delta >0$ 
\[
 \nu(\varphi)
=-\frac{1}{2\pi}\int_{\im(\zeta)=-\delta}{f(\zeta)\int_{\im(z)=\widetilde{c}}{\varphi(z)e^{iz\zeta}\d z}\,\d\zeta}.
\]
like in \prettyref{thm:fourier_unbounded_interval}. We choose $\delta,\delta'>0$ according to \eqref{P.0.0.2} 
and define $I_{+}$ and $I_{-}$ like before. 
Furthermore, we set 
\[
  J_{++}
:=\int_{0-i\delta}^{\infty-i\delta}{f(\zeta)\int_{a-\delta'+i\delta}^{b+\delta'+i\delta}
  {\varphi(z)e^{iz\zeta}\d z}\,\d\zeta}
\]
and 
\[
  J_{-+}
:=\int_{-\infty-i\delta}^{0-i\delta}{f(\zeta)\int_{a-\delta'-i\delta}^{b+\delta'-i\delta}
  {\varphi(z)e^{iz\zeta}\d z}\,\d\zeta}.
\]
as well as for $y\in\R$ and $x\in\{a,b\}$
\[
J(-y;I_{x}):=\int_{0-iy}^{\infty-iy}{f(\zeta)J_{I_{x}}(\zeta)\d\zeta}
 \quad\text{and}\quad
J(-y;II_{x}):=\int_{-\infty-iy}^{0-iy}{f(\zeta)J_{II_{x}}(\zeta)\d\zeta}
\]
and 
\[
J_{I_{a}}(\zeta):=\int_{-\infty+i\delta}^{a-\delta'+i\delta}{\varphi(z)e^{iz\zeta}\d z}
 \quad\text{and}\quad
J_{II_{a}}(\zeta):=\int_{-\infty-i\delta}^{a-\delta'-i\delta}{\varphi(z)e^{iz\zeta}\d z}
\]
plus
\[
J_{I_{b}}(\zeta):=\int_{b+\delta'+i\delta}^{\infty+i\delta}{\varphi(z)e^{iz\zeta}\d z}
 \quad\text{and}\quad
J_{II_{b}}(\zeta):=\int_{b+\delta'-i\delta}^{\infty-i\delta}{\varphi(z)e^{iz\zeta}\d z}.
\]
Then we have $-2\pi\nu(\varphi)=I_{+}+I_{-}$ and $I_{+}=J_{++}+J(-\delta;I_{b})+J(-\delta;I_{a})$ as well as 
$I_{-}=J_{-+}+J(-\delta;II_{b})+J(-\delta;II_{a})$. 
If we choose $k\in\N$ such that $\tfrac{1}{k}<\delta$, we obtain for $\alpha\in\mathfrak{A}$
\begin{align}\label{P.0.0.10b}
 p_{\alpha}(J_{++})
&\leq \int_{0}^{\infty}{|f|_{k,\alpha,K}e^{\frac{1}{k}|t-i\delta|+H_{K}(-\delta)}
 \int_{a-\delta'}^{b+\delta'}{\|\varphi\|_{n,K}e^{-\frac{1}{n}|s|}e^{s\delta-t\delta}\d s}\,\d t}\notag\\
&\leq \frac{1}{\delta-\frac{1}{k}}(b-a+2\delta')e^{(\frac{1}{n}+\delta)(\max{(|a|,|b|)}+\delta')}
 e^{(\frac{1}{k}-a)\delta}|f|_{k,\alpha,K}\|\varphi\|_{n,K}
\end{align}
and an analogous estimate for $J_{-+}$. 

Moreover, we set
\[
J(\uparrow):=\int_{-i\delta}^{i\delta}{f(\zeta)J_{I_{b}}(\zeta)\d\zeta}
 \quad\text{and}\quad
J(\downarrow):=\int_{i\delta}^{-i\delta}{f(\zeta)J_{II_{b}}(\zeta)\d\zeta}.
\]
\begin{center}
\begin{minipage}{\linewidth}
\centering
\begin{tikzpicture}
\draw[blue,very thick] (1.8,-1) -- (4,-1) node [below=0.1pt] {\color{blue}$(-\delta;I_{b})$};
\draw[->,blue,very thick] (0,-1) -- (2,-1);
\draw[->,green!50!black,very thick] (0,1) -- (2,1);
\draw[green!50!black,very thick] (1.8,1) -- (4,1) node [above=0.1pt] {\color{green!50!black}$(\delta;I_{b})$};
\draw[dashed,very thick] (3,-1) -- (3,-0.5) node [left] {$\widetilde{\gamma}_{0}$};
\draw[->,dashed,very thick] (3,1) -- (3,-0.5);
\draw[->,cyan,very thick] (-4,-1)node [below=0.1pt] {\color{cyan}$(-\delta;II_{b})$} -- (-2,-1) ;
\draw[cyan,very thick] (-2.2,-1) -- (0,-1);
\draw[teal!50!white, very thick] (-0.03,-1) -- (-0.03,-0.3) node [left] {\color{cyan}$\downarrow$};
\draw[->,teal!50!white, very thick] (-0.03,1) -- (-0.03,-0.5);
\draw[green!50!black, very thick] (0.03,1) -- (0.03,0.3) node [right] {\color{green!50!black}$\uparrow$};
\draw[->,green!50!black, very thick] (0.03,-1) -- (0.03,0.5);
\draw[->,teal!50!white,very thick] (-4,1)node [above=0.1pt] {\color{teal!50!white}$(\delta;II_{b})$} -- (-2,1);
\draw[teal!50!white,very thick] (-2.2,1) -- (0,1) ;
\draw[dashed,very thick] (-3,1) -- (-3,0.5) node [right] {$\widetilde{\gamma}_{1}$};
\draw[->,dashed,very thick] (-3,-1) -- (-3,0.5);
\draw(-3 cm,1pt)--(-3cm,-1pt)node[anchor=north east] {$-r$};
\draw(3cm,1pt)--(3cm,-1pt)node[anchor=north west] {$r$};
\draw(1pt,-1cm)--(-1pt,-1cm)node[anchor=north east] {$-\delta$};
\draw(1pt,1cm)--(-1pt,1cm)node[anchor=south east] {$\delta$};
\draw[->] (-4,0) -- (4,0) node[right] {$\re(\zeta)$} coordinate (x axis);
\draw[->] (0,-2) -- (0,2) node[above] {$\im(\zeta)$} coordinate (y axis);
\end{tikzpicture}
\end{minipage}
\captionsetup{type=figure}
\caption{Some paths of integration in $\zeta$-plane for $K=[a,b]$ and $x=b$}
\label{fig:paths_zeta_plane_bounded}
\end{center}
\begin{center}
\begin{minipage}{\linewidth}
\centering
\begin{tikzpicture}
\def\mypath{
 (1,2) -- (-1,2) arc (90:270:20mm) (-1,-2) -- (1,-2) arc (270:450:20mm)}
\fill[fill=black!10,draw=black,very thick] \mypath;
\node (A) at (1,1.5) {$U_{n}(K)$};
\draw[->,dotted,very thick] (-1.5,1) -- (0.5,1);
\draw[dotted,very thick] (0.5,1) -- (1.5,1) node[anchor=north east]{$++$};
\draw[->,dotted,very thick] (-1.5,-1) -- (0.5,-1);
\draw[dotted,very thick] (0.5,-1) -- (1.5,-1) node[anchor=south east]{$-+$};
\draw[->,blue,very thick] (-4.5,1)node [above=0.1pt] {\color{blue}$I_{a}$} -- (-2.9,1);
\draw[blue,very thick] (-3,1) -- (-1.5,1);
\draw[->,green!50!black, very thick] (-1.5,-1) -- (-1.5,-0.5);
\draw[green!50!black, very thick] (-1.5,-0.5) -- (-1.5,1) node [anchor=north east] {\color{green!50!black}$III_{a}$};
\draw[->,teal!50!white,very thick] (-4.5,-1)node [below=0.1pt] {\color{teal!50!white}$II_{a}$} -- (-2.9,-1);
\draw[teal!50!white,very thick] (-3,-1) -- (-1.5,-1);
\draw[->,blue,very thick] (1.5,1) -- (3,1);
\draw[blue,very thick] (2.9,1) -- (4.5,1) node [above=0.1pt] {\color{blue}$I_{b}$};
\draw[->,green!50!black, very thick] (1.5,1) -- (1.5,0.5);
\draw[green!50!black, very thick] (1.5,0.5) node [right] {\color{green!50!black}$III_{b}$} -- (1.5,-1) ;
\draw[->,teal!50!white,very thick] (1.5,-1) -- (3,-1);
\draw[teal!50!white,very thick] (2.9,-1) -- (4.5,-1) node [below=0.1pt] {\color{teal!50!white}$II_{b}$};
\draw(-1.5cm,1pt)--(-1.5cm,-1pt)node[anchor=north east] {$a-\delta'$};
\draw(1.5cm,1pt)--(1.5cm,-1pt)node[anchor=north west] {$b+\delta'$};
\draw(1pt,-1cm)--(-1pt,-1cm)node[anchor=north east] {$-\delta$};
\draw(1pt,1cm)--(-1pt,1cm)node[anchor=south east] {$\delta$};
\draw(-1 cm,1pt)--(-1cm,-1pt)node[anchor=north] {$a$};
\draw(1 cm,1pt)--(1cm,-1pt)node[anchor=north] {$b$};
\draw(1pt,2 cm)--(-1pt,2 cm)node[anchor=south east] {$\nicefrac{1}{n}$};
\draw(1pt,-2 cm)--(-1pt,-2 cm)node[anchor=north east] {$-\nicefrac{1}{n}$};
\draw[->] (-4.5,0) -- (4.5,0) node[right] {$\re(z)$} coordinate (x axis);
\draw[red,thick] (-1,0)--(1,0);
\draw[->] (0,-2.5) -- (0,2.5) node[above] {$\im(z)$} coordinate (y axis);
\end{tikzpicture}
\end{minipage}
\captionsetup{type=figure}
\caption{Paths of integration in $z$-plane for $K=[a,b]$}
\end{center}
Since it holds for the path $\widetilde{\gamma}_{0}$ from \prettyref{fig:paths_zeta_plane_bounded} that
\begin{align*}
 p_{\alpha}\bigl(\int_{\widetilde{\gamma}_{0}}{f(\zeta)J_{I_{b}}(\zeta)}\bigr)
&\leq \int_{-\delta}^{\delta}{ |f|_{k,\alpha,K}e^{\frac{1}{k}|r+it|+H_{K}(t)}
 \int_{b+\delta'}^{\infty}{\|\varphi\|_{n,\overline{\R}}e^{-\frac{1}{n}|s|}e^{-r\delta+ts}\d s}\,\d t}\\
&\leq 2 |f|_{k,\alpha,K}\|\varphi\|_{n,\overline{\R}}e^{(\frac{1}{k}-\delta)r}
 \int_{-\delta}^{\delta}{e^{\frac{1}{k}|t|+\max{(at,bt)}}\d t}
 \int_{0}^{\infty}{e^{(\delta-\frac{1}{n})|s|}\d s}\\
&\leq \frac{4\delta}{\frac{1}{n}-\delta}|f|_{k,\alpha,K}\|\varphi\|_{n,\overline{\R}}
 e^{(\max{|a|,|b|)}+\frac{1}{k})\delta}e^{(\frac{1}{k}-\delta)r}\\
&\;\;\mathclap{\underset{\frac{1}{k}<\delta}{\to}}\quad 0,\;r\to \infty,
\end{align*}
we get that $J(-\delta;I_{b})=J(\delta;I_{b})+J(\uparrow)$ and analogously 
$J(-\delta;II_{b})=J(\delta;II_{b})+J(\downarrow)$ by Cauchy's integral theorem. 
Like in \eqref{P.0.0.7a} we have by Cauchy's integral theorem
\[
 J(\uparrow)+J(\downarrow)=\int_{-i \delta}^{i\delta}{f(\zeta)J_{III_{b}}(\zeta)\d\zeta}
\]
where
\[
 J_{III_{b}}(\zeta):=\int_{b+\delta'+i\delta}^{b+\delta'-i\delta}{\varphi(z)e^{iz\zeta}\d z}.
\]
Further, we estimate
\begin{align}\label{P.0.0.10c}
 p_{\alpha}(J(\uparrow)+J(\downarrow))
&\leq \int_{-\delta}^{\delta}{ |f|_{k,\alpha,K}e^{\frac{1}{k}|it|+H_{K}(t)}
 \int_{-\delta}^{\delta}{\|\varphi\|_{n,K}e^{-\frac{1}{n}|b+\delta'|}e^{-(b+\delta')t}\d s}\,\d t}\notag\\
&\leq 4\delta^{2}e^{(\frac{1}{k}+\max{|a|,|b|)}+|b+\delta'|)\delta}|f|_{k,\alpha,K}\|\varphi\|_{n,K}.
\end{align}
Altogether we obtain
\begin{flalign*}
&\hspace{0.35cm}-2\pi \nu(\varphi)\\
&=J_{++}+J(-\delta;I_{b})+J(-\delta;I_{a})+J_{-+}+J(-\delta;II_{b})+J(-\delta;II_{a})\\
&=J_{++}+J_{-+}+J(-\delta;I_{a})+J(-\delta;II_{a})+J(-\delta;I_{b})+J(-\delta;II_{b})\\
&=J_{++}+J_{-+}+J(-\delta;I_{a})+J(-\delta;II_{a})+J(\delta;I_{b})+J(\delta;II_{b})+J(\uparrow)+J(\downarrow)\\
&=J_{++}+J_{-+}+(J(-\delta;I_{a})+J(-\delta;II_{a}))+(J(\delta;I_{b})+J(\delta;II_{b}))+(J(\uparrow)+J(\downarrow)).
\end{flalign*}
By the same method as in \prettyref{thm:fourier_unbounded_interval} we get that 
\[
J(-\delta;I_{a})+J(-\delta;II_{a})=\lim_{\eta\to\infty}\int_{-i\delta}^{-i\eta}f(\zeta)
\int_{a-\delta'-i\delta}^{a-\delta'+i\delta}\varphi(z)e^{iz\zeta}\d z\;\d\zeta=:\lim_{\eta\to\infty}J(-\eta;III_{a})
\]
and 
\[
J(\delta;I_{b})+J(\delta;II_{b})=\lim_{\eta\to\infty}\int_{i\delta}^{i\eta}f(\zeta)J_{III_{b}}(\zeta)\d\zeta
=:\lim_{\eta\to\infty}J(\eta;III_{b}).
\]
Thus $p_{\alpha}(J(-\delta;I_{a})+J(-\delta;II_{a}))$ and 
$p_{\alpha}(J(\delta;I_{b})+J(\delta;II_{b}))$ for $\alpha\in\mathfrak{A}$ can be estimated 
by the right-hand side of \eqref{P.0.0.9} with $K=[a,b]$, using that $H_{K}(-t)=-at$ and $H_{K}(t)=bt$ for $t>0$.
Combining the estimates \eqref{P.0.0.10b}, \eqref{P.0.0.10c} and \eqref{P.0.0.9}, we get \eqref{P.0.0.1.1}.
Since $\mathscr{H}^{-1}_{[a,b]}(\widetilde{\nu})=\mathscr{H}^{-1}_{[a,\infty]}(\widetilde{\nu})$ holds 
by \cite[Eq.\ (11), p.\ 20]{kruse2019_2} for the unique extension $\widetilde{\nu}\in L(\mathcal{P}_{\ast}([a,b]),E)$ 
of $\nu=\mathcal{F}_{\star}^{-1}(\mathscr{H}_{\overline{\R}}([f_{-}]))$, 
it follows by \prettyref{thm:fourier_unbounded_interval} that
\[
 \mathcal{F}_{[a,b]}(-\mathscr{H}^{-1}_{[a,b]}(\widetilde{\nu}))
\underset{\eqref{P.0.0.10a}}{=}\mathcal{F}_{[a,\infty]}(-\mathscr{H}^{-1}_{[a,b]}(\widetilde{\nu}))
=\mathcal{F}_{[a,\infty]}(-\mathscr{H}^{-1}_{[a,\infty]}(\widetilde{\nu}))
\underset{\eqref{eq:surjectivity_unbounded}}{=}f_{-}
\]
holds on the lower halfplane. The left-hand side is an entire function and thus coincides with $f$ on $\C$ 
by the identity theorem, implying the surjectivity of $\mathcal{F}_{[a,b]}$. 
The continuity of the inverse $\mathcal{F}^{-1}_{[a,b]}$ is a consequence of \eqref{P.0.0.1.1} like before.
\end{proof}

\begin{rem}\label{P.0.0.10}
a) In short we express the inverse of $\mathcal{F}_{K}$ for a non-empty compact interval $K\neq\overline{\R}$ 
as
\[
 \mathcal{F}_{[a,\infty]}^{-1}=-\mathscr{H}_{[a,\infty]}^{-1}\circ \mathcal{F}_{\star}^{-1}\circ 
 \mathscr{H}_{\overline{\R}}
  \quad\text{and}\quad 
 \mathcal{F}_{[-\infty,b]}^{-1}=\mathscr{H}_{[-\infty,b]}^{-1}\circ \mathcal{F}_{\star}^{-1}\circ 
 \mathscr{H}_{\overline{\R}}
\]
for $-\infty<a\leq\infty$ and $-\infty\leq b<\infty$, resp.\ as
\[
 \mathcal{F}_{[a,b]}^{-1}(f)=-(\mathscr{H}_{[a,b]}^{-1}\circ \mathcal{F}_{\star}^{-1}\circ 
 \mathscr{H}_{\overline{\R}})([f_{-}])=(\mathscr{H}_{[a,b]}^{-1}\circ \mathcal{F}_{\star}^{-1}\circ 
 \mathscr{H}_{\overline{\R}})([f_{+}])
\]
for $-\infty<a\leq b<\infty$ where $f_{+}:=f$ on the upper halfplane and $f_{+}:=0$ on the lower halfplane, 
since there is a unique extension of $\mathcal{F}_{\star}^{-1}(\mathscr{H}_{\overline{\R}}([f]))$ resp.\ 
$\mathcal{F}_{\star}^{-1}(\mathscr{H}_{\overline{\R}}([f_{\pm}]))$ to $L(\mathcal{P}_{\ast}(K),E)$ 
for every $f\in\mathcal{FO}_{K}(E)$.

b) Let $E$ be sequentially complete. 
If $E$ is strictly admissible, then the sheaf of $E$-valued Fourier hyperfunctions is flabby by 
\cite[Theorem 5.9 b), p.\ 33]{kruse2019_5}. Hence for $f\in bv_{\overline{\R}}(E)$ and $a\in\R$ there are 
$f_{1}\in bv_{[-\infty,a]}(E)$ and $f_{2}\in bv_{[a,\infty]}(E)$ such that $f=f_{1}+f_{2}$ 
by \cite[Lemma 1.4.4, p.\ 36]{Kan}. Due to \cite[Eq.\ (6), p.\ 14]{kruse2019_2} we have
\[
\mathscr{H}_{\overline{\R}}(f_{1}) = \mathscr{H}_{[-\infty,a]}(f_{1}) 
 \quad\text{and}\quad  
\mathscr{H}_{\overline{\R}}(f_{2}) = \mathscr{H}_{[a,\infty]}(f_{2})
\]
on $\mathcal{P}_{\ast}(\overline{\R})$. Thus we get
\begin{align*}
  (\mathcal{F}_{\star}\circ \mathscr{H}_{\overline{\R}})(f)
&=(\mathcal{F}_{\star}\circ \mathscr{H}_{\overline{\R}})(f_{1})
 +(\mathcal{F}_{\star}\circ \mathscr{H}_{\overline{\R}})(f_{2})\\
&=(\mathcal{F}_{\star}\circ \mathscr{H}_{[-\infty,a]})(f_{1})
 +(\mathcal{F}_{\star}\circ \mathscr{H}_{[a,\infty]})(f_{2})\\
&\;\;\mathclap{\underset{\eqref{P.0.1.1}}{=}}\;\;\mathscr{H}_{\overline{\R}}([\mathcal{F}_{[-\infty,a]}(f_{1})])
 -\mathscr{H}_{\overline{\R}}([\mathcal{F}_{[a,\infty]}(f_{2})]).
\end{align*}
Therefore we may write the Fourier transform of $f$ by \prettyref{cor:fourier_isom_extended_reals} as
\[
 \mathcal{F}_{\overline{\R}}(f)
=(\mathscr{H}_{\overline{\R}}^{-1}\circ \mathcal{F}_{\star}\circ \mathscr{H}_{\overline{\R}})(f)
=[\mathcal{F}_{[-\infty,a]}(f_{1})-\mathcal{F}_{[a,\infty]}(f_{2})]
\]
keeping \prettyref{prop:upper_lower_extension} in mind 
(cf.\ \cite[Theorem 3.2.6, p.\ 483, Definition 3.2.7, p.\ 484]{Kawai} for $a=0$ and $E=\C$). 
If $f\in bv_{K}(E)$, such that $K\subset[a,b]\subset\R$ compact, we get by \eqref{P.0.0.10a} that 
\[
 \mathcal{F}_{\overline{\R}}(f)=[\mathcal{F}_{[-\infty,b]}(f)]=[\mathcal{F}_{[a,b]}(f)_{+}]
 =-[\mathcal{F}_{[a,\infty]}(f)]=-[\mathcal{F}_{[a,b]}(f)_{-}].
\]
\end{rem}

We close this section with a discussion of some standard operations and their connection to the Fourier transform 
$\mathcal{F}_{K}$ for compact intervals $\varnothing\neq K\varsubsetneq\overline{\R}$. 
We have the following counterpart of \cite[Example 2.5, p.\ 47]{langenbruch2011}, 
which follows from a simple calculation. 

\begin{prop}\label{prop:fourier_transform_of_shift}
Let $E$ be a $\C$-lcHs and $\varnothing\neq K\varsubsetneq\overline{\R}$ a compact interval. Let 
$\tau_{h}([F]):=[F(\cdot -h)]$, $h\in\R$, be the shift operator from $bv_{K}(E)$ to $bv_{h+K}(E)$. If 
\begin{enumerate}
\item[(i)] $K\subset\R$ and $E$ is locally complete, or if
\item[(ii)] $E$ is sequentially complete,
\end{enumerate}
then we have for $[F]\in bv_{K}(E)$
\[
\mathcal{F}_{h+K}(\tau_{h}([F]))=e^{-ih(\cdot)}\mathcal{F}_{K}([F]).
\]
\end{prop}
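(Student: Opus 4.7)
The plan is to prove the identity by a direct change of variable in the contour integral defining $\mathcal{F}_K$.

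First I would verify that $\tau_h$ is well-defined on the level of representatives, i.e.\ that $z\mapsto F(z-h)$ lies in $\mathcal{O}^{exp}(\overline{\C}\setminus(h+K),E)$ whenever $F\in\mathcal{O}^{exp}(\overline{\C}\setminus K,E)$. This is routine: holomorphy is preserved by the translation $z\mapsto z-h$, and since $\lvert\re(z-h)\rvert\leq\lvert\re(z)\rvert+\lvert h\rvert$, for every $n\in\N$ and $\alpha\in\mathfrak{A}$ one gets $\vertiii{F(\cdot-h)}_{n,\alpha,h+K}\leq e^{|h|/n}\vertiii{F}_{n,\alpha,K}$ (up to an inessential adjustment of $n$ to ensure $S_n(h+K)-h\subset S_{n'}(K)$). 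Hence $\tau_h\colon bv_K(E)\to bv_{h+K}(E)$ is well-defined, and case (i) or (ii) guarantees that \prettyref{thm:duality} and Lemmas/Theorems \ref{thm:fourier_bounded_interval}, \ref{thm:fourier_unbounded_interval} apply so that $\mathcal{F}_K$ and $\mathcal{F}_{h+K}$ are available.

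Next, I would choose an appropriate path. By Lemma \ref{lem:fourier_unbounded_interval} (resp.\ \ref{lem:fourier_bounded_interval}) the integral defining $\mathcal{F}_{h+K}$ is independent of the choice of the parameter $c>0$ used to form $\gamma_{h+K}$, so I may take $\gamma_{h+K}$ to be the translate $h+\gamma_K$ of the path $\gamma_K$ used for $\mathcal{F}_K$ (here $h+\gamma_K$ is the boundary of $h+U_{1/c}(K)=U_{1/c}(h+K)$ with clockwise orientation, which is a valid choice for any of the cases $K=[a,b]$, $K=[a,\infty]$, $K=[-\infty,b]$, $K=\{\pm\infty\}$).

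With this choice the substitution $w=z-h$ gives, for $[F]\in bv_K(E)$ and $\zeta\in\C\setminus\R$ (or $\zeta\in\C$ if $K\subset\R$),
\begin{align*}
\mathcal{F}_{h+K}(\tau_h([F]))(\zeta)
&=\int_{\gamma_{h+K}}F(z-h)e^{-iz\zeta}\,\d z
 =\int_{\gamma_K}F(w)e^{-i(w+h)\zeta}\,\d w\\
&=e^{-ih\zeta}\int_{\gamma_K}F(w)e^{-iw\zeta}\,\d w
 =e^{-ih\zeta}\mathcal{F}_K([F])(\zeta),
\end{align*}
which is the claimed identity. I do not foresee a genuine obstacle; the only point to keep track of is that the contour deformation/translation is legitimate, and this is exactly the content of the independence of $\gamma_{h+K}$ from $c$ that was proved in \prettyref{lem:fourier_unbounded_interval} and \prettyref{lem:fourier_bounded_interval}. (Note that the statement does not require the identity in the case $K=\overline{\R}$ and $h+K$ would collapse in degenerate cases like $K=\{\pm\infty\}$ only up to the identity $h+\{\pm\infty\}=\{\pm\infty\}$, where the computation above is still valid.)
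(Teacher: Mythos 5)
Your proof is correct and is precisely the ``simple calculation'' the paper alludes to (the paper in fact omits the proof of this proposition, stating only that it follows from such a calculation): translate the contour and substitute $w=z-h$. The one cosmetic imprecision is that $h+U_{\nicefrac{1}{c}}(K)$ need not literally equal $U_{\nicefrac{1}{c}}(h+K)$ when $\pm\infty\in K$ (the unbounded strip component of $U_{\nicefrac{1}{c}}$ is anchored at $\pm\nicefrac{1}{c}$ rather than at $h\pm\nicefrac{1}{c}$), but the two paths differ only across a region where $F(\cdot-h)e^{-i(\cdot)\zeta}$ is holomorphic with the required decay, so Cauchy's theorem --- i.e.\ exactly the path-independence you invoke from Lemmas \ref{lem:fourier_unbounded_interval} and \ref{lem:fourier_bounded_interval} --- closes this gap.
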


Next, we transfer \cite[Example 2.6, p.\ 48]{langenbruch2011} and \cite[Proposition 2.9, p.\ 49]{langenbruch2011} 
to our setting. 

\begin{prop}\label{prop:fourier_transform_of_derivative}
Let $E$ be a sequentially complete $\C$-lcHs, $\varnothing\neq K\varsubsetneq\overline{\R}$ a compact interval and 
$P(-i\partial):=\sum_{k=0}^{\infty}\tfrac{c_{k}}{k!}(-i\partial)^{k}$ where $(c_{k})\subset\C$ and 
$P$ is of \emph{exponential type $0$}, i.e.\ 
\[
\forall\;\varepsilon>0\;\exists\;C>0\;\forall\;k\in\N_{0}:\;|c_{k}|=|P^{(k)}(0)|\leq C\varepsilon^{k}.
\]
Then $P(-i\partial)$ resp.\ $P(-i\partial)([F]):= [P(-i\partial)F]$ 
and the multiplication operator $M_{P}(F):=PF$ resp.\ $M_{P}([F]):= [PF]$ are well-defined 
continuous linear operators on 
$\mathcal{O}^{exp}(\overline{\C}\setminus K,E)$ and
$\mathcal{O}^{exp}(\overline{\C},E)$ 
resp.\ $bv_{K}(E)$, and for $[F]\in bv_{K}(E)$ we have
\begin{equation}\label{eq:fourier_transform_of_derivative}
\mathcal{F}_{K}(P(-i\partial)[F])(\zeta)=P(\zeta)\mathcal{F}_{K}([F])(\zeta)
\end{equation}
and
\begin{equation}\label{eq:fourier_transform_of_multiplication}
\mathcal{F}_{K}(P[F])(\zeta)=P(i\partial)\mathcal{F}_{K}([F])(\zeta)
\end{equation}
for $\im(\zeta)<0$ if $K=[a,\infty]$, for $\im(\zeta)>0$ if $K=[-\infty,a]$, and for $\zeta\in\C$ if 
$K\subset\R$, respectively. 
\end{prop}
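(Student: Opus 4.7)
The plan is to first verify continuity of the two operators on the relevant function spaces (which then forces well-definedness on the quotient $bv_{K}(E)$) and afterwards to obtain the Fourier identities on representatives by integration by parts and by differentiation under the integral, respectively.

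For the continuity of $P(-i\partial)$ I would rely on Cauchy's integral formula for derivatives. An elementary geometric check shows that for every $n\in\N$ and every $z\in S_{n}(K)$ the closed disk $\overline{B(z,1/(4n))}$ is contained in $S_{2n}(K)$, so that
\[
 p_{\alpha}(\partial^{k}F(z))\leq k!(4n)^{k}\sup_{|w-z|=1/(4n)}p_{\alpha}(F(w))
 \leq C_{n}\,k!(4n)^{k}\vertiii{F}_{2n,\alpha,K}\,e^{\frac{1}{2n}|\re(z)|}.
\]
Since $P$ is of exponential type $0$, for every $\varepsilon>0$ there is $C(\varepsilon)>0$ with $|c_{k}|\leq C(\varepsilon)\varepsilon^{k}$; the choice $\varepsilon=1/(8n)$ turns $\sum_{k}|c_{k}|(4n)^{k}$ into a convergent geometric series. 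Summing over $k$ and then multiplying by the weight $e^{-\frac{1}{n}|\re(z)|}$ produces an estimate of the form $\vertiii{P(-i\partial)F}_{n,\alpha,K}\leq\widetilde{C}_{n}\vertiii{F}_{2n,\alpha,K}$, proving continuity on $\mathcal{O}^{exp}(\overline{\C}\setminus K,E)$; the same estimate with $K=\varnothing$ handles $\mathcal{O}^{exp}(\overline{\C},E)$. For $M_{P}$ the bound $|P(z)|\leq C(\varepsilon)e^{\varepsilon|z|}$ combined with $|z|\leq|\re(z)|+n$ on $S_{n}(K)$ gives $\vertiii{PF}_{n,\alpha,K}\leq\widetilde{C}_{n}'\vertiii{F}_{2n,\alpha,K}$ after choosing $\varepsilon=1/(2n)$. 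As both operators preserve $\mathcal{O}^{exp}(\overline{\C},E)$, they descend to continuous operators on $bv_{K}(E)$.

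For the identity \eqref{eq:fourier_transform_of_derivative} I would begin with the monomial case $P(w)=w$. Integration by parts along $\gamma_{K}$ gives
\[
 \int_{\gamma_{K}}(-i\partial F)(z)e^{-iz\zeta}\d z
 =\bigl[-iF(z)e^{-iz\zeta}\bigr]_{\partial\gamma_{K}}
 +\zeta\int_{\gamma_{K}}F(z)e^{-iz\zeta}\d z.
\]
If $K=[a,b]\subset\R$ the path is closed and the boundary contribution vanishes. If $K=[a,\infty]$ with $\im(\zeta)<0$ (respectively $K=[-\infty,a]$ with $\im(\zeta)>0$), the path $\gamma_{K}$ runs to infinity along the horizontal rays $\im(z)=\pm c$; the identity $|e^{-iz\zeta}|=e^{\re(z)\im(\zeta)+\im(z)\re(\zeta)}$ together with the slowly increasing estimate $p_{\alpha}(F(z))\leq\vertiii{F}_{m,\alpha,K}e^{\frac{1}{m}|\re(z)|}$ shows that $F(z)e^{-iz\zeta}\to 0$ along these rays as soon as $m\in\N$ is chosen with $1/m<|\im(\zeta)|$, so the boundary term at infinity also vanishes. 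Iterating yields $\mathcal{F}_{K}([(-i\partial)^{k}F])(\zeta)=\zeta^{k}\mathcal{F}_{K}([F])(\zeta)$, and the absolute convergence established in the previous paragraph justifies passing $\sum\tfrac{c_{k}}{k!}$ through $\mathcal{F}_{K}$.

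The dual identity \eqref{eq:fourier_transform_of_multiplication} follows by differentiation under the integral sign. Since $(i\partial_{\zeta})^{k}e^{-iz\zeta}=z^{k}e^{-iz\zeta}$, termwise differentiation gives
\[
 P(i\partial_{\zeta})\mathcal{F}_{K}([F])(\zeta)
 =\sum_{k=0}^{\infty}\frac{c_{k}}{k!}\int_{\gamma_{K}}F(z)z^{k}e^{-iz\zeta}\d z
 =\int_{\gamma_{K}}F(z)P(z)e^{-iz\zeta}\d z
 =\mathcal{F}_{K}([PF])(\zeta),
\]
the exchange of sum and integral being dominated via $p_{\alpha}(F(z)P(z)e^{-iz\zeta})\leq C(\varepsilon)\vertiii{F}_{m,\alpha,K}e^{\varepsilon|z|+\frac{1}{m}|\re(z)|+\re(z)\im(\zeta)+\im(z)\re(\zeta)}$, which is integrable on $\gamma_{K}$ once $\varepsilon$ is taken sufficiently small and the $\zeta$-restriction stated in the proposition is imposed. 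The main obstacle I anticipate is the careful bookkeeping for the unbounded contours: one must verify that the boundary contributions of the integration by parts really vanish uniformly throughout the permitted $\zeta$-half-plane and that the infinite series can be interchanged with the contour integral on the same region, so that both identities hold exactly where the proposition claims.
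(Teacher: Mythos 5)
Your proposal is correct and follows essentially the same route as the paper: Cauchy's inequality on a disk of radius $\sim\tfrac{1}{n}$ contained in $S_{2n}(K)$ plus the exponential-type-$0$ bound yields a convergent geometric series and hence the estimates $\vertiii{P(-i\partial)F}_{n,\alpha,K},\vertiii{PF}_{n,\alpha,K}\lesssim\vertiii{F}_{2n,\alpha,K}$, and the two identities come from integration by parts (with vanishing boundary terms at infinity when $\tfrac1m<|\im(\zeta)|$) and differentiation under the integral, extended to infinite order by continuity of $\mathcal{F}_{K}$. The only cosmetic difference is that the paper first scalarizes via $e'\in E'$ and the Hahn--Banach theorem before performing these calculus manipulations on the (Pettis) contour integrals, whereas you carry them out directly in the vector-valued setting.
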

\begin{proof}
Let $F\in \mathcal{O}^{exp}(\overline{\C}\setminus K,E)$ and $n\in\N$. 
We choose $0<r<\tfrac{1}{2n}$ and remark that
\[
-\frac{1}{n}|\re(z)|\leq -\frac{1}{n}|\re(\zeta)|+\frac{1}{n}|\re(z)-\re(\zeta)|\leq -\frac{1}{2n}|\re(\zeta)|
+\frac{r}{n},
\quad z,\zeta\in\C,\;|\zeta-z|=r.
\]
By Cauchy's inequality \cite[Corollary 5.3 a), p.\ 263]{kruse2019_4} 
we have for $k\in\N_{0}$ and $\alpha\in\mathfrak{A}$
\[
 p_{\alpha}(\partial^{k}F(z))
\leq\frac{k!}{r^{k}}\sup_{\zeta\in\C,|\zeta-z|=r}p_{\alpha}(F(\zeta)),\quad z\in S_{n}(K),
\]
which implies 
\begin{flalign*}
&\hspace{0.35cm}\sup_{z\in S_{n}(K)}
 p_{\alpha}(\partial^{k}F(z))e^{-\frac{1}{n}|\re(z)|}
 \leq\sup_{z\in S_{n}(K)}
 \frac{k!}{r^{k}}\sup_{\zeta\in\C,|\zeta-z|=r}p_{\alpha}(F(\zeta))e^{-\frac{1}{n}|\re(z)|}\\
&\leq e^{\frac{r}{n}}\frac{k!}{r^{k}}\sup_{\zeta\in S_{2n}(K)}p_{\alpha}(F(\zeta))e^{-\frac{1}{2n}|\re(\zeta)|}
 = e^{\frac{r}{n}}\frac{k!}{r^{k}}\vertiii{F}_{2n,\alpha,K}.
\end{flalign*}
Since $P$ is of exponential type $0$, we have for $0<\varepsilon<r$ and $m,l\in\N_{0}$ with $m\geq l$
\begin{align*}
 \vertiii{\sum_{k=l}^{m}\frac{c_{k}}{k!}(-i\partial)^{k}F}_{n,\alpha,K}
&\leq \sup_{z\in S_{n}(K)}\sum_{k=l}^{m}\tfrac{|c_{k}|}{k!}p_{\alpha}(\partial^{k}F(z))e^{-\frac{1}{n}|\re(z)|}\\
&\leq C_{\varepsilon}e^{\frac{r}{n}}\sum_{k=l}^{m}\bigl(\frac{\varepsilon}{r}\bigr)^{k}\vertiii{F}_{2n,\alpha,K},
\end{align*}
which implies that the sum defining $P(-i\partial)$ converges on $\mathcal{O}^{exp}(\overline{\C}\setminus K,E)$ 
because this space is sequentially complete due to the sequential completeness of $E$ and 
$\mathcal{O}^{exp}(\overline{\C}\setminus K)$ combined with 
$\mathcal{O}^{exp}(\overline{\C}\setminus K,E)\cong \mathcal{O}^{exp}(\overline{\C}\setminus K)\varepsilon E$ 
by \cite[Remark 3.4 b), p.\ 8]{kruse2019_5}. Moreover, we deduce that
\[
 \vertiii{P(-i\partial)F}_{n,\alpha,K}
\leq C_{\varepsilon}e^{\frac{r}{n}}\frac{1}{1-\frac{\varepsilon}{r}}\vertiii{F}_{2n,\alpha,K},
\]
yielding that $P(-i\partial)$ is a continuous linear operator on $\mathcal{O}^{exp}(\overline{\C}\setminus K,E)$. 
Analogously, we obtain that $P(-i\partial)$ is a continuous linear operator on $\mathcal{O}^{exp}(\overline{\C},E)$.
Hence $P(-i\partial)$ is a well-defined continuous linear operator on $bv_{K}(E)$ as well. 

Let us turn to \eqref{eq:fourier_transform_of_derivative}. If $P(-i\partial)$ is of finite order, then 
\begin{align*}
 \langle e', \mathcal{F}_{K}(P(-i\partial)[F])(\zeta)\rangle
&= \mathcal{F}_{K}(P(-i\partial)[e'\circ F])(\zeta)
 =P(\zeta)\mathcal{F}_{K}([e'\circ F])(\zeta)\\
&=\langle e', P(\zeta)\mathcal{F}_{K}([F])(\zeta)\rangle,\quad e'\in E'
\end{align*}
holds by partial integration, which implies \eqref{eq:fourier_transform_of_derivative} by the Hahn-Banach theorem. 
If $P(-i\partial)$ is of infinite order, then the sum defining $P(-i\partial)$ converges on $bv_{K}(E)$. In combination 
with the continuity of $\mathcal{F}_{K}$ this proves \eqref{eq:fourier_transform_of_derivative}.

The proof in the case of the multiplication operator $M_{P}$ is quite similar. 
Since $P$ is of exponential type $0$, we have for $0<\varepsilon<\tfrac{1}{2n}$ and $m,l\in\N_{0}$ with $m\geq l$
\begin{align*}
 \vertiii{\sum_{k=l}^{m}\frac{c_{k}}{k!}(\cdot)^{k}F}_{n,\alpha,K}
&\leq C_{\varepsilon}\sup_{z\in S_{n}(K)}\sum_{k=l}^{m}\tfrac{\varepsilon^{k}}{k!}|z|^{k}e^{-\frac{1}{2n}|\re(z)|}
 p_{\alpha}(F(z))e^{-\frac{1}{2n}|\re(z)|}\\
&\leq C_{\varepsilon}e^{\frac{1}{2}}\sum_{k=l}^{m}\tfrac{\varepsilon^{k}}{k!}\sup_{z\in S_{n}(K)}|z|^{k}
 e^{-\frac{1}{2n}|z|}\vertiii{F}_{2n,\alpha,K}\\
&\leq C_{\varepsilon}e^{\frac{1}{2}}\sum_{k=l}^{m}\tfrac{\varepsilon^{k}}{k!}k!(2n)^{k}\vertiii{F}_{2n,\alpha,K}
 = C_{\varepsilon}e^{\frac{1}{2}}\sum_{k=l}^{m}(2\varepsilon n)^{k}\vertiii{F}_{2n,\alpha,K},
\end{align*}
which yields that the sum defining $P$ converges on $\mathcal{O}^{exp}(\overline{\C}\setminus K,E)$ as above. 
Further, we derive that for $\varepsilon<\tfrac{1}{2n}$ 
\[
\vertiii{PF}_{n,\alpha,K}\leq C_{\varepsilon}e^{\frac{1}{2}}\frac{1}{1-2\varepsilon n}\vertiii{F}_{2n,\alpha,K},
\]
implying that $M_{P}$ is a continuous linear operator on $\mathcal{O}^{exp}(\overline{\C}\setminus K,E)$. 
Analogously, we obtain that $M_{P}$ is a continuous linear operator on $\mathcal{O}^{exp}(\overline{\C},E)$
and hence a well-defined continuous linear operator on $bv_{K}(E)$ as well.  

Let us turn to \eqref{eq:fourier_transform_of_multiplication}. If $P$ is a polynomial, then 
\begin{align*}
 \langle e', \mathcal{F}_{K}(P[F])(\zeta)\rangle
&= \mathcal{F}_{K}(P[e'\circ F])(\zeta)
 =P(i\partial)\mathcal{F}_{K}([e'\circ F])(\zeta)\\
&=\langle e', P(i\partial)\mathcal{F}_{K}([F])(\zeta)\rangle,\quad e'\in E'
\end{align*}
holds by differentiation w.r.t.\ the parameter $\zeta$, 
which implies \eqref{eq:fourier_transform_of_multiplication} by the Hahn-Banach theorem. 
If $P$ is not a polynomial, then we have for its Taylor series $(P_{j})$ and $g:=\mathcal{F}_{K}([F])$ by 
Cauchy's inequality with $0<\varepsilon<r<\tfrac{1}{2n}$
\[
 p_{\alpha}\bigl(\sum_{k=l}^{m}\frac{c_{k}}{k!}(i\partial)^{k}g(\zeta)\bigr)
\leq C_{\varepsilon}\sum_{k=l}^{m}\bigl(\frac{\varepsilon}{r}\bigr)^{k}\sup_{z\in\C,|z-\zeta|=r}p_{\alpha}(g(z)),
\]
which implies that $P_{j}(i\partial)g$ converges to $P(i\partial)g$ as $j\to\infty$ 
in the sequentially complete space $\mathcal{O}(-\h,E)$ if $K=[a,\infty]$, 
in $\mathcal{O}(\h,E)$ if $K=[-\infty,a]$ and in $\mathcal{O}(\C,E)$ if $K\subset\R$ 
with respect to the topology of uniform convergence on compact subsets, respectively. 
In combination with the continuity of $\mathcal{F}_{K}$ this proves \eqref{eq:fourier_transform_of_multiplication}.
\end{proof}

In \cite[Proposition 2.9, p.\ 49]{langenbruch2011} it is allowed that the $P$
in \prettyref{eq:fourier_transform_of_multiplication} is an entire function of exponential type because there the 
test functions (see \prettyref{def:test_functions_langenbruch} or the introduction) are exponentially decreasing of type $-\infty$ 
and not exponentially increasing like ours. At the end of this section we treat convolutions.

\begin{thm}\label{thm:convolution}
Let $(E,(p_{\alpha})_{\alpha\in\mathfrak{A}})$, $(E_{1},(p_{\beta})_{\beta\in\mathfrak{B}})$ 
and $(E_{2},(p_{\omega})_{\omega\in\Omega})$ be sequentially complete $\C$-lcHs such that 
a canonical bilinear map $\cdot\:\colon E_{1}\times E_{2}\to E$ is defined with the property
\begin{equation}\label{P.0.0.11}
\forall\;\alpha\in\mathfrak{A}\;\exists\;\beta\in\mathfrak{B},\,\omega\in\Omega,\,D>0\;\forall\;x\in E_{1},\,y\in E_{2}:
\;p_{\alpha}(x\cdot y)\leq D p_{\beta}(x)p_{\omega}(y).
\end{equation}
Let $a,b,c,d\in\overline{\R}$ and $a,c\neq -\infty$ or $b,d\neq \infty$.
\begin{enumerate}
\item[a)] If $f\in\mathcal{FO}_{[a,b]}(E_{1})$ and $g\in\mathcal{FO}_{[c,d]}(E_{2})$, 
then $fg\in\mathcal{FO}_{[a+c,b+d]}(E)$ where $fg$ is defined by $(fg)(z):=f(z)\cdot g(z)$. 
\item[b)] We define the convolution
\[
\ast\:\colon bv_{[a,b]}(E_{1})\times bv_{[c,d]}(E_{2})\to bv_{[a+c,b+d]}(E),\;
f\ast g:=\mathcal{F}^{-1}(\mathcal{F}(f)\mathcal{F}(g)).
\]
The convolution is well-defined, bilinear and continuous.
\item[c)] If $E_{1}=E_{2}$ and $\cdot\:\colon E_{1}\times E_{2}\to E$ is commutative, 
then the convolution is commutative as well. 
If $E=E_{1}=E_{2}=\C$ and $\cdot\:\colon E_{1}\times E_{2}\to E$ is the multiplication, 
then the convolution is associative.
\end{enumerate}
\end{thm}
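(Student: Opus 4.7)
The plan is to address the three parts in order, using the Fourier isomorphisms \prettyref{thm:fourier_unbounded_interval} and \prettyref{thm:fourier_bounded_interval} to convert convolution into pointwise multiplication, and controlling the latter via the bilinear bound \eqref{P.0.0.11} combined with the additivity $H_{[a,b]+[c,d]}=H_{[a,b]}+H_{[c,d]}$ from \prettyref{prop:supporting_function}~d).

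For part a), the hypothesis ``$a,c\neq-\infty$ or $b,d\neq\infty$'' ensures that $f$ and $g$ share a common holomorphic domain ($-\h$, $\h$, or $\C$), that $[a+c,b+d]\neq\overline{\R}$, and that the target $\mathcal{FO}_{[a+c,b+d]}(E)$ is defined on that same domain, so $fg$ is holomorphic where required. For the growth bound I would fix $k\in\N$ and $\alpha\in\mathfrak{A}$, pick $\beta,\omega,D$ according to \eqref{P.0.0.11}, and estimate
\[
 p_\alpha(f(z)g(z))\leq D\,p_\beta(f(z))\,p_\omega(g(z))\leq D\,|f|_{K,\beta,[a,b]}\,|g|_{K,\omega,[c,d]}\,e^{2|z|/K}\,w(\im z),
\]
where $w(\im z)$ is the product of the two weight factors inherited from the two seminorms and, by the additivity of $H$, matches the target weight $e^{-H_{[a+c,b+d]}(\im z)}$ (or its analogue $e^{k|\im z|}$ in the $\pm\infty$-endpoint cases). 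Choosing $K\geq 2k$ so that the $|z|$-exponent is non-positive after multiplying by $e^{-|z|/k}$, and (when an endpoint equals $\pm\infty$, so $w_a^\pm$ itself carries a $k$-dependence) additionally $K$ dominating a combination like $k-c$ or $k-a$, yields the estimate
\[
 |fg|_{k,\alpha,[a+c,b+d]}\leq D\,|f|_{K,\beta,[a,b]}\,|g|_{K,\omega,[c,d]}.
\]
The main obstacle is purely bookkeeping: the piecewise definition of $w_a^{\pm}$ together with the three conventions (entire, upper half-plane, lower half-plane) forces a short case analysis in the choice of $K$.

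For part b), if $f\in bv_{[a,b]}(E_{1})$ and $g\in bv_{[c,d]}(E_{2})$, then \prettyref{thm:fourier_unbounded_interval} and \prettyref{thm:fourier_bounded_interval} give $\mathcal{F}(f)\in\mathcal{FO}_{[a,b]}(E_{1})$ and $\mathcal{F}(g)\in\mathcal{FO}_{[c,d]}(E_{2})$; part a) places the product in $\mathcal{FO}_{[a+c,b+d]}(E)$; and $\mathcal{F}^{-1}$, available on the latter space since $[a+c,b+d]\neq\overline{\R}$, returns $f\ast g\in bv_{[a+c,b+d]}(E)$, so the convolution is well-defined. Bilinearity follows from that of the pointwise product and the linearity of $\mathcal{F}$ and $\mathcal{F}^{-1}$. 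Continuity is obtained by composing the three continuity estimates (for $\mathcal{F}$, the product bound from part a), and $\mathcal{F}^{-1}$), producing an inequality of the schematic form
\[
 \vertiii{f\ast g}_{q,\alpha,[a+c,b+d]}^{\wedge}\leq C\,\vertiii{f}_{r,\beta,[a,b]}^{\wedge}\,\vertiii{g}_{r,\omega,[c,d]}^{\wedge},
\]
which is the asserted bilinear continuity.

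For part c), commutativity of $\cdot$ gives $\mathcal{F}(f)\mathcal{F}(g)=\mathcal{F}(g)\mathcal{F}(f)$ pointwise, so applying $\mathcal{F}^{-1}$ yields $f\ast g=g\ast f$. For associativity in the scalar case $E=E_{1}=E_{2}=\C$, using $\mathcal{F}\mathcal{F}^{-1}=\operatorname{id}$ on the relevant $\mathcal{FO}$-space, one computes
\[
 (f\ast g)\ast h=\mathcal{F}^{-1}\!\bigl(\mathcal{F}(f)\mathcal{F}(g)\mathcal{F}(h)\bigr)=f\ast(g\ast h),
\]
the middle equality being the associativity of ordinary complex multiplication.
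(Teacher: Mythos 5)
Your proposal is correct and follows essentially the same route as the paper: estimate the pointwise product via \eqref{P.0.0.11} together with the additivity of the supporting function, using an enlarged seminorm index $K\geq 2k$ (further enlarged to dominate $k-c$ or $k-a$ when an endpoint is $\pm\infty$, exactly as in the paper's choice $n\geq k+|c|$), and then transport everything through the Fourier isomorphisms of \prettyref{thm:fourier_unbounded_interval} and \prettyref{thm:fourier_bounded_interval}. The one step you gloss over is the holomorphy of $fg$ itself, which the paper checks explicitly by writing out the difference quotient of $f\cdot g$ and using \eqref{P.0.0.11} to see that it converges to $f'\cdot g+f\cdot g'$; since $f$ and $g$ take values in different spaces paired only by a bilinear map, this deserves a line rather than being inferred from the domains alone.
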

\begin{proof}
a) Let $a,c\neq -\infty$ and $b,d=\infty$. Let $f\in\mathcal{FO}_{[a,\infty]}(E_{1})$ 
and $g\in\mathcal{FO}_{[c,\infty]}(E_{2})$. Due to the bilinearity of the map $\cdot\:\colon E_{1}\times E_{2}\to E$ 
we get for $z\in-\h$ and $h\in\C$, $h\neq 0$,
\begin{flalign*}
&\quad\;\frac{f(z+h)\cdot g(z+h)-f(z)\cdot g(z)}{h}-f'(z)\cdot g(z)-f(z)\cdot g'(z)\\
&= \bigl(\frac{f(z+h)-f(z)}{h}-f'(z)\bigr)\cdot g(z+h)+f'(z)\cdot (g(z+h)-g(z)) \\
&\phantom{=}+f(z)\cdot\bigl(\frac{g(z+h)-g(z)}{h}-g'(z)\bigr).
\end{flalign*}
This implies that $fg\in\mathcal{O}(-\h,E)$ by \eqref{P.0.0.11}. 
First, we consider the cases $a,c\in\R$ or $a=c=\infty$. Let $\alpha\in\mathfrak{A}$ and $k\in\N$. 
Due to \eqref{P.0.0.11} there are $\beta\in\mathfrak{B}$, $\omega\in\Omega$ and $D>0$ such that
\[
 \|fg\|_{k,\alpha,[a+c,\infty]}\leq D\|f\|_{2k,\beta,[a,\infty]}\|g\|_{2k,\omega,[c,\infty]}.
\]
Now, let $a:=\infty$ or $c:=\infty$ but $a\neq c$. W.l.o.g.\ $a=\infty$. 
Then there is $n\in\N$ such that $k+|c|\leq n$ and 
\[
 \|fg\|_{k,\alpha,\{\infty\}}\leq D\|f\|_{2n,\beta,\{\infty\}}\|g\|_{2n,\omega,[c,\infty]}.
\]
Hence $fg\in\mathcal{FO}_{[a+c,\infty]}(E)$. The proof in the other cases is analogous.

b) The convolution is well-defined by part a) and \prettyref{thm:fourier_unbounded_interval} resp.\
\prettyref{thm:fourier_bounded_interval}. The bilinearity is due to the bilinearity of the map 
$\cdot\:\colon E_{1}\times E_{2}\to E$. The continuity follows directly from \eqref{P.0.0.11} 
and \prettyref{thm:fourier_unbounded_interval} resp.\ \prettyref{thm:fourier_bounded_interval}.

c) This is obviously true.
\end{proof}

\begin{rem}
In the following cases condition \eqref{P.0.0.11} is fulfilled.
\begin{enumerate}
\item [a)] Let $E_{1}=E_{2}$ and $E$ be complex Banach spaces and $a\colon E_{1}\times E_{1}\to E$ 
a continuous bilinear form. Define $x\cdot y:= a(x,y)$ for $x,y\in E_{1}$. If $a$ is symmetric as well, 
then $\cdot$ is commutative.
\item [b)] Let $E_{2},E$ be complex Banach spaces and $E_{1}:=L(E_{2},E)$. Define $x\cdot y:= x(y)$ 
for $x\in E_{1}$, $y\in E_{2}$. 
\item [c)] Let $X,Y,Z$ be complex Banach spaces, $E_{1}:=L(X,Y)$, $E_{2}:=L(Y,X)$ and $E:=L(Y,Z)$. 
Define $x \cdot y:= x\circ y$ for $x\in E_{1}$, $y\in E_{2}$. 
\end{enumerate}
\end{rem}

If one of the Fourier hyperfunctions has real compact support, we may define the convolution in a more common way.

\begin{cor}\label{cor:convolution_standard}
Let $(E,(p_{\alpha})_{\alpha\in\mathfrak{A}})$, $(E_{1},(p_{\beta})_{\beta\in\mathfrak{B}})$ 
and $(E_{2},(p_{\omega})_{\omega\in\Omega})$ be sequentially complete $\C$-lcHs such that 
a canonical bilinear map $\cdot\:\colon E_{1}\times E_{2}\to E$ is defined with the property
\[
\forall\; \alpha\in\mathfrak{A}\;\exists\;\beta\in\mathfrak{B},\,\omega\in\Omega,\,D>0\;\forall\;x\in E_{1},\,y\in E_{2}:
\;p_{\alpha}(x\cdot y)\leq D p_{\beta}(x)p_{\omega}(y).
\]
Let $[a,b]\subset\R$ and $[c,d]\varsubsetneq\overline{\R}$. We set 
\begin{gather*}
 \circledast\:\colon bv_{[a,b]}(E_{1})\times bv_{[c,d]}(E_{2})\to bv_{[a+c,b+d]}(E),\\
[F] \circledast [G]:=\bigl[z\mapsto \int_{\gamma_{[a,b],2n}}F(w)G(z-w)\d w\bigr],
\end{gather*}
where for $z\in S_{n}([a+c,b+d])$, $n\in\N$, the path $\gamma_{[a,b],2n}$ goes along the boundary of $U_{2n}([a,b])$ 
with clockwise orientation. Then we have
\[
[F] \circledast [G]=[F]\ast[G]
\]
for $[F]\in  bv_{[a,b]}(E_{1})$ and $[G]\in bv_{[c,d]}(E_{2})$. 
\end{cor}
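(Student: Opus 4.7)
The plan is to show that both $[F]\circledast[G]$ and $[F]\ast[G]$ define the same element of $bv_{[a+c,b+d]}(E)$ by computing their Fourier transforms and appealing to the injectivity of $\mathcal{F}_{[a+c,b+d]}$ from \prettyref{thm:fourier_unbounded_interval} (if $[c,d]$ is unbounded) and \prettyref{thm:fourier_bounded_interval} (if $[c,d]\subset\R$).

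First, I would verify that $\circledast$ is well-defined. For fixed $n\in\N$ and $z\in S_{n}([a+c,b+d])$, the map $w\mapsto F(w)\cdot G(z-w)$ is continuous on $\gamma_{[a,b],2n}$ and holomorphic in a neighbourhood of the enclosed region once the paths $\gamma_{[a+c,b+d]}$, $\gamma_{[a,b],2n}$, and $\gamma_{[c,d]}$ are set up compatibly so that $z-w\notin [c,d]\cap\C$ for every $w\in\gamma_{[a,b],2n}$. Holomorphicity of the integral in $z$ follows by differentiation under the integral using \eqref{P.0.0.11}; independence from the choice of $n$ (large enough) is a consequence of Cauchy's integral theorem. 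To see that the equivalence class does not depend on the representatives, one notes that if $F\in\mathcal{O}^{exp}(\overline{\C},E_{1})$ then $w\mapsto F(w)G(z-w)$ extends holomorphically to the bounded region enclosed by $\gamma_{[a,b],2n}$, so the clockwise integral vanishes; the case of a globally holomorphic $G$ is handled by deforming $\gamma_{[a,b],2n}$ outward and using the exponential bounds on $F$ and $G$ to push the remaining contribution to zero. A seminorm estimate based on \eqref{P.0.0.11} and the defining seminorms $\vertiii{F}_{\cdot,\beta,[a,b]}$, $\vertiii{G}_{\cdot,\omega,[c,d]}$ then places the resulting function in $\mathcal{O}^{exp}(\overline{\C}\setminus [a+c,b+d],E)$.

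The main computation is the Fourier transform of $[F]\circledast [G]$. For $\zeta$ in the half-plane appropriate to $[c,d]$ (namely $\im(\zeta)<0$ if $d=\infty$, $\im(\zeta)>0$ if $c=-\infty$, and $\zeta\in\C$ otherwise) and using the path $\gamma_{[a+c,b+d]}$ of \prettyref{lem:fourier_unbounded_interval} or \prettyref{lem:fourier_bounded_interval}, one obtains the iterated integral
\[
\mathcal{F}_{[a+c,b+d]}([F]\circledast [G])(\zeta)=\int_{\gamma_{[a+c,b+d]}}\Bigl(\int_{\gamma_{[a,b],2n}}F(w)\cdot G(z-w)\,\d w\Bigr)e^{-iz\zeta}\,\d z.
\]
The key analytic step is a Fubini--Tonelli argument: after composing with an arbitrary $e'\in E'$ and invoking \eqref{P.0.0.11}, the defining bounds on $F,G$ together with the decay of $e^{-iz\zeta}$ along $\gamma_{[a+c,b+d]}$ (which is exactly the input used in the proofs of \prettyref{lem:fourier_unbounded_interval} and \prettyref{lem:fourier_bounded_interval}) yield absolute convergence and permit the interchange of the order of integration. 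Substituting $v:=z-w$ in the inner integral and deforming $\gamma_{[a+c,b+d]}-w$ to $\gamma_{[c,d]}$ by Cauchy's theorem inside the holomorphy domain of $G$ then gives
\[
\mathcal{F}_{[a+c,b+d]}([F]\circledast [G])(\zeta)=\mathcal{F}_{[a,b]}([F])(\zeta)\cdot \mathcal{F}_{[c,d]}([G])(\zeta)=\mathcal{F}_{[a+c,b+d]}([F]\ast [G])(\zeta),
\]
the second equality being the definition of $\ast$ in \prettyref{thm:convolution}~b) (here one also uses that $\gamma_{[a,b],2n}$ is an admissible path for computing $\mathcal{F}_{[a,b]}([F])$ by \prettyref{lem:fourier_bounded_interval}).

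Injectivity of $\mathcal{F}_{[a+c,b+d]}$ on $bv_{[a+c,b+d]}(E)$ then forces $[F]\circledast [G]=[F]\ast [G]$. The main obstacles are the careful bookkeeping of the paths so that $\gamma_{[a+c,b+d]}-w$ lies outside $[c,d]$ for every $w\in\gamma_{[a,b],2n}$, and the uniform exponential estimates needed to justify Fubini and the contour deformation; these have to be handled separately for the three shapes of $[c,d]$ (bounded, upper extended half-line, lower extended half-line), because the half-plane on which $\mathcal{F}([G])$ is available changes accordingly.
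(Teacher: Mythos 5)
Your overall strategy is the same as the paper's: set up the contour integral, check that $z-w$ stays off $[c,d]$, establish the exponential bounds so that $F\circledast G\in\mathcal{O}^{exp}(\overline{\C}\setminus[a+c,b+d],E)$, and then prove $\mathcal{F}_{[a+c,b+d]}([F\circledast G])=\mathcal{F}_{[a,b]}([F])\,\mathcal{F}_{[c,d]}([G])$ via Fubini--Tonelli (after composing with $e'\in E'$ and using \eqref{P.0.0.11}), a substitution, and a contour deformation, concluding by injectivity of the Fourier transform from \prettyref{thm:fourier_unbounded_interval} resp.\ \prettyref{thm:fourier_bounded_interval} and the definition of $\ast$ in \prettyref{thm:convolution}. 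The hard geometric work you flag at the end (that $z-\gamma_{[a,b],2n}$ stays in $S_{2n}([c,d])$ for $z\in S_{n}([a+c,b+d])$, and that $\gamma_{[a+c,b+d]}-w$ encircles $[c,d]$) is indeed where most of the paper's effort goes, and your sketch correctly identifies it.

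One sub-step of your argument would fail as stated: the independence of $[F]\circledast[G]$ from the representative of $[G]$. If $G\in\mathcal{O}^{exp}(\overline{\C},E_{2})$, the integrand $w\mapsto F(w)\cdot G(z-w)$ is still singular on $[a,b]$ (because $F$ is), so the clockwise integral over $\gamma_{[a,b],2n}$ does not vanish by shrinking the contour; and deforming $\gamma_{[a,b],2n}$ \emph{outward} does not help either, since the boundaries of $U_{2m}([a,b])$ remain bounded curves on which the integrand is only controlled by growing exponential bounds, so there is no decay to ``push the remaining contribution to zero''. This gap is harmless for the theorem, because representative-independence already follows from the identity $\mathcal{F}_{[a+c,b+d]}([F\circledast G])=\mathcal{F}_{[a,b]}([F])\,\mathcal{F}_{[c,d]}([G])$ together with injectivity of $\mathcal{F}_{[a+c,b+d]}$ --- the right-hand side depends only on the classes $[F]$ and $[G]$ --- and this is exactly how the paper obtains it. You should therefore drop the direct contour argument for the $G$-component and let the Fourier identity do that work.
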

\begin{proof}
We restrict our considerations to the case that $[c,d]\neq \pm\{\infty\}$. The other case is similar.

(i) Let $[F]\in  bv_{[a,b]}(E_{1})$, $[G]\in bv_{[c,d]}(E_{2})$, $n\in\N$ and $\tfrac{1}{2n}<\widetilde{c}<\tfrac{1}{n}$.
Then 
\[
f\colon \Bigl(U_{\nicefrac{1}{\widetilde{c}}}([a,b])\setminus \overline{U_{3n}([a,b])}\Bigr)\times S_{n}([a+c,b+d])\to E,\; 
f(w,z):=F(w)G(z-w),
\] 
is well-defined, i.e.\ $z-w\in\C\setminus [c,d]$ for all
$w\in V:=U_{\nicefrac{1}{\widetilde{c}}}([a,b])\setminus \overline{U_{3n}([a,b])}$ and $z\in U:=S_{n}([a+c,b+d])$. 
Let $w\in V$ and $z\in U$ such that $y:=\im(z)=\im(w)$. This can only happen if $|y|=|\im(z)|<\widetilde{c}$. 
Then $z=a+c-\sqrt{r^{2}-y^2}+iy$ or $z=b+d+\sqrt{r^{2}-y^2}+iy$ for some $r>\tfrac{1}{n}$. 
If $w=t+iy$ with some $t\in[a,b]$, then 
\[
z-w=c+a-t-\sqrt{r^{2}-y^2}<c
\;\text{or}\;
z-w=d+b-t+\sqrt{r^{2}-y^2}>d.
\] 
If $w=a-\sqrt{R^{2}-y^2}+iy$ for some $\tfrac{1}{3n}<R<\widetilde{c}$, then $R<r$ and 
\[
z-w=c+\sqrt{R^{2}-y^2}-\sqrt{r^{2}-y^2}<c
\;\text{or}\;
z-w=d+(b-a)+\sqrt{r^{2}-y^2}+\sqrt{R^{2}-y^2}>d.
\] 
If $w=b+\sqrt{R^{2}-y^2}+iy$ for some $\tfrac{1}{3n}<R<\widetilde{c}$, then 
\[
z-w=c+(a-b)+\sqrt{R^{2}-y^2}-\sqrt{r^{2}-y^2}<c
\;\text{or}\;
z-w=d+\sqrt{r^{2}-y^2}-\sqrt{R^{2}-y^2}>d.
\] 
Hence we have $z-w\in\C\setminus\R$ or $z-w\in\R\setminus [c,d]$ for all $w\in V$ and $z\in U$, i.e.\ 
$z-w\in\C\setminus [c,d]$. 

(ii) It follows from \eqref{P.0.0.11} that $f(w,\cdot)$ is holomorphic on $U$ for all $w\in V$, 
and that $f(\cdot,z)$ is holomorphic on $V$ for all $z\in U$. Therefore 
\[
z\mapsto \int_{\gamma_{[a,b],2n}}F(w)G(z-w)\d w
\]
is holomorphic on $U=S_{n}([a+c,b+d])$ by differentiation under the Pettis integral w.r.t.\ the parameter $z$
(see \cite[Lemma 4.8 b), p.\ 259]{kruse2019_4}). Due to Cauchy's integral theorem 
\[
\int_{\gamma_{[a,b],2n}}F(w)G(z-w)\d w =\int_{\gamma_{[a,b],2k}}F(w)G(z-w)\d w
\] 
for $z\in S_{n}([a+c,b+d])$ and $k\in\N$ with $k>n$. Thus we get a well-defined holomorphic function 
$F\circledast G \colon\C\setminus [a+c,b+d]\to E$ by setting
\[
(F\circledast G)(z):=\int_{\gamma_{[a,b],2n}}F(w)G(z-w)\d w,\quad z\in  S_{n}([a+c,b+d]),
\]
for $n\in\N$.

(iii) Next, we show that $F\circledast G\in\mathcal{O}^{exp}(\overline{\C}\setminus [a+c,b+d],E)$. We claim that 
$z-\gamma_{[a,b],2n}\subset S_{2n}([c,d])$ for every $z\in S_{n}([a+c,b+d])$. Let the path $\gamma_{[a,b],2n}$ 
be parametrised by $[0,1]$. 

(iii.1) We note that 
\[
|\im(z-\gamma_{[a,b],2n}(t))|< n+\frac{1}{2n}<2n
\]
for all $z\in S_{n}([a+c,b+d])$ and $t\in [0,1]$. If $z\in S_{n}([a+c,b+d])$ with $|\im(z)|>\tfrac{1}{n}$, then 
\[
|\im(z-\gamma_{[a,b],2n}(t))|> \frac{1}{n}-\frac{1}{2n}=\frac{1}{2n}
\]
for all $t\in[0,1]$. 

(iii.2) Let $z\in S_{n}([a+c,b+d])$ with $|y|:=|\im(z)|\leq\tfrac{1}{n}$. Then we have $\re(z)<a+c$ or 
$\re(z)>b+d$. In particular, there is $r>\tfrac{1}{n}$ such that 
\[
z=a+c-\sqrt{r^{2}-y^2}+iy
\;\text{or}\;
z=b+d+\sqrt{r^{2}-y^2}+iy.
\] 

(iii.2.1) Let $t\in [0,1]$ such that $\gamma_{[a,b],2n}(t)= t_{0}\pm i\tfrac{1}{2n}$ for some $t_{0}\in[a,b]$. 
If $\re(z)<a+c$, then 
\[
z-\gamma_{[a,b],2n}(t)=c+a-t_{0}-\sqrt{r^{2}-y^2}+i\bigl(y\mp\frac{1}{2n}\bigr)
\]
and thus $\re(z-\gamma_{[a,b],2n}(t))<c$. This implies 
\begin{flalign*}
&\hspace{0.35cm} \d (z-\gamma_{[a,b],2n}(t),[c,d]\cap\C)\\
&=|z-\gamma_{[a,b],2n}(t)-c|
 =\sqrt{(a-t_{0})^{2}-2(a-t_{0})\sqrt{r^{2}-y^{2}}+r^{2}-y^{2}+y^{2}\mp y\frac{1}{n}+\frac{1}{4n^{2}}}\\
&\geq \sqrt{r^{2}\mp y\frac{1}{n}+\frac{1}{4n^{2}}}
 \geq \sqrt{r^{2}-\frac{1}{n^{2}}+\frac{1}{4n^{2}}}
 > \frac{1}{2n}.
\end{flalign*}
If $\re(z)>b+d$, then 
\[
z-\gamma_{[a,b],2n}(t)=d+b-t_{0}+\sqrt{r^{2}-y^2}+i\bigl(y\mp\frac{1}{2n}\bigr)
\]
and thus $\re(z-\gamma_{[a,b],2n}(t))>d$. This implies 
\begin{flalign*}
&\hspace{0.35cm} \d (z-\gamma_{[a,b],2n}(t),[c,d]\cap\C)\\
&=|z-\gamma_{[a,b],2n}(t)-d|
 =\sqrt{(b-t_{0})^{2}+2(b-t_{0})\sqrt{r^{2}-y^{2}}+r^{2}-y^{2}+y^{2}\mp y\frac{1}{n}+\frac{1}{4n^{2}}}\\
&\geq \sqrt{r^{2}\mp y\frac{1}{n}+\frac{1}{4n^{2}}}
 > \frac{1}{2n}.
\end{flalign*}

(iii.2.2) Let $t\in [0,1]$ such that $\gamma_{[a,b],2n}(t)=a-\sqrt{\frac{1}{4n^{2}}-y_{1}^2}+iy_{1}$ for some 
$|y_{1}|<\tfrac{1}{2n}$. If $\re(z)<a+c$, then 
\[
z-\gamma_{[a,b],2n}(t)=c+\sqrt{\frac{1}{4n^{2}}-y_{1}^2}-\sqrt{r^{2}-y^2}+i(y-y_{1}).
\]
If $|y-y_{1}|\leq\tfrac{1}{2n}$, then 
\[
|y_{1}^{2}-y^{2}|=|y_{1}-y||y_{1}+y|<\frac{1}{2n}\bigl(\frac{1}{2n}+\frac{1}{n}\bigr)=\frac{3}{4n^{2}},
\]
which yields
\[
\frac{1}{4n^{2}}-r^2<\frac{1}{4n^{2}}-\frac{1}{n^{2}}=-\frac{3}{4n^{2}}<y_{1}^{2}-y^{2}
\]
and thus 
\[
\sqrt{\frac{1}{4n^{2}}-y_{1}^2}-\sqrt{r^{2}-y^2}<0.
\]
Hence we have $\re(z-\gamma_{[a,b],2n}(t))<c$ and 
\begin{flalign*}
&\hspace{0.35cm} \d (z-\gamma_{[a,b],2n}(t),[c,d]\cap\C)\\
&=|z-\gamma_{[a,b],2n}(t)-c|
 =\sqrt{\Bigl(\sqrt{\frac{1}{4n^{2}}-y_{1}^2}-\sqrt{r^{2}-y^2}\Bigr)^{2}+(y-y_{1})^{2}}\\
&=\sqrt{\frac{1}{4n^{2}}-2\sqrt{\frac{1}{4n^{2}}-y_{1}^2}\sqrt{r^{2}-y^2}+r^{2}-2yy_{1}}
\end{flalign*}
if $\re(z)<a+c$ and $|y-y_{1}|\leq\tfrac{1}{2n}$. We claim that 
\begin{equation}\label{eq:conv_standard}
-2\sqrt{\frac{1}{4n^{2}}-y_{1}^2}\sqrt{r^{2}-y^2}+r^{2}-2yy_{1}> 0,
\end{equation}
which then implies $\d (z-\gamma_{[a,b],2n}(t),[c,d]\cap\C)>\tfrac{1}{2n}$. The inequality \eqref{eq:conv_standard} 
is equivalent to 
\[
 r^{4}-4yy_{1}r^{2}+4y_{1}^{2}y^{2}
> 4\bigl(\frac{r^{2}}{4n^{2}}-\frac{y^{2}}{4n^{2}}-y_{1}^{2}r^{2}+y_{1}^{2}y^{2}\bigr),
\]
which is again equivalent to
\[
0< r^{4}-\frac{r^{2}}{n^{2}}-4yy_{1}r^{2}+\frac{y^{2}}{n^{2}}+4y_{1}^{2}r^{2}=:r^{4}-\frac{r^{2}}{n^{2}}+h(y_{1}).
\]
We note that $h'(y_{1})=8r^{2}y_{1}-4r^{2}y$, $h'(\tfrac{y}{2})=0$ and $h''(y_{1})=8r^{2}>0$, yielding 
\[
 r^{4}-\frac{r^{2}}{n^{2}}+h(y_{1})
\geq r^{4}-\frac{r^{2}}{n^{2}}+h(\tfrac{y}{2})
=r^{4}-\frac{r^{2}}{n^{2}}+\bigl(\frac{1}{n^{2}}-r^{2}\bigr)y^{2}
=\bigl(r^{2}-\frac{1}{n^{2}}\bigr)\bigl(r^{2}-y^{2}\bigr)> 0
\]
as $r>\tfrac{1}{n}$ and $|y|\leq\tfrac{1}{n}$. Thus our claim \eqref{eq:conv_standard} is true.

If $\re(z)>b+d$, then 
\[
z-\gamma_{[a,b],2n}(t)=d+b-a+\sqrt{r^{2}-y^2}+\sqrt{\frac{1}{4n^{2}}+y_{1}^2}+i(y-y_{1}),
\]
which implies $\re(z-\gamma_{[a,b],2n}(t))>d$ and 
\begin{flalign*}
&\hspace{0.35cm} \d (z-\gamma_{[a,b],2n}(t),[c,d]\cap\C)\\
&=|z-\gamma_{[a,b],2n}(t)-d|
 =\sqrt{\Bigl(b-a+\sqrt{r^{2}-y^2}+\sqrt{\frac{1}{4n^{2}}-y_{1}^2}\Bigr)^{2}+(y-y_{1})^{2}}\\
&\geq\sqrt{r^{2}-y^2+\frac{1}{4n^{2}}-y_{1}^2+y^{2}-2yy_{1}+y_{1}^{2}}\\
&=\sqrt{\frac{1}{4n^{2}}+r^{2}-2yy_{1}}> \frac{1}{2n}
\end{flalign*}
because 
\[
r^{2}-2yy_{1}> r^{2}-2\frac{1}{n}\frac{1}{2n}=r^{2}-\frac{1}{n^{2}}>0
\]
as $r>\tfrac{1}{n}$. 

Similarly, we can handle the case that $\gamma_{[a,b],2n}(t)=b+\sqrt{\frac{1}{4n^{2}}-y_{1}^2}+iy_{1}$ for some 
$|y_{1}|<\tfrac{1}{2n}$ and $t\in [0,1]$. Altogether we obtain $z-\gamma_{[a,b],2n}\subset S_{2n}([c,d])$ 
for every $z\in S_{n}([a+c,b+d])$.

Let $\alpha\in\mathfrak{A}$. We observe that 
\begin{align*}
 -\frac{1}{n}|\re(z)|
&\leq -\frac{1}{2n}|\re(z-\gamma_{[a,b],2n}(t))|
 +\frac{1}{2n}\max\Bigl(\bigl|a-\frac{1}{2n}\bigr|,\bigl|b+\frac{1}{2n}\bigr|\Bigr) \\
&=: -\frac{1}{2n}|\re(z-\gamma_{[a,b],2n}(t))|+C_{n,a,b} .
\end{align*}
Denoting by  by 
$\ell(\gamma_{[a,b],2n}):=\int_{0}^{1}|\gamma_{[a,b],2n}'(t)|\d t$ the length of the path $\gamma_{[a,b],2n}$,
we get for $z\in S_{n}([a+c,b+d])$
\begin{flalign*}
&\hspace{0.35cm} p_{\alpha}((F\circledast G)(z))e^{-\frac{1}{n}|\re(z)|}\\
&=p_{\alpha}\Bigl(\int_{\gamma_{[a,b],2n}}F(w)G(z-w)\d w\Bigr)e^{-\frac{1}{n}|\re(z)|}\\
&\underset{\mathclap{\eqref{P.0.0.11}}}{\leq}D\ell(\gamma_{[a,b],2n})
 \sup_{t\in[0,1]}p_{\beta}\bigl(F(\gamma_{[a,b],2n}(t)\bigr)
 \sup_{t\in[0,1]}p_{\omega}\bigl(G(z-\gamma_{[a,b],2n}(t))\bigr)e^{-\frac{1}{n}|\re(z)|}\\
&\leq D\ell(\gamma_{[a,b],2n})e^{C_{n,a,b}} 
 \sup_{t\in[0,1]}p_{\beta}\bigl(F(\gamma_{[a,b],2n}(t)\bigr)
 \vertiii{G}_{2n,\omega,[c,d]}.
\end{flalign*}
Hence we obtain $F\circledast G\in\mathcal{O}^{exp}(\overline{\C}\setminus [a+c,b+d],E)$.

(iv) Let $\zeta\in\h$ if $-\infty\in [a+c,b+d]$, or $\zeta\in -\h$ if $\infty\in [a+c,b+d]$, 
or $\zeta\in\C$ if $[a+c,b+d]\subset\R$, respectively.
We choose $\tfrac{1}{n}<\widetilde{c}<n$ and let $w\in\gamma_{[a,b],2n}$. We claim that 
\begin{equation}\label{eq:conv_standard_1}
\int_{\gamma_{[a+c,b+d],\widetilde{c}}}G(z-w)e^{-i(z-w)\zeta}\d z=\mathcal{F}_{[c,d]}([G])(\zeta).
\end{equation}
Due to \prettyref{thm:fourier_unbounded_interval}  
resp.\ \prettyref{thm:fourier_bounded_interval} and Cauchy's integral theorem we only need to prove that 
there are $z_{1},z_{2}\in\gamma_{[a+c,b+d],\widetilde{c}}$ such that $z_{1}-w<c$ if $c>-\infty$, 
and $z_{2}-w>d$ if $d<\infty$, which implies that $\gamma_{[a+c,b+d],\widetilde{c}}-w$ encircles $[c,d]$.
We only consider the case $c\in\R$ and $d=\infty$. The other cases are similar. 
If $w=t\pm i\tfrac{1}{2n}$ for some $t\in[a,b]$, we choose $z_{1}:=a+c+\tfrac{1}{\widetilde{c}}e^{i\theta}$ 
where $\theta\in ]\tfrac{\pi}{2},\tfrac{3\pi}{2}[$ is the unique solution of 
$\sin(\theta)=\pm\tfrac{\widetilde{c}}{2n}$, and get $z_{1}-w=c+a-t+\tfrac{1}{\widetilde{c}}\cos(\theta)<c$. 
If $w=a-\sqrt{\tfrac{1}{4n^{2}}-y^2}+iy$ or $w=b+\sqrt{\tfrac{1}{4n^{2}}-y^2}+iy$ for some $|y|<\tfrac{1}{2n}$, 
then we choose $z_{1}:=a+c-\sqrt{\tfrac{1}{\widetilde{c}^{2}}-y^2}+iy$ and get 
$z_{1}-w=c+\sqrt{\tfrac{1}{4n^{2}}-y^2}-\sqrt{\tfrac{1}{\widetilde{c}^{2}}-y^2}<c$ or 
$z_{1}-w=c+a-b-\sqrt{\tfrac{1}{4n^{2}}-y^2}-\sqrt{\tfrac{1}{\widetilde{c}^{2}}-y^2}<c$.

Moreover, we have for every $e'\in E'$ that $e'(F(w)\,\cdot)\in E_{2}'$ by \eqref{P.0.0.11}, which implies 
\begin{flalign*}
&\hspace{0.35cm}\langle e', \int_{\gamma_{[a+c,b+d],\widetilde{c}}}F(w)G(z-w)e^{-i(z-w)\zeta}\d z\rangle\\
&=\int_{\gamma_{[a+c,b+d],\widetilde{c}}}\langle e',F(w)G(z-w)e^{-i(z-w)\zeta}\rangle\d z\\
&=\langle e'(F(w)\,\cdot),\int_{\gamma_{[a+c,b+d],\widetilde{c}}}G(z-w)e^{-i(z-w)\zeta}\d z\rangle\\
&=\langle e',F(w)\int_{\gamma_{[a+c,b+d],\widetilde{c}}}G(z-w)e^{-i(z-w)\zeta}\d z\rangle
\end{flalign*}
by the definition of Pettis integrability. It follows from the Hahn-Banach theorem that  
\begin{equation}\label{eq:conv_standard_2}
 \int_{\gamma_{[a+c,b+d],\widetilde{c}}}F(w)G(z-w)e^{-i(z-w)\zeta}\d z
=F(w)\int_{\gamma_{[a+c,b+d],\widetilde{c}}}G(z-w)e^{-i(z-w)\zeta}\d z.
\end{equation}
Analogously it follows that 
\begin{equation}\label{eq:conv_standard_3}
 \int_{\gamma_{[a,b],2n}}F(w)\mathcal{F}_{[c,d]}([G])(\zeta) e^{-iw\zeta}\d w 
=\mathcal{F}_{[a,b]}([F])(\zeta)\mathcal{F}_{[c,d]}([G])(\zeta),
\end{equation}
using that $e'(\cdot\,\mathcal{F}_{[c,d]}([G])(\zeta))\in E_{1}'$ by \eqref{P.0.0.11} for every $e'\in E'$ and 
\prettyref{thm:fourier_unbounded_interval} resp.\ \prettyref{thm:fourier_bounded_interval}.

We obtain by the Fubini-Tonelli theorem and \prettyref{thm:fourier_unbounded_interval}  
resp.\ \prettyref{thm:fourier_bounded_interval} again that 
\begin{flalign*}
&\hspace{0.35cm} \mathcal{F}_{[a+c,b+d]}([F\circledast G])(\zeta)\\
&=\int_{\gamma_{[a+c,b+d],\widetilde{c}}}(F\circledast G)(z)e^{-iz\zeta}\d z\\
&=\int_{\gamma_{[a+c,b+d],\widetilde{c}}}\int_{\gamma_{[a,b],2n}}F(w)G(z-w)\d w e^{-iz\zeta}\d z\\
&\underset{\mathclap{\eqref{eq:conv_standard_2}}}{=}\int_{\gamma_{[a,b],2n}}F(w)
 \int_{\gamma_{[a+c,b+d],\widetilde{c}}}G(z-w)e^{-i(z-w)\zeta}\d z e^{-iw\zeta}\d w \\
&\underset{\mathclap{\eqref{eq:conv_standard_1}}}{=}\int_{\gamma_{[a,b],2n}}F(w)\mathcal{F}_{[c,d]}([G])(\zeta) 
 e^{-iw\zeta}\d w 
 \underset{\mathclap{\eqref{eq:conv_standard_3}}}{=}\mathcal{F}_{[a,b]}([F])(\zeta)\mathcal{F}_{[c,d]}([G])(\zeta),
\end{flalign*}
implying $[F] \circledast [G]=[F]\ast[G]$ by \prettyref{thm:convolution}. Moreover, it follows from this equation 
that $[F] \circledast [G]$ does not depend on the representing functions $F$ and $G$.
\end{proof}

\begin{rem}
Let $E$, $E_{1}$ and $E_{2}$ be sequentially complete $\C$-lcHs, $E_{2}$ strictly admissible, 
$\cdot\:\colon E_{1}\times E_{2}\to E$ a canonical bilinear map with \eqref{P.0.0.11} and $[a,b]\subset\R$. 
Then the sheaf of $E_{2}$-valued Fourier hyperfunctions is flabby by 
\cite[Theorem 5.9 b), p.\ 33]{kruse2019_5} and for $g\in bv_{\overline{\R}}(E_{2})$ there are 
$g_{1}\in bv_{[-\infty,0]}(E_{2})$ and $g_{2}\in bv_{[0,\infty]}(E_{2})$ such that $g=g_{1}+g_{2}$ 
by \cite[Lemma 1.4.4, p.\ 36]{Kan}. Hence we may define the convolution 
\[
f\circledast g:= f\circledast g_{1} + f\circledast g_{2}\in bv_{[-\infty,b]}(E)+bv_{[a,\infty]}(E).
\]
for $f\in bv_{[a,b]}(E_{1})$ by \prettyref{cor:convolution_standard}.
\end{rem}

\begin{exa}\label{ex:dirac_hyp}
Let $F(z):=-\tfrac{1}{2\pi i z}$ for $z\neq 0$. Then $F\in\mathcal{O}^{exp}(\overline{\C}\setminus\{0\})$ 
and we call $\delta_{0}:=[F]\in bv_{\{0\}}$ \emph{Dirac hyperfunction} (see \cite[Examples 1.1.5 b), p.\ 15]{Kan}). 
Due to \eqref{P.0.0.10a} and Cauchy's integral formula we get $\mathcal{F}_{\{0\}}(\delta_{0})=1$ on $\C$,
$\mathcal{F}_{[-\infty,0]}(\delta_{0})=1$ on the upper halfplane and 
$\mathcal{F}_{[0,\infty]}(\delta_{0})=1$ on the lower halfplane. 
Hence $\delta_{0}$ is the neutral element of the convolution.
\end{exa}
\section{Asymptotic Fourier transform}
\label{sect:Asymptotic_Fourier_transform}
The results of the preceding sections allow us to define an asymptotic Fourier transform on the space 
$\mathcal{B}(A,E)$ of $E$-valued hyperfunctions with support in a closed interval $A\subset\R$.

\begin{thm}\label{thm:extension}
Let $E$ be an admissible $\C$-lcHs and $a\in\R$. Then the canonical 
(restriction) maps 
\[
\mathcal{R}_{[a,\infty[}\colon \mathcal{O}^{exp}(\overline{\C}\setminus [a,\infty],E)/
\mathcal{O}^{exp}(\overline{\C}\setminus\{\infty\},E)
\to \mathcal{B}([a,\infty[,E),\;[F]\mapsto [F],
\]
and 
\[
\mathcal{R}_{]-\infty,a]}\colon \mathcal{O}^{exp}(\overline{\C}\setminus [-\infty,a],E)/
\mathcal{O}^{exp}(\overline{\C}\setminus\{-\infty\},E)
\to \mathcal{B}(]-\infty,a],E),\;[F]\mapsto [F],
\]
are linear isomorphisms.
\end{thm}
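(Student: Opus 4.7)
My plan is to treat $\mathcal{R}_{[a,\infty[}$ in detail and then deduce the statement for $\mathcal{R}_{]-\infty,a]}$ by the reflection $z\mapsto -z$, which interchanges $[a,\infty]$ with $[-\infty,-a]$ and preserves every function space involved. Well-definedness is immediate: since $\{\infty\}\cap\C=\emptyset$ we have $\mathcal{O}^{exp}(\overline{\C}\setminus\{\infty\},E)\subset\mathcal{O}(\C,E)$, so the restriction descends to the quotient $\mathcal{B}([a,\infty[,E)=\mathcal{O}(\C\setminus[a,\infty[,E)/\mathcal{O}(\C,E)$.

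For injectivity, start from $F\in\mathcal{O}^{exp}(\overline{\C}\setminus[a,\infty],E)$ whose class in $\mathcal{B}([a,\infty[,E)$ is zero, so that $F$ extends to some $\widetilde{F}\in\mathcal{O}(\C,E)$; the task is to promote the given bound $\vertiii{F}_{n,\alpha,[a,\infty]}<\infty$ on $S_n([a,\infty])$ to the bound $\vertiii{\widetilde{F}}_{n,\alpha,\{\infty\}}<\infty$ on the larger set $S_n(\{\infty\})\supset S_n([a,\infty])$, for every $n\in\N$ and $\alpha\in\mathfrak{A}$. Unwinding the definitions, the extra piece $S_n(\{\infty\})\setminus S_n([a,\infty])$ lies in $\{z\in\C:d(z,[a,\infty[)\leq \nicefrac{1}{n}\}\setminus\overline{U_n(\{\infty\})}$. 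The first condition forces either $\re z\geq a$ with $|\im z|\leq\nicefrac{1}{n}$, or $|z-a|\leq\nicefrac{1}{n}$ with $\re z<a$; the exclusion of $\overline{U_n(\{\infty\})}=[n,\infty[+i[-\nicefrac{1}{n},\nicefrac{1}{n}]$ then forces $\re z\leq n$, so the extra piece is contained in the compactum $[a-\nicefrac{1}{n},n]+i[-\nicefrac{1}{n},\nicefrac{1}{n}]$ (together with a small half-disc near $a$). On this compactum $\widetilde{F}$ is bounded by continuity, so combining with the bound on $S_n([a,\infty])$ yields $\widetilde{F}\in\mathcal{O}^{exp}(\overline{\C}\setminus\{\infty\},E)$.

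For surjectivity, given $G\in\mathcal{O}(\C\setminus[a,\infty[,E)$, I aim to construct $\widetilde{G}\in\mathcal{O}^{exp}(\overline{\C}\setminus[a,\infty],E)$ with $\widetilde{G}-G\in\mathcal{O}(\C,E)$ by a cutoff-plus-$\overline{\partial}$ argument using admissibility. Choose $\chi\in\mathcal{C}^\infty(\C)$ equal to $1$ on a neighborhood of $[a,\infty[\cup\{\infty\}$ in $\overline{\C}$ and vanishing outside a slightly larger such neighborhood, so that $\operatorname{supp}(\overline{\partial}\chi)$ is disjoint from $[a,\infty[$. Then $G\cdot\overline{\partial}\chi$ extends smoothly to $\C$ by zero across $[a,\infty[$; if it belongs to $\mathcal{E}^{exp}(\overline{\C},E)$, admissibility (applied with $K=\emptyset$) produces $u\in\mathcal{E}^{exp}(\overline{\C},E)$ with $\overline{\partial}u=G\cdot\overline{\partial}\chi$ on $\C$. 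Setting $\widetilde{G}:=\chi G-u$ on $\C\setminus[a,\infty[$ yields a holomorphic function there (the $\overline{\partial}$-equations cancel) which lies in $\mathcal{O}^{exp}(\overline{\C}\setminus[a,\infty],E)$, and the discrepancy $\widetilde{G}-G=(\chi-1)G-u$ collapses on the set $\{\chi=1\}$ to $-u$, which is holomorphic on this neighborhood of $[a,\infty[$ (since $\overline{\partial}u=0$ wherever $\overline{\partial}\chi=0$); consequently $\widetilde{G}-G$ extends to an entire function, placing it in $\mathcal{O}(\C,E)$.

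The main obstacle is to arrange that $G\cdot\overline{\partial}\chi$ genuinely belongs to $\mathcal{E}^{exp}(\overline{\C},E)$: since $G$ carries no a priori growth control, the cutoff $\chi$ must be tailored so that its transition region lies where $G$ is locally bounded (via Cauchy estimates on annuli avoiding $[a,\infty[$), typically by shrinking the vertical thickness of the tube around $[a,\infty[$ as $|\re z|\to\infty$ and, if necessary, iterating in the style of Mittag-Leffler: at each stage subtract an entire correction from $G$ (licensed by a Runge-type approximation on the complement of $[a,\infty[\cup\{\infty\}$ in $\overline{\C}$, whose complement is connected) so that after a convergent sequence of steps the cutoff product lands in $\mathcal{E}^{exp}(\overline{\C},E)$. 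Once $\mathcal{R}_{[a,\infty[}$ is settled, the companion $\mathcal{R}_{]-\infty,a]}$ follows via the reflection $z\mapsto -z$ noted above.
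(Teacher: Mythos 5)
Your treatment of injectivity is correct and is essentially the paper's own argument, just with more detail: the paper simply asserts that an entire $F$ with $\vertiii{F}_{n,\alpha,[a,\infty]}<\infty$ lies in $\mathcal{O}^{exp}(\overline{\C}\setminus\{\infty\},E)$, and your observation that $S_{n}(\{\infty\})\setminus S_{n}([a,\infty])$ is contained in a compact set (essentially $[a-\nicefrac{1}{n},n]+i[-\nicefrac{1}{n},\nicefrac{1}{n}]$), on which the entire extension is automatically bounded, is exactly the justification. The reflection $z\mapsto -z$ for $\mathcal{R}_{]-\infty,a]}$ is also fine.

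The surjectivity argument has a genuine gap, and it sits precisely at the point you flag. A general $G\in\mathcal{O}(\C\setminus[a,\infty[,E)$ carries no growth restriction at all, while the support of $\overline{\partial}\chi$ for any cutoff $\chi$ that equals $1$ on a neighbourhood of $[a,\infty]$ in $\overline{\C}$ must, for every large $x$, meet the vertical segment $\{x+iy\;|\;\delta_{1}\leq|y|\leq\delta_{2}\}$ on which $\chi$ drops from $1$ to $0$. Already the scalar example $G(z)=e^{e^{z}}$ gives $|G(x+iy)|=e^{e^{x}\cos(y)}$, which for $|y|<\tfrac{\pi}{2}$ exceeds every bound $Ce^{\frac{1}{n}|x|}$; replacing $G$ by $e^{e^{mz}}$ defeats any fixed $\delta_{2}$, and shrinking the vertical thickness of the tube as $\re(z)\to\infty$ makes this worse, since the transition region then approaches the axis where such functions blow up fastest. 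So $G\cdot\overline{\partial}\chi\in\mathcal{E}^{exp}(\overline{\C},E)$ fails in general and no single cutoff can repair it; local boundedness via Cauchy estimates is not the issue, global exponential bounds are. Your fallback --- a Mittag--Leffler iteration with Runge-type entire corrections --- is indeed the mechanism that makes the statement true, but as written it is a gesture rather than a proof: one has to decompose the hyperfunction into a locally finite sum of compactly supported pieces, take for each piece a slowly increasing representative furnished by the duality (\prettyref{cor:duality_real_comp}), and then sum with entire corrections while controlling all seminorms $\vertiii{\cdot}_{n,\alpha,[a,\infty]}$ simultaneously; the vector-valued Runge step would also need justification for a general admissible lcHs $E$. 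The paper deliberately does not redo this work: it quotes \cite[5.7 Lemma, p.~25]{kruse2019_5} to obtain $\widetilde{F}\in\mathcal{O}^{exp}(\overline{\C}\setminus[a-\varepsilon,\infty],E)$ with $\widetilde{F}-F$ entire, and then extends $\widetilde{F}$ holomorphically across $[a-\varepsilon,a[$ by the entire difference to land in the correct quotient. To complete your proof you must either cite such a lemma or actually carry out the decomposition-and-correction scheme.
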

\begin{proof}
Let $F\in \mathcal{O}^{exp}(\overline{\C}\setminus [a,\infty],E)$ such that $\mathcal{R}_{[a,\infty[}([F])=0$. This means 
that $F\in\mathcal{O}(\C,E)$ and thus $F\in \mathcal{O}^{exp}(\overline{\C}\setminus \{\infty\},E)$, yielding 
$[F]=0$. Hence $\mathcal{R}_{[a,\infty[}$ is injective.

Let $[F]\in \mathcal{B}([a,\infty[,E)$ and fix $\varepsilon>0$. 
Then $F\in \mathcal{O}(\C\setminus ]a-\varepsilon,\infty[,E)$ and by \cite[5.7 Lemma, p.\ 25]{kruse2019_5} there 
is $\widetilde{F}\in \mathcal{O}^{exp}(\overline{\C}\setminus [a-\varepsilon,\infty],E)$ such that 
$\widetilde{F}-F\in\mathcal{O}(\C,E)$, more precisely, $\widetilde{F}-F$ extends to a function $G\in\mathcal{O}(\C,E)$. 
Now, we extend $\widetilde{F}$ by $\widetilde{F}(z):=G(z)+F(z)$ on $[a-\varepsilon,a[$ and obtain 
$\widetilde{F}\in\mathcal{O}^{exp}(\overline{\C}\setminus [a,\infty],E)$ with 
$\mathcal{R}_{[a,\infty[}([\widetilde{F}])=[F]$ by this extension. Thus $\mathcal{R}_{[a,\infty[}$ is surjective. 
The proof for $\mathcal{R}_{]-\infty,a]}$ is analogous.
\end{proof}

\begin{thm}\label{thm:asymptotic_one_sided_fourier}
Let $E$ be an admissible sequentially complete $\C$-lcHs and $A:=]-\infty,a]$ or $A:=[a,\infty[$ for some $a\in\R$. 
Then the asymptotic one-sided Fourier transform
\[
\mathcal{F}_{A}^{\mathcal{B}}\colon 
\mathcal{B}(A,E)\to 
\mathcal{FO}_{\overline{A}}(E)/\mathcal{FO}_{\overline{A}\setminus A}(E),\;
\mathcal{F}_{A}^{\mathcal{B}}(f):=[(\mathcal{F}\circ \mathcal{R}_{A}^{-1})(f)],
\]
is a linear isomorphism, where the closure $\overline{A}$ is taken in $\overline{\R}$.
\end{thm}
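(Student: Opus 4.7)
The plan is to factor $\mathcal{F}_A^{\mathcal{B}}$ as the composition of two linear isomorphisms obtained from \prettyref{thm:extension} and \prettyref{thm:fourier_unbounded_interval} by passing to quotients.

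First I would rewrite the source. The inclusion $\mathcal{O}^{exp}(\overline{\C}\setminus(\overline{A}\setminus A),E) \hookrightarrow \mathcal{O}^{exp}(\overline{\C}\setminus\overline{A},E)$, taken modulo the common subspace $\mathcal{O}^{exp}(\overline{\C},E)$, induces an injective linear map $\iota_{1}\colon bv_{\overline{A}\setminus A}(E) \to bv_{\overline{A}}(E)$. Combining this with the third isomorphism theorem and \prettyref{thm:extension} yields a linear isomorphism
\[
\widetilde{\mathcal{R}}_A \colon \mathcal{B}(A,E) \longrightarrow bv_{\overline{A}}(E)\big/\iota_{1}\bigl(bv_{\overline{A}\setminus A}(E)\bigr).
\]

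The main step is to show that $\mathcal{F}_{\overline{A}}$ descends to a linear isomorphism
\[
[\mathcal{F}_{\overline{A}}] \colon bv_{\overline{A}}(E)\big/\iota_{1}\bigl(bv_{\overline{A}\setminus A}(E)\bigr) \longrightarrow \mathcal{FO}_{\overline{A}}(E)\big/\mathcal{FO}_{\overline{A}\setminus A}(E).
\]
For this I need two ingredients. First, the inclusion $\mathcal{FO}_{\overline{A}\setminus A}(E) \hookrightarrow \mathcal{FO}_{\overline{A}}(E)$: since $\overline{A}\setminus A \in \{\{\infty\},\{-\infty\}\}$, a direct comparison of the weight functions in the definition of the seminorms $|\cdot|_{k,\alpha,\overline{A}\setminus A}$ and $|\cdot|_{k,\alpha,\overline{A}}$ shows that for every $k$ there is $k' \geq k$ for which the former dominates the latter on the correspondingly shrunken half-plane. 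Second, the compatibility $\mathcal{F}_{\overline{A}} \circ \iota_{1} = \mathcal{F}_{\overline{A}\setminus A}$ at the level of representatives: if $F \in \mathcal{O}^{exp}(\overline{\C}\setminus(\overline{A}\setminus A),E)$, then $F$ is entire on $\C$, so by Cauchy's integral theorem the contour integrals along $\gamma_{\overline{A}}$ and $\gamma_{\overline{A}\setminus A}$ agree. The $c$-independence of $\mathcal{F}_{K}$ established in \prettyref{lem:fourier_unbounded_interval}, together with the exponential decay of $F$ on the strips $S_{n}(\overline{A}\setminus A)$, takes care of the vanishing of the tail contributions that appear during the deformation. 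With compatibility in place, an elementary diagram chase using that both $\mathcal{F}_{\overline{A}}$ and $\mathcal{F}_{\overline{A}\setminus A}$ are isomorphisms by \prettyref{thm:fourier_unbounded_interval} shows that $[\mathcal{F}_{\overline{A}}]$ is bijective.

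Finally I would identify $\mathcal{F}_{A}^{\mathcal{B}} = [\mathcal{F}_{\overline{A}}] \circ \widetilde{\mathcal{R}}_A$ and conclude. I expect the compatibility step to be the only real technical point, but beyond the estimates already contained in \prettyref{lem:fourier_unbounded_interval} it only demands the standard Cauchy deformation argument for entire integrands with exponential decay in $|\re(z)|$.
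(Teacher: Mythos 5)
Your proposal is correct and is precisely the intended argument: the paper's own proof is the one-line remark that the statement follows from \prettyref{thm:extension} and \prettyref{thm:fourier_unbounded_interval}, and your factorisation through $bv_{\overline{A}}(E)/\iota_{1}\bigl(bv_{\overline{A}\setminus A}(E)\bigr)$ together with the inclusion $\mathcal{FO}_{\overline{A}\setminus A}(E)\subset\mathcal{FO}_{\overline{A}}(E)$ and the compatibility $\mathcal{F}_{\overline{A}}\circ\iota_{1}=\mathcal{F}_{\overline{A}\setminus A}$ is exactly what makes that one-liner rigorous. One small correction to your compatibility step: the exponential decay that controls the contour deformation comes from the kernel $e^{-iz\zeta}$ (with $\im(\zeta)\le -\tfrac{1}{k}$), not from $F$ itself, which is only slowly increasing; alternatively, taking the same $c$ for both paths reduces the difference to a closed bounded contour, on which Cauchy's integral theorem applies directly since $F$ is entire.
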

\begin{proof}
This follows directly from \prettyref{thm:extension} and \prettyref{thm:fourier_unbounded_interval}. 
\end{proof}

\begin{thm}\label{thm:extension_real_comp}
Let $E$ be a locally complete $\C$-lcHs and $K\subset\R$ a non-empty compact set. Then the canonical 
(restriction) map
\[
\mathcal{R}_{K}\colon bv_{K}(E)\to \mathcal{B}(K,E),\;[F]\mapsto [F],
\]
is a topological isomorphism and its inverse $\mathcal{R}_{K}^{-1}\colon \mathcal{B}(K,E)\to bv_{K}(E)$ 
is given by $\mathcal{R}_{K}^{-1}([F])=[\Psi_{K}([F])]$ where 
\[
\Psi_{K}([F])\colon \C\setminus K \to E,\; 
\Psi_{K}([F])(z):=\frac{i}{2\pi}\Bigl\langle \mathscr{H}_{K}([F]),\frac{e^{-(z-\cdot)^2}}{z-\cdot}\Bigr\rangle .
\]
\end{thm}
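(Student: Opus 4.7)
My plan is to derive the theorem from the duality results \prettyref{thm:duality} and \prettyref{cor:duality_real_comp} via a commutative diagram argument. The crucial point is that the integral formula defining $\mathscr{H}_{K}$ on $bv_{K}(E)$ and on $\mathcal{B}(K,E)$ is literally the same Pettis integral against the same path, so under the identification $\mathcal{P}_{\ast}(K)\cong\mathcal{A}(K)$ (valid for $K\subset\R$) we obtain the commutative diagram
\[
\begin{array}{ccc}
bv_{K}(E) & \xrightarrow{\;\mathscr{H}_{K}\;} & L_{b}(\mathcal{P}_{\ast}(K),E)\\[2pt]
\mathcal{R}_{K}\big\downarrow & & \big\downarrow \iota\\[2pt]
\mathcal{B}(K,E) & \xrightarrow{\;\mathscr{H}_{K}\;} & L_{b}(\mathcal{A}(K),E)
\end{array}
\]
in which both horizontal arrows are topological isomorphisms and the right vertical arrow $\iota$ is the topological isomorphism induced by transposing the isomorphism $\mathcal{A}(K)\cong\mathcal{P}_{\ast}(K)$.

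First, I would verify that $\mathcal{R}_{K}$ is a well-defined continuous linear map. Well-definedness follows from the inclusion $\mathcal{O}^{exp}(\overline{\C},E)\subset \mathcal{O}(\C,E)$. For continuity, one observes that any compact $L\subset\C\setminus K$ lies in some $S_{n}(K)$ (since $K\subset\R$ is compact, $L$ is bounded in imaginary part and separated from $K$), and on such $L$ the weight $e^{-|\re z|/n}$ is bounded below; analogously for $L\subset\C$. Hence the inclusions $\mathcal{O}^{exp}(\overline{\C}\setminus K,E)\hookrightarrow\mathcal{O}(\C\setminus K,E)$ and $\mathcal{O}^{exp}(\overline{\C},E)\hookrightarrow\mathcal{O}(\C,E)$ are continuous, and passing to quotients yields continuity of $\mathcal{R}_{K}$. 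Commutativity of the diagram is then immediate: for $F\in\mathcal{O}^{exp}(\overline{\C}\setminus K,E)$ the functional $\mathscr{H}_{K}([F])$ computed in either row is the integral over $\gamma_{K,n}$ against $\varphi\in\mathcal{O}_{n}(U_{n}(K))$, and such $\varphi$ are precisely the representatives of elements of $\mathcal{P}_{\ast}(K)\cong\mathcal{A}(K)$. Consequently
\[
\mathcal{R}_{K}=(\mathscr{H}_{K}^{\mathcal{B}})^{-1}\circ\iota\circ\mathscr{H}_{K}^{bv}
\]
is a composition of topological isomorphisms and hence itself a topological isomorphism.

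For the explicit form of the inverse, I would substitute the formula for $(\mathscr{H}_{K}^{bv})^{-1}$ given in \prettyref{thm:duality}, namely $T\mapsto\bigl[z\mapsto\tfrac{i}{2\pi}\bigl\langle T,\tfrac{e^{-(z-\cdot)^{2}}}{z-\cdot}\bigr\rangle\bigr]$, applied to $T:=\mathscr{H}_{K}([F])\in L_{b}(\mathcal{A}(K),E)\cong L_{b}(\mathcal{P}_{\ast}(K),E)$. This yields $\mathcal{R}_{K}^{-1}([F])=[\Psi_{K}([F])]$ with $\Psi_{K}$ as stated; note that for each $z\in\C\setminus K$ the kernel $\zeta\mapsto e^{-(z-\zeta)^{2}}/(z-\zeta)$ belongs to $\mathcal{P}_{\ast}(K)$ because $e^{-(z-\zeta)^{2}}$ decays like $e^{-(\re\zeta)^{2}}$ as $|\re\zeta|\to\infty$, which dominates every factor $e^{|\re\zeta|/n}$.

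I do not anticipate a real obstacle here; the diagram argument is formal and the remaining verifications are routine. The only place that requires a small amount of care is making sure that the identification $\mathcal{P}_{\ast}(K)\cong\mathcal{A}(K)$ makes the right square commute at the level of the explicit integrals, which it does because the path $\gamma_{K,n}$ and the integrand depend on $\varphi$ only through its values on a neighbourhood of $K$.
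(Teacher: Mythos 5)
Your proposal is correct and follows exactly the route of the paper, whose entire proof reads ``The statement is a consequence of \prettyref{thm:duality} and \prettyref{cor:duality_real_comp}''; your commutative diagram and the substitution of the explicit inverse from \prettyref{thm:duality} simply spell out the details that the paper leaves implicit.
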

\begin{proof}
The statement is a consequence of \prettyref{thm:duality} and \prettyref{cor:duality_real_comp}.
\end{proof}

\begin{cor}\label{cor:fourier_hyp_real_comp}
Let $E$ be a locally complete $\C$-lcHs and $-\infty<a\leq b<\infty$. Then the Fourier transform
\[
\mathcal{F}_{[a,b]}^{\mathcal{B}}\colon \mathcal{B}([a,b],E)\to \mathcal{FO}_{[a,b]}(E),\;
\mathcal{F}_{[a,b]}^{\mathcal{B}}([F]):=(\mathcal{F}_{[a,b]}\circ \mathcal{R}_{[a,b]}^{-1})([F]),
\]
is a topological isomorphism and 
\begin{equation}\label{eq:fourier_hyp_real_comp}
 \mathcal{F}_{[a,b]}^{\mathcal{B}}([F])(\zeta)
=\mathcal{F}_{[a,b]}([\Psi([F])])(\zeta)=\langle \mathscr{H}_{[a,b]}([F]),e^{-i(\cdot)\zeta}\rangle,\;\zeta\in\C .
\end{equation}
\end{cor}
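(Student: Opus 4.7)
The plan is to assemble the corollary from three pieces that have already been established in the preceding material: the extension isomorphism $\mathcal{R}_{[a,b]}$ of \prettyref{thm:extension_real_comp}, the Fourier isomorphism $\mathcal{F}_{[a,b]}$ of \prettyref{thm:fourier_bounded_interval}, and the duality map $\mathscr{H}_{[a,b]}$ appearing in both \prettyref{thm:duality} and \prettyref{cor:duality_real_comp}. Since $E$ is locally complete and $[a,b]\subset\R$ is compact, all three results apply.

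For the first assertion, since $\mathcal{F}_{[a,b]}^{\mathcal{B}}=\mathcal{F}_{[a,b]}\circ \mathcal{R}_{[a,b]}^{-1}$ is defined as a composition, I would simply observe that $\mathcal{R}_{[a,b]}$ is a topological isomorphism $bv_{[a,b]}(E)\to \mathcal{B}([a,b],E)$ by \prettyref{thm:extension_real_comp} and that $\mathcal{F}_{[a,b]}\colon bv_{[a,b]}(E)\to \mathcal{FO}_{[a,b]}(E)$ is a topological isomorphism by \prettyref{thm:fourier_bounded_interval}; hence their composition is a topological isomorphism.

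For the identity \eqref{eq:fourier_hyp_real_comp}, the first equality is immediate from the definition of $\mathcal{F}_{[a,b]}^{\mathcal{B}}$ together with the explicit form $\mathcal{R}_{[a,b]}^{-1}([F])=[\Psi_{[a,b]}([F])]$ from \prettyref{thm:extension_real_comp}. For the second equality, by \prettyref{lem:fourier_bounded_interval} applied to the representative $\Psi_{[a,b]}([F])$ we have
\[
\mathcal{F}_{[a,b]}([\Psi_{[a,b]}([F])])(\zeta)
=\langle\mathscr{H}_{[a,b]}([\Psi_{[a,b]}([F])]),e^{-i(\cdot)\zeta}\rangle,
\]
so the task reduces to showing $\mathscr{H}_{[a,b]}([\Psi_{[a,b]}([F])])=\mathscr{H}_{[a,b]}([F])$, where the left map is the one from \prettyref{thm:duality} and the right one is the map from \prettyref{cor:duality_real_comp}. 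But this is essentially built into the construction of $\Psi_{[a,b]}$: since the defining formula for $\Psi_{[a,b]}([F])$ is exactly $\mathscr{H}_{[a,b]}^{-1}$ of \prettyref{thm:duality} applied to $\mathscr{H}_{[a,b]}([F])$ of \prettyref{cor:duality_real_comp}, we have $\mathscr{H}_{[a,b]}^{\mathrm{Thm}}\circ \mathcal{R}_{[a,b]}^{-1}=\mathscr{H}_{[a,b]}^{\mathrm{Cor}}$.

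The only point that needs a brief justification is why the two incarnations of $\mathscr{H}_{[a,b]}$ are compatible on a common representative, i.e.\ that the Pettis integral $\int_{\gamma_{[a,b],n}}F(\zeta)\varphi(\zeta)\d\zeta$ gives the same element of $E$ whether $[F]$ is viewed in $bv_{[a,b]}(E)$ or in $\mathcal{B}([a,b],E)$. This is clear because both integrals are defined by the very same contour integral around $[a,b]$, and both quotients (by $\mathcal{O}^{exp}(\overline{\C},E)$ and by $\mathcal{O}(\C,E)$) kill precisely the entire representatives, whose contour integrals vanish by Cauchy's theorem. The main thing to be careful about is thus nothing more than bookkeeping of the two $\mathscr{H}_{[a,b]}$; there is no deep obstacle here, as all nontrivial analytic content sits in \prettyref{thm:fourier_bounded_interval} and \prettyref{thm:extension_real_comp}.
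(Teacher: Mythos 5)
Your proposal is correct and follows the same route as the paper, whose proof is simply the one-line citation of \prettyref{thm:fourier_bounded_interval} and \prettyref{thm:extension_real_comp}; you have merely spelled out the composition argument and the compatibility of the two incarnations of $\mathscr{H}_{[a,b]}$, both of which are sound. No issues.
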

\begin{proof}
This follows from \prettyref{thm:fourier_bounded_interval} and \prettyref{thm:extension_real_comp}.
\end{proof}

\begin{prop}\label{prop:fourier_decomposition}
Let $E$ be a strictly admissible sequentially complete $\C$-lcHs and $-\infty<a<b<\infty$. Then the following holds:
\begin{enumerate}
\item[a)] For every 
$f\in \mathcal{B}([a,\infty[,E)$ there are $f_{1}\in \mathcal{B}([a,b],E)$ and $f_{2}\in \mathcal{B}([b,\infty[,E)$ 
such that $f=f_{1}+f_{2}$ and 
\[
 \mathcal{F}_{[a,\infty[}^{\mathcal{B}}(f)
=[\mathcal{F}_{[a,b]}^{\mathcal{B}}(f_{1})_{\mid\im <0}]+\mathcal{F}_{[b,\infty[}^{\mathcal{B}}(f_{2}).
\]
\item[b)] For every 
$f\in \mathcal{B}(]-\infty,b],E)$ there are $f_{1}\in \mathcal{B}([a,b],E)$ and $f_{2}\in \mathcal{B}(]-\infty,a],E)$ 
such that $f=f_{1}+f_{2}$ and 
\[
 \mathcal{F}_{]-\infty,b]}^{\mathcal{B}}(f)
=[\mathcal{F}_{[a,b]}^{\mathcal{B}}(f_{1})_{\mid\im >0}]+\mathcal{F}_{]-\infty,a]}^{\mathcal{B}}(f_{2}).
\]
\end{enumerate}
\end{prop}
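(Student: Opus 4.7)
The plan is to obtain the support decomposition $f=f_{1}+f_{2}$ via the flabbiness of the sheaf of $E$-valued hyperfunctions and then transport it to the Fourier side by invoking \eqref{P.0.0.10a} and Cauchy's integral theorem. I carry out part a); part b) is entirely analogous on the upper half-plane.

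Under our hypotheses the sheaf of $E$-valued hyperfunctions is flabby by \cite[Theorem 5.9 b), p.\ 33]{kruse2019_5}, so applying the standard support splitting \cite[Lemma 1.4.4, p.\ 36]{Kan} (as in \prettyref{P.0.0.10} b)) to the closed covering $[a,\infty[\,=\,[a,b]\cup [b,\infty[$ yields $f_{1}\in\mathcal{B}([a,b],E)$ and $f_{2}\in\mathcal{B}([b,\infty[,E)$ with $f=f_{1}+f_{2}$. Using \prettyref{thm:extension_real_comp} and \prettyref{thm:extension}, choose representatives $F_{1}\in\mathcal{O}^{exp}(\overline{\C}\setminus[a,b],E)$ of $\mathcal{R}_{[a,b]}^{-1}(f_{1})$ and $F_{2}\in\mathcal{O}^{exp}(\overline{\C}\setminus[b,\infty],E)$ of $\mathcal{R}_{[b,\infty[}^{-1}(f_{2})$. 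Because $[a,b]$ and $[b,\infty]$ are both contained in $[a,\infty]$, the sets $S_{n}([a,\infty])$ are contained in both $S_{n}([a,b])$ and $S_{n}([b,\infty])$, so $F_{1},F_{2}\in\mathcal{O}^{exp}(\overline{\C}\setminus[a,\infty],E)$ as well. By linearity of $\mathcal{R}_{[a,\infty[}$, the class $[F_{1}+F_{2}]$ in $\mathcal{O}^{exp}(\overline{\C}\setminus[a,\infty],E)/\mathcal{O}^{exp}(\overline{\C}\setminus\{\infty\},E)$ coincides with $\mathcal{R}_{[a,\infty[}^{-1}(f)$, whence
\[
\mathcal{F}_{[a,\infty[}^{\mathcal{B}}(f)=\bigl[\mathcal{F}_{[a,\infty]}([F_{1}])+\mathcal{F}_{[a,\infty]}([F_{2}])\bigr]
\]
in $\mathcal{FO}_{[a,\infty]}(E)/\mathcal{FO}_{\{\infty\}}(E)$.

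For the first summand, equation \eqref{P.0.0.10a} of \prettyref{lem:fourier_bounded_interval} gives $\mathcal{F}_{[a,\infty]}([F_{1}])(\zeta)=\mathcal{F}_{[a,b]}([F_{1}])(\zeta)$ for $\im(\zeta)<0$, and combining with \eqref{eq:fourier_hyp_real_comp} this identifies $\mathcal{F}_{[a,\infty]}([F_{1}])$ with $\mathcal{F}_{[a,b]}^{\mathcal{B}}(f_{1})_{\mid\im<0}$. For the second summand, $F_{2}$ is holomorphic on $\C\setminus[b,\infty[$, so taking the path-parameter $c>0$ small enough that $\gamma_{[a,\infty]}$ stays in this domain, Cauchy's integral theorem allows the continuous deformation of $\gamma_{[a,\infty]}$ into $\gamma_{[b,\infty]}$, yielding $\mathcal{F}_{[a,\infty]}([F_{2}])=\mathcal{F}_{[b,\infty]}([F_{2}])=\mathcal{F}_{[b,\infty[}^{\mathcal{B}}(f_{2})$. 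Interpreting $\mathcal{FO}_{[b,\infty]}(E)\subset\mathcal{FO}_{[a,\infty]}(E)$ (immediate from $e^{a|\im(z)|}\leq e^{b|\im(z)|}$ when $\im(z)<0$), these two identities combine to give the asserted formula. The most delicate point is the flabbiness-based decomposition together with the compatibility of $F_{1},F_{2}$ as representatives in the joint extension space; the remaining steps are direct applications of results from \prettyref{sect:Fourier_half_line} and \prettyref{sect:Fourier_real_comp}.
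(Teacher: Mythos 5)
Your proof is correct, and its skeleton is the one the paper uses: obtain $f=f_{1}+f_{2}$ from flabbiness via \cite[Theorem 5.9 b), p.\ 33]{kruse2019_5} and \cite[Lemma 1.4.4, p.\ 36]{Kan}, then identify $\mathcal{F}_{[a,\infty[}^{\mathcal{B}}(f_{1})$ with $[\mathcal{F}_{[a,b]}^{\mathcal{B}}(f_{1})_{\mid\im<0}]$ and $\mathcal{F}_{[a,\infty[}^{\mathcal{B}}(f_{2})$ with $\mathcal{F}_{[b,\infty[}^{\mathcal{B}}(f_{2})$ and add. The one place where you diverge is the mechanism of that identification: the paper does it on the dual side, invoking the compatibility $\mathscr{H}_{[a,\infty]}(f_{1})=\mathscr{H}_{[a,b]}(f_{1})$ and $\mathscr{H}_{[a,\infty]}(f_{2})=\mathscr{H}_{[b,\infty]}(f_{2})$ from \cite[Eq.\ (6), p.\ 14]{kruse2019_2} together with \eqref{eq:fourier_hyp_real_comp}, whereas you work with explicit representatives $F_{1}\in\mathcal{O}^{exp}(\overline{\C}\setminus[a,b],E)$, $F_{2}\in\mathcal{O}^{exp}(\overline{\C}\setminus[b,\infty],E)$, check (correctly, via the inclusion $S_{n}([a,\infty])\subset S_{n}([a,b])\cap S_{n}([b,\infty])$ and injectivity of $\mathcal{R}_{[a,\infty[}$) that $[F_{1}+F_{2}]=\mathcal{R}_{[a,\infty[}^{-1}(f)$, and then deform contours: \eqref{P.0.0.10a} for the first summand and Cauchy's theorem for the second. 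Your route is more self-contained (no appeal to the external restriction identity for the $\mathscr{H}_{K}$) at the price of the small extra verification about representatives; the paper's route is shorter because the functional identity does that bookkeeping for free. Both are sound, and your remarks on the inclusions $\mathcal{FO}_{[b,\infty]}(E)\subset\mathcal{FO}_{[a,\infty]}(E)$ needed to read the sum in the right quotient are a point the paper leaves implicit.
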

\begin{proof}
Let us start with part a). Since $E$ is strictly admissible, the sheaf of $E$-valued hyperfunctions is flabby 
by \cite[Theorem 5.9 b), p.\ 33]{kruse2019_5} and thus for every 
$f\in \mathcal{B}([a,\infty[,E)$ there are $f_{1}\in \mathcal{B}([a,b],E)$ and $f_{2}\in \mathcal{B}([b,\infty[,E)$ 
such that $f=f_{1}+f_{2}$ by \cite[Lemma 1.4.4, p.\ 36]{Kan}. Moreover, we have 
$\mathscr{H}_{[a,\infty]}(f_{1})=\mathscr{H}_{[a,b]}(f_{1})$ and 
$\mathscr{H}_{[a,\infty]}(f_{2})=\mathscr{H}_{[b,\infty]}(f_{2})$ due to \cite[Eq.\ (6), p.\ 14]{kruse2019_2}, 
which implies 
\[
\mathcal{F}_{[a,\infty[}^{\mathcal{B}}(f_{1})=[\mathcal{F}_{[a,b]}^{\mathcal{B}}(f_{1})_{\mid\im <0}]
 \quad\text{and}\quad
 \mathcal{F}_{[a,\infty[}^{\mathcal{B}}(f_{2})=\mathcal{F}_{[b,\infty[}^{\mathcal{B}}(f_{2})
\]
by \prettyref{eq:fourier_hyp_real_comp} and the definition of the asymptotic Fourier transforms. We deduce that
\[
 \mathcal{F}_{[a,\infty[}^{\mathcal{B}}(f)
=\mathcal{F}_{[a,\infty[}^{\mathcal{B}}(f_{1})+\mathcal{F}_{[a,\infty[}^{\mathcal{B}}(f_{2})
=[\mathcal{F}_{[a,b]}^{\mathcal{B}}(f_{1})_{\mid\im <0}]+\mathcal{F}_{[b,\infty[}^{\mathcal{B}}(f_{2}).
\]
The proof of part b) is analogous.
\end{proof}

\begin{thm}\label{thm:asymptotic_fourier}
Let $E$ be an admissible sequentially complete $\C$-lcHs. Then the canonical (restriction) map 
\[
\mathcal{R}_{\R}\colon\mathcal{O}^{exp}(\overline{\C}\setminus\overline{\R},E)/
\mathcal{O}^{exp}(\overline{\C}\setminus\{\pm\infty\},E)\to \mathcal{B}(\R,E),\;[F]\mapsto [F],
\]
and the asymptotic Fourier transform 
\begin{gather*}
\mathcal{F}^{\mathcal{B}}_{\R}\colon\mathcal{B}(\R,E)\to\mathcal{O}^{exp}(\overline{\C}\setminus\overline{\R},E)/
\bigl(\mathcal{O}^{exp}(\overline{\C},E)\oplus \mathcal{FO}_{\{\pm\infty\}}(E)\bigr),\\
\mathcal{F}^{\mathcal{B}}_{\R}(f):=[(\mathcal{F}_{\overline{\R}}\circ \mathcal{R}_{\R}^{-1})(f)],
\end{gather*}
are linear isomorphisms.
\end{thm}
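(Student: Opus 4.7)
The plan is to prove first that $\mathcal{R}_{\R}$ is a linear isomorphism, following the template of \prettyref{thm:extension}, and then to deduce the statement about $\mathcal{F}^{\mathcal{B}}_{\R}$ by combining that with \prettyref{cor:fourier_isom_extended_reals} and \prettyref{thm:fourier_union_unbounded_intervals}. For injectivity of $\mathcal{R}_{\R}$, suppose $F\in\mathcal{O}^{exp}(\overline{\C}\setminus\overline{\R},E)$ extends to an entire function; I would verify $F\in\mathcal{O}^{exp}(\overline{\C}\setminus\{\pm\infty\},E)$ by splitting $S_{n}(\{\pm\infty\})$ into the bounded central square $\{|\re z|<n,\,|\im z|<n\}$, where $F$ is bounded by continuity on the compact closure, and the outer pieces $\{|\re z|\geq n,\,\tfrac{1}{n}<|\im z|<n\}\subset S_{n}(\overline{\R})$, where the hypothesis $\vertiii{F}_{n,\alpha,\overline{\R}}<\infty$ supplies the required exponential bound. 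For surjectivity, given $F\in\mathcal{O}(\C\setminus\R,E)$ I would adapt the extension argument used in \prettyref{thm:extension}: choose a smooth cutoff $\chi$ which is $1$ in a thin horizontal strip around $\R$ and $0$ outside a slightly wider strip, form $\overline{\partial}(\chi F)$ on $\C\setminus\R$, and solve $\overline{\partial}u=\overline{\partial}(\chi F)$ with $u\in\mathcal{E}^{exp}(\overline{\C}\setminus\overline{\R},E)$ using admissibility of $E$. Then $\widetilde{F}:=\chi F-u\in\mathcal{O}^{exp}(\overline{\C}\setminus\overline{\R},E)$ is the required representative, with $\widetilde{F}-F$ extending entirely across $\R$.

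With Part~1 in hand, the second claim reduces to showing that $\mathcal{F}_{\overline{\R}}$ descends to an isomorphism between the two target quotients. \prettyref{thm:fourier_union_unbounded_intervals} applied with $a=-\infty$, $b=\infty$ provides the topological isomorphism $\mathcal{F}\colon bv_{\{-\infty,\infty\}}(E)\to\mathcal{FO}_{\{-\infty\}}(E)\oplus\mathcal{FO}_{\{\infty\}}(E)=\mathcal{FO}_{\{\pm\infty\}}(E)$. Combining the compatibility relation \eqref{eq:fourier_union_unbounded_intervals}, the definition $\mathcal{F}_{\overline{\R}}=\mathscr{H}_{\overline{\R}}^{-1}\circ\mathcal{F}_{\star}\circ\mathscr{H}_{\overline{\R}}$, and the identity $\mathscr{H}_{\overline{\R}}=\mathscr{H}_{\{\pm\infty\}}$ on $\mathcal{P}_{\ast}(\overline{\R})$ for classes in $bv_{\{\pm\infty\}}(E)$ (by deformation of the defining path) shows that $\mathcal{F}_{\overline{\R}}$ carries $bv_{\{\pm\infty\}}(E)\subset bv_{\overline{\R}}(E)$ onto $\mathcal{FO}_{\{\pm\infty\}}(E)$ modulo $\mathcal{O}^{exp}(\overline{\C},E)$. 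Passing to the quotients then yields the isomorphism $\mathcal{F}^{\mathcal{B}}_{\R}$.

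The main obstacle is the surjectivity step for $\mathcal{R}_{\R}$. In \prettyref{thm:extension} one exploits that $F$ is already holomorphic on $\C\setminus]a-\varepsilon,\infty[$, so the extension is essentially one-sided and covered directly by an extension lemma. Here $F\in\mathcal{O}(\C\setminus\R,E)$ is holomorphic on neither half-line, and since we assume only admissibility rather than strict admissibility, flabbiness of the hyperfunction sheaf is unavailable and one cannot first decompose $[F]$ into half-line-supported pieces as in \prettyref{prop:fourier_decomposition}. The technical heart of the proof is thus to choose the cutoff $\chi$ and to solve the $\overline{\partial}$-equation within $\mathcal{E}^{exp}(\overline{\C}\setminus\overline{\R},E)$ so that the resulting $u$ has exponential growth control simultaneously on every $S_{n}(\overline{\R})$.
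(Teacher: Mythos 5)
The decisive gap is in your surjectivity argument for $\mathcal{R}_{\R}$. A cutoff $\chi$ that is $1$ on a thin horizontal strip around $\R$ and $0$ outside a slightly wider one cannot work: $\overline{\partial}(\chi F)=(\overline{\partial}\chi)F$ is supported in the transition region $\{a<|\im(z)|<b\}$, where a general $F\in\mathcal{O}(\C\setminus\R,E)$ carries no growth restriction whatsoever as $|\re(z)|\to\infty$, so $\overline{\partial}(\chi F)$ need not lie in $\mathcal{E}^{exp}(\overline{\C}\setminus\overline{\R},E)$ and admissibility gives you no solution $u$ there. Even granting a solution, $\widetilde{F}=\chi F-u$ equals $F-u$ on the inner strip, and for large $n$ the set $S_{n}(\overline{\R})=\{\tfrac{1}{n}<|\im(z)|<n\}$ penetrates that strip, so $\vertiii{\widetilde{F}}_{n,\alpha,\overline{\R}}=\infty$ in general. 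The real content of this step is to tame the growth of $F$ in the \emph{real} direction, which no cutoff in the imaginary direction can achieve; the paper (both here and in \prettyref{thm:extension}, which you took as your template) delegates exactly this point to the extension lemma \cite[5.7 Lemma, p.~25]{kruse2019_5}, which under admissibility directly produces $\widetilde{F}\in\mathcal{O}^{exp}(\overline{\C}\setminus\overline{\R},E)$ with $\widetilde{F}-F\in\mathcal{O}(\C,E)$. You must either invoke such a result or supply a genuinely different construction; your (correct) observation that flabbiness is unavailable under mere admissibility rules out one alternative but does not repair the cutoff argument.

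The rest of your plan is essentially the paper's. Your injectivity argument for $\mathcal{R}_{\R}$ (bounded central piece of $S_{n}(\{\pm\infty\})$ plus outer pieces contained in $S_{n}(\overline{\R})$) fleshes out what the paper states tersely, and your strategy for $\mathcal{F}^{\mathcal{B}}_{\R}$ --- pass $\mathcal{F}_{\overline{\R}}$ to the quotients using \prettyref{thm:fourier_union_unbounded_intervals} with $K=\{\pm\infty\}$ and the compatibility relations \eqref{eq:compat_fourier_union} --- is the route the paper takes for both injectivity and surjectivity. One further omission, though: you never verify that the sum $\mathcal{O}^{exp}(\overline{\C},E)+\mathcal{FO}_{\{\pm\infty\}}(E)$ is direct, i.e.\ that $\mathcal{O}^{exp}(\overline{\C},E)\cap\mathcal{FO}_{\{\pm\infty\}}(E)=\{0\}$. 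This is part of the statement (the $\oplus$) and is not automatic; the paper proves it by combining the two decay scales into $p_{\alpha}(f(iz))\leq Ce^{\frac{1}{k}|z|-k|\re(z)|}$ and applying the Paley--Wiener theorem to $e'\circ f$ for every $e'\in E'$. Add that step.
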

\begin{proof}
(i) We consider $\mathcal{R}_{\R}$. Let $F\in\mathcal{O}^{exp}(\overline{\C}\setminus\overline{\R},E)$ 
such that $\mathcal{R}_{\R}([F])=0$, i.e.\ $F\in\mathcal{O}(\C,E)$. 
We deduce that $F\in \mathcal{O}^{exp}(\overline{\C}\setminus \{\pm\infty\},E)$, resulting in 
$[F]=0$. Thus $\mathcal{R}_{\R}$ is injective.

Let $[F]\in \mathcal{B}(\R,E)$. 
Then $F\in \mathcal{O}(\C\setminus\R,E)$ and by \cite[5.7 Lemma, p.\ 25]{kruse2019_5} there 
is $\widetilde{F}\in \mathcal{O}^{exp}(\overline{\C}\setminus\overline{\R},E)$ such that 
$\widetilde{F}-F\in\mathcal{O}(\C,E)$. It follows that $\mathcal{R}_{\R}([\widetilde{F}])=[F]$, meaning that 
$\mathcal{R}_{\R}$ is surjective.

(ii) Next, we prove $\mathcal{O}^{exp}(\overline{\C},E)\cap\mathcal{FO}_{\{\pm\infty\}}(E)=\{0\}$. 
The proof is similar to the one of part (a) of the proof of \cite[Proposition 3.3, p.\ 50]{langenbruch2011}.
Let $f\in\mathcal{O}^{exp}(\overline{\C},E)\cap\mathcal{FO}_{\{\pm\infty\}}(E)$. This implies 
\begin{flalign*}
&\hspace{0.35cm}\sup_{z\in\C}p_{\alpha}(f(z))e^{-\frac{1}{k}|z|+k|\im(z)|}\\
&\leq |f|_{k,\alpha,\{-\infty\}}+ |f|_{k,\alpha,\{\infty\}}
 +\sup_{|\im(z)|<\frac{1}{k}}p_{\alpha}(f(z))e^{-\frac{1}{k}|z|+k|\im(z)|}\\
&\leq |f|_{k,\alpha,\{-\infty\}}+ |f|_{k,\alpha,\{\infty\}}+ e \vertiii{f}_{k,\alpha,\emptyset}
\end{flalign*}
for every $\alpha\in\mathfrak{A}$ and $k\in\N$. Hence we have 
\[
\forall\;\alpha\in\mathfrak{A},\,k\in\N\;\exists\;C>0\;\forall\;z\in\C:\;
 p_{\alpha}(f(iz))\leq C e^{\frac{1}{k}|z|-k|\re(z)|}.
\] 
Let $e'\in E'$. Due to the Paley-Wiener theorem (see \cite[19.3 Theorem, p.\ 375]{rudin1986}) 
applied to $e'(f(i\cdot))$ there is $F_{e',k}\in L^{2}(-\tfrac{1}{k},\tfrac{1}{k})$ such that 
\[
(e'\circ f)(iz)=\int_{-\frac{1}{k}}^{\frac{1}{k}}F_{e',k}(t)e^{itz}\d t,\;z\in\C,
\]
for every $k\in\N$. We derive that $e'\circ f=0$ and the Hahn-Banach theorem yields $f=0$.

(iii) Let $f\in\mathcal{B}(\R,E)$ such that $\mathcal{F}^{\mathcal{B}}_{\R}(f)=0
=\mathcal{O}^{exp}(\overline{\C},E)\oplus \mathcal{FO}_{\{\pm\infty\}}(E)$. 
Let $F\in\mathcal{F}^{\mathcal{B}}_{\R}(f)$. Then there are $F_{1}\in\mathcal{O}^{exp}(\overline{\C},E)$ and 
$F_{2}\in\mathcal{FO}_{\{\pm\infty\}}(E)$ such that $F=F_{1}+F_{2}$. 
Due to \prettyref{thm:fourier_union_unbounded_intervals} there is $g\in bv_{\{\pm\infty\}}(E)$ such 
that $F_{2}=\mathcal{F}_{\{\pm\infty\}}(g)$. Taking equivalence classes in $bv_{\overline{\R}}(E)$, we get 
\begin{align*}
 (\mathscr{H}_{\overline{\R}}^{-1}\circ\mathcal{F}_{\star}\circ \mathscr{H}_{\overline{\R}})(\mathcal{R}_{\R}^{-1}(f))
&=\mathcal{F}_{\overline{\R}}(\mathcal{R}_{\R}^{-1}(f))
 =[F_{1}+F_{2}]
 =[F_{2}]
 =[\mathcal{F}_{\{\pm\infty\}}(g)]\\
&\underset{\mathclap{\eqref{eq:fourier_union_unbounded_intervals}}}{=}
(\mathscr{H}_{\overline{\R}}^{-1}\circ\mathcal{F}_{\star})\bigl(\mathscr{H}_{\{-\infty\}}(g)
 -\mathscr{H}_{\{\infty\}}(g)\bigr),
\end{align*}
which implies 
\[
 \mathscr{H}_{\overline{\R}}(\mathcal{R}_{\R}^{-1}(f))
=\mathscr{H}_{\{-\infty\}}(g)-\mathscr{H}_{\{\infty\}}(g)\in L(\mathcal{P}(\{\pm\infty\}),E).
\]
By \prettyref{thm:duality} and \cite[Eq.\ (6), p.\ 14]{kruse2019_2} there is $h\in bv_{\{\pm\infty\}}(E)$
such that  
\[
\mathscr{H}_{\{\pm\infty\}}(h)=\mathscr{H}_{\{-\infty\}}(g)-\mathscr{H}_{\{\infty\}}(g)
 \quad\text{and}\quad 
\mathscr{H}_{\{\pm\infty\}}(h)
=\mathscr{H}_{\overline{\R}}(h),
\]
yielding $\mathscr{H}_{\overline{\R}}(\mathcal{R}_{\R}^{-1}(f))=\mathscr{H}_{\overline{\R}}(h)$.
From the injectivity of $\mathscr{H}_{\overline{\R}}$ on $bv_{\overline{\R}}(E)$ it follows 
that $\mathcal{R}_{\R}^{-1}(f)=h$. We derive from part (i) that $f=0$, i.e.\ $\mathcal{F}^{\mathcal{B}}_{\R}$ 
is injective. 

Let $F\in\mathcal{O}^{exp}(\overline{\C}\setminus\overline{\R},E)$. 
Since $\mathcal{F}_{\overline{\R}}\colon bv_{\overline{\R}}(E)\to bv_{\overline{\R}}(E)$ is 
surjective by \prettyref{cor:fourier_isom_extended_reals}, there is $g=[G]\in bv_{\overline{\R}}(E)$ such that 
$\mathcal{F}_{\overline{\R}}(g)=F+\mathcal{O}^{exp}(\overline{\C},E)$. 
From the surjectivity of $\mathcal{R}_{\R}^{-1}$ we deduce that there is $h\in\mathcal{B}(\R,E)$ such that 
$H-G\in\mathcal{O}^{exp}(\overline{\C}\setminus\{\pm\infty\},E)$ for $[H]=\mathcal{R}^{-1}_{\R}(h)$. 
We note that by \cite[Eq.\ (6), p.\ 14]{kruse2019_2}
\[
 \mathscr{H}_{\overline{\R}}([H-G])
=\mathscr{H}_{\{\pm\infty\}}([H-G])
=\mathscr{H}_{\{-\infty\}}([H-G])+\mathscr{H}_{\{\infty\}}([H-G])
\]
and so
\begin{align*}
 \mathcal{F}_{\overline{\R}}([H-G])
&=(\mathscr{H}_{\overline{\R}}^{-1}\circ\mathcal{F}_{\star}\circ \mathscr{H}_{\overline{\R}})([H-G])\\
&=(\mathscr{H}_{\overline{\R}}^{-1}\circ\mathcal{F}_{\star}\circ \mathscr{H}_{\{-\infty\}})([H-G])
  +(\mathscr{H}_{\overline{\R}}^{-1}\circ\mathcal{F}_{\star}\circ \mathscr{H}_{\{\infty\}})([H-G])\\
&\underset{\mathclap{\eqref{eq:compat_fourier_union}}}{=}
 [\mathcal{F}_{\{-\infty\}}([H-G])-\mathcal{F}_{\{\infty\}}([H-G])] 
 \underset{\eqref{eq:fourier_union_unbounded_intervals}}{=}[\mathcal{F}_{\{\pm\infty\}}([H-G])].
\end{align*}
Thus there is $P\in\mathcal{FO}_{\{\pm\infty\}}(E)$ such that 
$\mathcal{F}_{\overline{\R}}([H-G])=P+\mathcal{O}^{exp}(\overline{\C},E)$, which guarantees that our map is 
well-defined as well. We conclude that
\[
 \mathcal{F}_{\overline{\R}}(\mathcal{R}_{\R}^{-1}(h))
=\mathcal{F}_{\overline{\R}}([G]+[H-G])
 =\mathcal{F}_{\overline{\R}}([G])+\mathcal{F}_{\overline{\R}}([H-G])\\
=F+P+\mathcal{O}^{exp}(\overline{\C},E), 
\]
which means 
\[
\mathcal{F}_{\R}^{\mathcal{B}}(h)=[\mathcal{F}_{\overline{\R}}(\mathcal{R}_{\R}^{-1}(h))]=[F]
\]
where the equivalence class is taken in $\mathcal{O}^{exp}(\overline{\C}\setminus\overline{\R},E)/
\bigl(\mathcal{O}^{exp}(\overline{\C},E)\oplus \mathcal{FO}_{\{\pm\infty\}}(E)\bigr)$.
This proves that $\mathcal{F}_{\R}^{\mathcal{B}}$ is surjective.
\end{proof}

Now, we may transfer the connections of some standard operations and the asymptotic Fourier transform 
from the preceding section. 

\begin{prop}\label{prop:asymp_fourier_transform_of_shift}
Let $E$ be an admissible sequentially complete $\C$-lcHs, $A:=]-\infty,a]$ or $A:=[a,\infty[$ for some $a\in\R$. 
Let $\tau_{h}([F]):=[F(\cdot -h)]$, $h\in\R$, be the shift operator from 
$\mathcal{B}(A,E)$ to $\mathcal{B}(h+A,E)$. 
Then we have for for $[F]\in\mathcal{B}(A,E)$
\[
\mathcal{F}_{h+A}^{\mathcal{B}}(\tau_{h}([F]))=e^{-ih(\cdot)}\mathcal{F}_{A}^{\mathcal{B}}([F]).
\]
\end{prop}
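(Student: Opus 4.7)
The plan is to lift the shift identity from the level of $\mathcal{B}(A,E)$ to the level of $bv_{K}(E)$ with $K=\overline{A}$, where \prettyref{prop:fourier_transform_of_shift} already provides the desired formula. The isomorphism $\mathcal{R}_{A}$ of \prettyref{thm:extension} and the definition $\mathcal{F}_{A}^{\mathcal{B}}=\mathcal{F}_{\overline{A}}\circ\mathcal{R}_{A}^{-1}$ (taken modulo $\mathcal{FO}_{\overline{A}\setminus A}(E)$) from \prettyref{thm:asymptotic_one_sided_fourier} will do the reduction.

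I would treat $A=[a,\infty[$ (the case $A=\,]-\infty,a]$ being symmetric). Given $f\in\mathcal{B}([a,\infty[,E)$, \prettyref{thm:extension} furnishes a representative $F\in\mathcal{O}^{exp}(\overline{\C}\setminus[a,\infty],E)$, so that $\mathcal{R}_{A}^{-1}(f)=[F]$ in the quotient. First I would verify that the translate $F(\cdot-h)$ belongs to $\mathcal{O}^{exp}(\overline{\C}\setminus[a+h,\infty],E)$ and hence represents $\mathcal{R}_{h+A}^{-1}(\tau_{h}(f))$: substituting $w=z-h$ in the defining seminorm and using $S_{n}([a+h,\infty])-h\subset S_{n'}([a,\infty])$ for $n'$ sufficiently large in terms of $n$ and $|h|$ together with $|\re z|\leq|\re w|+|h|$ yields
\[
\vertiii{F(\cdot-h)}_{n,\alpha,[a+h,\infty]}\leq e^{|h|/n}\vertiii{F}_{n',\alpha,[a,\infty]}<\infty.
\]

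Next, \prettyref{prop:fourier_transform_of_shift} applied with the compact interval $K=[a,\infty]\subsetneq\overline{\R}$ (hypothesis (ii), valid since $E$ is sequentially complete) gives
\[
\mathcal{F}_{[a+h,\infty]}([F(\cdot-h)])(\zeta)=e^{-ih\zeta}\mathcal{F}_{[a,\infty]}([F])(\zeta),\quad\im(\zeta)<0.
\]
To descend to the quotient by $\mathcal{FO}_{\{\infty\}}(E)$ I would check that multiplication by $e^{-ih(\cdot)}$ preserves $\mathcal{FO}_{\{\infty\}}(E)$: for $\im\zeta<0$ and $g\in\mathcal{FO}_{\{\infty\}}(E)$ one has $|e^{-ih\zeta}|=e^{-h|\im\zeta|}$, hence
\[
|e^{-ih(\cdot)}g|_{k,\alpha,\{\infty\}}=\sup_{\im\zeta\leq-1/k}p_{\alpha}(g(\zeta))\,e^{-\frac{1}{k}|\zeta|+(k-h)|\im\zeta|}\leq|g|_{k',\alpha,\{\infty\}}
\]
for any $k'\geq k+\max(0,-h)$. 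Combining the two preceding displays gives the desired identity in $\mathcal{FO}_{[a+h,\infty]}(E)/\mathcal{FO}_{\{\infty\}}(E)$.

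The only delicate point is this last compatibility check: the defining seminorm of $\mathcal{FO}_{\{\infty\}}(E)$ itself depends on the index $k$ through the weight $e^{k|\im\zeta|}$, so stability under multiplication by $e^{-ih(\cdot)}$ requires enlarging the seminorm index by $|h|$. This is bookkeeping rather than substance, and uses no machinery beyond \prettyref{lem:fourier_unbounded_interval} and \prettyref{prop:fourier_transform_of_shift}.
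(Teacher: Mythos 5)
Your proof is correct and follows essentially the same route as the paper: the paper's own proof is the one-line observation that $\tau_{h}$ commutes with $\mathcal{R}_{A}^{-1}$ (i.e.\ $\tau_{h}(\mathcal{R}_{A}^{-1}([F]))=\mathcal{R}_{h+A}^{-1}(\tau_{h}([F]))$), after which \prettyref{prop:fourier_transform_of_shift} gives the identity. Your additional verifications --- that $F(\cdot-h)$ lies in $\mathcal{O}^{exp}(\overline{\C}\setminus(h+\overline{A}),E)$ and that multiplication by $e^{-ih(\cdot)}$ maps $\mathcal{FO}_{\overline{A}\setminus A}(E)$ into itself after enlarging the seminorm index --- are details the paper leaves implicit, and both of your estimates are accurate.
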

\begin{proof}
This is a consequence of \prettyref{prop:fourier_transform_of_shift} and 
$\mathcal{R}_{A}(\tau_{h}\mathcal{R}_{A}^{-1}([F]))=\tau_{h}([F])$, which implies 
$\tau_{h}(\mathcal{R}_{A}^{-1}([F]))=\mathcal{R}_{A}^{-1}(\tau_{h}([F]))$.
\end{proof}

\begin{prop}\label{prop:asymp_fourier_transform_of_derivative}
Let $E$ be an admissible sequentially complete $\C$-lcHs, $A:=]-\infty,a]$ or $A:=[a,\infty[$ for some $a\in\R$ and 
$P(-i\partial):=\sum_{k=0}^{\infty}\tfrac{c_{k}}{k!}(-i\partial)^{k}$ where $(c_{k})\subset\C$ and 
$P$ is of exponential type $0$.
Then we have for $f=[F]\in\mathcal{B}(A,E)$
\[
\mathcal{F}_{A}^{\mathcal{B}}(P(-i\partial)f)=P\mathcal{F}_{A}^{\mathcal{B}}(f)
 \quad\text{and}\quad
\mathcal{F}_{A}^{\mathcal{B}}(Pf)=P(i\partial)\mathcal{F}_{A}^{\mathcal{B}}(f)
\]
where $P(-i\partial)f:=[P(-i\partial)F]$ and $Pf:=[PF]$.
\end{prop}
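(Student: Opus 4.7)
The approach is to reduce the claim to Proposition \ref{prop:fourier_transform_of_derivative} by commuting the operators $P(-i\partial)$ and $M_{P}$ through the chain of isomorphisms $\mathcal{R}_{A}^{-1}$ and $\mathcal{F}_{\overline{A}}$, and then checking that the resulting identities descend to the quotient by $\mathcal{FO}_{\overline{A}\setminus A}(E)$. Throughout I fix the case $A=[a,\infty[$, so $\overline{A}=[a,\infty]$ and $\overline{A}\setminus A=\{\infty\}$; the other case is analogous.

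First I would verify that $P(-i\partial)$ and $M_{P}$ commute with the restriction $\mathcal{R}_{A}$. For a representative $F\in\mathcal{O}(\C\setminus[a,\infty[,E)$ lifted to $\widetilde{F}\in\mathcal{O}^{exp}(\overline{\C}\setminus[a,\infty],E)$ via $\mathcal{R}_{A}^{-1}$, the difference $\widetilde{F}-F$ extends to an entire function. Since $P(-i\partial)$ and $M_{P}$ act locally on representatives, they preserve the defining subspaces $\mathcal{O}(\C,E)\subset\mathcal{O}(\C\setminus[a,\infty[,E)$ and $\mathcal{O}^{exp}(\overline{\C}\setminus\{\infty\},E)\subset\mathcal{O}^{exp}(\overline{\C}\setminus[a,\infty],E)$ (the latter by Proposition \ref{prop:fourier_transform_of_derivative} with $K=\{\infty\}$, which in particular gives continuity on $\mathcal{O}^{exp}(\overline{\C}\setminus\{\infty\},E)=\mathcal{O}^{exp}(\overline{\C}\setminus[0,\infty],E)\cap\mathcal{O}(\C,E)$). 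Hence both operators descend, and $\mathcal{R}_{A}\circ P(-i\partial)=P(-i\partial)\circ\mathcal{R}_{A}$ and likewise for $M_{P}$.

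Next, by the very definition $\mathcal{F}_{A}^{\mathcal{B}}(f)=[(\mathcal{F}_{[a,\infty]}\circ\mathcal{R}_{A}^{-1})(f)]$ and the previous step,
\[
\mathcal{F}_{A}^{\mathcal{B}}(P(-i\partial)f)=[\mathcal{F}_{[a,\infty]}(P(-i\partial)\mathcal{R}_{A}^{-1}(f))]
\quad\text{and}\quad
\mathcal{F}_{A}^{\mathcal{B}}(Pf)=[\mathcal{F}_{[a,\infty]}(P\mathcal{R}_{A}^{-1}(f))].
\]
Applying Proposition \ref{prop:fourier_transform_of_derivative} with $K=[a,\infty]$ yields at the level of representatives $\mathcal{F}_{[a,\infty]}(P(-i\partial)\mathcal{R}_{A}^{-1}(f))(\zeta)=P(\zeta)\mathcal{F}_{[a,\infty]}(\mathcal{R}_{A}^{-1}(f))(\zeta)$ for $\im(\zeta)<0$, and analogously for $M_{P}$ with $P(i\partial)$ on the right. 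This is the claimed identity prior to passing to the quotient.

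The remaining and main point is to check that multiplication by $P$ and the operator $P(i\partial)$ preserve the subspace $\mathcal{FO}_{\{\infty\}}(E)$, so that both identities descend to well-defined operations on $\mathcal{FO}_{[a,\infty]}(E)/\mathcal{FO}_{\{\infty\}}(E)$. For this I would apply Proposition \ref{prop:fourier_transform_of_derivative} once more, now with $K=\{\infty\}$: the proposition asserts that $P(-i\partial)$ and $M_{P}$ act continuously on $bv_{\{\infty\}}(E)$ and satisfy the intertwining identities with $\mathcal{F}_{\{\infty\}}$. Since $\mathcal{F}_{\{\infty\}}\colon bv_{\{\infty\}}(E)\to\mathcal{FO}_{\{\infty\}}(E)$ is a topological isomorphism by Theorem \ref{thm:fourier_unbounded_interval}, transport of structure shows that $M_{P}$ and $P(i\partial)$ are continuous endomorphisms of $\mathcal{FO}_{\{\infty\}}(E)$. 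This guarantees that both identities are well-posed modulo $\mathcal{FO}_{\{\infty\}}(E)$, completing the proof. The only technical obstacle is this stability under quotients; everything else is bookkeeping with the isomorphisms $\mathcal{R}_{A}$ and $\mathcal{F}_{[a,\infty]}$.
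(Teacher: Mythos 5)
Your proposal is correct and follows essentially the same route as the paper: commute $P(-i\partial)$ and $M_{P}$ with $\mathcal{R}_{A}$, then apply \prettyref{prop:fourier_transform_of_derivative} with $K=\overline{A}$. The only addition is your explicit transport-of-structure check that $M_{P}$ and $P(i\partial)$ preserve $\mathcal{FO}_{\overline{A}\setminus A}(E)$, which the paper leaves implicit (it follows from the same intertwining identities applied with $K=\overline{A}\setminus A$); this is a harmless and slightly more careful variant, not a different proof.
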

\begin{proof}
First, we remark that $F\mapsto P(-i\partial)F$ and $F\mapsto PF$ are well-defined continuous linear operators on 
$\mathcal{O}(\C\setminus A,E)$ and $\mathcal{O}(\C,E)$ as in \prettyref{prop:fourier_transform_of_derivative} 
when these spaces are equipped with the topology of uniform convergence on compact subsets. 
This implies that $P(-i\partial)f$ and $Pf$ are well-defined for $f\in\mathcal{B}(A,E)$.

The first equation follows from \eqref{eq:fourier_transform_of_derivative} because 
$\mathcal{R}_{A}(P(-i\partial)\mathcal{R}_{A}^{-1}f)=P(-i\partial)f$ and hence
$P(-i\partial)\mathcal{R}_{A}^{-1}f=\mathcal{R}_{A}^{-1}P(-i\partial)f$ for $f\in\mathcal{B}(A,E)$ 
(like in \cite[Example 3.8 (b), p.\ 52]{langenbruch2011}).

The second equation follows from \eqref{eq:fourier_transform_of_multiplication} since 
$\mathcal{R}_{A}(P\mathcal{R}_{A}^{-1}f)=Pf$ and thus $P\mathcal{R}_{A}^{-1}f=\mathcal{R}_{A}^{-1}(Pf)$ 
(like in \cite[Proposition 3.10, p.\ 53]{langenbruch2011}). 
\end{proof}

\begin{thm}\label{thm:asymp_fourier_convolution}
Let $(E,(p_{\alpha})_{\alpha\in\mathfrak{A}})$, $(E_{1},(p_{\beta})_{\beta\in\mathfrak{B}})$ 
and $(E_{2},(p_{\omega})_{\omega\in\Omega})$ be admissible sequentially complete $\C$-lcHs such that 
a canonical bilinear map $\cdot\:\colon E_{1}\times E_{2}\to E$ is defined with the property
\[
\forall\; \alpha\in\mathfrak{A}\;\exists\;\beta\in\mathfrak{B},\,\omega\in\Omega,\,D>0\;\forall\;x\in E_{1},\,y\in E_{2}:
\;p_{\alpha}(x\cdot y)\leq D p_{\beta}(x)p_{\omega}(y).
\]
Let $a,b,c,d\in\R$.
\begin{enumerate}
\item[a)] If $f\in\mathcal{FO}_{\{\infty\}}(E_{1})$ and $g\in\mathcal{FO}_{[c,\infty]}(E_{2})$, or if
$f\in\mathcal{FO}_{[a,\infty]}(E_{1})$ and $g\in\mathcal{FO}_{\{\infty\}}(E_{2})$, 
then $fg\in\mathcal{FO}_{\{\infty\}}(E)$.
\item[b)] If $f\in\mathcal{FO}_{\{-\infty\}}(E_{1})$ and $g\in\mathcal{FO}_{[-\infty,d]}(E_{2})$, or if
$f\in\mathcal{FO}_{[-\infty,b]}(E_{1})$ and $g\in\mathcal{FO}_{\{-\infty\}}(E_{2})$, 
then $fg\in\mathcal{FO}_{\{-\infty\}}(E)$.
\item[c)] We define the convolution
\begin{gather*}
\ast^{\mathcal{B}}\:\colon \mathcal{B}([a,\infty[,E_{1})\times \mathcal{B}([c,\infty[,E_{2})
\to\mathcal{B}([a+c,\infty[,E),\\
f\ast^{\mathcal{B}} g:=(\mathcal{F}^{\mathcal{B}})^{-1}
\bigl(\mathcal{F}^{\mathcal{B}}(f)\mathcal{F}^{\mathcal{B}}(g)\bigr),
\end{gather*}
and 
\begin{gather*}
\ast^{\mathcal{B}}\:\colon \mathcal{B}(]-\infty,b],E_{1})\times \mathcal{B}(]-\infty,d],E_{2})
\to\mathcal{B}(]-\infty,b+d],E),\\
f\ast^{\mathcal{B}} g:=(\mathcal{F}^{\mathcal{B}})^{-1}
\bigl(\mathcal{F}^{\mathcal{B}}(f)\mathcal{F}^{\mathcal{B}}(g)\bigr).
\end{gather*}
The convolution is well-defined, bilinear and continuous.
\item[d)] If $E_{1}=E_{2}$ and $\cdot\:\colon E_{1}\times E_{2}\to E$ is commutative, 
then the convolution is commutative as well. 
If $E=E_{1}=E_{2}=\C$ and $\cdot\:\colon E_{1}\times E_{2}\to E$ is the multiplication, 
then the convolution is associative.
\end{enumerate}
\end{thm}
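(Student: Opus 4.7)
The whole theorem is designed as an asymptotic analogue of \prettyref{thm:convolution}, and the plan is to transport the convolution defined there to the hyperfunction setting via the asymptotic one-sided Fourier transforms of \prettyref{thm:asymptotic_one_sided_fourier}. Parts a) and b) provide exactly the compatibility needed for the product to descend to the relevant quotients.

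For part a), I focus on the case $f\in\mathcal{FO}_{\{\infty\}}(E_1)$, $g\in\mathcal{FO}_{[c,\infty]}(E_2)$; the remaining three configurations are symmetric and amount to interchanging the roles of $f,g$ or working on $\h$ instead of $-\h$. Holomorphy of $fg$ on $-\h$ follows from the bilinearity of $\cdot$ exactly as in the proof of \prettyref{thm:convolution}~a). For seminorm control, on $\{\im(z)\le -\tfrac{1}{2k}\}$ condition \eqref{P.0.0.11} gives
\[
p_\alpha\bigl(f(z)g(z)\bigr)\le D\,|f|_{2k,\beta,\{\infty\}}\,|g|_{2k,\omega,[c,\infty]}\,
e^{\tfrac{1}{k}|z|-(2k+c)|\im(z)|}.
\]
Choosing $k$ large enough that $2k+c\ge k$, and handling the remaining small $k$ by the monotonicity in the domain of integration (a smaller $k$ means a smaller region $\{\im(z)\le -1/k\}$ with stronger weight), one obtains $|fg|_{k,\alpha,\{\infty\}}<\infty$ for every $k$, placing $fg$ in $\mathcal{FO}_{\{\infty\}}(E)$. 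Part b) is established by the same computation carried out on $\h$, replacing $w_c^+$ by $w_d^-$.

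For part c), I define the convolution through the Fourier side. Given $f\in\mathcal{B}([a,\infty[,E_1)$ and $g\in\mathcal{B}([c,\infty[,E_2)$, pick representatives $F\in\mathcal{FO}_{[a,\infty]}(E_1)$ of $\mathcal{F}_{[a,\infty[}^{\mathcal{B}}(f)$ and $G\in\mathcal{FO}_{[c,\infty]}(E_2)$ of $\mathcal{F}_{[c,\infty[}^{\mathcal{B}}(g)$. By \prettyref{thm:convolution}~a) the product $FG$ belongs to $\mathcal{FO}_{[a+c,\infty]}(E)$, and part a) above ensures that changing $F$ by an element of $\mathcal{FO}_{\{\infty\}}(E_1)$ (or $G$ by one of $\mathcal{FO}_{\{\infty\}}(E_2)$) only alters $FG$ by an element of $\mathcal{FO}_{\{\infty\}}(E)$. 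Hence the product descends to a well-defined element $\mathcal{F}^{\mathcal{B}}(f)\mathcal{F}^{\mathcal{B}}(g)\in\mathcal{FO}_{[a+c,\infty]}(E)/\mathcal{FO}_{\{\infty\}}(E)$, and applying $(\mathcal{F}_{[a+c,\infty[}^{\mathcal{B}})^{-1}$ gives $f\ast^{\mathcal{B}}g\in\mathcal{B}([a+c,\infty[,E)$. Bilinearity is immediate from bilinearity of the representative product, and continuity follows from the bilinear seminorm estimate in the proof of \prettyref{thm:convolution}~a) once $\mathcal{B}([a,\infty[,E)$ is endowed with the topology making $\mathcal{F}^{\mathcal{B}}$ a topological isomorphism. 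The $\mathcal{B}(]-\infty,b],E)$-case is parallel, using part b) in place of a).

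Part d) is essentially formal: commutativity is inherited from commutativity of $\cdot$ because the representative multiplication is pointwise, and in the scalar case associativity reduces to the associativity of the pointwise product on $\mathcal{FO}_{[a+c,\infty]}(\C)/\mathcal{FO}_{\{\infty\}}(\C)$, exactly as in \prettyref{thm:convolution}~c). The main technical obstacle is the weight bookkeeping in part a): one must arrange that the $e^{-k|\im(z)|}$-decay of the $\mathcal{FO}_{\{\pm\infty\}}$-factor absorbs the linear $|\im(z)|$-growth $-c|\im(z)|$ (or $d|\im(z)|$) of the other factor simultaneously with matching the $\tfrac{1}{k}|z|$-weight for every target index $k$. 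Once this bookkeeping is handled cleanly in a), parts b), c) and d) follow by the described reduction.
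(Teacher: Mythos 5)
Your proposal is correct and follows essentially the same route as the paper, which simply observes that parts a) and b) are the special cases $a=b=\infty$ (resp.\ $c=d=\infty$, etc.) of \prettyref{thm:convolution}~a) and then deduces c) and d) from \prettyref{thm:convolution}~b) and c); your explicit weight bookkeeping in a) reproduces the estimate already carried out in the proof of \prettyref{thm:convolution}~a) (where the index on the right is enlarged to some $n\geq k+|c|$, matching your choice $k\geq -c$ plus monotonicity of the seminorms in $k$).
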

\begin{proof}
The parts a) and b) are just special cases of \prettyref{thm:convolution} a). In particular, this implies 
that for $\mathcal{F}^{\mathcal{B}}(f)=[\widetilde{F}]$ and $\mathcal{F}^{\mathcal{B}}(g)=[\widetilde{G}]$ the product 
$\mathcal{F}^{\mathcal{B}}(f)\mathcal{F}^{\mathcal{B}}(g):=[\widetilde{F}\cdot\widetilde{G}]$ is well-defined.
The convolutions in c) are well-defined due to a) and b) and \prettyref{thm:convolution} a). The rest follows 
from \prettyref{thm:convolution} b) and c).
\end{proof}

Let $E$, $E_{1}$ and $E_{2}$ and $\cdot\:\colon E_{1}\times E_{2}\to E$ be as above and $a,b,c,d\in\R$.
We note that the convolution $f\circledast g$ from \prettyref{cor:convolution_standard} is even defined 
and an element of $\mathcal{B}([a+c,\infty[,E)$ resp.\ $\mathcal{B}(]-\infty,b+d],E)$ by part (i) and (ii) of the 
proof of \prettyref{cor:convolution_standard} if $f\in \mathcal{B}([a,b],E_{1})$ and 
$g\in \mathcal{B}([c,\infty[,E_{2})$ resp.\ $g\in \mathcal{B}(]-\infty,d],E_{2})$. If $E=E_{1}=E_{2}=\C$, then 
this is the usual definition of the convolution of two hyperfunctions where one of them has real compact support 
(see \cite[Proposition 2.53, p.\ 141]{graf2010}). 

\begin{cor}\label{cor:asymp_fourier_convolution}
Let $(E,(p_{\alpha})_{\alpha\in\mathfrak{A}})$, $(E_{1},(p_{\beta})_{\beta\in\mathfrak{B}})$ 
and $(E_{2},(p_{\omega})_{\omega\in\Omega})$ be admissible sequentially complete $\C$-lcHs, 
$E$ and $E_{1}$ strictly admissible, such that
a canonical bilinear map $\cdot\:\colon E_{1}\times E_{2}\to E$ is defined with the property
\[
\forall\; \alpha\in\mathfrak{A}\;\exists\;\beta\in\mathfrak{B},\,\omega\in\Omega,\,D>0\;\forall\;x\in E_{1},\,y\in E_{2}:
\;p_{\alpha}(x\cdot y)\leq D p_{\beta}(x)p_{\omega}(y).
\]
Let $a,b,c,d\in\R$. 
\begin{enumerate}
\item[a)] For every $f\in\mathcal{B}([a,\infty[,E_{1})$ and $j>a$ there is $f_{j}\in\mathcal{B}([a,j],E_{1})$ 
such that $f-f_{j}\in \mathcal{B}([j,\infty[,E_{1})$.
\item[b)] For every $f\in\mathcal{B}(]-\infty,b],E_{1})$ and $j<b$ there is 
$\widetilde{f}_{j}\in\mathcal{B}([j,b],E_{1})$ such that $f-\widetilde{f}_{j}\in \mathcal{B}(]-\infty,j],E_{1})$.
\item[c)] We define the convolutions
\[
\circledast^{\mathcal{B}}\:\colon \mathcal{B}([a,\infty[,E_{1})\times \mathcal{B}([c,\infty[,E_{2})
\to\mathcal{B}([a+c,\infty[,E)
\]
by $(f\circledast^{\mathcal{B}} g)_{\big{|}]-\infty,j[}:=(f_{j}\circledast g)_{\big{|} ]-\infty,j[}$ for 
every $j>a$, and 
\[
\circledast^{\mathcal{B}}\:\colon \mathcal{B}(]-\infty,b],E_{1})\times \mathcal{B}(]-\infty,d],E_{2})
\to\mathcal{B}(]-\infty,b+d],E)
\]
by $(f\circledast^{\mathcal{B}} g)_{\big{|}]j,\infty[}
:=(\widetilde{f}_{j}\circledast g)_{\big{|} ]j,\infty[}$
for every $j<b$. Then we have
\[
f\circledast^{\mathcal{B}} g=f\ast^{\mathcal{B}} g 
 \quad\text{and}\quad
\mathcal{F}^{\mathcal{B}}(f\circledast^{\mathcal{B}} g)=\mathcal{F}^{\mathcal{B}}(f)\mathcal{F}^{\mathcal{B}}(g).
\]
\end{enumerate}
\end{cor}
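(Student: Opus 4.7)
The plan is to treat parts (a) and (b) via the flabbiness of the sheaf of $E_{1}$-valued hyperfunctions, and then to establish (c) by identifying the locally defined $f_{j}\circledast g$ with the globally defined $f\ast^{\mathcal{B}} g$ via the Fourier transform.

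For (a), since $E_{1}$ is strictly admissible, \cite[Theorem~5.9~b)]{kruse2019_5} gives that the sheaf of $E_{1}$-valued hyperfunctions on $\R$ is flabby. Applying \cite[Lemma~1.4.4]{Kan} to the closed cover $[a,\infty[\,=\,[a,j]\cup [j,\infty[$ then produces the decomposition $f=f_{j}+(f-f_{j})$ with the stated supports. Part (b) follows in the same way from the cover $]-\infty,b]\,=\,]-\infty,j]\cup [j,b]$.

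For (c) the key observation is that, for $f_{j}$ as in (a), the convolution $f_{j}\circledast g$ from \prettyref{cor:convolution_standard} is defined and lies in $\mathcal{B}([a+c,\infty[,E)$ (as noted in the paragraph preceding this corollary), and by the Fourier compatibility in \prettyref{thm:convolution}~(b) together with the definition of $\ast^{\mathcal{B}}$ in \prettyref{thm:asymp_fourier_convolution}~(c) one has $f_{j}\circledast g=f_{j}\ast^{\mathcal{B}} g$. Bilinearity of $\ast^{\mathcal{B}}$ then gives
\[
f\ast^{\mathcal{B}} g \,-\, f_{j}\circledast g \,=\, (f-f_{j})\ast^{\mathcal{B}} g,
\]
and the right-hand side lies in $\mathcal{B}([j+c,\infty[,E)$ by \prettyref{thm:asymp_fourier_convolution}~(c). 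Hence $f_{j}\circledast g$ and $f\ast^{\mathcal{B}} g$ coincide as hyperfunctions on every open subset of $\R$ disjoint from $[j+c,\infty[$. As $j$ ranges over $(a,\infty)$ these open sets exhaust $\R$, which simultaneously proves the independence of the local formula from the choice of $f_{j}$ (so $\circledast^{\mathcal{B}}$ is well-defined) and forces the identity $f\circledast^{\mathcal{B}} g=f\ast^{\mathcal{B}} g$. The Fourier identity $\mathcal{F}^{\mathcal{B}}(f\circledast^{\mathcal{B}} g)=\mathcal{F}^{\mathcal{B}}(f)\mathcal{F}^{\mathcal{B}}(g)$ is then immediate from the definition of $\ast^{\mathcal{B}}$. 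The $]-\infty,b]$ case is handled analogously using the decomposition from (b).

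The main obstacle I expect is the support bookkeeping: the direct compatibility $(f_{j_{1}}\circledast g)|_{]-\infty,j_{1}[}=(f_{j_{2}}\circledast g)|_{]-\infty,j_{1}[}$ for $a<j_{1}<j_{2}$ reduces, by linearity, to checking that the support of $(f_{j_{2}}-f_{j_{1}})\circledast g$ misses $]-\infty,j_{1}[$; the support bound $[j_{1}+c,\infty[$ yields this for free when $c\geq 0$, whereas in the case $c<0$ one must reach the conclusion indirectly through the Fourier identification with $f\ast^{\mathcal{B}} g$, upgrading compatibility on $]-\infty,j_{1}+c[$ to compatibility on all of $]-\infty,j_{1}[$ by invoking a larger auxiliary index $j'\geq j_{1}-c$.
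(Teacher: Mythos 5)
Your parts a) and b) are exactly the paper's: flabbiness of the sheaf of $E_{1}$-valued hyperfunctions from \cite[Theorem 5.9 b), p.\ 33]{kruse2019_5} combined with \cite[Lemma 1.4.4, p.\ 36]{Kan}. In part c) you use the same three ingredients as the paper -- the splitting $f=f_{j}+(f-f_{j})$, the identification of the compactly supported piece through \prettyref{cor:convolution_standard}, and the fact that the remainder is supported arbitrarily far to the right -- but you run the logic in the opposite direction. The paper first builds $f\circledast^{\mathcal{B}}g$ by glueing, then shows that $\mathcal{F}^{\mathcal{B}}(f\circledast^{\mathcal{B}}g)-\mathcal{F}^{\mathcal{B}}(f)\mathcal{F}^{\mathcal{B}}(g)$ lies in $\mathcal{FO}_{[c+j,\infty]}(E)/\mathcal{FO}_{\{\infty\}}(E)$ for every $j>a$ (working upstairs in the Fourier ranges), concludes the Fourier identity by letting $j\to\infty$ there, and only then obtains $f\circledast^{\mathcal{B}}g=f\ast^{\mathcal{B}}g$ from the injectivity of $\mathcal{F}^{\mathcal{B}}$ in \prettyref{thm:asymptotic_one_sided_fourier}. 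You instead prove $f_{j}\circledast g=f_{j}\ast^{\mathcal{B}}g$ first, compare supports downstairs on the hyperfunction side via $(f-f_{j})\ast^{\mathcal{B}}g\in\mathcal{B}([j+c,\infty[,E)$, let $j\to\infty$, and read off the Fourier identity at the end. Both routes are valid and of comparable length; yours has the merit of making explicit the overlap compatibility that the paper subsumes under ``the glueing property of a sheaf''.

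One caveat on the point you flag yourself. For $c<0$ your auxiliary index $j'\geq j_{1}-c$ shows that $f_{j'}\circledast g$ agrees with $f\ast^{\mathcal{B}}g$ on $]-\infty,j_{1}[$, but it does not show that $f_{j_{1}}\circledast g$ does: replacing $f_{j_{1}}$ by $f_{j_{1}}+(\tau_{j_{1}}\delta_{0})e$ for a fixed $e\in E_{1}$ is another admissible choice in a), and it changes $f_{j_{1}}\circledast g$ by a shift of $e\cdot g$ supported in $[j_{1}+c,\infty[$, which need not vanish on $[j_{1}+c,j_{1}[$. So the local formula $(f\circledast^{\mathcal{B}}g)_{\mid ]-\infty,j[}:=(f_{j}\circledast g)_{\mid ]-\infty,j[}$ is only unambiguous on $]-\infty,j+c[$, and the clean statement is that the data $(f_{j}\circledast g)_{\mid ]-\infty,j+c[}$ glue to $f\ast^{\mathcal{B}}g$. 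This is not a defect of your argument relative to the paper -- the paper's appeal to the glueing property leaves exactly the same compatibility unchecked for $c<0$ -- but your proposed fix does not close it either; it only shows that the large-$j$ data determine the glued object.
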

\begin{proof}
a) Since $E_{1}$ is strictly admissible, the sheaf of $E_{1}$-valued hyperfunctions is flabby by 
\cite[Theorem 5.9 b), p.\ 33]{kruse2019_5} and for $f\in\mathcal{B}([a,\infty[,E_{1})$ and $j>a$ there is
$f_{j}\in\mathcal{B}([a,j],E_{1})$ such that $f-f_{j}\in \mathcal{B}([j,\infty[,E_{1})$
due to \cite[Lemma 1.4.4, p.\ 36]{Kan}. The proof of part b) is analogous.

c) Let $f\in\mathcal{B}([a,\infty[,E_{1})$ and $g\in\mathcal{B}([c,\infty[,E_{2})$.
It follows from the glueing property of a sheaf (see \cite[Property (S2), p.\ 6]{bre}) that by setting 
$(f\circledast^{\mathcal{B}} g)_{\big{|}]-\infty,j[}:=(f_{j}\circledast g)_{\big{|} ]-\infty,j[}$ for 
every $j>a$, we get a hyperfunction $f\circledast^{\mathcal{B}} g\in \mathcal{B}([a+c,\infty[,E)$. 
We deduce from \prettyref{thm:asymptotic_one_sided_fourier}, \prettyref{thm:convolution} 
and \prettyref{cor:convolution_standard} that 
\begin{flalign*}
&\hspace{0.35cm} \mathcal{F}^{\mathcal{B}}(f\circledast^{\mathcal{B}} g)
 -\mathcal{F}^{\mathcal{B}}(f)\mathcal{F}^{\mathcal{B}}(g)\\
&=\mathcal{F}^{\mathcal{B}}((f-f_{j})\circledast^{\mathcal{B}} g)
 +\mathcal{F}^{\mathcal{B}}(f_{j}\circledast^{\mathcal{B}} g)
 -\mathcal{F}^{\mathcal{B}}(f)\mathcal{F}^{\mathcal{B}}(g)\\
&=\mathcal{F}^{\mathcal{B}}((f-f_{j})\circledast^{\mathcal{B}} g)
 +\mathcal{F}(\mathcal{R}_{[a,j]}^{-1}(f_{j})\circledast \mathcal{R}_{[c,\infty[}^{-1}(g))
 -\mathcal{F}^{\mathcal{B}}(f)\mathcal{F}^{\mathcal{B}}(g)\\
&=\mathcal{F}^{\mathcal{B}}((f-f_{j})\circledast^{\mathcal{B}} g)
 +\mathcal{F}^{\mathcal{B}}(f_{j})\mathcal{F}^{\mathcal{B}}(g)
 -\mathcal{F}^{\mathcal{B}}(f)\mathcal{F}^{\mathcal{B}}(g)\\
&=\mathcal{F}^{\mathcal{B}}((f-f_{j})\circledast^{\mathcal{B}} g)
 +\mathcal{F}^{\mathcal{B}}(f_{j}-f)\mathcal{F}^{\mathcal{B}}(g).
\end{flalign*}
We note that $(f-f_{j})\circledast^{\mathcal{B}} g\in\mathcal{B}([c+j,\infty[,E)$, 
$f_{j}-f\in \mathcal{B}([j,\infty[,E_{1})$ by a) and $g\in\mathcal{B}([c,\infty[,E_{2})$, 
which implies that 
$\mathcal{F}^{\mathcal{B}}((f-f_{j})\circledast^{\mathcal{B}} g)\in 
\mathcal{FO}_{[c+j,\infty]}(E)/\mathcal{FO}_{\{\infty\}}(E)$, 
$\mathcal{F}^{\mathcal{B}}(f_{j}-f)\in \mathcal{FO}_{[j,\infty]}(E_{1})/\mathcal{FO}_{\{\infty\}}(E_{1})$ 
and $\mathcal{F}^{\mathcal{B}}(g)\in \mathcal{FO}_{[c,\infty]}(E_{2})/\mathcal{FO}_{\{\infty\}}(E_{2})$ 
by \prettyref{thm:asymptotic_one_sided_fourier}. We derive from \prettyref{thm:convolution} a) that 
\[
\mathcal{F}^{\mathcal{B}}(f_{j}-f)\mathcal{F}^{\mathcal{B}}(g)
\in \mathcal{FO}_{[c+j,\infty]}(E)/\mathcal{FO}_{\{\infty\}}(E).
\]
We conclude that $\mathcal{F}^{\mathcal{B}}(f\circledast^{\mathcal{B}} g)
-\mathcal{F}^{\mathcal{B}}(f)\mathcal{F}^{\mathcal{B}}(g)\in\mathcal{FO}_{[c+j,\infty]}(E)/\mathcal{FO}_{\{\infty\}}(E)$
for every $j>a$, yielding
\[
 \mathcal{F}^{\mathcal{B}}(f\circledast^{\mathcal{B}} g)
=\mathcal{F}^{\mathcal{B}}(f)\mathcal{F}^{\mathcal{B}}(g).
\]
It follows from \prettyref{thm:asymp_fourier_convolution} that $f\circledast^{\mathcal{B}} g=f\ast^{\mathcal{B}} g $. 
The other case is analogous.
\end{proof}

The idea of the proof above is a modification of the proof of \cite[Theorem 5.5, p.\ 56]{langenbruch2011}.
\section{Langenbruch's asymptotic Fourier transform}
\label{sect:Asymptotic_Fourier_transform_Lan}
In this section we study the relation of our asymptotic Fourier transform on $\mathcal{B}(\R)$ 
to the one of Langenbruch. We briefly recall the relevant notions and results from \cite{langenbruch2011} 
(and the introduction).

\begin{defn}[{\cite[p.\ 44, 49, 53, 54, 55, 61]{langenbruch2011}}]\label{def:test_functions_langenbruch}
Let $E$ be a $\C$-lcHs and $K\subset\overline{\R}$ be compact. We define the space
\[
\mathcal{H}_{-\infty}(\overline{\C}\setminus K,E):= \{f\in\mathcal{O}(\C\setminus K,E)
\; | \; \forall\;n\in\N,\,\alpha\in\mathfrak{A}:\;\|f\|_{n,\alpha,K}^{\mathcal{H}_{-\infty}} < \infty\}
\]
where
\[
\|f\|_{n,\alpha,K}^{\mathcal{H}_{-\infty}}:=\sup_{z\in S_{n}(K)}
p_{\alpha}(f(z))e^{n|\re(z)|},
\]
and the quotient space
\[
\mathcal{G}(K,E):=\mathcal{H}_{-\infty}(\overline{\C}\setminus K,E)/
\mathcal{H}_{-\infty}(\overline{\C},E).
\]
\end{defn}

We remark that our definition of the spaces above coincides with Langenbruch's original definition because 
$S_{n}(K)=W_{n}(K)$ for $K=\emptyset$, $[0,\infty]$, $S_{n}(\overline{\R})=F_{n}$ with $F_{n}$ from 
\cite[p.\ 44]{langenbruch2011} and 
$\mathcal{H}_{-\infty}(\overline{\C}\setminus\{\pm\infty\},E)=
\mathcal{H}_{-\infty}(\overline{\C}\setminus\overline{\R},E)\cap\mathcal{O}(\C,E)$
and
$\mathcal{H}_{-\infty}(\overline{\C}\setminus\{\infty\},E)=
\mathcal{H}_{-\infty}(\overline{\C}\setminus [0,\infty],E)\cap\mathcal{O}(\C,E)$.

\begin{defn}[{\cite[p.\ 46, 50]{langenbruch2011}}]
Let $E$ be a $\C$-lcHs. We define the spaces 
\[
\mathcal{H}_{1}^{\pm}(E):=\{f\in\mathcal{O}(\C,E)\;|\;\forall\;j\in\N,\,\alpha\in\mathfrak{A}:\;
|f|_{j,\alpha}^{\pm}<\infty\}
\]
where 
\[
|f|_{j,\alpha}^{\pm}:=\sup_{\pm\im(z)\leq j}p_{\alpha}(f(z))e^{-\frac{1}{j}|\re(z)|+j|\im(z)|},
\]
and
\[
\mathfrak{F}\mathcal{G}(\overline{\R},E):=\mathcal{O}^{exp}(\overline{\C},E)
\quad\text{and}\quad
\mathfrak{F}\mathcal{G}(\{\pm\infty\},E):=\mathcal{H}_{1}^{-}(E)\oplus\mathcal{H}_{1}^{+}(E).
\]
\end{defn}

Let $K=\overline{\R}$ or $K=\{\pm\infty\}$. By 
\cite[Theorem 2.3, p.\ 47, Proposition 3.3, p.\ 50]{langenbruch2011} the Fourier transform 
\[
\mathfrak{F}\colon \mathcal{G}(K)\to \mathfrak{F}\mathcal{G}(K),\;
\mathfrak{F}([g])(z):=\int_{\gamma_{\overline{\R}}}e^{-iz\zeta}g(\zeta)\d\zeta,\;z\in\C,
\]
where $\gamma_{\overline{\R}}$ is the path along the boundary of $U_{\nicefrac{1}{c}}(\overline{\R})$ for $c>0$ 
with clockwise orientation, is a topological isomorphism (for $K=\{\pm\infty\}$ the path may be deformed to the 
boundary of $U_{\nicefrac{1}{c}}(\{\pm\infty\})$). 

The canonical (restriction) map 
\[
R\colon \mathcal{H}_{-\infty}(\overline{\C}\setminus\overline{\R})/
\mathcal{H}_{-\infty}(\overline{\C}\setminus\{\pm\infty\})
\to \mathcal{B}(\R),\;[F]\mapsto [F],
\]
is a linear isomorphism by \cite[Theorem 3.1, p.\ 49]{langenbruch2011_1}. 
The combination of both results gives the following theorem. 

\begin{thm}[{\cite[Theorem 3.4, p.\ 51]{langenbruch2011}}]\label{thm:fourier_asymp_iso_langenbruch}
The asymptotic Fourier transform 
\[
\mathfrak{F}_{\mathcal{B}}\colon \mathcal{B}(\R)\to 
\mathfrak{F}\mathcal{G}(\overline{\R})/\mathfrak{F}\mathcal{G}(\{\pm\infty\}),\;
\mathfrak{F}_{\mathcal{B}}(f):=[(\mathfrak{F}\circ R^{-1})(f)],
\]
is a linear isomorphism. 
\end{thm}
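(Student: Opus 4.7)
The plan is to realize $\mathfrak{F}_{\mathcal{B}}$ as a composition of three linear isomorphisms, each coming from the results recalled immediately before the statement. First, $R$ supplies an isomorphism
\[
R^{-1}\colon\mathcal{B}(\R)\to\mathcal{H}_{-\infty}(\overline{\C}\setminus\overline{\R})/\mathcal{H}_{-\infty}(\overline{\C}\setminus\{\pm\infty\}).
\]
Second, from the chain of inclusions $\mathcal{H}_{-\infty}(\overline{\C})\subset\mathcal{H}_{-\infty}(\overline{\C}\setminus\{\pm\infty\})\subset\mathcal{H}_{-\infty}(\overline{\C}\setminus\overline{\R})$ the third isomorphism theorem gives
\[
\mathcal{H}_{-\infty}(\overline{\C}\setminus\overline{\R})/\mathcal{H}_{-\infty}(\overline{\C}\setminus\{\pm\infty\})\cong\mathcal{G}(\overline{\R})/\mathcal{G}(\{\pm\infty\}),
\]
where $\mathcal{G}(\{\pm\infty\})$ is regarded as a subspace of $\mathcal{G}(\overline{\R})$ via the inclusion of representatives. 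This is a genuine subspace, because a class $[g]\in\mathcal{G}(\{\pm\infty\})$ vanishes in $\mathcal{G}(\overline{\R})$ exactly when $g\in\mathcal{H}_{-\infty}(\overline{\C})$, which is already the defining relation on $\mathcal{G}(\{\pm\infty\})$.

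The key remaining step is to check that the topological isomorphism $\mathfrak{F}\colon\mathcal{G}(\overline{\R})\to\mathfrak{F}\mathcal{G}(\overline{\R})$ carries the subspace $\mathcal{G}(\{\pm\infty\})$ isomorphically onto $\mathfrak{F}\mathcal{G}(\{\pm\infty\})$, and agrees on it with the isomorphism $\mathfrak{F}\colon\mathcal{G}(\{\pm\infty\})\to\mathfrak{F}\mathcal{G}(\{\pm\infty\})$ supplied by the second cited result. Concretely, for a representative $g\in\mathcal{H}_{-\infty}(\overline{\C}\setminus\{\pm\infty\})$ the function $g$ is entire on $\C$, and by Cauchy's integral theorem combined with the rapid decay of $g$ in the real direction on horizontal strips, the defining integral $\int_{\gamma_{\overline{\R}}}e^{-iz\zeta}g(\zeta)\d\zeta$ along the boundary of $U_{1/c}(\overline{\R})$ may be deformed to an integral along the boundary of $U_{1/c}(\{\pm\infty\})$, which is precisely the prescription used for $\mathfrak{F}$ on $\mathcal{G}(\{\pm\infty\})$. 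This is the step I expect to require the most care, but it amounts only to a standard contour-deformation argument controlled by the weights defining $\mathcal{H}_{-\infty}$.

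Once this compatibility is established, $\mathfrak{F}$ descends to a linear isomorphism of quotients
\[
\mathcal{G}(\overline{\R})/\mathcal{G}(\{\pm\infty\})\to\mathfrak{F}\mathcal{G}(\overline{\R})/\mathfrak{F}\mathcal{G}(\{\pm\infty\}),
\]
and $\mathfrak{F}_{\mathcal{B}}$ is exactly the composition of this isomorphism with the identification coming from the first two steps. This simultaneously shows that $\mathfrak{F}_{\mathcal{B}}$ is well defined, i.e.\ independent of the lift chosen when representing an element of the domain quotient, and that it is a linear bijection, as required.
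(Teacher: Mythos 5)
Your proposal is correct and follows the same route as the paper, which simply states that the theorem follows by combining the isomorphism $R$ with the Fourier isomorphisms $\mathfrak{F}\colon\mathcal{G}(K)\to\mathfrak{F}\mathcal{G}(K)$ for $K=\overline{\R}$ and $K=\{\pm\infty\}$; you merely make explicit the third-isomorphism-theorem identification and the contour-deformation compatibility that the paper already records parenthetically (``for $K=\{\pm\infty\}$ the path may be deformed to the boundary of $U_{\nicefrac{1}{c}}(\{\pm\infty\})$''). No gap.
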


\begin{prop}\label{prop:fourier_decomposition_langenbruch}
Let $j\in\N$. 
\begin{enumerate}
\item[a)] Then for every $f\in \mathcal{B}([0,\infty[)$ there are $f_{1}\in \mathcal{B}([0,j])$ 
and $f_{2}\in \mathcal{B}([j,\infty[)$ such that $f=f_{1}+f_{2}$. 
For $h\in\mathfrak{F}_{\mathcal{B}}(f)$ we have for every $k\in\N$ 
\[
\sup_{\im(z)\leq k}|h(z)-\mathcal{F}_{[0,j]}^{\mathcal{B}}(f_{1})(z)|e^{-\frac{1}{k}|z|+j|\im(z)|}<\infty .
\]
\item[b)] Then for every $f\in \mathcal{B}(]-\infty,0])$ there are $f_{1}\in \mathcal{B}([-j,0])$ 
and $f_{2}\in \mathcal{B}(]-\infty,-j])$ such that $f=f_{1}+f_{2}$. 
For $h\in\mathfrak{F}_{\mathcal{B}}(f)$ we have for every $k\in\N$ 
\[
\sup_{\im(z)\geq -k}|h(z)-\mathcal{F}_{[-j,0]}^{\mathcal{B}}(f_{1})(z)|e^{-\frac{1}{k}|z|+j|\im(z)|}<\infty .
\]
\end{enumerate}
\end{prop}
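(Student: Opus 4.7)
We prove (a); part (b) is analogous. Since $\C$ is strictly admissible by \prettyref{thm:examples_strictly_admiss}, the sheaf of $\C$-valued hyperfunctions is flabby by \cite[Theorem 5.9 b)]{kruse2019_5}, so \cite[Lemma 1.4.4]{Kan} yields a decomposition $f=f_1+f_2$ with $f_1\in\mathcal{B}([0,j])$ and $f_2\in\mathcal{B}([j,\infty[)$. This establishes the first part of (a).

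For the estimate, the strategy is to build a specific representative of $\mathfrak{F}_{\mathcal{B}}(f)$ in which the contributions of $f_1$ and $f_2$ are cleanly separated, and then to observe that any other representative differs by an element of $\mathfrak{F}\mathcal{G}(\{\pm\infty\})=\mathcal{H}_1^-\oplus\mathcal{H}_1^+$ whose defining seminorms directly furnish the desired bound. Appealing to the flabbiness of Langenbruch's sheaf $\mathcal{G}$ (following \cite{langenbruch2011}), we pick Langenbruch representatives $F_1\in\mathcal{H}_{-\infty}(\overline{\C}\setminus[0,j])$ of $f_1$ and $F_2\in\mathcal{H}_{-\infty}(\overline{\C}\setminus[j,\infty])$ of $f_2$. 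Then $F:=F_1+F_2\in\mathcal{H}_{-\infty}(\overline{\C}\setminus[0,\infty])\subset\mathcal{H}_{-\infty}(\overline{\C}\setminus\overline{\R})$ represents $f$, and we set $h:=\mathfrak{F}(F)=\mathfrak{F}(F_1)+\mathfrak{F}(F_2)$ as our privileged representative of $\mathfrak{F}_{\mathcal{B}}(f)$.

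For the term $\mathfrak{F}(F_1)$: since $F_1$ is holomorphic on $\C\setminus[0,j]$ and decays rapidly as $\re\zeta\to\pm\infty$, Cauchy's theorem collapses the defining contour $\gamma_{\overline{\R}}$ to a small contour $\gamma_{[0,j]}$ around $[0,j]$, yielding $\mathfrak{F}(F_1)(z)=\int_{\gamma_{[0,j]}}e^{-iz\zeta}F_1(\zeta)\,\d\zeta=\mathcal{F}_{[0,j]}(F_1)(z)=\mathcal{F}_{[0,j]}^{\mathcal{B}}(f_1)(z)$ by \prettyref{cor:fourier_hyp_real_comp} and the identification $\mathcal{H}_{-\infty}(\overline{\C}\setminus[0,j])\hookrightarrow\mathcal{O}^{exp}(\overline{\C}\setminus[0,j])$. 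For $\mathfrak{F}(F_2)$: deform $\gamma_{\overline{\R}}$ into a contour enclosing $[j,\infty[$ from the right and staying inside $\{\re\zeta\geq j-\varepsilon,\,|\im\zeta|\leq\varepsilon\}$ for small $\varepsilon>0$ (the rapid decay of $F_2$ as $\re\zeta\to+\infty$ kills the boundary contribution). On this contour, for $\im z\leq 0$ the kernel is controlled by $|e^{-iz\zeta}|\leq e^{j\im z+\varepsilon|\re z|}=e^{-j|\im z|+\varepsilon|\re z|}$; combined with the $e^{-n|\re\zeta|}$-decay of $F_2$, a computation structurally identical to the one leading to \eqref{eq:cont_fourier_unbounded_interval} in the proof of \prettyref{lem:fourier_unbounded_interval} (with growth replaced by decay) delivers $|\mathfrak{F}(F_2)(z)|\leq C_k e^{\frac{1}{k}|z|-j|\im z|}$ on the lower half-plane upon choosing $\varepsilon=1/k$. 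The remaining strip $0\leq\im z\leq k$ is absorbed from $\mathfrak{F}(F_2)\in\mathcal{O}^{exp}(\overline{\C})$ (at most $e^{\frac{1}{k}|\re z|}$-growth). Hence $h-\mathcal{F}_{[0,j]}^{\mathcal{B}}(f_1)=\mathfrak{F}(F_2)$ satisfies the bound, and adding any $e\in\mathcal{H}_1^-\oplus\mathcal{H}_1^+$ preserves it by a direct inspection of the seminorms $|\cdot|_{m,\pm}$ upon choosing the index $m\geq\max(k,j)$.

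The main obstacle is twofold: first, securing the Langenbruch representatives $F_i$ with the prescribed support and rapid real-decay, which rests on the flabbiness of the sheaf $\mathcal{G}$ (an input from \cite{langenbruch2011}); second, making the Paley-Wiener type estimate on $\mathfrak{F}(F_2)$ precise via the described contour deformation, which parallels the proofs of \prettyref{lem:fourier_unbounded_interval} and \prettyref{lem:fourier_bounded_interval} but with the roles of exponential growth and decay reversed.
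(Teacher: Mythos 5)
Your overall strategy is genuinely different from the paper's: you construct a privileged representative $h_{0}=\mathfrak{F}(F_{1})+\mathfrak{F}(F_{2})$ and estimate the remainder $\mathfrak{F}(F_{2})$ directly by contour deformation, whereas the paper proves nothing directly here. It writes $\mathfrak{F}(g)(z)=\mathfrak{L}(g)(iz)$ for a representative $g\in\mathcal{H}_{-\infty}(\overline{\C}\setminus[0,\infty])$ and imports the entire estimate from the Laplace-side statement \prettyref{prop:laplace_decomposition_langenbruch}, i.e.\ ultimately from \cite[Lemma 5.4 (a), p.\ 56]{langenbruch2011}. Your identification $\mathfrak{F}(F_{1})=\mathcal{F}_{[0,j]}^{\mathcal{B}}(f_{1})$, the Paley--Wiener type bound for $\mathfrak{F}(F_{2})$ on the lower half-plane, and the absorption of the strip $0\leq\im(z)\leq k$ via $\mathfrak{F}(F_{2})\in\mathcal{O}^{exp}(\overline{\C})$ are sound in outline (though the existence of the representatives $F_{1}\in\mathcal{H}_{-\infty}(\overline{\C}\setminus[0,j])$ and $F_{2}\in\mathcal{H}_{-\infty}(\overline{\C}\setminus[j,\infty])$ is not mere flabbiness of $\mathcal{B}$; it is the surjectivity of Langenbruch's restriction maps, the analogue of $R_{+}$, and should be cited as such).

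The gap is in your final step, the passage from $h_{0}$ to an arbitrary $h\in\mathfrak{F}_{\mathcal{B}}(f)$. You write $h=h_{0}+e$ with $e=e_{-}+e_{+}\in\mathcal{H}_{1}^{-}\oplus\mathcal{H}_{1}^{+}$ and claim the required bound for $e$ by ``direct inspection of the seminorms upon choosing $m\geq\max(k,j)$''. This works for $e_{+}$, since $|e_{+}|_{m}^{+}$ controls $e_{+}$ on $\{\im(z)\leq m\}\supset\{\im(z)\leq k\}$ with a factor $e^{-m|\im(z)|}$ dominating $e^{-j|\im(z)|}$. It fails for $e_{-}$: the seminorm $|e_{-}|_{m}^{-}$ only controls $e_{-}$ on $\{\im(z)\geq -m\}$, and no choice of $m$ makes that region contain the set $\{\im(z)\leq k\}$ over which the supremum in part a) is taken. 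On $\{\im(z)<-m\}$ you have no bound from the index-$m$ seminorm, and invoking larger indices $m'$ brings in constants $|e_{-}|_{m'}^{-}$ with no uniform control, so the supremum cannot be closed this way. The paper circumvents exactly this point: for $f$ supported in $[0,\infty[$ it shows, via \cite[Corollary 4.2, p.\ 41]{langenbruch2011_1}, that every $h\in\mathfrak{F}_{\mathcal{B}}(f)$ has the form $\mathfrak{F}(g+h_{1})$ with $g+h_{1}\in\mathcal{H}_{-\infty}(\overline{\C}\setminus[0,\infty])$, so the effective ambiguity is only the part supported at $+\infty$, and then transfers to the Laplace picture, where the ambiguity class $\mathfrak{L}\mathcal{G}_{\{\infty\}}$ is by definition controlled on the half-planes $\{\re(z)\geq -k\}$ appearing in the conclusion. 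You need this (or an equivalent) structural input on the ambiguity class; the defining seminorms of $\mathcal{H}_{1}^{-}$ alone do not suffice. The symmetric issue arises for $\mathcal{H}_{1}^{+}$ in part b).
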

\begin{proof}
a) The sheaf of $\C$-valued hyperfunctions is flabby 
by \cite[p.\ 392]{Sato2} and thus for every 
$f\in \mathcal{B}([0,\infty[)$ there are $f_{1}\in \mathcal{B}([0,j])$ and $f_{2}\in \mathcal{B}([j,\infty[)$ 
such that $f=f_{1}+f_{2}$ by \cite[Lemma 1.4.4, p.\ 36]{Kan}. Let $[g]= R^{-1}(f)$. Then we even have 
$g\in\mathcal{H}_{-\infty}(\overline{\C}\setminus[0,\infty])$ and 
$g-g_{1}\in\mathcal{H}_{-\infty}(\overline{\C}\setminus\{\infty\})$ for any $g_{1}$ with $[g_{1}]=[g]$ 
by \cite[Corollary 4.2, p.\ 41]{langenbruch2011_1} as well as
\[
 \mathfrak{F}(g)(z)
=\int_{\gamma_{\overline{\R}}}e^{-iz\zeta}g(\zeta)\d\zeta 
=\int_{\gamma_{[0,\infty]}}e^{-iz\zeta}g(\zeta)\d\zeta
=\mathfrak{L}(g)(iz),\;z\in\C,
\]
by Cauchy's integral theorem with the Laplace transform $\mathfrak{L}$ from \eqref{eq:laplace_def_lan}. 
Let $h\in\mathfrak{F}_{\mathcal{B}}(f)$. It follows that there are 
$g\in\mathcal{H}_{-\infty}(\overline{\C}\setminus[0,\infty])$ and 
$h_{1}\in\mathcal{H}_{-\infty}(\overline{\C}\setminus\{\infty\})$ such that $h=\mathfrak{F}(g+h_{1})$. 
Hence we have 
\begin{flalign*}
&\hspace{0.35cm}\sup_{\im(z)\leq k}|h(z)-\mathcal{F}_{[0,j]}^{\mathcal{B}}(f_{1})(z)|e^{-\frac{1}{k}|z|+j|\im(z)|}\\
&=\sup_{\im(z)\leq k}|\mathfrak{L}(g+h_{1})(iz)-\mathcal{L}_{[0,j]}^{\mathcal{B}}(f_{1})(iz)|
  e^{-\frac{1}{k}|z|+j|\im(z)|}\\
&=\sup_{\re(z)\geq -k}|\mathfrak{L}(g+h_{1})(z)-\mathcal{L}_{[0,j]}^{\mathcal{B}}(f_{1})(z)|
  e^{-\frac{1}{k}|z|+j|\re(z)|}<\infty
\end{flalign*}
for every $k\in\N$ by \prettyref{prop:laplace_decomposition_langenbruch} because 
$\mathfrak{L}(g+h_{1})\in\mathfrak{L}_{\mathcal{B}}(f)$.

b) As above for every $f\in \mathcal{B}(]-\infty,0])$ there are $f_{1}=[F_{1}]\in \mathcal{B}([-j,0])$ and 
$f_{2}=[F_{2}]\in \mathcal{B}(]-\infty,-j])$ such that $f=f_{1}+f_{2}$. 
The map $[F]\mapsto[F(-\cdot)]$ is a linear isomorphism $\mathcal{B}(]-\infty,0])\to\mathcal{B}([0,\infty[)$ resp.\ 
$\mathcal{H}_{-\infty}(\overline{\C}\setminus[-\infty,0])/\mathcal{H}_{-\infty}(\overline{\C}\setminus\{-\infty\})\to
\mathcal{H}_{-\infty}(\overline{\C}\setminus[0,\infty])/\mathcal{H}_{-\infty}(\overline{\C}\setminus\{\infty\})$.
Due to part a) we have for $[g]= R^{-1}(f)$ that
$g\in\mathcal{H}_{-\infty}(\overline{\C}\setminus[-\infty,0])$ and 
$g-g_{1}\in\mathcal{H}_{-\infty}(\overline{\C}\setminus\{-\infty\})$ for any $g_{1}$ with $[g_{1}]=[g]$ as well as
\begin{align*}
 \mathfrak{F}(g)(z)
&=\int_{\gamma_{\overline{\R}}}e^{-iz\zeta}g(\zeta)\d\zeta 
 =\int_{\gamma_{[-\infty,0]}}e^{-iz\zeta}g(\zeta)\d\zeta
 =\int_{\gamma_{[0,\infty]}}e^{iz\zeta}g(-\zeta)\d\zeta\\
&=\mathfrak{L}(g(-\cdot))(-iz),\;z\in\C,
\end{align*}
by Cauchy's integral theorem. The rest follows from part a).
\end{proof}

\begin{thm}\label{thm:asymptotic_fourier_equiv}
The restriction map 
\[
I\colon\mathfrak{F}\mathcal{G}(\overline{\R})/\mathfrak{F}\mathcal{G}(\{\pm\infty\})\to 
\mathcal{O}^{exp}(\overline{\C}\setminus\overline{\R})/
\bigl(\mathcal{O}^{exp}(\overline{\C})\oplus \mathcal{FO}_{\{\pm\infty\}}\bigr),\;[F]\mapsto [F_{+}],
\]
where $F_{+}:=F$ on $\h$ and $F_{+}:=0$ on $-\h$, is a linear isomorphism 
and $I\circ\mathfrak{F}_{\mathcal{B}}=\mathcal{F}^{\mathcal{B}}$.
\end{thm}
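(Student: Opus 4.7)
The plan is to establish the intertwining identity $I\circ\mathfrak{F}_{\mathcal{B}}=\mathcal{F}_{\R}^{\mathcal{B}}$ at the level of representatives. Once this holds, $I=\mathcal{F}_{\R}^{\mathcal{B}}\circ\mathfrak{F}_{\mathcal{B}}^{-1}$ is immediately a linear isomorphism, since $\mathfrak{F}_{\mathcal{B}}$ and $\mathcal{F}_{\R}^{\mathcal{B}}$ are linear isomorphisms by \prettyref{thm:fourier_asymp_iso_langenbruch} and \prettyref{thm:asymptotic_fourier} (applied to $E=\C$). As a preliminary I would check that $[F]\mapsto[F_{+}]$ descends to the quotients. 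For $F\in\mathfrak{F}\mathcal{G}(\overline{\R})=\mathcal{O}^{exp}(\overline{\C})$, the truncation $F_{+}$ is holomorphic on $\C\setminus\R$ and inherits the $\mathcal{O}^{exp}$-bound of $F$ on $S_{n}(\overline{\R})\subset\{|\im z|<n\}$, so $F_{+}\in\mathcal{O}^{exp}(\overline{\C}\setminus\overline{\R})$. For $F=F^{-}+F^{+}\in\mathcal{H}_{1}^{-}\oplus\mathcal{H}_{1}^{+}=\mathfrak{F}\mathcal{G}(\{\pm\infty\})$, the estimate $|F^{\pm}(z)|\leq C_{j}e^{(1/j)|\re z|-j|\im z|}$ on $\pm\im z\leq j$ gives, taking $j=n$, that $F^{\pm}\in\mathcal{O}^{exp}(\overline{\C})$; the same estimate together with $|\re z|\leq|z|$ yields $F^{-}|_{\h}\in\mathcal{FO}_{\{-\infty\}}$ and $F^{+}|_{-\h}\in\mathcal{FO}_{\{\infty\}}$. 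Writing $F_{+}=F^{+}+(F^{-}|_{\h}-F^{+}|_{-\h})$ shows $F_{+}\in\mathcal{O}^{exp}(\overline{\C})\oplus\mathcal{FO}_{\{\pm\infty\}}$, so $I$ is well-defined.

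For the identity, fix $f\in\mathcal{B}(\R)$ and pick $g\in\mathcal{H}_{-\infty}(\overline{\C}\setminus\overline{\R})$ with $R([g])=f$; since $\mathcal{H}_{-\infty}\subset\mathcal{O}^{exp}$ we also have $\mathcal{R}_{\R}([g])=f$. Flabbiness of the sheaf of $\C$-valued hyperfunctions together with Langenbruch's half-line extension isomorphisms (\cite[Theorem 5.1]{langenbruch2011} and its symmetric counterpart) allows a decomposition $g=g_{1}+g_{2}+e$ with $g_{1}\in\mathcal{H}_{-\infty}(\overline{\C}\setminus[-\infty,a])$, $g_{2}\in\mathcal{H}_{-\infty}(\overline{\C}\setminus[a,\infty])$ and $e\in\mathcal{H}_{-\infty}(\overline{\C}\setminus\{\pm\infty\})$, so that $\mathfrak{F}(e)\in\mathfrak{F}\mathcal{G}(\{\pm\infty\})$. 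Cauchy's integral theorem, justified by the arbitrary $|\re\zeta|$-decay of each $g_{i}$, permits deforming the path $\gamma_{\overline{\R}}$ in the definition of $\mathfrak{F}$ to $\gamma_{[-\infty,a]}$ for $g_{1}$ and to $\gamma_{[a,\infty]}$ for $g_{2}$, so $\mathfrak{F}(g_{i})$ coincides with the entire function $\widetilde{G}_{i}(z):=\int_{\gamma_{[\pm\infty,a]}}e^{-iz\zeta}g_{i}(\zeta)\d\zeta$, with $\widetilde{G}_{1}|_{\h}=\mathcal{F}_{[-\infty,a]}([g_{1}])$ and $\widetilde{G}_{2}|_{-\h}=\mathcal{F}_{[a,\infty]}([g_{2}])$. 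Adapting the path parameter $c<1/n$ to each target strip $|\im z|<n$ and invoking the decay of $g_{2}$ at an arbitrary exponential rate, one derives $|\widetilde{G}_{2}(z)|\leq C_{n}e^{(1/n)|\re z|}$, hence $\widetilde{G}_{2}\in\mathcal{O}^{exp}(\overline{\C})$. By Remark~\ref{P.0.0.10}~b), a representative of $\mathcal{F}_{\R}^{\mathcal{B}}(f)$ is $R_{0}:=\mathcal{F}_{[-\infty,a]}([g_{1}])-\mathcal{F}_{[a,\infty]}([g_{2}])$ (extended by zero off the respective half-planes); a piecewise comparison on $\h$ and $-\h$ then gives $\mathfrak{F}(g)_{+}-R_{0}=\widetilde{G}_{2}+\mathfrak{F}(e)_{+}$ on $\C\setminus\R$, and this lies in $\mathcal{O}^{exp}(\overline{\C})\oplus\mathcal{FO}_{\{\pm\infty\}}$ since $\widetilde{G}_{2}\in\mathcal{O}^{exp}(\overline{\C})$ and $\mathfrak{F}(e)_{+}\in\mathcal{O}^{exp}(\overline{\C})\oplus\mathcal{FO}_{\{\pm\infty\}}$ by the well-definedness step applied to $\mathfrak{F}(e)\in\mathfrak{F}\mathcal{G}(\{\pm\infty\})$.

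The main obstacle will be the sign and orientation bookkeeping in the two related but oppositely oriented path formulas: the asymmetric minus sign between $\mathcal{F}_{[-\infty,a]}$ and $\mathcal{F}_{[a,\infty]}$ from \eqref{P.0.1.1} and Remark~\ref{P.0.0.10}~b) must be tracked carefully against the clockwise orientation of $\gamma_{\overline{\R}}$ inherited by $\widetilde{G}_{1}$ and $\widetilde{G}_{2}$, and the growth bound $\widetilde{G}_{2}\in\mathcal{O}^{exp}(\overline{\C})$ relies on adapting the integration path to the target strip and exploiting the full scale of $\mathcal{H}_{-\infty}$-decay rates available for $g_{2}$.
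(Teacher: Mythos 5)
Your proof is correct, but it takes a genuinely different route from the paper's. The paper never works with a common representative: it splits $f=f_{1}+f_{2}$ at $0$, then for each $j$ splits again into a compact piece and a far piece, and compares $\mathfrak{F}(R^{-1}(f_{i}))$ and $\mathcal{F}(\mathcal{R}^{-1}(f_{i}))$ to the \emph{same} compactly supported reference transforms $\mathcal{F}_{[-j,0]}^{\mathcal{B}}(f_{1,1})$, $\mathcal{F}_{[0,j]}^{\mathcal{B}}(f_{2,1})$, using \prettyref{prop:fourier_decomposition} on one side and \prettyref{prop:fourier_decomposition_langenbruch} on the other (the latter resting on \cite[Lemma 5.4 (a), p.\ 56]{langenbruch2011} via the Laplace transform); the triangle inequality with $k=j$ then yields the $|\cdot|_{j,\{\mp\infty\}}$-bounds for every $j$. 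You instead exploit the inclusion $\mathcal{H}_{-\infty}(\overline{\C}\setminus K)\subset\mathcal{O}^{exp}(\overline{\C}\setminus K)$ to feed one and the same representative $g=g_{1}+g_{2}+e$ into both transforms, after which $\mathfrak{F}(g_{i})$ and the one-sided $\mathcal{F}$'s are literally the same contour integral up to a Cauchy deformation, and the whole comparison collapses to $\mathfrak{F}(g)_{+}-R_{0}=\widetilde{G}_{2}|_{\C\setminus\R}+\mathfrak{F}(e)_{+}$. This is shorter and avoids \prettyref{prop:fourier_decomposition_langenbruch} altogether; what the paper's detour buys is the quantitative decomposition estimate, whose Laplace counterpart is reused in \prettyref{sect:Asymptotic_Laplace_transform_Lan}. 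Two points to tighten. First, when you invoke Remark \ref{P.0.0.10} b) to identify $R_{0}$ as a representative of $\mathcal{F}_{\R}^{\mathcal{B}}(f)$, discard $e$ beforehand: since $e\in\mathcal{O}(\C)\cap\mathcal{H}_{-\infty}(\overline{\C}\setminus\overline{\R})\subset\mathcal{O}^{exp}(\overline{\C}\setminus\{\pm\infty\})$, the class $[g_{1}+g_{2}]$ is still a representative of $\mathcal{R}_{\R}^{-1}(f)$ and decomposes in $bv_{\overline{\R}}$ as $[g_{1}]+[g_{2}]$ with supports in $[-\infty,a]$ and $[a,\infty]$, so the remark applies verbatim; applying it to $[g]$ itself would require first splitting $[e]\in bv_{\{\pm\infty\}}$ at $\pm\infty$. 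Second, the bound $\widetilde{G}_{2}\in\mathcal{O}^{exp}(\overline{\C})$ needs no new estimate: $g_{2}\in\mathcal{H}_{-\infty}(\overline{\C}\setminus[a,\infty])\subset\mathcal{H}_{-\infty}(\overline{\C}\setminus\overline{\R})$, so $\widetilde{G}_{2}=\mathfrak{F}([g_{2}])\in\mathfrak{F}\mathcal{G}(\overline{\R})=\mathcal{O}^{exp}(\overline{\C})$ by the Langenbruch isomorphism quoted before \prettyref{thm:fourier_asymp_iso_langenbruch}. With these small repairs your argument is complete, and the sign bookkeeping you flag does work out exactly as you anticipate: the minus sign in Remark \ref{P.0.0.10} b) is what turns the lower-half-plane comparison into $\widetilde{G}_{2}|_{-\h}$ rather than a cancellation.
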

\begin{proof}
The map $I$ is well-defined because $F_{+}\in\mathcal{O}^{exp}(\overline{\C}\setminus\overline{\R})$ for 
$F\in\mathfrak{F}\mathcal{G}(\overline{\R})=\mathcal{O}^{exp}(\overline{\C})$ 
and $F_{+}\in\mathcal{FO}_{\{\infty\}}$ for $F\in\mathfrak{F}\mathcal{G}(\{\pm\infty\})$.  

Now, we prove $I\circ\mathfrak{F}_{\mathcal{B}}=\mathcal{F}^{\mathcal{B}}$, which also implies that $I$ is 
a linear isomorphism since $\mathfrak{F}_{\mathcal{B}}$ and $\mathcal{F}^{\mathcal{B}}$ are linear isomorphism 
by \prettyref{thm:fourier_asymp_iso_langenbruch} and \prettyref{thm:asymptotic_fourier}. 
Let $f\in\mathcal{B}(\R)$ and $j\in\N$. Then there are $f_{1}\in\mathcal{B}(]-\infty,0])$ and 
$f_{2}\in\mathcal{B}([0,\infty[)$ such that $f=f_{1}+f_{2}$ due to the flabbiness of the sheaf of $\C$-valued 
hyperfunctions by \cite[p.\ 392]{Sato2} and \cite[Lemma 1.4.4, p.\ 36]{Kan}. Furthermore, there are
$f_{1,1}\in\mathcal{B}([-j,0])$ and $f_{1,2}\in\mathcal{B}(]-\infty,-j])$ 
such that $f_{1}=f_{1,1}+f_{1,2}$ and
\[
(\mathcal{F}\circ \mathcal{R}_{]-\infty,0]}^{-1})(f_{1})-\mathcal{F}_{[-j,0]}^{\mathcal{B}}(f_{1,1})_{\mid\im >0}
\in\mathcal{FO}_{[-\infty,-j]}
\]
as well as $f_{2,1}\in\mathcal{B}([0,j])$ and $f_{2,2}\in\mathcal{B}([j,\infty[)$ 
such that $f_{2}=f_{2,1}+f_{2,2}$ and
\[
(\mathcal{F}\circ \mathcal{R}_{[0,\infty[}^{-1})(f_{2})-\mathcal{F}_{[0,j]}^{\mathcal{B}}(f_{2,1})_{\mid\im <0}
\in\mathcal{FO}_{[j,\infty]}
\]
by \prettyref{prop:fourier_decomposition}. In addition, we obtain 
\[
\sup_{\im(z)\geq -k}|(\mathfrak{F}\circ R^{-1})(f_{1})(z)-\mathcal{F}_{[-j,0]}^{\mathcal{B}}(f_{1,1})(z)|
 e^{-\frac{1}{k}|z|+j|\im(z)|}<\infty 
\]
and
\[
\sup_{\im(z)\leq k}|(\mathfrak{F}\circ R^{-1})(f_{2})(z)-\mathcal{F}_{[0,j]}^{\mathcal{B}}(f_{2,1})(z)|
 e^{-\frac{1}{k}|z|+j|\im(z)|}<\infty 
\]
for every $k\in\N$ from \prettyref{prop:fourier_decomposition_langenbruch}. We remark that 
\begin{flalign*}
&\hspace{0.35cm} \mathfrak{F}(R^{-1}(f))_{+}-\mathcal{F}_{\overline{\R}}(\mathcal{R}_{\R}^{-1}(f))\\
&=\mathfrak{F}(R^{-1}(f_{1}))_{+}+\mathfrak{F}(R^{-1}(f_{2}))_{+}
 -\mathcal{F}_{\overline{\R}}(\mathcal{R}_{\R}^{-1}(f_{1}))-\mathcal{F}_{\overline{\R}}(\mathcal{R}_{\R}^{-1}(f_{2}))\\
&=\mathfrak{F}(R^{-1}(f_{1}))_{+}-\mathcal{F}(\mathcal{R}_{]-\infty,0]}^{-1}(f_{1}))
 +\mathfrak{F}(R^{-1}(f_{2}))_{+}-(-\mathcal{F}(\mathcal{R}_{[0,\infty[}^{-1}(f_{2})))\\
&=\bigl(\mathfrak{F}(R^{-1}(f_{1}))_{+}-\mathcal{F}(\mathcal{R}_{]-\infty,0]}^{-1}(f_{1}))\bigr)
 +\mathfrak{F}(R^{-1}(f_{2}))\\
&\phantom{=}+\bigl(-\mathfrak{F}(R^{-1}(f_{2}))_{-}+\mathcal{F}(\mathcal{R}_{[0,\infty[}^{-1}(f_{2}))\bigr)
\end{flalign*}
where $\mathfrak{F}(R^{-1}(f_{2}))_{-}:=\mathfrak{F}(R^{-1}(f_{2}))$ on $-\h$ and 
$\mathfrak{F}(R^{-1}(f_{2}))_{-}:=0$ on $\h$. We deduce with $k=j$ that
\begin{flalign*}
&\hspace{0.35cm}|\mathfrak{F}(R^{-1}(f_{1}))_{+}-\mathcal{F}(\mathcal{R}_{]-\infty,0]}^{-1}(f_{1}))|_{j,\{-\infty\}}\\
&\leq|\mathfrak{F}(R^{-1}(f_{1}))_{+}-\mathcal{F}_{[-j,0]}^{\mathcal{B}}(f_{1,1})|_{j,\{-\infty\}}
 +|\mathcal{F}_{[-j,0]}^{\mathcal{B}}(f_{1,1})-\mathcal{F}(\mathcal{R}_{]-\infty,0]}^{-1}(f_{1}))|_{j,\{-\infty\}}\\
&=\phantom{+}\sup_{\im(z)\geq \frac{1}{j}}|\mathfrak{F}(R^{-1}(f_{1}))(z)-\mathcal{F}_{[-j,0]}^{\mathcal{B}}(f_{1,1})(z)|
  e^{-\frac{1}{j}|z|+j|\im(z)|}\\
&\phantom{=}+\sup_{\im(z)\geq \frac{1}{j}}|\mathcal{F}_{[-j,0]}^{\mathcal{B}}(f_{1,1})(z)
  -\mathcal{F}(\mathcal{R}_{]-\infty,0]}^{-1}(f_{1}))(z)|e^{-\frac{1}{j}|z|+j|\im(z)|}
 <\infty
\end{flalign*}
and 
\begin{flalign*}
&\hspace{0.35cm}|-\mathfrak{F}(R^{-1}(f_{2}))_{-}+\mathcal{F}(\mathcal{R}_{[0,\infty[}^{-1}(f_{2}))|_{j,\{\infty\}}\\
&\leq|-\mathfrak{F}(R^{-1}(f_{2}))_{-}+\mathcal{F}_{[0,j]}^{\mathcal{B}}(f_{2,1})|_{j,\{\infty\}}
 +|-\mathcal{F}_{[0,j]}^{\mathcal{B}}(f_{2,1})+\mathcal{F}(\mathcal{R}_{[0,\infty[}^{-1}(f_{2}))|_{j,\{\infty\}}\\
&=\phantom{+}\sup_{\im(z)\leq -\frac{1}{j}}|-\mathfrak{F}(R^{-1}(f_{2}))(z)
 +\mathcal{F}_{[0,j]}^{\mathcal{B}}(f_{2,1})(z)|e^{-\frac{1}{j}|z|+j|\im(z)|}\\
&\phantom{=}+\sup_{\im(z)\leq -\frac{1}{j}}|-\mathcal{F}_{[0,j]}^{\mathcal{B}}(f_{2,1})(z)
  +\mathcal{F}(\mathcal{R}_{[0,\infty[}^{-1}(f_{2}))(z)|e^{-\frac{1}{j}|z|+j|\im(z)|}
 <\infty
\end{flalign*}
plus $\mathfrak{F}(R^{-1}(f_{2}))\in\mathfrak{F}\mathcal{G}(\overline{\R})=\mathcal{O}^{exp}(\overline{\C})$.
Hence we conclude 
\[
 \mathfrak{F}(R^{-1}(f))_{+}-\mathcal{F}_{\overline{\R}}(\mathcal{R}_{\R}^{-1}(f))
\in(\mathcal{O}^{exp}(\overline{\C})\oplus \mathcal{FO}_{\{\pm\infty\}}\bigr),
\]
resulting in $I\circ\mathfrak{F}_{\mathcal{B}}=\mathcal{F}^{\mathcal{B}}$.
\end{proof}

\section{Laplace and asymptotic Laplace transform}
\label{sect:Laplace_and_Asymptotic_Laplace}
In this short section we phrase our results on the (asymptotic) Fourier transform in terms 
of the Laplace transform. We start with the definition of the range spaces.

\begin{defn} 
Let $E$ be a $\C$-lcHs.
\begin{enumerate}
\item [a)] For $-\infty<a\leq\infty$ we define the space
\[
\mathcal{LO}_{[a,\infty]}(E):=\{f\in\mathcal{O}(-i\h,E)\;|\;\forall\;k\in\N,\,\alpha\in\mathfrak{A}:\;
\|f\|_{k,\alpha,[a,\infty]}<\infty\}
\]
where
\[
 \|f\|_{k,\alpha,[a,\infty]}:=\sup_{\re(z)\geq \frac{1}{k}}p_{\alpha}(f(z))e^{-\frac{1}{k}|z|-w^{+}_{a}(-\re(z))}.
\]
\item [b)] For $-\infty\leq a<\infty$ we define the space
\[
\mathcal{LO}_{[-\infty,a]}(E):=\{f\in\mathcal{O}(i\h,E)\;|\;\forall\;k\in\N,\,\alpha\in\mathfrak{A}:\;
\|f\|_{k,\alpha,[-\infty,a]}<\infty\}
\]
where
\[
 \|f\|_{k,\alpha,[-\infty,a]}:=\sup_{\re(z)\leq -\frac{1}{k}}p_{\alpha}(f(z))e^{-\frac{1}{k}|z|-w^{-}_{a}(-\re(z))}.
\]
\item [c)] For $-\infty<a\leq b<\infty$ we define the space
\[
\mathcal{LO}_{[a,b]}(E):=\{f\in\mathcal{O}(\C,E)\;|\;\forall\;k\in\N,\,\alpha\in\mathfrak{A}:\;
\|f\|_{k,\alpha,[a,b]}<\infty\}
\]
where
\[
 \|f\|_{k,\alpha,[a,b]}:=\sup_{z\in\C}p_{\alpha}(f(z))e^{-\frac{1}{k}|z|-H_{[a,b]}(-\re(z))}.
\]
\end{enumerate}
\end{defn}

\begin{thm}\label{thm:laplace_top_iso}
Let $E$ be a $\C$-lcHs and $\varnothing\neq K\varsubsetneq\overline{\R}$ a compact interval. If 
\begin{enumerate}
\item[(i)] $K\subset\R$ and $E$ is locally complete, or if
\item[(ii)] $E$ is sequentially complete,
\end{enumerate}
then the map
\[
\mathcal{L}\colon bv_{K}(E) \to \mathcal{LO}_{K}(E),\;\;
\mathcal{L}([F])(\zeta):=\mathcal{F}([F])(-i\zeta),
\]
is a topological isomorphism.
\end{thm}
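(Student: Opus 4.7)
The plan is to realise $\mathcal{L}$ as the composition $\mathcal{L} = S \circ \mathcal{F}$, where $\mathcal{F}$ is the Fourier transform treated in \prettyref{thm:fourier_unbounded_interval} and \prettyref{thm:fourier_bounded_interval}, and $S\colon \mathcal{FO}_{K}(E) \to \mathcal{LO}_{K}(E)$ is the substitution map $S(g)(\zeta) := g(-i\zeta)$. Once $S$ is shown to be a topological (in fact isometric) isomorphism, the conclusion is immediate because $\mathcal{L}([F])(\zeta) = \mathcal{F}([F])(-i\zeta) = (S\circ\mathcal{F})([F])(\zeta)$.

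The first step is to check that $S$ is well-defined with range inside $\mathcal{LO}_{K}(E)$. The map $\zeta \mapsto -i\zeta$ is a biholomorphism of $\C$ sending the open right half-plane $-i\h = \{\re(\zeta)>0\}$ onto the lower half-plane $-\h$, the open left half-plane $i\h = \{\re(\zeta)<0\}$ onto the upper half-plane $\h$, and $\C$ onto itself. Consequently, if $g$ is $E$-valued holomorphic on the domain prescribed for $\mathcal{FO}_{K}(E)$, then $S(g)$ is $E$-valued holomorphic on the corresponding domain prescribed for $\mathcal{LO}_{K}(E)$, for each of the three shapes $K = [a,\infty]$, $K = [-\infty,a]$, $K = [a,b]$.

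The second step is to compare seminorms. Setting $w := -i\zeta$ one has $|w| = |\zeta|$ and $\im(w) = -\re(\zeta)$, so the region $\re(\zeta)\geq\tfrac{1}{k}$ corresponds exactly to $\im(w)\leq -\tfrac{1}{k}$, the region $\re(\zeta)\leq -\tfrac{1}{k}$ corresponds to $\im(w)\geq \tfrac{1}{k}$, and $-\re(\zeta) = \im(w)$ throughout. Since $w^{+}_{a}$, $w^{-}_{a}$ and $H_{[a,b]}$ depend only on a single real argument, one reads off directly
\[
\|S(g)\|_{k,\alpha,K} = |g|_{k,\alpha,K}
\]
in each of the three cases. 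Hence $S$ is an isometric (and in particular topological) isomorphism with inverse $(S^{-1}f)(w) = f(iw)$.

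The third step is to invoke the Fourier isomorphism: under hypothesis (i), \prettyref{thm:fourier_bounded_interval} gives that $\mathcal{F} = \mathcal{F}_{[a,b]}\colon bv_{[a,b]}(E) \to \mathcal{FO}_{[a,b]}(E)$ is a topological isomorphism; under hypothesis (ii), \prettyref{thm:fourier_unbounded_interval} handles the cases $K = [a,\infty]$ and $K=[-\infty,a]$ with $K\neq\overline{\R}$ (and hypothesis (ii) also subsumes case (i)). Composition with $S$ then yields the claim. No substantial obstacle is expected; the only point of care is keeping the three geometric cases straight when verifying the norm identity and, in case (ii) with $a = \pm\infty$, noting that the hidden dependence of $w^{\pm}_{a}$ on the seminorm index $k$ is identical on both sides of the substitution.
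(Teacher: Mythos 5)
Your proof is correct and is essentially the paper's own argument: the paper likewise obtains $\mathcal{L}$ by composing the Fourier isomorphisms of \prettyref{thm:fourier_unbounded_interval} and \prettyref{thm:fourier_bounded_interval} with the substitution $\zeta\mapsto -i\zeta$, citing $\im(-i\zeta)=-\re(\zeta)$. You have merely spelled out the verification that this substitution is an isometric isomorphism of the range spaces, which the paper leaves implicit.
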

\begin{proof}
This follows directly from \prettyref{thm:fourier_unbounded_interval} resp.\
\prettyref{thm:fourier_bounded_interval} and $\im(-i\zeta)=-\re(\zeta)$. 
\end{proof}

\begin{prop}\label{prop:bv_LO_nuclear}
Let $\varnothing\neq K\varsubsetneq\overline{\R}$ be a compact interval. 
Then the spaces $bv_{K}$ and $\mathcal{LO}_{K}$ are nuclear Fr\'echet spaces.
\end{prop}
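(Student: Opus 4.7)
By Theorem~\ref{thm:laplace_top_iso} applied with $E=\C$ (which is both locally complete and sequentially complete), the Laplace transform $\mathcal{L}\colon bv_K\to\mathcal{LO}_K$ is a topological isomorphism for every admissible non-empty compact interval $K$. Nuclearity and the Fréchet property are preserved by topological isomorphism, so it suffices to establish that $bv_K$ is a nuclear Fréchet space.

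For the Fréchet property, I would first verify that $\mathcal{O}^{exp}(\overline{\C}\setminus K)$ is a Fréchet space. The countable directed separating family $(\vertiii{\cdot}_{n,K})_{n\in\N}$ yields a metrizable locally convex topology, and completeness is routine: a Cauchy sequence converges uniformly on each $S_n(K)$ with the weight $e^{-|\re z|/n}$, the pointwise limit $f$ is holomorphic on $\C\setminus K$ by Weierstrass's convergence theorem, and passage to the limit in the weighted sup bound preserves finiteness of $\vertiii{f}_{n,K}$. The subspace $\mathcal{O}^{exp}(\overline{\C})$ is closed, since its seminorms are just the restrictions of those of the ambient space, so the quotient $bv_K$ is Fréchet.

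For nuclearity, I would use the duality Theorem~\ref{thm:duality}, which furnishes the topological isomorphism $\mathscr{H}_K\colon bv_K\to L_b(\mathcal{P}_{\ast}(K),\C)=\mathcal{P}_{\ast}(K)'_b$. Since the strong dual of a DFN-space (nuclear DFS-space) is a nuclear Fréchet space, the task reduces to strengthening the DFS property of $\mathcal{P}_{\ast}(K)=\varinjlim_n\mathcal{O}_n(U_n(K))$ to DFN, i.e.\ to showing that the spectral maps $\mathcal{O}_n(U_n(K))\hookrightarrow\mathcal{O}_{n+1}(U_{n+1}(K))$ are nuclear. Because $\overline{U_{n+1}(K)}\subset U_n(K)$ with strictly positive gap both near the finite part of $K$ (at least $\tfrac{1}{n}-\tfrac{1}{n+1}$) and near its extended points, the Cauchy integral formula represents the restriction by integration against the Cauchy kernel along a contour in $U_n(K)\setminus\overline{U_{n+1}(K)}$; expanding the kernel and combining with the weight strengthening from $e^{\re z/n}$ to $e^{\re z/(n+1)}$ yields a decomposition of the linking map into rank-one operators with summable coefficients, which is the desired nuclear factorisation.

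The main obstacle is precisely this verification of nuclearity (as opposed to mere compactness) of the linking maps in the DFS representation of $\mathcal{P}_{\ast}(K)$. The DFS property is already in Kawai and Junker, but nuclearity requires the finer Cauchy/Laurent summability argument, and in particular a careful handling of the unbounded strip-shaped pieces of $U_n(K)$ that appear when $\pm\infty\in K$, where the exponential weight must absorb the Cauchy kernel. Alternatively one can identify $\mathcal{P}_{\ast}(K)$ with a power series space of the type $\Lambda_{\infty}(\alpha)$ via a Laurent/Fourier expansion around $K$ and apply the Grothendieck--Pietsch criterion; the nuclearity of $bv_K$, and hence of $\mathcal{LO}_K$ via $\mathcal{L}$, then follows.
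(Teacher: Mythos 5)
Your proposal follows essentially the same route as the paper: the paper transfers both properties from $bv_{K}$ to $\mathcal{LO}_{K}$ via the isomorphism $\mathcal{L}$ of \prettyref{thm:laplace_top_iso}, and obtains them for $bv_{K}$ from the duality $bv_{K}\cong L_{b}(\mathcal{P}_{\ast}(K),\C)=\mathcal{P}_{\ast}(K)'_{b}$ of \prettyref{thm:duality} together with the fact that $\mathcal{P}_{\ast}(K)$ is a nuclear DFS-space (citing \cite[Remark 3.4 a)]{kruse2019_5} and \cite[Proposition 28.6]{meisevogt1997}), which is precisely your argument with the citations unpacked. The one step you leave as a sketch --- nuclearity, rather than mere compactness, of the linking maps $\mathcal{O}_{n}(U_{n}(K))\to\mathcal{O}_{n+1}(U_{n+1}(K))$, including the unbounded strips when $\pm\infty\in K$ --- is exactly what the paper outsources to the literature, so nothing essential is missing.
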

\begin{proof}
For $bv_{K}$ this statement follows from \cite[Remark 3.4 a), p.\ 8]{kruse2019_5} and 
\cite[Proposition 28.6, p.\ 347]{meisevogt1997}. This implies that $\mathcal{LO}_{K}$ 
is a nuclear Fr\'echet space by \prettyref{thm:laplace_top_iso} too.
\end{proof}

\begin{thm}\label{thm:asymptotic_laplace}
Let $E$ be an admissible sequentially complete $\C$-lcHs and $A:=]-\infty,a]$ or $A:=[a,\infty[$ for some $a\in\R$. 
Then the asymptotic Laplace transform 
\[
\mathcal{L}_{A}^{\mathcal{B}}\colon 
\mathcal{B}(A,E)\to 
\mathcal{LO}_{\overline{A}}(E)/\mathcal{LO}_{\overline{A}\setminus A}(E),\;
\mathcal{L}_{A}^{\mathcal{B}}(f):=[(\mathcal{L}\circ \mathcal{R}_{A}^{-1})(f)],
\]
is a linear isomorphism, where the closure $\overline{A}$ is taken in $\overline{\R}$.
\end{thm}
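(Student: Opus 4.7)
The plan is to reduce this statement to a routine combination of results already established in the paper, in direct parallel with how the asymptotic Fourier transform $\mathcal{F}_{A}^{\mathcal{B}}$ was handled in \prettyref{thm:asymptotic_one_sided_fourier}. I would set $K:=\overline{A}$ (so $K=[a,\infty]$ or $K=[-\infty,a]$) and $K_{0}:=\overline{A}\setminus A=\{\pm\infty\}\cap K$, and exploit the fact that the Laplace transform on $bv_{K}(E)$ is just the Fourier transform composed with the rotation $\zeta\mapsto -i\zeta$.

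First, I would observe that the substitution $\zeta\mapsto -i\zeta$ sends holomorphic functions on $-\h$ bijectively to holomorphic functions on $-i\h$ (respectively $\h$ to $i\h$, and $\C$ to $\C$), and that under this substitution the weights transform as $|\im(-i\zeta)|=|\re(\zeta)|$. Hence the map $f\mapsto f(-i\cdot)$ is a topological isomorphism $\mathcal{FO}_{K}(E)\to\mathcal{LO}_{K}(E)$ for each compact interval $\varnothing\neq K\varsubsetneq\overline{\R}$, which restricted to $K_{0}$ gives an isomorphism $\mathcal{FO}_{K_{0}}(E)\to\mathcal{LO}_{K_{0}}(E)$. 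By \prettyref{thm:laplace_top_iso} (which itself packages \prettyref{thm:fourier_unbounded_interval}) both $\mathcal{L}\colon bv_{K}(E)\to\mathcal{LO}_{K}(E)$ and $\mathcal{L}\colon bv_{K_{0}}(E)\to\mathcal{LO}_{K_{0}}(E)$ are topological isomorphisms.

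Next, the canonical (restriction) map of \prettyref{thm:extension} gives a linear isomorphism
\[
\mathcal{R}_{A}\colon \mathcal{O}^{exp}(\overline{\C}\setminus K,E)/\mathcal{O}^{exp}(\overline{\C}\setminus K_{0},E)\to \mathcal{B}(A,E),
\]
which, after dividing numerator and denominator by the common subspace $\mathcal{O}^{exp}(\overline{\C},E)$, can be rewritten as $bv_{K}(E)/bv_{K_{0}}(E)\cong\mathcal{B}(A,E)$. Applying the Laplace transform to representatives and passing to equivalence classes is well-defined precisely because $\mathcal{L}$ sends $bv_{K_{0}}(E)$ bijectively onto $\mathcal{LO}_{K_{0}}(E)$. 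Thus the composition
\[
\mathcal{L}_{A}^{\mathcal{B}}\colon\mathcal{B}(A,E)\xrightarrow{\mathcal{R}_{A}^{-1}}bv_{K}(E)/bv_{K_{0}}(E)\xrightarrow{\mathcal{L}}\mathcal{LO}_{K}(E)/\mathcal{LO}_{K_{0}}(E)
\]
is a linear isomorphism, as an isomorphism between quotients induced by two isomorphisms of short exact sequences.

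There is no real obstacle here: every piece has been proved above, and what remains is the bookkeeping of identifying the quotient $\mathcal{O}^{exp}(\overline{\C}\setminus K,E)/\mathcal{O}^{exp}(\overline{\C}\setminus K_{0},E)$ with $bv_{K}(E)/bv_{K_{0}}(E)$ and checking that $\mathcal{L}$ descends to the quotients. The only mildly subtle point is ensuring that $\mathcal{L}$ restricted to $bv_{K_{0}}(E)$ really lands in $\mathcal{LO}_{K_{0}}(E)$ and not merely in $\mathcal{LO}_{K}(E)$, but this is exactly the content of \prettyref{thm:laplace_top_iso} applied to the compact set $K_{0}=\{\pm\infty\}\cap K$, which is itself a one-point compact interval in $\overline{\R}$.
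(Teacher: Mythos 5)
Your proposal is correct and follows essentially the same route as the paper, whose proof of this theorem is simply the one-line observation that it follows from \prettyref{thm:extension} together with \prettyref{thm:laplace_top_iso}~(ii); you merely spell out the quotient bookkeeping (third isomorphism theorem plus the compatibility $\mathcal{L}_{K}\vert_{bv_{K_{0}}(E)}=\mathcal{L}_{K_{0}}$ via Cauchy's theorem) that the paper leaves implicit. Nothing further is needed.
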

\begin{proof}
It follows directly from \prettyref{thm:extension} and \prettyref{thm:laplace_top_iso} (ii). 
\end{proof}

Due to the theorem above, \prettyref{prop:bv_LO_nuclear} and \prettyref{thm:examples_strictly_admiss} 
our asymptotic Laplace transform fulfils the conditions (I) and (IV) from the introduction. 

\begin{cor}\label{cor:laplace_hyp_real_comp}
Let $E$ be a locally complete $\C$-lcHs and $-\infty<a\leq b<\infty$. Then the Laplace transform
\[
\mathcal{L}_{[a,b]}^{\mathcal{B}}\colon \mathcal{B}([a,b],E)\to \mathcal{LO}_{[a,b]}(E),\;
\mathcal{L}_{[a,b]}^{\mathcal{B}}([F]):=(\mathcal{L}_{[a,b]}\circ \mathcal{R}_{[a,b]}^{-1})([F]),
\]
is a topological isomorphism and 
\begin{equation}\label{eq:laplace_hyp_real_comp}
 \mathcal{L}_{[a,b]}^{\mathcal{B}}([F])(\zeta)
=\mathcal{L}_{[a,b]}([\Psi([F])])(\zeta)=\langle \mathscr{H}_{[a,b]}([F]),e^{-(\cdot)\zeta}\rangle,\;\zeta\in\C .
\end{equation}
\end{cor}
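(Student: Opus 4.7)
The plan is to simply chain together the isomorphisms already established: by \prettyref{thm:extension_real_comp} the restriction map $\mathcal{R}_{[a,b]}\colon bv_{[a,b]}(E)\to\mathcal{B}([a,b],E)$ is a topological isomorphism (with inverse $[F]\mapsto[\Psi_{[a,b]}([F])]$), and by \prettyref{thm:laplace_top_iso}(i) the Laplace transform $\mathcal{L}_{[a,b]}\colon bv_{[a,b]}(E)\to\mathcal{LO}_{[a,b]}(E)$ is a topological isomorphism under the sole hypothesis of local completeness of $E$. Their composition $\mathcal{L}_{[a,b]}^{\mathcal{B}}=\mathcal{L}_{[a,b]}\circ\mathcal{R}_{[a,b]}^{-1}$ is therefore a topological isomorphism, which gives the first claim.

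For the explicit formula I would unwind the definitions. By construction, $\mathcal{L}_{[a,b]}^{\mathcal{B}}([F])=\mathcal{L}_{[a,b]}([\Psi_{[a,b]}([F])])$, which is the first equality in \eqref{eq:laplace_hyp_real_comp}. For the second equality I would substitute $\zeta\mapsto -i\zeta$ in \prettyref{thm:laplace_top_iso}: by its definition and \prettyref{lem:fourier_bounded_interval},
\[
\mathcal{L}_{[a,b]}([\Psi_{[a,b]}([F])])(\zeta)
=\mathcal{F}_{[a,b]}([\Psi_{[a,b]}([F])])(-i\zeta)
=\langle\mathscr{H}_{[a,b]}([\Psi_{[a,b]}([F])]),e^{-i(\cdot)(-i\zeta)}\rangle.
\]
Since $\mathscr{H}_{[a,b]}\circ\mathcal{R}_{[a,b]}^{-1}=\mathscr{H}_{[a,b]}$ (the map $\mathscr{H}_{[a,b]}$ on $\mathcal{B}([a,b],E)$ from \prettyref{cor:duality_real_comp} agrees with the one on $bv_{[a,b]}(E)$ via the restriction isomorphism, as recorded in the discussion following \prettyref{cor:duality_real_comp}), and $e^{-i(\cdot)(-i\zeta)}=e^{-(\cdot)\zeta}$, the right-hand side equals $\langle\mathscr{H}_{[a,b]}([F]),e^{-(\cdot)\zeta}\rangle$.

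There is no real obstacle; the only point worth double-checking is the identification $\mathscr{H}_{[a,b]}\circ\mathcal{R}_{[a,b]}^{-1}=\mathscr{H}_{[a,b]}$, which is immediate from the fact that the duality formula in \prettyref{thm:duality} and in \prettyref{cor:duality_real_comp} are given by the same integral representation against the same test space $\mathcal{A}([a,b])\cong\mathcal{P}_{\ast}([a,b])$, so no separate argument is needed beyond invoking \prettyref{thm:extension_real_comp} and \prettyref{thm:laplace_top_iso}.
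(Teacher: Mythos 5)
Your proposal is correct and follows essentially the same route as the paper, which proves the corollary simply by citing \prettyref{thm:extension_real_comp} and \prettyref{thm:laplace_top_iso}~(i); your unwinding of the explicit formula \eqref{eq:laplace_hyp_real_comp} via $\mathcal{L}_{[a,b]}=\mathcal{F}_{[a,b]}(-i\,\cdot)$ and the coincidence of the duality maps on $bv_{[a,b]}(E)$ and $\mathcal{B}([a,b],E)$ is a correct elaboration of what the paper leaves implicit.
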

\begin{proof}
This holds due to \prettyref{thm:extension_real_comp} and \prettyref{thm:laplace_top_iso} (i).
\end{proof}

\begin{prop}\label{prop:laplace_decomposition}
Let $E$ be a strictly admissible sequentially complete $\C$-lcHs and $-\infty<a<b<\infty$. Then the following holds:
\begin{enumerate}
\item[a)] For every 
$f\in \mathcal{B}([a,\infty[,E)$ there are $f_{1}\in \mathcal{B}([a,b],E)$ and $f_{2}\in \mathcal{B}([b,\infty[,E)$ 
such that $f=f_{1}+f_{2}$ and 
\[
 \mathcal{L}_{[a,\infty[}^{\mathcal{B}}(f)
=[\mathcal{L}_{[a,b]}^{\mathcal{B}}(f_{1})_{\mid\re >0}]+\mathcal{L}_{[b,\infty[}^{\mathcal{B}}(f_{2}).
\]
\item[b)] For every 
$f\in \mathcal{B}(]-\infty,b],E)$ there are $f_{1}\in \mathcal{B}([a,b],E)$ and $f_{2}\in \mathcal{B}(]-\infty,a],E)$ 
such that $f=f_{1}+f_{2}$ and 
\[
 \mathcal{L}_{]-\infty,b]}^{\mathcal{B}}(f)
=[\mathcal{L}_{[a,b]}^{\mathcal{B}}(f_{1})_{\mid\re <0}]+\mathcal{L}_{]-\infty,a]}^{\mathcal{B}}(f_{2}).
\]
\end{enumerate}
\end{prop}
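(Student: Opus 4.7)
The plan is to deduce this result directly from its Fourier analogue, \prettyref{prop:fourier_decomposition}, via the substitution $\eta = -i\zeta$ linking $\mathcal{F}$ to $\mathcal{L}$.

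First, I would produce the decomposition $f=f_1+f_2$ using flabbiness. Since $E$ is strictly admissible, the sheaf of $E$-valued hyperfunctions is flabby by \cite[Theorem 5.9 b), p.\ 33]{kruse2019_5}, so applying \cite[Lemma 1.4.4, p.\ 36]{Kan} to the closed covering $[a,b]\cup [b,\infty[$ of $[a,\infty[$ (resp.\ $]-\infty,a]\cup [a,b]$ of $]-\infty,b]$) yields the required decomposition with the stated supports. This step is identical to the opening move of the proof of \prettyref{prop:fourier_decomposition}.

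Next, I would transport the identity from Fourier to Laplace. By construction $\mathcal{L}([F])(\zeta)=\mathcal{F}([F])(-i\zeta)$, and \prettyref{thm:laplace_top_iso} implies that $f\mapsto f(-i\,\cdot)$ is a topological isomorphism $\mathcal{FO}_{K}(E)\to\mathcal{LO}_{K}(E)$ for each relevant compact interval $\varnothing\neq K\varsubsetneq\overline{\R}$. Since $\mathcal{R}_A^{-1}$ is insensitive to this reparametrisation, we obtain
\[
\mathcal{L}_{A}^{\mathcal{B}}(f)(\zeta)=\mathcal{F}_{A}^{\mathcal{B}}(f)(-i\zeta)
\]
at the quotient level. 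Under $\eta=-i\zeta$ one has $\im(\eta)<0\Leftrightarrow\re(\zeta)>0$ and $\im(\eta)>0\Leftrightarrow\re(\zeta)<0$, so the restrictions $\mathcal{F}^{\mathcal{B}}_{[a,b]}(f_1)_{\mid\im<0}$ and $\mathcal{F}^{\mathcal{B}}_{[a,b]}(f_1)_{\mid\im>0}$ in \prettyref{prop:fourier_decomposition} translate into $\mathcal{L}^{\mathcal{B}}_{[a,b]}(f_1)_{\mid\re>0}$ and $\mathcal{L}^{\mathcal{B}}_{[a,b]}(f_1)_{\mid\re<0}$, respectively. Substituting these into the two Fourier identities furnishes the two Laplace identities.

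The only point to double-check is that the substitution $\zeta\mapsto -i\zeta$ is compatible with the quotients, i.e.\ that it sends $\mathcal{FO}_{\overline{A}\setminus A}(E)$ onto $\mathcal{LO}_{\overline{A}\setminus A}(E)$; this is immediate because the seminorms defining $\mathcal{FO}_{K}$ and $\mathcal{LO}_{K}$ are related precisely by the swap $\im\leftrightarrow-\re$ built into the weights $w_a^{\pm}$ and the supporting function $H_{[a,b]}$. I therefore expect no genuine obstacle: the proposition is really just \prettyref{prop:fourier_decomposition} reread through the change of variable $\zeta\mapsto -i\zeta$, and no new analytic estimate is required.
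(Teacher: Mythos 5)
Your proposal is correct and follows essentially the same route as the paper, whose proof is precisely the one-line reduction to Proposition \ref{prop:fourier_decomposition} via the identity $\im(-i\zeta)=-\re(\zeta)$. Your additional remarks on the compatibility of the substitution with the quotient spaces merely spell out details the paper leaves implicit.
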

\begin{proof}
The statement is a consequence of \prettyref{prop:fourier_decomposition} and $\im(-i\zeta)=-\re(\zeta)$.
\end{proof}

\begin{prop}\label{prop:laplace_transform_of_shift}
Let $E$ be a $\C$-lcHs and $\varnothing\neq K\varsubsetneq\overline{\R}$ a compact interval. Let 
$\tau_{h}([F]):=[F(\cdot -h)]$, $h\in\R$, be the shift operator from $bv_{K}(E)$ to $bv_{h+K}(E)$. If 
\begin{enumerate}
\item[(i)] $K\subset\R$ and $E$ is locally complete, or if
\item[(ii)] $E$ is sequentially complete,
\end{enumerate}
then we have for $[F]\in bv_{K}(E)$
\[
\mathcal{L}_{h+K}(\tau_{h}([F]))=e^{-h(\cdot)}\mathcal{L}_{K}([F]).
\]
\end{prop}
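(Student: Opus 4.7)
The plan is to reduce this statement to the already established Fourier-transform version \prettyref{prop:fourier_transform_of_shift} via the defining relation $\mathcal{L}_K([F])(\zeta)=\mathcal{F}_K([F])(-i\zeta)$ from \prettyref{thm:laplace_top_iso}. Since \prettyref{thm:laplace_top_iso} requires exactly the same hypotheses (i) or (ii), both cases are handled uniformly, and the whole argument boils down to a one-line substitution.

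First I would invoke \prettyref{prop:fourier_transform_of_shift}, which is applicable under each of the hypotheses (i), (ii) and yields
\[
\mathcal{F}_{h+K}(\tau_{h}([F]))(w)=e^{-ihw}\mathcal{F}_{K}([F])(w),\qquad w\in\C\setminus\R\text{ (or }w\in\C\text{ if }K\subset\R).
\]
Setting $w:=-i\zeta$ and applying the definition $\mathcal{L}(\,\cdot\,)(\zeta)=\mathcal{F}(\,\cdot\,)(-i\zeta)$ on both sides then gives
\[
\mathcal{L}_{h+K}(\tau_{h}([F]))(\zeta)=\mathcal{F}_{h+K}(\tau_{h}([F]))(-i\zeta)=e^{-ih(-i\zeta)}\mathcal{F}_{K}([F])(-i\zeta)=e^{-h\zeta}\mathcal{L}_{K}([F])(\zeta),
\]
which is the claimed identity. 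The only thing to observe is that the domain on which this identity is asserted matches the domain of $\mathcal{LO}_{h+K}(E)$: if $h+K=[h+a,\infty]$ this is $-i\h$, if $h+K=[-\infty,h+a]$ it is $i\h$, and if $h+K\subset\R$ it is $\C$; in all three cases the identity holds after the substitution $w=-i\zeta$ since $\im(w)<0\Leftrightarrow\re(\zeta)>0$ and $\im(w)>0\Leftrightarrow\re(\zeta)<0$.

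There is really no main obstacle here: the proposition is a cosmetic restatement of \prettyref{prop:fourier_transform_of_shift} under the change of variable $\zeta\mapsto -i\zeta$, and the multiplier transforms correctly since $e^{-ih(-i\zeta)}=e^{-h\zeta}$. The only minor bookkeeping concern is to make sure the shift operator $\tau_h$ is well-defined as a map $bv_K(E)\to bv_{h+K}(E)$ on representatives; this is immediate because $F\in\mathcal{O}^{exp}(\overline{\C}\setminus K,E)$ implies $F(\cdot-h)\in\mathcal{O}^{exp}(\overline{\C}\setminus(h+K),E)$, with seminorm bounds transferring up to a factor $e^{|h|/n}$.
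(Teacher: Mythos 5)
Your proof is correct and follows the same route as the paper, which likewise deduces the claim from \prettyref{prop:fourier_transform_of_shift} together with the definition $\mathcal{L}_{K}([F])(\zeta)=\mathcal{F}_{K}([F])(-i\zeta)$; your verification of the substitution $w=-i\zeta$, of the multiplier $e^{-ih(-i\zeta)}=e^{-h\zeta}$, and of the matching domains just makes explicit what the paper leaves implicit.
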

\begin{proof}
This is a consequence of \prettyref{prop:fourier_transform_of_shift} and the definition of $\mathcal{L}_{K}$.
\end{proof}

\begin{prop}\label{prop:asymp_laplace_transform_of_shift}
Let $E$ be a sequentially complete, $\C$-lcHs and $A:=]-\infty,a]$ or $A:=[a,\infty[$ for some $a\in\R$. Let 
$\tau_{h}([F]):=[F(\cdot -h)]$, $h\in\R$, be the shift operator from $\mathcal{B}(A,E)$ to $\mathcal{B}(h+A,E)$. 
Then we have for $[F]\in\mathcal{B}(A,E)$
\[
\mathcal{L}_{h+A}^{\mathcal{B}}(\tau_{h}([F]))=e^{-h(\cdot)}\mathcal{L}_{A}^{\mathcal{B}}([F]).
\]
\end{prop}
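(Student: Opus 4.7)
The plan is to mirror the very short proof of \prettyref{prop:asymp_fourier_transform_of_shift}, replacing the Fourier transform by the Laplace transform. The key intertwining identity $\mathcal{R}_{h+A}(\tau_{h}\mathcal{R}_{A}^{-1}([F]))=\tau_{h}([F])$ is independent of which integral transform we apply, so by injectivity of $\mathcal{R}_{h+A}$ I still have $\tau_{h}(\mathcal{R}_{A}^{-1}([F]))=\mathcal{R}_{h+A}^{-1}(\tau_{h}([F]))$. Applying the non-asymptotic Laplace shift \prettyref{prop:laplace_transform_of_shift} to a representative of $\mathcal{R}_{A}^{-1}([F])\in bv_{\overline{A}}(E)$ then gives
\[
\mathcal{L}_{h+A}^{\mathcal{B}}(\tau_{h}[F])
=[(\mathcal{L}\circ \mathcal{R}_{h+A}^{-1})(\tau_{h}[F])]
=[\mathcal{L}(\tau_{h}\mathcal{R}_{A}^{-1}[F])]
=[e^{-h(\cdot)}\mathcal{L}(\mathcal{R}_{A}^{-1}[F])],
\]
and it remains to identify the right hand side with $e^{-h(\cdot)}\mathcal{L}_{A}^{\mathcal{B}}([F])$ at the level of the quotient. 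Equivalently, one can rewrite the claim via the substitution $\zeta\mapsto -i\zeta$ and read it off from \prettyref{prop:asymp_fourier_transform_of_shift} together with the identity $e^{-ih(-i\zeta)}=e^{-h\zeta}$.

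The only step that needs genuine (if routine) verification is that multiplication by $e^{-h(\cdot)}$ induces a well-defined linear map
\[
\mathcal{LO}_{\overline{A}}(E)/\mathcal{LO}_{\overline{A}\setminus A}(E)\longrightarrow
\mathcal{LO}_{\overline{h+A}}(E)/\mathcal{LO}_{\overline{h+A}\setminus(h+A)}(E),
\]
so that taking equivalence classes after multiplying commutes with the multiplication. Since $|e^{-h\zeta}|=e^{-h\re(\zeta)}$ and $-w^{\pm}_{a}(-\re(z))$ depends linearly on $\re(z)$ whenever $a\in\R$, the factor $e^{-h\re(z)}$ exactly converts the supporting-function weight for $[a,\infty]$ or $[-\infty,a]$ into the one for the translated interval, giving an isometry $\|e^{-h(\cdot)}f\|_{k,\alpha,h+\overline{A}}=\|f\|_{k,\alpha,\overline{A}}$. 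On the point-support pieces $\mathcal{LO}_{\{\pm\infty\}}(E)$ the weights $-k\re(z)$ already absorb the additional linear term $-h\re(z)$ at the cost of passing from $k$ to $k+|h|+1$, and the $|z|$-exponent is handled by choosing a smaller parameter; so multiplication by $e^{-h(\cdot)}$ maps $\mathcal{LO}_{\overline{A}\setminus A}(E)$ into $\mathcal{LO}_{\overline{h+A}\setminus(h+A)}(E)$.

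Once this weight bookkeeping is in place, the proof closes in one line by combining the two previous displays. The main obstacle is therefore not conceptual but notational: tracking the four possible shapes of $\overline{A}\setminus A$ (namely $\{\infty\}$, $\{-\infty\}$, and for $A$ a finite interval also the two-point set, which however does not occur here since $A$ is a half-line) and checking that the above seminorm computation goes through in each case. I would also flag that the statement as printed only assumes $E$ to be sequentially complete, while the very definition of $\mathcal{L}_{A}^{\mathcal{B}}$ via \prettyref{thm:asymptotic_laplace} requires admissibility; the proof should be read under the hypothesis "admissible sequentially complete", matching \prettyref{prop:asymp_fourier_transform_of_shift}.
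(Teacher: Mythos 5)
Your proposal is correct and, in its ``equivalently'' form (substituting $\zeta\mapsto-i\zeta$ and invoking \prettyref{prop:asymp_fourier_transform_of_shift} together with the definition of $\mathcal{L}_{A}^{\mathcal{B}}$), is exactly the paper's one-line proof; the extra bookkeeping you supply, showing that multiplication by $e^{-h(\cdot)}$ descends to the quotient $\mathcal{LO}_{\overline{A}}(E)/\mathcal{LO}_{\overline{A}\setminus A}(E)$, is a correct and welcome elaboration of what the paper leaves implicit. Your remark that the hypothesis should read ``admissible sequentially complete'' (since $\mathcal{L}_{A}^{\mathcal{B}}$ is only defined via \prettyref{thm:asymptotic_laplace} under admissibility) is also a fair catch of an apparent omission in the statement.
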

\begin{proof}
This is a consequence of \prettyref{prop:asymp_fourier_transform_of_shift} and the definition of 
$\mathcal{L}_{A}^{\mathcal{B}}$.
\end{proof}

\begin{prop}\label{prop:laplace_transform_of_derivative}
Let $E$ be a sequentially complete $\C$-lcHs, $\varnothing\neq K\varsubsetneq\overline{\R}$ a compact interval and 
$P(\partial):=\sum_{k=0}^{\infty}\tfrac{c_{k}}{k!}\partial^{k}$ where $(c_{k})\subset\C$ and 
$P$ is of exponential type $0$.
Then we have for $[F]\in bv_{K}(E)$
\[
\mathcal{L}_{K}(P(\partial)[F])(\zeta)=P(\zeta)\mathcal{L}_{K}([F])(\zeta)
 \quad\text{and}\quad
\mathcal{L}_{K}(P[F])(\zeta)=P(-\partial)\mathcal{L}_{K}([F])(\zeta)
\]
for $\re(\zeta)>0$ if $K=[a,\infty]$, for $\re(\zeta)<0$ if $K=[-\infty,a]$, and for $\zeta\in\C$ if 
$K\subset\R$, respectively. 
\end{prop}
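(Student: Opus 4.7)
The plan is to reduce both identities to the Fourier-side statements already established in \prettyref{prop:fourier_transform_of_derivative} via the defining relation $\mathcal{L}_{K}([F])(\zeta)=\mathcal{F}_{K}([F])(-i\zeta)$ from \prettyref{thm:laplace_top_iso}. The differential operators $P(\partial)$ and $P(-i\partial)$ differ only by an absorption of $(-i)^{k}$ into the coefficients, and the key preliminary observation is that being of exponential type $0$ is invariant under the substitutions $c_{k}\leadsto i^{k}c_{k}$ or $c_{k}\leadsto (-i)^{k}c_{k}$, so in particular $P(\partial)$ is a well-defined continuous linear operator on $bv_{K}(E)$ by the same argument as in the proof of \prettyref{prop:fourier_transform_of_derivative}.

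For the first identity I set $\widetilde{P}(w):=P(iw)=\sum_{k\geq 0}\tfrac{c_{k}i^{k}}{k!}w^{k}$, which is again of exponential type $0$, and observe that as operators on $bv_{K}(E)$ we have $\widetilde{P}(-i\partial)=P(\partial)$. Applying \eqref{eq:fourier_transform_of_derivative} to $\widetilde{P}$ in place of $P$ gives
\[
\mathcal{F}_{K}(P(\partial)[F])(w)=\widetilde{P}(w)\mathcal{F}_{K}([F])(w)=P(iw)\mathcal{F}_{K}([F])(w)
\]
on the relevant half-plane resp.\ $\C$, and substituting $w=-i\zeta$ yields $\mathcal{L}_{K}(P(\partial)[F])(\zeta)=P(\zeta)\mathcal{L}_{K}([F])(\zeta)$.

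For the second identity I start from \eqref{eq:fourier_transform_of_multiplication}, namely $\mathcal{F}_{K}(P[F])(w)=P(i\partial_{w})\mathcal{F}_{K}([F])(w)$, and transport the derivatives through the change of variable $w=-i\zeta$. Writing $g(\zeta):=\mathcal{L}_{K}([F])(\zeta)=\mathcal{F}_{K}([F])(-i\zeta)$, the chain rule gives $\partial_{\zeta}^{k}g(\zeta)=(-i)^{k}(\partial_{w}^{k}\mathcal{F}_{K}([F]))(-i\zeta)$, equivalently $(\partial_{w}^{k}\mathcal{F}_{K}([F]))(-i\zeta)=i^{k}\partial_{\zeta}^{k}g(\zeta)$. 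Combining the series termwise,
\[
 P(i\partial_{w})\mathcal{F}_{K}([F])(w)\bigr|_{w=-i\zeta}
=\sum_{k\geq 0}\frac{c_{k}i^{k}}{k!}i^{k}\partial_{\zeta}^{k}g(\zeta)
=\sum_{k\geq 0}\frac{c_{k}}{k!}(-\partial_{\zeta})^{k}g(\zeta)
=P(-\partial)\mathcal{L}_{K}([F])(\zeta),
\]
so $\mathcal{L}_{K}(P[F])(\zeta)=P(-\partial)\mathcal{L}_{K}([F])(\zeta)$.

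The only point that needs a little care is the termwise interchange in this last display when $P$ is of infinite order. This is handled exactly as in the proof of \prettyref{prop:fourier_transform_of_derivative}: Cauchy's inequality applied to $g$ in the $\zeta$-variable shows that the partial sums $P_{j}(-\partial)g$ converge in $\mathcal{O}(-i\h,E)$, $\mathcal{O}(i\h,E)$, or $\mathcal{O}(\C,E)$ respectively, and this convergence is compatible with the continuous substitution $w=-i\zeta$. I expect this convergence bookkeeping to be the only mildly technical step; everything else is a direct algebraic rewriting of the Fourier identities via $\mathcal{L}_{K}=\mathcal{F}_{K}(-i\,\cdot\,)$.
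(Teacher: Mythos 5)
Your proposal is correct and follows exactly the route the paper takes: the paper's own proof is the one-line reduction "this follows from Proposition \ref{prop:fourier_transform_of_derivative} and the definition of $\mathcal{L}_{K}$", and you have simply made the substitution $w=-i\zeta$ (and the resulting bookkeeping of the factors $i^{k}$, the invariance of exponential type $0$, and the matching of the half-planes $\im(w)<0\Leftrightarrow\re(\zeta)>0$) explicit. No gaps.
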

\begin{proof}
This follows from \prettyref{prop:fourier_transform_of_derivative} and the definition of $\mathcal{L}_{K}$.
\end{proof}

\begin{prop}\label{prop:asymp_laplace_transform_of_derivative}
Let $E$ be an admissible sequentially complete $\C$-lcHs, $A:=]-\infty,a]$ or $A:=[a,\infty[$ for some $a\in\R$ and 
$P(\partial):=\sum_{k=0}^{\infty}\tfrac{c_{k}}{k!}\partial^{k}$ where $(c_{k})\subset\C$ and 
$P$ is of exponential type $0$.
Then we have for $f\in\mathcal{B}(A,E)$
\[
\mathcal{L}_{A}^{\mathcal{B}}(P(\partial)f)=P\mathcal{L}_{A}^{\mathcal{B}}(f)
 \quad\text{and}\quad
\mathcal{L}_{A}^{\mathcal{B}}(Pf)=P(-\partial)\mathcal{L}_{A}^{\mathcal{B}}(f).
\]
\end{prop}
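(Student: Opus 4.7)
The plan is to reduce both identities to the corresponding Fourier-side identities in \prettyref{prop:asymp_fourier_transform_of_derivative} via the defining relation $\mathcal{L}_{A}^{\mathcal{B}}(f)(\zeta)=\mathcal{F}_{A}^{\mathcal{B}}(f)(-i\zeta)$ that is built into $\mathcal{L}_{A}^{\mathcal{B}}$ through \prettyref{thm:laplace_top_iso} and \prettyref{thm:asymptotic_laplace}. Since everything splits into two independent identities, I would treat the differential operator first and then the multiplication operator.

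For the first identity, I would rewrite $P(\partial)=\widetilde{P}(-i\partial)$, where $\widetilde{P}(w):=P(iw)=\sum_{k=0}^{\infty}\tfrac{i^{k}c_{k}}{k!}w^{k}$. Because $|i^{k}c_{k}|=|c_{k}|$, the growth estimate $|c_{k}|\leq C\varepsilon^{k}$ is preserved, so $\widetilde{P}$ is again of exponential type $0$ and \prettyref{prop:asymp_fourier_transform_of_derivative} applies to give $\mathcal{F}_{A}^{\mathcal{B}}(P(\partial)f)(w)=\widetilde{P}(w)\,\mathcal{F}_{A}^{\mathcal{B}}(f)(w)=P(iw)\,\mathcal{F}_{A}^{\mathcal{B}}(f)(w)$. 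Substituting $w=-i\zeta$ and using $P(i(-i\zeta))=P(\zeta)$ immediately yields $\mathcal{L}_{A}^{\mathcal{B}}(P(\partial)f)(\zeta)=P(\zeta)\mathcal{L}_{A}^{\mathcal{B}}(f)(\zeta)$.

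For the second identity, I would set $g:=\mathcal{L}_{A}^{\mathcal{B}}(f)$, so that $\mathcal{F}_{A}^{\mathcal{B}}(f)(w)=g(iw)$, and compute via the chain rule $(i\partial_{w})^{k}g(iw)=(-1)^{k}g^{(k)}(iw)$. Summing yields $P(i\partial_{w})\mathcal{F}_{A}^{\mathcal{B}}(f)(w)=\sum_{k=0}^{\infty}\tfrac{c_{k}}{k!}(-\partial)^{k}g(iw)=(P(-\partial)g)(iw)$, where the rearrangement is justified by the uniform-on-compacta convergence of the series (the same estimate used in the proof of \prettyref{prop:fourier_transform_of_derivative}). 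Evaluating at $w=-i\zeta$ so that $iw=\zeta$ and invoking the multiplication identity $\mathcal{F}_{A}^{\mathcal{B}}(Pf)(w)=P(i\partial_{w})\mathcal{F}_{A}^{\mathcal{B}}(f)(w)$ from \prettyref{prop:asymp_fourier_transform_of_derivative} gives $\mathcal{L}_{A}^{\mathcal{B}}(Pf)(\zeta)=(P(-\partial)g)(\zeta)=P(-\partial)\mathcal{L}_{A}^{\mathcal{B}}(f)(\zeta)$.

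The potentially delicate point is only bookkeeping: one must verify that reinterpreting $P(\partial)$ as $\widetilde{P}(-i\partial)$ preserves the exponential-type-zero hypothesis (which is immediate) and that the formal chain-rule manipulation of the infinite series $P(i\partial_{w})$ is legitimate (which follows from the convergence of the corresponding series in the space of holomorphic functions on the relevant half-plane, exactly as established in the proof of \prettyref{prop:fourier_transform_of_derivative}). No new analytic input is required beyond \prettyref{prop:asymp_fourier_transform_of_derivative} and the definition of $\mathcal{L}_{A}^{\mathcal{B}}$.
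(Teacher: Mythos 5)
Your proposal is correct and follows essentially the same route as the paper, whose proof is the one-line reduction to \prettyref{prop:asymp_fourier_transform_of_derivative} via the definition $\mathcal{L}_{A}^{\mathcal{B}}(f)(\zeta)=\mathcal{F}_{A}^{\mathcal{B}}(f)(-i\zeta)$; you have merely made the bookkeeping (the substitution $P(\partial)=\widetilde{P}(-i\partial)$ with $\widetilde{P}(w)=P(iw)$, and the chain-rule identity $(i\partial_{w})^{k}g(iw)=(-1)^{k}g^{(k)}(iw)$) explicit, and both steps check out.
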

\begin{proof}
This follows from \prettyref{prop:asymp_fourier_transform_of_derivative} and the definition of 
$\mathcal{L}_{A}^{\mathcal{B}}$.
\end{proof}

\begin{prop}\label{prop:asymp_laplace_convolution}
Let $(E,(p_{\alpha})_{\alpha\in\mathfrak{A}})$, $(E_{1},(p_{\beta})_{\beta\in\mathfrak{B}})$ 
and $(E_{2},(p_{\omega})_{\omega\in\Omega})$ be admissible sequentially complete $\C$-lcHs, 
$E$ and $E_{1}$ strictly admissible, such that 
a canonical bilinear map $\cdot\:\colon E_{1}\times E_{2}\to E$ is defined with the property
\[
\forall\; \alpha\in\mathfrak{A}\;\exists\;\beta\in\mathfrak{B},\,\omega\in\Omega,\,D>0\;\forall\;x\in E_{1},\,y\in E_{2}:
\;p_{\alpha}(x\cdot y)\leq D p_{\beta}(x)p_{\omega}(y).
\]
Let $a,b,c,d\in\R$.
Then we have for $f\in\mathcal{B}([a,\infty[,E_{1})$ and $g\in\mathcal{B}([c,\infty[,E_{2})$, or for 
$f\in\mathcal{B}(]-\infty,b],E_{1})$ and $g\in\mathcal{B}(]-\infty,d],E_{2})$ that
\[
 \mathcal{L}^{\mathcal{B}}(f\ast^{\mathcal{B}}g)
=\mathcal{L}^{\mathcal{B}}(f\circledast^{\mathcal{B}}g)
=\mathcal{L}^{\mathcal{B}}(f)\mathcal{L}^{\mathcal{B}}(g).
\]
\end{prop}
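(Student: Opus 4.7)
The plan is to reduce the statement to the corresponding one for the asymptotic Fourier transform, which is Corollary \ref{cor:asymp_fourier_convolution}, by means of the substitution $\zeta\mapsto -i\zeta$. First I would record that at the level of representatives the asymptotic Laplace transform is obtained from the asymptotic Fourier transform by exactly this substitution: since $\mathcal{L}_K([F])(\zeta)=\mathcal{F}_K([F])(-i\zeta)$ for every compact interval $\varnothing\neq K\varsubsetneq\overline{\R}$ (Theorem \ref{thm:laplace_top_iso}), one gets
\[
\mathcal{L}^{\mathcal{B}}_A(h)(\zeta)=\mathcal{F}^{\mathcal{B}}_A(h)(-i\zeta),\qquad h\in\mathcal{B}(A,E),
\]
under the canonical linear isomorphism $\mathcal{FO}_{\overline{A}}(E)/\mathcal{FO}_{\overline{A}\setminus A}(E)\to\mathcal{LO}_{\overline{A}}(E)/\mathcal{LO}_{\overline{A}\setminus A}(E)$ induced by this change of variable. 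This isomorphism is multiplicative: if $\mathcal{F}^{\mathcal{B}}(f)=[\widetilde{F}]$ and $\mathcal{F}^{\mathcal{B}}(g)=[\widetilde{G}]$, then by definition of the product of equivalence classes (Theorem \ref{thm:asymp_fourier_convolution} c)) we have $\mathcal{L}^{\mathcal{B}}(f)(\zeta)\mathcal{L}^{\mathcal{B}}(g)(\zeta)=[\widetilde{F}(-i\zeta)\cdot\widetilde{G}(-i\zeta)]=[(\widetilde{F}\cdot\widetilde{G})(-i\zeta)]$, which is exactly the image of $\mathcal{F}^{\mathcal{B}}(f)\mathcal{F}^{\mathcal{B}}(g)$ under the substitution.

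Next, I would invoke Corollary \ref{cor:asymp_fourier_convolution}, which already gives $f\ast^{\mathcal{B}}g=f\circledast^{\mathcal{B}}g$ (note that $\ast^{\mathcal{B}}$ and $\circledast^{\mathcal{B}}$ are defined purely in terms of the asymptotic Fourier transform, so this equality does not involve $\mathcal{L}^{\mathcal{B}}$ at all) together with
\[
\mathcal{F}^{\mathcal{B}}(f\circledast^{\mathcal{B}}g)=\mathcal{F}^{\mathcal{B}}(f)\mathcal{F}^{\mathcal{B}}(g).
\]
Applying the substitution $\zeta\mapsto -i\zeta$ to both sides and using the multiplicativity recorded above then yields
\[
\mathcal{L}^{\mathcal{B}}(f\ast^{\mathcal{B}}g)=\mathcal{L}^{\mathcal{B}}(f\circledast^{\mathcal{B}}g)=\mathcal{L}^{\mathcal{B}}(f)\mathcal{L}^{\mathcal{B}}(g),
\]
which is the asserted identity. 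The other case, $f\in\mathcal{B}(]-\infty,b],E_1)$ and $g\in\mathcal{B}(]-\infty,d],E_2)$, is handled identically using the corresponding half of Corollary \ref{cor:asymp_fourier_convolution}.

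No step here is substantially difficult: all of the real work has already been carried out in Corollary \ref{cor:asymp_fourier_convolution}, which itself relies on the flabbiness of the sheaf of $E_1$-valued hyperfunctions (via strict admissibility) and on Theorem \ref{thm:convolution}/Corollary \ref{cor:convolution_standard}. The only item to check carefully is the compatibility of the substitution $\zeta\mapsto -i\zeta$ with the passage to quotients and with the pointwise product on representatives, and both are immediate from the definitions and from the bilinearity of $\cdot\colon E_1\times E_2\to E$.
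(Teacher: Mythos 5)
Your proposal is correct and follows essentially the same route as the paper, which proves the statement by combining \prettyref{cor:asymp_fourier_convolution}~c) with the definition of $\mathcal{L}_{A}^{\mathcal{B}}$ via the substitution $\zeta\mapsto -i\zeta$. Your additional check that this substitution is compatible with the quotient structures and with the product of equivalence classes is a useful explicit elaboration, but it is not a deviation from the paper's argument.
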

\begin{proof}
This follows from \prettyref{cor:asymp_fourier_convolution} c) and the definition of 
$\mathcal{L}_{A}^{\mathcal{B}}$.
\end{proof}

Due to \prettyref{eq:laplace_hyp_real_comp}, \prettyref{prop:asymp_laplace_transform_of_derivative}
and \prettyref{prop:asymp_laplace_convolution} combined with \prettyref{thm:asymp_fourier_convolution} 
and \prettyref{cor:asymp_fourier_convolution} our asymptotic Laplace transform fulfils condition (II) 
for a satisfactory theory of Laplace transforms as well. 
\section{Langenbruch's asymptotic Laplace transform}
\label{sect:Asymptotic_Laplace_transform_Lan}
In \prettyref{sect:Asymptotic_Fourier_transform_Lan} we already discussed the relation between our asymptotic 
Fourier transform and the one Langenbruch on $\mathcal{B}(\R)$. This section is dedicated to 
the connection of our asymptotic Laplace transform on $\mathcal{B}([0,\infty[,E)$ with the one of Langenbruch 
in the case that $E$ is a $\C$-Fr\'echet space and with the ones of Komatsu, Lumer and Neubrander in the case that $E$ 
is a $\C$-Banach space as well. We recall the relevant notions and results from \cite{langenbruch2011} 
(and the introduction). 

\begin{defn}[{\cite[p.\ 53, 55, 61]{langenbruch2011}}]\label{def:range_laplace_lan}
Let $E$ be a $\C$-lcHs. We define the spaces 
\[
\mathfrak{L}\mathcal{G}_{[0,\infty]}(E):=\{f\in\mathcal{O}(\C,E)\;|\;\forall\;k\in\N,\,\alpha\in\mathfrak{A}:\; 
\|f\|_{k,\alpha,[0,\infty]}^{\mathfrak{L}\mathcal{G}}<\infty\}
\]
where 
\[
\|f\|_{k,\alpha,[0,\infty]}^{\mathfrak{L}\mathcal{G}}:=\sup_{\re(z)\geq -k}
p_{\alpha}(f(z))e^{-\frac{1}{k}|z|},
\]
and 
\[
\mathfrak{L}\mathcal{G}_{\{\infty\}}(E):=\{f\in\mathcal{O}(\C,E)\;|\;\forall\;k\in\N,\,\alpha\in\mathfrak{A}:\; 
\|f\|_{k,\alpha,\{\infty\}}^{\mathfrak{L}\mathcal{G}}<\infty\}
\]
where 
\[
\|f\|_{k,\alpha,\{\infty\}}^{\mathfrak{L}\mathcal{G}}:=\sup_{\re(z)\geq -k}
p_{\alpha}(f(z))e^{k|\re(z)|-\frac{1}{k}|z|}.
\]
\end{defn}

Let $E$ be a $\C$-Fr\'echet space and $K=[0,\infty]$ or $K=\{\infty\}$. By 
\cite[Theorem 4.1, p.\ 53, Proposition 5.2, p.\ 55, 61]{langenbruch2011} the Laplace transform 
\begin{equation}\label{eq:laplace_def_lan}
\mathfrak{L}\colon \mathcal{G}_{K}(E)\to \mathfrak{L}\mathcal{G}_{K}(E),\;
\mathfrak{L}([g])(z):=\int_{\gamma_{K}}g(\zeta)e^{-z\zeta}\d\zeta,\;z\in\C,
\end{equation}
where $\gamma_{K}$ is the path along the boundary of $U_{\nicefrac{1}{c}}(K)$ for $c>0$ with clockwise orientation, 
is a topological isomorphism. 

The canonical (restriction) map 
\[
R_{+}\colon \mathcal{H}_{-\infty}(\overline{\C}\setminus [0,\infty],E)/
\mathcal{H}_{-\infty}(\overline{\C}\setminus\{\infty\},E)
\to \mathcal{B}([0,\infty[,E)
\]
is a linear isomorphism for any $\C$-Fr\'echet space $E$ by \cite[Corollary 5.2, p.\ 42]{langenbruch2011_1}. 
The combination of both results gives the following theorem. 

\begin{thm}[{\cite[Theorem 5.3, p.\ 55, 61]{langenbruch2011}}]\label{thm:asymp_laplace_iso_langenbruch}
Let $E$ be a $\C$-Fr\'echet space. Then the asymptotic Laplace transform 
\[
\mathfrak{L}_{\mathcal{B}}\colon \mathcal{B}([0,\infty[,E)\to 
\mathfrak{L}\mathcal{G}_{[0,\infty]}(E)/\mathfrak{L}\mathcal{G}_{\{\infty\}}(E),\;
\mathfrak{L}_{\mathcal{B}}(f):=[(\mathfrak{L}\circ R_{+}^{-1})(f)],
\]
is a linear isomorphism. 
\end{thm}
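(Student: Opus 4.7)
The proof is essentially a diagram chase. All the hard analytic work has already been done in the cited results: the inclusion chain
\[
\mathcal{H}_{-\infty}(\overline{\C},E) \subset \mathcal{H}_{-\infty}(\overline{\C}\setminus\{\infty\},E) \subset \mathcal{H}_{-\infty}(\overline{\C}\setminus[0,\infty],E)
\]
combined with the isomorphism $R_{+}\colon \mathcal{H}_{-\infty}(\overline{\C}\setminus [0,\infty],E)/\mathcal{H}_{-\infty}(\overline{\C}\setminus\{\infty\},E)\to \mathcal{B}([0,\infty[,E)$ and the Laplace isomorphisms $\mathfrak{L}\colon \mathcal{G}([0,\infty])(E)\to\mathfrak{L}\mathcal{G}_{[0,\infty]}(E)$ and $\mathfrak{L}\colon\mathcal{G}(\{\infty\})(E)\to\mathfrak{L}\mathcal{G}_{\{\infty\}}(E)$ is what needs to be spliced together.

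The plan is as follows. First, I would apply the third isomorphism theorem to the inclusion chain above to get a canonical linear isomorphism
\[
\Phi\colon \mathcal{H}_{-\infty}(\overline{\C}\setminus [0,\infty],E)/\mathcal{H}_{-\infty}(\overline{\C}\setminus\{\infty\},E)
\;\longrightarrow\;\mathcal{G}([0,\infty])(E)/\mathcal{G}(\{\infty\})(E),
\]
sending the class of $F$ on the left to the class of its image in $\mathcal{G}([0,\infty])(E)=\mathcal{H}_{-\infty}(\overline{\C}\setminus [0,\infty],E)/\mathcal{H}_{-\infty}(\overline{\C},E)$ on the right. Pre-composing with $R_{+}^{-1}$ identifies $\mathcal{B}([0,\infty[,E)$ with $\mathcal{G}([0,\infty])(E)/\mathcal{G}(\{\infty\})(E)$.

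Second, since the Laplace transform $\mathfrak{L}$ is a topological isomorphism $\mathcal{G}([0,\infty])(E)\to\mathfrak{L}\mathcal{G}_{[0,\infty]}(E)$ which restricts to a topological isomorphism $\mathcal{G}(\{\infty\})(E)\to\mathfrak{L}\mathcal{G}_{\{\infty\}}(E)$ (both cited from \cite[Theorem 4.1, p.\ 53, Proposition 5.2, p.\ 55, 61]{langenbruch2011}), it induces a linear isomorphism on the quotients
\[
\widehat{\mathfrak{L}}\colon \mathcal{G}([0,\infty])(E)/\mathcal{G}(\{\infty\})(E)\;\longrightarrow\;
\mathfrak{L}\mathcal{G}_{[0,\infty]}(E)/\mathfrak{L}\mathcal{G}_{\{\infty\}}(E).
\]
Chasing the definition of $\mathfrak{L}_{\mathcal{B}}(f):=[(\mathfrak{L}\circ R_{+}^{-1})(f)]$ through these identifications shows that $\mathfrak{L}_{\mathcal{B}}=\widehat{\mathfrak{L}}\circ \Phi\circ R_{+}^{-1}$, and as a composition of linear isomorphisms it is itself a linear isomorphism.

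There is no real analytic obstacle; the only point that requires attention is the bookkeeping of the well-definedness. Concretely, one must check that if $F,F'\in\mathcal{H}_{-\infty}(\overline{\C}\setminus [0,\infty],E)$ represent the same class modulo $\mathcal{H}_{-\infty}(\overline{\C}\setminus\{\infty\},E)$, then $\mathfrak{L}([F])-\mathfrak{L}([F'])\in\mathfrak{L}\mathcal{G}_{\{\infty\}}(E)$; but this is immediate from $F-F'\in\mathcal{H}_{-\infty}(\overline{\C}\setminus\{\infty\},E)$ together with the restricted isomorphism on $\mathcal{G}(\{\infty\})(E)$. Injectivity and surjectivity of $\mathfrak{L}_{\mathcal{B}}$ follow symmetrically from the corresponding properties of $R_{+}$ and $\mathfrak{L}$ on the respective quotients.
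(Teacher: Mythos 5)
Your proof is correct and is essentially the paper's own argument: the paper disposes of this theorem with the single sentence ``The combination of both results gives the following theorem,'' meaning exactly the splice of $R_{+}$ with the two Laplace isomorphisms $\mathfrak{L}\colon\mathcal{G}([0,\infty],E)\to\mathfrak{L}\mathcal{G}_{[0,\infty]}(E)$ and $\mathfrak{L}\colon\mathcal{G}(\{\infty\},E)\to\mathfrak{L}\mathcal{G}_{\{\infty\}}(E)$ that you carry out via the third isomorphism theorem. Your explicit well-definedness check (including the tacit Cauchy deformation identifying the two integration paths on $\mathcal{G}(\{\infty\},E)$) is the only bookkeeping the paper leaves implicit, and you handle it correctly.
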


\begin{prop}\label{prop:laplace_decomposition_langenbruch}
Let $E$ be a $\C$-Fr\'echet space and $j\in\N$. Then for every 
$f\in \mathcal{B}([0,\infty[,E)$ there are $f_{1}\in \mathcal{B}([0,j],E)$ and $f_{2}\in \mathcal{B}([j,\infty[,E)$ 
such that $f=f_{1}+f_{2}$. For $h\in\mathfrak{L}_{\mathcal{B}}(f)$ we have for every $k\in\N$ and $\alpha\in\mathfrak{A}$ 
\[
\sup_{\re(z)\geq -k}p_{\alpha}(h(z)-\mathcal{L}_{[0,j]}^{\mathcal{B}}(f_{1})(z))e^{-\frac{1}{k}|z|+j|\re(z)|}<\infty .
\]
\end{prop}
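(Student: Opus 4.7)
The plan is to lift the splitting to the level of $\mathcal{H}_{-\infty}$-representatives and to estimate the tail, in close analogy with how the excerpt's proof of \prettyref{prop:fourier_decomposition_langenbruch} invokes the present statement. Since $E$ is a $\C$-Fr\'echet space, it is strictly admissible by \prettyref{thm:examples_strictly_admiss} a), so the sheaf of $E$-valued hyperfunctions is flabby by \cite[Theorem 5.9 b), p.\ 33]{kruse2019_5} and \cite[Lemma 1.4.4, p.\ 36]{Kan} provides $f=f_{1}+f_{2}$ with $f_{1}\in\mathcal{B}([0,j],E)$ and $f_{2}\in\mathcal{B}([j,\infty[,E)$.

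Let $g\in\mathcal{H}_{-\infty}(\overline{\C}\setminus[0,\infty],E)$ be a representative of $R_{+}^{-1}(f)$, which exists by \cite[Corollary 5.2, p.\ 42]{langenbruch2011_1}. Using the analogous existence of such representatives for the compact set $[0,j]$ and the half-line $[j,\infty]$, I would select $g_{1}\in\mathcal{H}_{-\infty}(\overline{\C}\setminus[0,j],E)$ representing $f_{1}$ and $\widetilde{g}_{2}\in\mathcal{H}_{-\infty}(\overline{\C}\setminus[j,\infty],E)$ representing $f_{2}$. Both $g$ and $g_{1}+\widetilde{g}_{2}$ represent $f$, so their difference lies in $\mathcal{H}_{-\infty}(\overline{\C}\setminus\{\infty\},E)\subset\mathcal{H}_{-\infty}(\overline{\C}\setminus[j,\infty],E)$ and can be absorbed into $\widetilde{g}_{2}$. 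I may thus write $g=g_{1}+g_{2}$ with $g_{2}\in\mathcal{H}_{-\infty}(\overline{\C}\setminus[j,\infty],E)$. For the given $h\in\mathfrak{L}_{\mathcal{B}}(f)$ there is $\Delta\in\mathfrak{L}\mathcal{G}_{\{\infty\}}(E)$ with $h=\mathfrak{L}(g_{1})+\mathfrak{L}(g_{2})+\Delta$, and the defining seminorm of $\mathfrak{L}\mathcal{G}_{\{\infty\}}(E)$ at level $\max(k,j)$ immediately yields $p_{\alpha}(\Delta(z))\leq C\,e^{\frac{1}{k}|z|-j|\re(z)|}$ on $\{\re(z)\geq -k\}$.

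The next step is to identify $\mathfrak{L}(g_{1})$ with $\mathcal{L}^{\mathcal{B}}_{[0,j]}(f_{1})$ as entire functions of $z$. Because $g_{1}$ is holomorphic on $\C\setminus[0,j]$ and satisfies $p_{\alpha}(g_{1}(\zeta))\leq C_{n,\alpha}e^{-n|\re(\zeta)|}$ for every $n$, closing $\gamma_{[0,\infty]}$ by a vertical segment at $\re(\zeta)=R$ and letting $R\to\infty$ shows $\mathfrak{L}(g_{1})(z)=\int_{\gamma_{[0,j]}}g_{1}(\zeta)e^{-z\zeta}\d\zeta$. Writing $\mathcal{R}_{[0,j]}^{-1}(f_{1})=[F_{1}]$ with $F_{1}\in\mathcal{O}^{exp}(\overline{\C}\setminus[0,j],E)$, the difference $g_{1}-F_{1}$ extends to an entire $E$-valued function whose contour integral along the closed curve $\gamma_{[0,j]}$ vanishes by Cauchy. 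Hence $\mathfrak{L}(g_{1})=\mathcal{L}^{\mathcal{B}}_{[0,j]}(f_{1})$, and the task reduces to estimating $\mathfrak{L}(g_{2})+\Delta$.

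Finally, I would estimate $\mathfrak{L}(g_{2})$ on $\{\re(z)\geq -k\}$. Choose $c:=\tfrac{1}{3k}$ and $n\geq\tfrac{1}{c}+k$ and split $\gamma_{[j,\infty]}$ into the left half-circle of radius $c$ around $j$ together with the rays $\{t\pm ic:t\geq j\}$. Using $p_{\alpha}(g_{2}(\zeta))\leq C_{n,\alpha}e^{-n|\re(\zeta)|}$ on $\gamma_{[j,\infty]}$ and $|e^{-z\zeta}|\leq e^{-\re(z)\re(\zeta)+c|\im(z)|}$, the half-circle contributes at most $C\,e^{c|\im(z)|}\max\bigl(e^{-(j-c)\re(z)},e^{-j\re(z)}\bigr)$ and each ray at most $C_{n,\alpha}(n+\re(z))^{-1}e^{-(n+\re(z))j+c|\im(z)|}$, where $n+\re(z)\geq 1$. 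The main obstacle is the strip $-k\leq\re(z)\leq 0$, on which $e^{-j\re(z)}=e^{j|\re(z)|}$ has the \emph{wrong} sign; but there $|\re(z)|\leq k$ forces $e^{j|\re(z)|}\leq e^{jk}$, so it is absorbed into a constant, while $c|\im(z)|\leq\tfrac{1}{k}|z|$ by the choice of $c$. For $\re(z)\geq 0$ the bound directly reads $e^{-j|\re(z)|}$ and $c(|\im(z)|+|\re(z)|)\leq\tfrac{2}{3k}|z|$. Combining these three pieces with the earlier estimate for $\Delta$ then produces the claimed uniform estimate.
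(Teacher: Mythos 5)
Your argument is correct in substance but takes a genuinely different route from the paper. The paper's proof is a scalarisation: it applies an arbitrary $e'\in E'$, observes that $e'\circ h\in\mathfrak{L}_{\mathcal{B}}([e'\circ F])$ and $e'\circ\mathcal{L}_{[0,j]}^{\mathcal{B}}(f_{1})=\mathcal{L}_{[0,j]}^{\mathcal{B}}([e'\circ F_{1}])$, invokes Langenbruch's scalar estimate \cite[Lemma 5.4 (a), p.\ 56]{langenbruch2011} for each $e'$, and then upgrades the resulting weak boundedness to boundedness in $E$ by the Mackey theorem. You instead reprove that lemma directly in the vector-valued setting: you split the $\mathcal{H}_{-\infty}$-representative as $g=g_{1}+g_{2}$ with $g_{1}$ supported near $[0,j]$ and $g_{2}$ near $[j,\infty]$, identify $\mathfrak{L}(g_{1})$ with $\mathcal{L}_{[0,j]}^{\mathcal{B}}(f_{1})$ by contour deformation, and estimate $\mathfrak{L}(g_{2})$ and the $\mathfrak{L}\mathcal{G}_{\{\infty\}}(E)$-error by hand; your contour estimates check out, and in particular the observation that on the strip $-k\leq\re(z)\leq 0$ the factor $e^{j|\re(z)|}\leq e^{jk}$ is absorbed into a constant is exactly the right point. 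The paper's route buys brevity and a black-box use of the scalar theory; yours buys self-containedness with explicit constants and no appeal to the Mackey theorem. Two details you should still make explicit: (i) the existence of a representative $g_{1}\in\mathcal{H}_{-\infty}(\overline{\C}\setminus[0,j],E)$ of $f_{1}$ (the standard representative $\Psi_{[0,j]}(f_{1})$ from \prettyref{thm:extension_real_comp} works, since the Gaussian kernel decays super-exponentially in $\re(z)$ on horizontal strips); and (ii) that the entire correction $g-g_{1}-\widetilde{g}_{2}$ really lies in $\mathcal{H}_{-\infty}(\overline{\C}\setminus[j,\infty],E)$, which requires controlling it on the bounded extra region $S_{n}([j,\infty])\setminus S_{n}([0,\infty])$ --- routine, but not free.
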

\begin{proof}
First, we observe that the operation $e'(g):=[e'\circ G]$ is well-defined for 
$g=[G]\in\mathcal{B}([0,\infty[,E)$ resp.\ $g=[G]\in\mathcal{B}([0,j],E)$ resp.\
$g=[G]\in\mathfrak{L}\mathcal{G}_{[0,\infty]}(E)/\mathfrak{L}\mathcal{G}_{\{\infty\}}(E)$ and any $e'\in E'$.

The decomposition $f=f_{1}+f_{2}$ follows from the flabbiness of the sheaf of Fr\'echet-valued hyperfunctions 
by \cite[Theorem 2.6, p.\ 14]{Ion/Ka} and \cite[Lemma 1.4.4, p.\ 36]{Kan}. 
Let $h\in\mathfrak{L}_{\mathcal{B}}(f)$, $e'\in E'$, $f=[F]$,
$f_{1}=[F_{1}]$ and $f_{2}=[F_{2}]$. Then $e'(f)=[e'\circ F_{1}]+[e'\circ F_{2}]$ and 
\[
(e'\circ \mathcal{L}_{[0,j]}^{\mathcal{B}}(f_{1}))(z)=\mathcal{L}_{[0,j]}^{\mathcal{B}}([e'\circ F_{1}])(z),\;z\in\C,
\]
as well as $e'\circ h\in\mathfrak{L}_{\mathcal{B}}([e'\circ F])$ because 
$\mathfrak{L}(e'\circ g)=e'\circ \mathfrak{L}(g)$ for any 
$g\in\mathcal{H}_{-\infty}(\overline{\C}\setminus [0,\infty],E)$.
By \cite[Lemma 5.4 (a), p.\ 56]{langenbruch2011} ($\mathcal{L}_{[0,j]}^{\mathcal{B}}([e'\circ F_{1}])$ corresponds to 
$\widetilde{\nu}_{j}$ in this lemma due to \prettyref{cor:duality_real_comp} and \eqref{eq:laplace_hyp_real_comp}) 
we obtain 
\[
\sup_{\re(z)\geq -k}|(e'\circ h)(z)-\mathcal{L}_{[0,j]}^{\mathcal{B}}([e'\circ F_{1}])(z)|
e^{-\frac{1}{k}|z|+j|\re(z)|}<\infty 
\]
for every $k\in\N$. Since this holds for any $e'\in E'$, the Mackey theorem implies that 
\[
\sup_{\re(z)\geq -k}p_{\alpha}(h(z)-\mathcal{L}_{[0,j]}^{\mathcal{B}}([f_{1}])(z))
e^{-\frac{1}{k}|z|+j|\re(z)|}<\infty 
\]
every $k\in\N$ and $\alpha\in\mathfrak{A}$.
\end{proof}

We come to our main result of this section.

\begin{thm}\label{thm:asymptotic_laplace_equiv}
Let $E$ be a $\C$-Fr\'echet space. Then the canonical map 
\[
I_{0}\colon \mathfrak{L}\mathcal{G}_{[0,\infty]}(E)/\mathfrak{L}\mathcal{G}_{\{\infty\}}(E)\to 
\mathcal{LO}_{[0,\infty]}(E)/\mathcal{LO}_{\{\infty\}}(E),\;[F]\mapsto [F],\;
\]
is a linear isomorphism and $I_{0}\circ\mathfrak{L}_{\mathcal{B}}=\mathcal{L}^{\mathcal{B}}$.
\end{thm}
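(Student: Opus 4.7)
The plan is to establish the commutation identity $I_{0}\circ\mathfrak{L}_{\mathcal{B}}=\mathcal{L}^{\mathcal{B}}$ by evaluating both sides on a carefully chosen common representative, and then to deduce that $I_{0}$ is a linear isomorphism from the fact that both $\mathfrak{L}_{\mathcal{B}}$ and $\mathcal{L}^{\mathcal{B}}$ are already known to be isomorphisms. Indeed, $\mathfrak{L}_{\mathcal{B}}$ is a linear isomorphism by \prettyref{thm:asymp_laplace_iso_langenbruch}, and since every $\C$-Fr\'echet space is admissible and sequentially complete by \prettyref{thm:examples_strictly_admiss}, $\mathcal{L}^{\mathcal{B}}$ is a linear isomorphism by \prettyref{thm:asymptotic_laplace}. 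So once the identity is proven, $I_{0}=\mathcal{L}^{\mathcal{B}}\circ\mathfrak{L}_{\mathcal{B}}^{-1}$ follows automatically.

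First I would check that $I_{0}$ is well-defined. For $K\in\{[0,\infty],\{\infty\}\}$ and any entire $F\in\mathfrak{L}\mathcal{G}_{K}(E)$, restriction of $F$ to $\{z\in\C\mid\re(z)\geq\nicefrac{1}{k}\}$ satisfies exactly the seminorms defining $\mathcal{LO}_{K}(E)$: for $K=[0,\infty]$ one has $w^{+}_{0}\equiv 0$ on $]-\infty,0[$, and for $K=\{\infty\}$ one has $w^{+}_{\infty}(-\re(z))=-k|\re(z)|$ when $\re(z)>0$, so the weights collapse to those appearing in \prettyref{def:range_laplace_lan}. Hence restriction to the right halfplane gives continuous linear inclusions $\mathfrak{L}\mathcal{G}_{K}(E)\hookrightarrow\mathcal{LO}_{K}(E)$, and these are compatible with the respective quotients, so $I_{0}$ is well-defined and linear.

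Next, for the identity $I_{0}\circ\mathfrak{L}_{\mathcal{B}}=\mathcal{L}^{\mathcal{B}}$, I would exploit the trivial inclusion $\mathcal{H}_{-\infty}(\overline{\C}\setminus L,E)\subset\mathcal{O}^{exp}(\overline{\C}\setminus L,E)$ for $L\in\{[0,\infty],\{\infty\}\}$ (exponential decay at any rate dominates exponential growth at any rate). Given $f\in\mathcal{B}([0,\infty[,E)$, pick $g\in\mathcal{H}_{-\infty}(\overline{\C}\setminus[0,\infty],E)$ with $[g]_{\mathcal{H}}=R_{+}^{-1}(f)$. Because $g\in\mathcal{O}^{exp}(\overline{\C}\setminus[0,\infty],E)$ and $\mathcal{R}_{[0,\infty[}([g]_{\mathcal{O}})=f$, the injectivity of $\mathcal{R}_{[0,\infty[}$ (\prettyref{thm:extension}) forces $[g]_{\mathcal{O}}=\mathcal{R}_{[0,\infty[}^{-1}(f)$ as well. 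The defining integrals, \eqref{eq:laplace_def_lan} for $\mathfrak{L}$ and the one in \prettyref{thm:laplace_top_iso} for $\mathcal{L}$, coincide verbatim on the right halfplane, yielding
\[
\mathcal{L}([g])(\zeta)=\int_{\gamma_{[0,\infty]}}g(z)e^{-z\zeta}\d z=\mathfrak{L}([g])(\zeta),\qquad \re(\zeta)>0.
\]
Passing to classes in $\mathcal{LO}_{[0,\infty]}(E)/\mathcal{LO}_{\{\infty\}}(E)$ gives $I_{0}(\mathfrak{L}_{\mathcal{B}}(f))=\mathcal{L}^{\mathcal{B}}(f)$.

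The only genuine check, and the mildest possible obstacle, is verifying that a single $g$ can simultaneously serve as representative of $R_{+}^{-1}(f)$ and of $\mathcal{R}_{[0,\infty[}^{-1}(f)$ through the inclusion of denominators $\mathcal{H}_{-\infty}(\overline{\C}\setminus\{\infty\},E)\subset\mathcal{O}^{exp}(\overline{\C}\setminus\{\infty\},E)$; this is immediate from the injectivity of $\mathcal{R}_{[0,\infty[}$. Once this bookkeeping is in place, the proof reduces to matching two identical contour integrals, with no remaining analytic difficulty.
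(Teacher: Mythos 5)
Your proof is correct, but it takes a genuinely different route from the paper's. The paper proves $I_{0}\circ\mathfrak{L}_{\mathcal{B}}=\mathcal{L}^{\mathcal{B}}$ by localising: it decomposes $f=f_{1}+f_{2}$ with $f_{1}\in\mathcal{B}([0,j],E)$ via flabbiness, compares \emph{both} asymptotic transforms against the compactly supported piece $\mathcal{L}_{[0,j]}^{\mathcal{B}}(f_{1})$ using \prettyref{prop:laplace_decomposition} and \prettyref{prop:laplace_decomposition_langenbruch}, and then lets $j$ run to conclude that the difference of representatives lies in $\mathcal{LO}_{\{\infty\}}(E)$. You instead exploit the inclusion $\mathcal{H}_{-\infty}(\overline{\C}\setminus L,E)\subset\mathcal{O}^{exp}(\overline{\C}\setminus L,E)$ to feed one and the same representative $g$ of $R_{+}^{-1}(f)$ into both transforms (the identification $[g]_{\mathcal{O}}=\mathcal{R}_{[0,\infty[}^{-1}(f)$ via injectivity of $\mathcal{R}_{[0,\infty[}$ is exactly the right bookkeeping, and the clockwise contours in \eqref{eq:laplace_def_lan} and \prettyref{lem:fourier_unbounded_interval} do agree), so the two defining integrals coincide verbatim on $\re(\zeta)>0$ and the classes match with no estimates at all. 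Your argument is shorter and cleaner for this particular comparison; what the paper's decomposition template buys is reusability — the identical scheme is then run for Komatsu's transform in \prettyref{thm:asymptotic_laplace_equiv_kom_lan}, where no inclusion of representative spaces is available (Komatsu's boundary values are exponentially \emph{increasing}, so there is no common representative to evaluate both integrals on), and it additionally yields the quantitative approximation of any $h\in\mathfrak{L}_{\mathcal{B}}(f)$ by the local Laplace transforms $\mathcal{L}_{[0,j]}^{\mathcal{B}}(f_{1})$, which is used elsewhere (e.g.\ in \prettyref{thm:asymptotic_fourier_equiv}).
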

\begin{proof}
The map $I_{0}$ is well-defined as $\mathfrak{L}\mathcal{G}_{[0,\infty]}(E)\subset \mathcal{LO}_{[0,\infty]}(E)$ and 
$\mathfrak{L}\mathcal{G}_{\{\infty\}}(E)\subset\mathcal{LO}_{\{\infty\}}(E)$.

We only need to prove that $I_{0}\circ\mathfrak{L}_{\mathcal{B}}=\mathcal{L}^{\mathcal{B}}$ since 
$\mathfrak{L}_{\mathcal{B}}$ and $\mathcal{L}^{\mathcal{B}}$ are linear isomorphisms by 
\prettyref{thm:asymp_laplace_iso_langenbruch}, \prettyref{thm:asymptotic_laplace} and
\prettyref{thm:examples_strictly_admiss} a).
Let $f\in\mathcal{B}([0,\infty[,E)$ and $j\in\N$. Then there are
$f_{1}\in\mathcal{B}([0,j],E)$ and $f_{2}\in\mathcal{B}([j,\infty[,E)$ 
such that $f=f_{1}+f_{2}$ and
\[
(\mathcal{L}\circ \mathcal{R}_{[0,\infty[}^{-1})(f)-\mathcal{L}_{[0,j]}^{\mathcal{B}}(f_{1})_{\mid\re >0}
\in\mathcal{LO}_{[j,\infty]}(E)
\]
as well as 
\[
\sup_{\re(z)\geq -k}p_{\alpha}((\mathfrak{L}\circ R_{+}^{-1})(f)(z)-\mathcal{L}_{[0,j]}^{\mathcal{B}}(f_{1})(z))
e^{-\frac{1}{k}|z|+j|\re(z)|}<\infty 
\]
for all $k\in\N$ and $\alpha\in\mathfrak{A}$ by \prettyref{prop:laplace_decomposition} a) and 
\prettyref{prop:laplace_decomposition_langenbruch}. Using 
\begin{flalign*}
&\hspace{0.35cm}(\mathcal{L}\circ \mathcal{R}_{[0,\infty[}^{-1})(f)(z)-(\mathfrak{L}\circ R_{+}^{-1})(f)(z)\\
&=(\mathcal{L}\circ \mathcal{R}_{[0,\infty[}^{-1})(f)(z)-\mathcal{L}_{[0,j]}^{\mathcal{B}}(f_{1})(z)
-\bigl((\mathfrak{L}\circ R_{+}^{-1})(f)(z)-\mathcal{L}_{[0,j]}^{\mathcal{B}}(f_{1})(z)\bigr)
\end{flalign*} 
for $\re(z)>0$, this implies with $k=j$ that
\begin{flalign*}
&\hspace{0.35cm} \|(\mathcal{L}\circ \mathcal{R}_{[0,\infty[}^{-1})(f)
 -(\mathfrak{L}\circ R_{+}^{-1})(f)\|_{j,\alpha,\{\infty\}}\\
&=\sup_{\re(z)\geq\frac{1}{j}}p_{\alpha}((\mathcal{L}\circ \mathcal{R}_{[0,\infty[}^{-1})(f)(z)
 -(\mathfrak{L}\circ R_{+}^{-1})(f)(z))e^{-\frac{1}{j}|z|+j|\re(z)|}<\infty .
\end{flalign*}
We deduce that $(\mathcal{L}\circ \mathcal{R}_{[0,\infty[}^{-1})(f)-(\mathfrak{L}\circ R_{+}^{-1})(f)
\in\mathcal{LO}_{\{\infty\}}(E)$ and so $I_{0}\circ\mathfrak{L}_{\mathcal{B}}=\mathcal{L}^{\mathcal{B}}$.
\end{proof}

\begin{rem}
Due to \prettyref{thm:asymptotic_laplace_equiv}, \prettyref{prop:asymp_laplace_convolution} and 
\cite[Theorem 5.5, p.\ 56]{langenbruch2011} our definition of the convolution of two elements 
from $\mathcal{B}([0,\infty[)$ is consistent with the one given by Langenbruch.
\end{rem}

Let us turn to Komatsu's asymptotic Laplace transform. Again, we briefly recall the relevant notions and results.
For $0<\varphi<\tfrac{\pi}{2}$ and $r\geq 0$ we set 
\[
\Gamma_{r,\varphi}:=\{\rho e^{i\psi}\;|\; \rho\geq r,\,|\psi|\leq \varphi\}.
\]
An open set $U\subset\C$ is called \emph{postsectorial} (see \cite[p.\ 37]{lumer1999}, \cite[p.\ 150]{lumer2001}) if 
\[
\forall\;0<\varphi<\frac{\pi}{2}\;\exists\;r>0:\;\Gamma_{r,\varphi}\subset U.
\]

\begin{center}
\begin{minipage}{\linewidth}
\centering
\begin{tikzpicture}
\def\mypath{
 (6,2) -- (2,0.66) arc(18.43:-18.43:2.106) -- (6,-2)}
\fill[fill=black!10,draw=black,thick] \mypath;
\draw[dashed,thick]  (0,0) -- (2,0.66);
\draw[dashed,thick]  (0,0) -- (2,-0.66);
\node[anchor=north east] (A) at (5.5,1) {$\Gamma_{r,\varphi}$};
\node[anchor=north east] (B) at (1.7,0.45) {$\varphi$};
\node[anchor=north east] (C) at (1,-0.425) {$r$};
\draw[->] (-2,0) -- (6,0) node[right] {$\re(z)$} coordinate (x axis);
\draw[->] (0,-2) -- (0,2) node[above] {$\im(z)$} coordinate (y axis);
\end{tikzpicture}
\end{minipage}
\captionsetup{type=figure}
\caption{$\Gamma_{r,\varphi}$ for $0<\varphi<\tfrac{\pi}{2}$ and $r\geq 0$}
\end{center}

\begin{defn}[{\cite[p.\ 57]{langenbruch2011}}]
Let $(E,\|\cdot\|_{E})$ be a $\C$-Banach space, $a\in\R$ and $U\subset\C$ open and postsectorial. We define 
\[
\mathfrak{L}\mathcal{G}_{[a,\infty]}(U,E):=\{f\in\mathcal{O}(U,E)\,|\,
\forall\,j\in\N,0<\varphi<\tfrac{\pi}{2},r>0,\Gamma_{r,\varphi}\subset U:\:
\|f\|_{j,r,\varphi,[a,\infty]}^{\operatorname{Kom}}<\infty\}
\]
where
\[
\|f\|_{j,r,\varphi,[a,\infty]}^{\operatorname{Kom}}:=\sup_{z\in\Gamma_{r,\varphi}}\|f(z)\|_{E}
e^{-\frac{1}{j}|z|+a|\re(z)|},
\]
and 
\[
\mathfrak{L}\mathcal{G}_{\{\infty\}}(U,E):=\{f\in\mathcal{O}(U,E)\,|\,
\forall\,j\in\N,0<\varphi<\tfrac{\pi}{2},r>0,\Gamma_{r,\varphi}\subset U:\:
\|f\|_{j,r,\varphi,\{\infty\}}^{\operatorname{Kom}}<\infty\}
\]
where
\[
\|f\|_{j,r,\varphi,\{\infty\}}^{\operatorname{Kom}}:=\sup_{z\in\Gamma_{r,\varphi}}\|f(z)\|_{E}
e^{-\frac{1}{j}|z|+j|\re(z)|},
\]
as well as 
\[
\mathfrak{L}_{\operatorname{Kom}}\mathcal{B}^{\operatorname{exp}}_{[a,\infty]}(E)
:= \lim\limits_{\substack{\longrightarrow\\U}}\,\mathfrak{L}\mathcal{G}_{[a,\infty]}(U,E)
\quad\text{and}\quad 
\mathfrak{L}_{\operatorname{Kom}}\mathcal{B}^{\operatorname{exp}}_{\{\infty\}}(E)
:= \lim\limits_{\substack{\longrightarrow\\U}}\,\mathfrak{L}\mathcal{G}_{\{\infty\}}(U,E)
\]
where the inductive limits run over all open postsectorial sets $U\subset\C$. 
\end{defn}

Komatsu's asymptotic Laplace transform $\mathfrak{L}_{\operatorname{Kom}}$ is a linear isomorphism 
since the canonical (restriction) map 
\[
\rho_{a}\colon \mathcal{B}_{[a,\infty]}^{\operatorname{exp}}(E)/\mathcal{B}_{\{\infty\}}^{\operatorname{exp}}(E)
\to \mathcal{B}([a,\infty[,E),\; [F]\mapsto [F],
\]
is a linear isomorphism for $a\in\R$ by \cite[Theorem 1, p.\ 361]{Kom4} (cf.\ \cite[Theorem 3.5, p.\ 816]{Kom3}, 
\cite[Theorem 2, p.\ 61]{komatsu1988} with different proofs) and his Laplace transform 
\[
\mathfrak{L}_{\operatorname{Kom},a}\colon  \mathcal{B}_{[a,\infty]}^{\operatorname{exp}}(E)\to 
\mathfrak{L}_{\operatorname{Kom}}\mathcal{B}^{\operatorname{exp}}_{[a,\infty]}(E)
\]
for $a\in\R\cup\{\infty\}$ as well due to
\cite[Theorem 3.3, 3.4, p.\ 815-816]{Kom3} for $E=\C$ and \cite[p.\ 218]{Kom5} for $\C$-Banach spaces $E$. 

\begin{thm}[{\cite[Eq.\ (24), p.\ 217]{Kom5}}]\label{thm:asymp_laplace_iso_komatsu}
Let $(E,\|\cdot\|_{E})$ be a $\C$-Banach space and $a\in\R$. Then  
\begin{gather*}
\mathfrak{L}_{\operatorname{Kom}}\colon \mathcal{B}([a,\infty[,E)\to 
\mathfrak{L}_{\operatorname{Kom}}\mathcal{B}^{\operatorname{exp}}_{[a,\infty]}(E)/
\mathfrak{L}_{\operatorname{Kom}}\mathcal{B}^{\operatorname{exp}}_{\{\infty\}}(E),\\
\mathfrak{L}_{\operatorname{Kom}}(f):=[(\mathfrak{L}_{\operatorname{Kom},a}\circ \rho_{a}^{-1})(f)],
\end{gather*}
is a linear isomorphism.
\end{thm}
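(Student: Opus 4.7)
The plan is to assemble the asserted isomorphism out of two component isomorphisms that are cited from Komatsu's work. Indeed, by \cite[Theorem 1, p.\ 361]{Kom4} the restriction map
\[
\rho_{a}\colon \mathcal{B}_{[a,\infty]}^{\operatorname{exp}}(E)/\mathcal{B}_{\{\infty\}}^{\operatorname{exp}}(E)\to \mathcal{B}([a,\infty[,E),\ [F]\mapsto [F],
\]
is a linear isomorphism for $a\in\R$, and by \cite[p.\ 218]{Kom5} the Komatsu Laplace transform
\[
\mathfrak{L}_{\operatorname{Kom},a}\colon \mathcal{B}_{[a,\infty]}^{\operatorname{exp}}(E)\to \mathfrak{L}_{\operatorname{Kom}}\mathcal{B}^{\operatorname{exp}}_{[a,\infty]}(E)
\]
is a linear isomorphism (in fact $\mathfrak{L}_{\operatorname{Kom}}\mathcal{B}^{\operatorname{exp}}_{[a,\infty]}(E)$ is defined as the image space, but the point is that the map is well-defined and injective). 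Composing these two with the quotient projection, the candidate $[(\mathfrak{L}_{\operatorname{Kom},a}\circ\rho_{a}^{-1})(f)]$ is unambiguous up to an element of the image of $\mathcal{B}_{\{\infty\}}^{\operatorname{exp}}(E)$ under $\mathfrak{L}_{\operatorname{Kom},a}$.

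Next I would verify that the Laplace transform takes the subspace $\mathcal{B}_{\{\infty\}}^{\operatorname{exp}}(E)$ of hyperfunctions supported at $\infty$ bijectively onto $\mathfrak{L}_{\operatorname{Kom}}\mathcal{B}^{\operatorname{exp}}_{\{\infty\}}(E)$, i.e.\ onto the space of exponentially decreasing holomorphic germs near $S_{\infty}$. The inclusion in one direction is a direct computation: for $F\in\mathcal{H}^{\operatorname{exp}}(\mathcal{D}\setminus\{\infty\},E)$ one deforms the path $\Gamma_{0}$ arbitrarily far to the left, and the exponential decay of $e^{-z\zeta}$ on the ray in the right half-plane produces an arbitrarily rapid exponential decay of $\mathfrak{L}_{\operatorname{Kom},0}(F)(\zeta)$ near $S_{\infty}$. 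Combined with the injectivity of $\mathfrak{L}_{\operatorname{Kom},0}$ this gives the restricted map
\[
\mathfrak{L}_{\operatorname{Kom},0}\colon \mathcal{B}_{\{\infty\}}^{\operatorname{exp}}(E)\hookrightarrow \mathfrak{L}_{\operatorname{Kom}}\mathcal{B}^{\operatorname{exp}}_{\{\infty\}}(E).
\]

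The main obstacle is the surjectivity of this restriction: given an exponentially decreasing germ $g$ near $S_{\infty}$, one has to realise it as $\mathfrak{L}_{\operatorname{Kom},0}(F)$ for some $F\in\mathcal{H}^{\operatorname{exp}}(\mathcal{D}\setminus\{\infty\},E)$, i.e.\ such that $F$ is entire (not merely holomorphic off $[0,\infty[$). This is the substantive content and is due to Komatsu \cite[p.\ 217, Eq.\ (24)]{Kom5}; the argument uses the defining surjectivity of $\mathfrak{L}_{\operatorname{Kom},0}$ onto $\mathfrak{L}_{\operatorname{Kom}}\mathcal{B}^{\operatorname{exp}}_{[0,\infty]}(E)$ to obtain some representative $F$, and then exploits the exponential decrease of $g$ together with a Phragm\'en--Lindel\"of/contour-deformation argument to show that $F$ extends across $[0,\infty[$ as an entire function of exponential type. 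Granted this step, the restriction is an isomorphism $\mathcal{B}_{\{\infty\}}^{\operatorname{exp}}(E)\cong\mathfrak{L}_{\operatorname{Kom}}\mathcal{B}^{\operatorname{exp}}_{\{\infty\}}(E)$.

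Putting the pieces together, $\mathfrak{L}_{\operatorname{Kom},a}$ induces a linear isomorphism
\[
\widehat{\mathfrak{L}}_{\operatorname{Kom},a}\colon \mathcal{B}_{[a,\infty]}^{\operatorname{exp}}(E)/\mathcal{B}_{\{\infty\}}^{\operatorname{exp}}(E)\to \mathfrak{L}_{\operatorname{Kom}}\mathcal{B}^{\operatorname{exp}}_{[a,\infty]}(E)/\mathfrak{L}_{\operatorname{Kom}}\mathcal{B}^{\operatorname{exp}}_{\{\infty\}}(E),
\]
and the composition $\mathfrak{L}_{\operatorname{Kom}}=\widehat{\mathfrak{L}}_{\operatorname{Kom},a}\circ\rho_{a}^{-1}$ is then a linear isomorphism, as claimed. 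For general $a\in\R$ one can either repeat Komatsu's argument verbatim or reduce to $a=0$ via the shift isomorphism $\tau_{-a}$ (in the spirit of \prettyref{prop:asymp_laplace_transform_of_shift}), noting that the shift intertwines $\rho_{a}$ with $\rho_{0}$ and $\mathfrak{L}_{\operatorname{Kom},a}$ with $\mathfrak{L}_{\operatorname{Kom},0}$ up to the factor $e^{-a\zeta}$, which is a multiplier on both quotient spaces.
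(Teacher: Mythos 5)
Your proposal is correct and follows essentially the same route as the paper, which justifies the theorem in the paragraph preceding its statement by citing Komatsu for the two component isomorphisms: $\rho_{a}$ (from \cite[Theorem 1, p.\ 361]{Kom4}) and $\mathfrak{L}_{\operatorname{Kom},a}$ for $a\in\R\cup\{\infty\}$ (from \cite{Kom3} and \cite[p.\ 218]{Kom5}), where the $a=\infty$ case is exactly the bijection $\mathcal{B}_{\{\infty\}}^{\operatorname{exp}}(E)\cong\mathfrak{L}_{\operatorname{Kom}}\mathcal{B}^{\operatorname{exp}}_{\{\infty\}}(E)$ that you single out as the main obstacle. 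Your additional remarks on the well-definedness of the induced quotient map and the reduction of general $a$ to $a=0$ via the shift are consistent with, but not spelled out in, the paper.
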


We note the following generalisation of \cite[Theorem 6.3, p.\ 59]{langenbruch2011} from $E=\C$ to general 
$\C$-Banach spaces $E$. 

\begin{thm}\label{thm:asymptotic_laplace_equiv_kom_lan}
Let $(E,\|\cdot\|_{E})$ be a $\C$-Banach space. Then the canonical map 
\[
I_{1}\colon \mathfrak{L}\mathcal{G}_{[0,\infty]}(E)/\mathfrak{L}\mathcal{G}_{\{\infty\}}(E)\to 
\mathfrak{L}_{\operatorname{Kom}}\mathcal{B}^{\operatorname{exp}}_{[0,\infty]}(E)/
\mathfrak{L}_{\operatorname{Kom}}\mathcal{B}^{\operatorname{exp}}_{\{\infty\}}(E),\; [F]\mapsto [F],
\]
is a linear isomorphism such that $I_{1}\circ\mathfrak{L}_{\mathcal{B}}=\mathfrak{L}_{\operatorname{Kom}}$.
\end{thm}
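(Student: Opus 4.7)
The plan is to establish the identity $I_{1}\circ\mathfrak{L}_{\mathcal{B}}=\mathfrak{L}_{\operatorname{Kom}}$ and then deduce the isomorphism property of $I_{1}$ from the fact that both $\mathfrak{L}_{\mathcal{B}}$ (\prettyref{thm:asymp_laplace_iso_langenbruch}) and $\mathfrak{L}_{\operatorname{Kom}}$ (\prettyref{thm:asymp_laplace_iso_komatsu}) are linear isomorphisms in the Banach-valued setting.

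First I would check that $I_{1}$ is well-defined at the level of representatives by verifying the inclusions $\mathfrak{L}\mathcal{G}_{[0,\infty]}(E)\subset\mathfrak{L}_{\operatorname{Kom}}\mathcal{B}^{\operatorname{exp}}_{[0,\infty]}(E)$ and $\mathfrak{L}\mathcal{G}_{\{\infty\}}(E)\subset\mathfrak{L}_{\operatorname{Kom}}\mathcal{B}^{\operatorname{exp}}_{\{\infty\}}(E)$. Taking $U:=\C$ (trivially postsectorial) and noting that any sector $\Gamma_{r,\varphi}$ lies in $\{\re(z)\geq 0\}$, the Komatsu seminorms $\|f\|_{j,r,\varphi,[0,\infty]}^{\operatorname{Kom}}$ with $a=0$ are dominated by $\|f\|_{j,[0,\infty]}^{\mathfrak{L}\mathcal{G}}$, and analogously for the $\{\infty\}$-variant. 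Hence $I_{1}$ descends to a well-defined linear map on the quotients.

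Next I would establish $I_{1}\circ\mathfrak{L}_{\mathcal{B}}=\mathfrak{L}_{\operatorname{Kom}}$. For $f\in\mathcal{B}([0,\infty[,E)$ choose representatives $h:=(\mathfrak{L}\circ R_{+}^{-1})(f)\in\mathfrak{L}\mathcal{G}_{[0,\infty]}(E)$ and $h':=(\mathfrak{L}_{\operatorname{Kom},0}\circ\rho_{0}^{-1})(f)\in\mathfrak{L}\mathcal{G}_{[0,\infty]}(U,E)$ for some postsectorial $U\subset\C$, and reduce the claim $h-h'\in\mathfrak{L}_{\operatorname{Kom}}\mathcal{B}^{\operatorname{exp}}_{\{\infty\}}(E)$ to the scalar case. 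For every $e'\in E'$ both extensions $R_{+}^{-1}$ and $\rho_{0}^{-1}$ commute with the operation $g\mapsto e'\circ g$ (they act representative-wise on germs and the uniqueness modulo the respective quotient kernels carries through), and both Laplace contour integrals $\mathfrak{L}$ and $\mathfrak{L}_{\operatorname{Kom},0}$ commute with $e'$ by weak (Pettis) integrability. Thus $e'(h)=(\mathfrak{L}\circ R_{+}^{-1})(e'\circ f)$ and $e'(h')=(\mathfrak{L}_{\operatorname{Kom},0}\circ\rho_{0}^{-1})(e'\circ f)$, so \cite[Theorem 6.3, p.\ 59]{langenbruch2011} applied to the scalar hyperfunction $e'\circ f\in\mathcal{B}([0,\infty[)$ yields $e'(h-h')\in\mathfrak{L}_{\operatorname{Kom}}\mathcal{B}^{\operatorname{exp}}_{\{\infty\}}(\C)$.

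To promote this pointwise-in-$e'$ decay to an $E$-norm decay, I would fix $0<\varphi<\pi/2$, pick $r>0$ with $\Gamma_{r,\varphi}\subset U$, and $j\in\N$. The family $\{e^{-\frac{1}{j}|z|+j|\re(z)|}(h(z)-h'(z)):z\in\Gamma_{r,\varphi}\}$ is weakly bounded in $E$ by the scalar statement, hence norm-bounded by Mackey's theorem (equivalently, by the uniform boundedness principle in the Banach space $E$). Letting $\varphi,r,j$ vary gives $h-h'\in\mathfrak{L}\mathcal{G}_{\{\infty\}}(U,E)\subset\mathfrak{L}_{\operatorname{Kom}}\mathcal{B}^{\operatorname{exp}}_{\{\infty\}}(E)$, so $I_{1}([h])=[h']=\mathfrak{L}_{\operatorname{Kom}}(f)$. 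Consequently $I_{1}=\mathfrak{L}_{\operatorname{Kom}}\circ\mathfrak{L}_{\mathcal{B}}^{-1}$ is a linear isomorphism. The main obstacle is precisely this scalar-to-Banach lifting: one must carefully verify the commutation of $R_{+}^{-1}$, $\rho_{0}^{-1}$, $\mathfrak{L}$, and $\mathfrak{L}_{\operatorname{Kom},0}$ with $e'\in E'$, and then apply the Mackey argument on a sector $\Gamma_{r,\varphi}\subset U$ chosen (once) after fixing the representative $h'$, so that the resulting uniform $E$-norm estimates yield the decay required for membership in $\mathfrak{L}_{\operatorname{Kom}}\mathcal{B}^{\operatorname{exp}}_{\{\infty\}}(E)$.
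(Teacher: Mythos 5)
Your argument is correct, but it reaches the identity $I_{1}\circ\mathfrak{L}_{\mathcal{B}}=\mathfrak{L}_{\operatorname{Kom}}$ by a genuinely different route from the paper. The paper repeats Langenbruch's proof of the scalar case \cite[Theorem 6.3, p.\ 59]{langenbruch2011} directly in the Banach-valued setting: it decomposes $f=f_{1}+f_{2}$ with $f_{1}\in\mathcal{B}([0,j],E)$, compares $(\mathfrak{L}_{\operatorname{Kom},0}\circ\rho_{0}^{-1})(f)$ with $\mathcal{L}_{[0,j]}^{\mathcal{B}}(f_{1})$ via Komatsu's compact-support theorem (\cite[Theorem 3.9, p.\ 818]{Kom3} combined with \cite[p.\ 218]{Kom5}) and $(\mathfrak{L}\circ R_{+}^{-1})(f)$ with the same function via \prettyref{prop:laplace_decomposition_langenbruch}, and lets $j$ run through $\N$ to place the difference in $\mathfrak{L}_{\operatorname{Kom}}\mathcal{B}^{\operatorname{exp}}_{\{\infty\}}(E)$. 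You instead use the scalar theorem as a black box, reduce to it by composing with $e'\in E'$, and upgrade weak decay to norm decay by the uniform boundedness principle on each fixed sector $\Gamma_{r,\varphi}\subset U$ --- which is in fact the same scalar-reduction-plus-Mackey device the paper deploys inside the proof of \prettyref{prop:laplace_decomposition_langenbruch}, only applied globally rather than to the intermediate estimate. Two points need a little more care in your write-up: the scalar conclusion gives the decay of $e'(h)-e'(h')$ only on a postsectorial set depending on $e'$, so to obtain weak boundedness on the \emph{fixed} sector $\Gamma_{r,\varphi}$ you must add that the leftover piece $\Gamma_{r,\varphi}\cap\{|z|\leq r'\}$ is compact and $h-h'$ is continuous on $U$; and since $(\mathfrak{L}\circ R_{+}^{-1})(f)$ and $(\mathfrak{L}_{\operatorname{Kom},0}\circ\rho_{0}^{-1})(f)$ are only defined modulo the respective kernels, you should fix representatives $G$ first and take $e'\circ G$ as the scalar representative. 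What your approach buys is brevity and independence from the decomposition machinery; what the paper's buys is explicit quantitative estimates and an argument that stays entirely at the vector-valued level, which is why it transfers verbatim to the Fr\'echet-valued comparison in \prettyref{thm:asymptotic_laplace_equiv}, where the Mackey upgrade alone would not suffice to identify the quotient spaces.
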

\begin{proof}
The map $I_{1}$ is well-defined due to the canonical inclusions $\mathfrak{L}\mathcal{G}_{[0,\infty]}(E)\subset 
\mathfrak{L}_{\operatorname{Kom}}\mathcal{B}^{\operatorname{exp}}_{[0,\infty]}(E)$ and 
$\mathfrak{L}\mathcal{G}_{\{\infty\}}(E)\subset 
\mathfrak{L}_{\operatorname{Kom}}\mathcal{B}^{\operatorname{exp}}_{\{\infty\}}(E)$. 

We only need to show that $I_{1}\circ\mathfrak{L}_{\mathcal{B}}=\mathfrak{L}_{\operatorname{Kom}}$ because 
$\mathfrak{L}_{\mathcal{B}}$ and $\mathfrak{L}_{\operatorname{Kom}}$ are linear isomorphisms by 
\prettyref{thm:asymp_laplace_iso_langenbruch} and \prettyref{thm:asymp_laplace_iso_komatsu}. 
We repeat the proof from \cite[Theorem 6.3, p.\ 59]{langenbruch2011} with small modifications. 
Let $f\in \mathcal{B}([0,\infty[,E)$ and $(\mathfrak{L}_{\operatorname{Kom},0}\circ \rho_{0}^{-1})(f)$ be defined on 
some postsectorial open set $U\subset\C$. For $j\in\N$ there are
$f_{1}\in\mathcal{B}([0,j],E)$ and $f_{2}\in\mathcal{B}([j,\infty[,E)$ 
such that $f=f_{1}+f_{2}$ and
\[
(\mathfrak{L}_{\operatorname{Kom},0}\circ \rho_{0}^{-1})(f)-\mathcal{L}_{[0,j]}^{\mathcal{B}}(f_{1})_{\mid U}
\in\mathfrak{L}_{\operatorname{Kom}}\mathcal{B}^{\operatorname{exp}}_{[j,\infty]}(E)
\]
as well as 
\[
\sup_{\re(z)\geq -k}\|(\mathfrak{L}\circ R_{+}^{-1})(f)(z)-\mathcal{L}_{[0,j]}^{\mathcal{B}}(f_{1})(z)\|_{E}
e^{-\frac{1}{k}|z|+j|\re(z)|}<\infty 
\]
for all $k\in\N$ by \cite[Theorem 3.9, p.\ 818]{Kom3} combined with \cite[p.\ 218]{Kom5} 
and \prettyref{prop:laplace_decomposition_langenbruch}. We note that
\begin{flalign*}
&\hspace{0.35cm}(\mathfrak{L}_{\operatorname{Kom},0}\circ \rho_{0}^{-1})(f)(z)-(\mathfrak{L}\circ R_{+}^{-1})(f)(z)\\
&=(\mathfrak{L}_{\operatorname{Kom},0}\circ \rho_{0}^{-1})(f)(z)-\mathcal{L}_{[0,j]}^{\mathcal{B}}(f_{1})(z)
-\bigl((\mathfrak{L}\circ R_{+}^{-1})(f)(z)-\mathcal{L}_{[0,j]}^{\mathcal{B}}(f_{1})(z)\bigr)
\end{flalign*} 
for $z\in U$. Let $0<\varphi<\tfrac{\pi}{2}$. Then there is $r>0$ such that $\Gamma_{r,\varphi}\subset U$ and 
with $k=j$ 
\begin{flalign*}
&\hspace{0.35cm} \|(\mathfrak{L}_{\operatorname{Kom},0}\circ \rho_{0}^{-1})(f)
 -(\mathfrak{L}\circ R_{+}^{-1})(f)\|_{j,r,\varphi,\{\infty\}}^{\operatorname{Kom}}\\
&=\sup_{z\in \Gamma_{r,\varphi}}\|(\mathfrak{L}_{\operatorname{Kom},0}\circ \rho_{0}^{-1})(f)(z)
 -(\mathfrak{L}\circ R_{+}^{-1})(f)(z)\|_{E}e^{-\frac{1}{j}|z|+j|\re(z)|}<\infty .
\end{flalign*}
We deduce that $(\mathfrak{L}_{\operatorname{Kom},0}\circ \rho_{0}^{-1})(f)-(\mathfrak{L}\circ R_{+}^{-1})(f)
\in\mathfrak{L}_{\operatorname{Kom}}\mathcal{B}^{\operatorname{exp}}_{\{\infty\}}(E)$ and so 
$I_{1}\circ\mathfrak{L}_{\mathcal{B}}=\mathfrak{L}_{\operatorname{Kom}}$.
\end{proof}

\begin{cor}\label{cor:asymptotic_laplace_equiv_kom}
Let $E$ be a $\C$-Banach space. Then the canonical map 
\[
I_{2}\colon \mathcal{LO}_{[0,\infty]}(E)/\mathcal{LO}_{\{\infty\}}(E)\to 
\mathfrak{L}_{\operatorname{Kom}}\mathcal{B}^{\operatorname{exp}}_{[0,\infty]}(E)/
\mathfrak{L}_{\operatorname{Kom}}\mathcal{B}^{\operatorname{exp}}_{\{\infty\}}(E),\; [F]\mapsto [F],
\]
is a linear isomorphism such that $I_{2}\circ I_{0}= I_{1}$ and 
$I_{2}\circ\mathcal{L}^{\mathcal{B}}=\mathfrak{L}_{\operatorname{Kom}}$.
\end{cor}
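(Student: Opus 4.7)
The plan is to observe that $I_{2}$ factors through the already-established isomorphisms $I_{0}$ and $I_{1}$, specifically as $I_{2} = I_{1} \circ I_{0}^{-1}$. First I would verify well-definedness: every element of $\mathcal{LO}_{[0,\infty]}(E)$ is holomorphic on the right half-plane $-i\h$, which is postsectorial. Given $f \in \mathcal{LO}_{[0,\infty]}(E)$ and any $0 < \varphi < \tfrac{\pi}{2}$, $r > 0$ with $\Gamma_{r,\varphi} \subset -i\h$, choose $j \in \N$ large enough that $1/j \leq r\cos\varphi$; then $\Gamma_{r,\varphi} \subset \{\re z \geq 1/j\}$ and, using $w^{+}_{0}(-\re z) = 0$ for $\re z > 0$, the estimate $\|f\|_{j,r,\varphi,[0,\infty]}^{\operatorname{Kom}} \leq \|f\|_{j,[0,\infty]}$ shows $f \in \mathfrak{L}\mathcal{G}_{[0,\infty]}(-i\h,E) \subset \mathfrak{L}_{\operatorname{Kom}}\mathcal{B}^{\operatorname{exp}}_{[0,\infty]}(E)$. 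The same argument, now using $w^{+}_{\infty}(-\re z) = -j\re z$ (so the exponential factor $e^{j|\re z|}$ on the Komatsu side is absorbed), yields the inclusion $\mathcal{LO}_{\{\infty\}}(E) \subset \mathfrak{L}_{\operatorname{Kom}}\mathcal{B}^{\operatorname{exp}}_{\{\infty\}}(E)$, so $I_{2}$ descends to the quotients and is linear.

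Next, I would establish the identity $I_{2} \circ I_{0} = I_{1}$. Both sides are defined by $[F] \mapsto [F]$ through the chain of canonical inclusions $\mathfrak{L}\mathcal{G}_{[0,\infty]}(E) \subset \mathcal{LO}_{[0,\infty]}(E) \subset \mathfrak{L}_{\operatorname{Kom}}\mathcal{B}^{\operatorname{exp}}_{[0,\infty]}(E)$ (and analogously at $\{\infty\}$), so the equality is immediate from the definitions. Since every complex Banach space is a complex Fr\'echet space, \prettyref{thm:asymptotic_laplace_equiv} applies and $I_{0}$ is a linear isomorphism, while $I_{1}$ is a linear isomorphism by \prettyref{thm:asymptotic_laplace_equiv_kom_lan}. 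Consequently $I_{2} = I_{1} \circ I_{0}^{-1}$ is a linear isomorphism.

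Finally, the compatibility $I_{2} \circ \mathcal{L}^{\mathcal{B}} = \mathfrak{L}_{\operatorname{Kom}}$ follows by a short diagram chase: combining $I_{0} \circ \mathfrak{L}_{\mathcal{B}} = \mathcal{L}^{\mathcal{B}}$ from \prettyref{thm:asymptotic_laplace_equiv} with the just-proved $I_{2} \circ I_{0} = I_{1}$ and $I_{1} \circ \mathfrak{L}_{\mathcal{B}} = \mathfrak{L}_{\operatorname{Kom}}$ from \prettyref{thm:asymptotic_laplace_equiv_kom_lan} gives $I_{2} \circ \mathcal{L}^{\mathcal{B}} = I_{2} \circ I_{0} \circ \mathfrak{L}_{\mathcal{B}} = I_{1} \circ \mathfrak{L}_{\mathcal{B}} = \mathfrak{L}_{\operatorname{Kom}}$. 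The main (and essentially only) subtlety is the well-definedness check in the first paragraph, which requires matching the weight functions $w^{+}_{a}$ against the weights in Komatsu's Laplace ranges; everything else is purely formal from the isomorphism statements already proved.
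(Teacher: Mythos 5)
Your proposal is correct and follows essentially the same route as the paper: the paper's own proof simply asserts the canonical inclusions $\mathcal{LO}_{[0,\infty]}(E)\subset \mathfrak{L}_{\operatorname{Kom}}\mathcal{B}^{\operatorname{exp}}_{[0,\infty]}(E)$ and $\mathcal{LO}_{\{\infty\}}(E)\subset \mathfrak{L}_{\operatorname{Kom}}\mathcal{B}^{\operatorname{exp}}_{\{\infty\}}(E)$, notes that $I_{2}\circ I_{0}=I_{1}$ is clear, and invokes Theorems \ref{thm:asymptotic_laplace_equiv} and \ref{thm:asymptotic_laplace_equiv_kom_lan} exactly as you do. Your explicit verification of the inclusions by comparing the weights (using $w^{+}_{0}=0$ and $w^{+}_{\infty}(-\re z)=-k\re z$ on $\Gamma_{r,\varphi}\subset\{\re z\geq \nicefrac{1}{k}\}$) is a welcome elaboration of what the paper leaves implicit.
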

\begin{proof}
The map $I_{2}$ is well-defined due to the canonical inclusions $\mathcal{LO}_{[0,\infty]}(E)\subset 
\mathfrak{L}_{\operatorname{Kom}}\mathcal{B}^{\operatorname{exp}}_{[0,\infty]}(E)$ and 
$\mathcal{LO}_{\{\infty\}}(E)\subset 
\mathfrak{L}_{\operatorname{Kom}}\mathcal{B}^{\operatorname{exp}}_{\{\infty\}}(E)$. Clearly, 
$I_{2}\circ I_{0}= I_{1}$ and the rest of the statement follows from
\prettyref{thm:asymptotic_laplace_equiv} and \prettyref{thm:asymptotic_laplace_equiv_kom_lan}.
\end{proof}

It follows from the corollary above that our asymptotic Laplace transform satisfies condition (III) 
from the introduction as well.
In \cite{lumer1999} Lumer and Neubrander (cf.\ B\"aumer \cite{baeumer1997}) introduced an asymptotic Laplace transform 
$\mathfrak{L}_{0,\operatorname{LN}}$ on the space $L_{\operatorname{loc}}^{1}([0,\infty[,E)$ of (equivalence classes) 
of locally integrable functions on $[0,\infty[$ with values in a $\C$-Banach space $E$. 
They modified their Laplace transform in \cite[2.5 Definition, p.\ 156]{lumer2001} to a Laplace 
transform $\mathfrak{L}_{\operatorname{LN}}$ because the unmodified Laplace transform does not fulfil condition (II) 
from the introduction by \cite[Example 2.1, p.\ 153]{lumer2001}. Their modified Laplace transform 
$\mathfrak{L}_{\operatorname{LN}}$ coincides with Komatsu's Laplace transform on 
$L_{\operatorname{loc}}^{1}([0,\infty[,E)$ by \cite[Eq.\ (15), p.\ 157]{lumer2001}. 
Here we may regard $L_{\operatorname{loc}}^{1}([0,\infty[,E)$ as a linear subspace of $\mathcal{B}([0,\infty[,E)$ 
as in \cite[Theorem 1.3.10, p.\ 25]{Kan}.
Further, they clarified the relation between their unmodified Laplace transform and the one of Komatsu, namely 
that $\mathfrak{L}_{\operatorname{Kom}}(f)\subset\mathfrak{L}_{0,\operatorname{LN}}(f)$ 
for all $f\in L_{\operatorname{loc}}^{1}([0,\infty[,E)$ by \cite[3.1 Theorem, p.\ 157]{lumer2001}. 
In combination with \prettyref{thm:asymptotic_laplace_equiv} and \prettyref{thm:asymptotic_laplace_equiv_kom_lan} 
we note the following implication, which generalises \cite[Corollary 6.4, p.\ 60]{langenbruch2011}.

\begin{cor}\label{cor:asymptotic_laplace_lumer_neubrander}
Let $E$ be a $\C$-Banach space. Then we have 
\[
\mathcal{L}^{\mathcal{B}}(f)=\mathfrak{L}_{\mathcal{B}}(f)=\mathfrak{L}_{\operatorname{Kom}}(f)
=\mathfrak{L}_{\operatorname{LN}}(f)\subset \mathfrak{L}_{0,\operatorname{LN}}(f),
\quad f\in L_{\operatorname{loc}}^{1}([0,\infty[,E).
\]
\end{cor}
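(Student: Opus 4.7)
The plan is to chain together the identifications already established in earlier sections together with two cited facts from Lumer--Neubrander \cite{lumer2001}. No new analytic estimates are needed; the proof is essentially bookkeeping.

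First, I would note that the equality $\mathcal{L}^{\mathcal{B}}(f)=\mathfrak{L}_{\mathcal{B}}(f)$ is simply the content of \prettyref{thm:asymptotic_laplace_equiv}, which gives a well-defined canonical map $I_{0}$ between the two quotient target spaces and the identity $I_{0}\circ\mathfrak{L}_{\mathcal{B}}=\mathcal{L}^{\mathcal{B}}$. Since $I_{0}$ is just induced by the inclusion $\mathfrak{L}\mathcal{G}_{[0,\infty]}(E)\subset\mathcal{LO}_{[0,\infty]}(E)$ (and similarly at $\{\infty\}$), any representative of $\mathfrak{L}_{\mathcal{B}}(f)$ serves as a representative of $\mathcal{L}^{\mathcal{B}}(f)$, so the two equivalence classes agree under this canonical identification.

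Next, the equality $\mathfrak{L}_{\mathcal{B}}(f)=\mathfrak{L}_{\operatorname{Kom}}(f)$ is exactly the assertion of \prettyref{thm:asymptotic_laplace_equiv_kom_lan} via the canonical map $I_{1}$, and again this is a pure identification statement: $\mathfrak{L}\mathcal{G}_{[0,\infty]}(E)\subset\mathfrak{L}_{\operatorname{Kom}}\mathcal{B}^{\operatorname{exp}}_{[0,\infty]}(E)$ and similarly at $\{\infty\}$, so representatives match.

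The equality $\mathfrak{L}_{\operatorname{Kom}}(f)=\mathfrak{L}_{\operatorname{LN}}(f)$ on $L_{\operatorname{loc}}^{1}([0,\infty[,E)$ is precisely the cited \cite[Eq.~(15), p.~157]{lumer2001}, which I would invoke directly. Finally, the inclusion $\mathfrak{L}_{\operatorname{LN}}(f)\subset\mathfrak{L}_{0,\operatorname{LN}}(f)$ is the combination of the previous equality with \cite[Theorem~3.1, p.~157]{lumer2001}, which states $\mathfrak{L}_{\operatorname{Kom}}(f)\subset\mathfrak{L}_{0,\operatorname{LN}}(f)$. The only thing one might worry about is the compatibility of all these ``equalities'' when the various transforms actually land in different (canonically isomorphic) quotient spaces, but since each of the canonical maps $I_{0}$, $I_{1}$, $I_{2}$ (the latter from \prettyref{cor:asymptotic_laplace_equiv_kom}) is induced by a genuine inclusion of representative spaces, this is not a genuine obstacle and can be handled in a single sentence.
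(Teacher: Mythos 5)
Your proposal is correct and follows exactly the route the paper takes: the first two equalities are read off from \prettyref{thm:asymptotic_laplace_equiv} and \prettyref{thm:asymptotic_laplace_equiv_kom_lan} via the canonical maps $I_{0}$ and $I_{1}$ (the paper likewise suppresses these isomorphisms, remarking only that they are omitted), and the last equality and the inclusion are the cited facts from Lumer--Neubrander. The only cosmetic omission is that you do not explicitly mention the embedding of $L_{\operatorname{loc}}^{1}([0,\infty[,E)$ into $\mathcal{B}([0,\infty[,E)$, which the paper records via \cite[Theorem 1.3.10, p.\ 25]{Kan}.
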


We omitted the linear isomorphisms $I_{j}$ in the equations above. 
\appendix
\renewcommand{\thesection}{\Alph{section}}  
\section{An alternative proof of \prettyref{thm:asymptotic_laplace_equiv}}
In this section we give an alternative proof that the map 
\[
I_{0}\colon \mathfrak{L}\mathcal{G}_{[0,\infty]}(E)/\mathfrak{L}\mathcal{G}_{\{\infty\}}(E)\to 
\mathcal{LO}_{[0,\infty]}(E)/\mathcal{LO}_{\{\infty\}}(E),\;[F]\mapsto [F],\;
\]
in our main result \prettyref{thm:asymptotic_laplace_equiv} of \prettyref{sect:Asymptotic_Laplace_transform_Lan} 
is a linear isomorphism if $E$ is a $\C$-Fr\'echet space.

\begin{defn}
Let $E$ be a $\C$-lcHs and $V_{0}:=-i\h=\C_{\re >0}$. We define the space
\[
\mathcal{E}_{\infty}(E):= \{f\in\mathcal{C}^{\infty}(\C,E)
\; | \; \forall\;n\in\N,\,m\in\N_{0},\,\alpha\in\mathfrak{A}:\;|f|_{n,m,\alpha}^{\mathcal{E}_{\infty}} < \infty\}
\]
where
\[
|f|_{n,m,\alpha}^{\mathcal{E}_{\infty}}:=\sup_{\substack{\re(z)>-n\\ \beta\in\N_{0}^{2}, |\beta|\leq m}}
p_{\alpha}(\partial^{\beta}f(z))e^{-\frac{1}{n}|z|+n|\re(z)|},
\]
and its topological subspaces
\[
\mathcal{E}_{\overline{V}_{0}}(E):= \{f\in\mathcal{E}_{\infty}(E)\;|\;
\operatorname{supp}f\subset\overline{V}_{0}\}
\]
and
\[
\mathcal{E}_{\overline{\partial}\overline{V}_{0}}(E):= \{f\in\mathcal{E}_{\infty}(E)\;|\;
\operatorname{supp}\overline{\partial}f\subset\overline{V}_{0}\}.
\]
\end{defn}

An alternative way to prove the surjectivity of $I_{0}$ is to find a solution 
$g\in\mathcal{E}_{\overline{\partial}\overline{V}_{0}}(E)$ of the Cauchy-Riemann equation 
$\overline{\partial}g=f$ for a given $f\in\mathcal{E}_{\overline{V}_{0}}(E)$ in the case that 
$E$ is a $\C$-Fr\'echet space. We start with a minimal modification of \cite[Lemma 6.1, p.\ 58]{langenbruch2011}.
We set $V_{n}:=\{z\in\C\;|\;\re(z)>-n\}$ for $n\in\N$ and 
\[
L^{2}(\mathfrak{L}\mathcal{G}_{\{\infty\}}):=\{f\in L^{2}_{\operatorname{loc}}(\C)\;|\;\forall\;n\in\N:\; 
|f|_{n}^{2}:=\int_{V_{n}}|f(z)|^{2}e^{-\frac{2}{n}|z|+2n|\re(z)|}\d z<\infty\}.
\]

\begin{lem}\label{lem:surj_dbar_L2}
For any $f\in L^{2}(\mathfrak{L}\mathcal{G}_{\{\infty\}})$ with $\operatorname{supp} f\subset \overline{V}_{0}$ 
there is $g\in L^{2}(\mathfrak{L}\mathcal{G}_{\{\infty\}})$ such that $\overline{\partial}g=f$.
\end{lem}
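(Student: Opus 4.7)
My plan is to invoke Hörmander's weighted $L^2$-existence theorem for the Cauchy--Riemann operator in one complex variable, and then patch the resulting family of solutions by a Mittag--Leffler argument so as to control every seminorm of $L^2(\mathfrak{L}\mathcal{G}_{\{\infty\}})$ simultaneously.

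For each $n \in \N$, I would introduce the weight
\[
\varphi_n(z) := \tfrac{2}{n}|z| - 2n\,\re(z), \qquad z \in \C .
\]
Since $|z|$ is subharmonic on $\C$ (with distributional Laplacian $1/|z|$) and $\re z$ is harmonic, each $\varphi_n$ is subharmonic on $\C$. On $\overline{V}_0=\{\re z\ge 0\}$ we have $|\re z|=\re z$, so
\[
e^{-\varphi_n(z)} \;=\; e^{-\frac{2}{n}|z|+2n|\re z|}
\]
coincides there with the weight defining $|\cdot|_n$; combined with the hypothesis $\operatorname{supp} f\subset \overline{V}_0$ this yields $\int_\C |f|^2 e^{-\varphi_n}\,dA = |f|_n^2 < \infty$. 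Hörmander's theorem (\cite[Thm.~4.4.2]{H1}) then furnishes some $g_n \in L^2_{\mathrm{loc}}(\C)$ with $\overline{\partial} g_n = f$ and
\[
\int_\C \frac{|g_n(z)|^{2}\, e^{-\varphi_n(z)}}{(1+|z|^{2})^{2}}\, dA(z)
\;\le\; C\,|f|_{n}^{2},
\]
the factor $(1+|z|^{2})^{-2}$ compensating for $\Delta\varphi_n = 2/(n|z|)$ decaying at infinity.

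Next I would translate this global bound into control of the target seminorm $|g_n|_n$. On $V_0$ the Hörmander weight and the target weight coincide, so after passing to a slightly larger index (say $2n$) to absorb the polynomial factor into the margin between $\tfrac{1}{n}$ and $\tfrac{1}{2n}$, one gets $\int_{V_0}|g_n|^{2} e^{-\frac{2}{n}|z|+2n\re z}\,dA < \infty$. On $V_n\setminus V_0$, where $f$ vanishes and $g_n$ is holomorphic, the Hörmander bound supplies only the \emph{wrong} weight $e^{-\varphi_n}=e^{-\frac{2}{n}|z|+2n\re z}$ instead of the desired $e^{-\frac{2}{n}|z|-2n\re z}$; the discrepancy must be fixed by adding an entire function to $g_n$ with the appropriate growth in the left half-plane, using Cauchy estimates together with the mean-value property to control the holomorphic remainder.

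Finally, I would patch the family $(g_n)_{n\in\N}$ into a single $g\in L^{2}(\mathfrak{L}\mathcal{G}_{\{\infty\}})$ via a Mittag--Leffler procedure in the Fréchet space $L^{2}(\mathfrak{L}\mathcal{G}_{\{\infty\}})$: since $g_{n+1}-g_n$ is entire, one inductively subtracts entire functions $h_n$ (approximating $g_{n+1}-g_n$ well in the $n$-th seminorm) so that $\tilde g_n:=g_n-h_n$ becomes Cauchy in every seminorm $|\cdot|_m$, and one sets $g:=\lim_n \tilde g_n$. The main obstacle, and the delicate point of the argument, is the Runge-type density statement that entire functions are dense in the relevant weighted holomorphic $L^{2}$-spaces on $V_n$, which is what allows the Mittag--Leffler patching to convert the weak bound $e^{+2n\re z}$ coming from Hörmander into the correct decay $e^{-2n\re z}$ on the left half-plane $V_n\setminus V_0$.
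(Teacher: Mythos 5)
Your first step is sound and is, in substance, exactly what the paper's source does: the paper does not reprove this lemma but cites Langenbruch's Lemma~6.1, whose part (a) solves $\overline{\partial}g_{n}=f e^{jz}$ with the radial weight $e^{-\frac{2}{n}|z|}$ --- and since $e^{-\varphi_{n}(z)}=e^{-\frac{2}{n}|z|}\,|e^{nz}|^{2}$, your subharmonic weight $\varphi_{n}$ is that twist in disguise. The support hypothesis $\operatorname{supp}f\subset\overline{V}_{0}$ enters exactly where you put it, namely to make $\int_{\C}|f(z)|^{2}e^{-\varphi_{n}(z)}\d z=|f|_{n}^{2}$ finite (this is also the only place the paper has to check anything when passing from $V_{0}$ to $\overline{V}_{0}$), and the index shift absorbing the factor $(1+|z|^{2})^{-2}$ is fine.

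The second half of your argument has two problems. First, you misplace the remaining difficulty. On $V_{n}\setminus V_{0}=\{-n<\re(z)\le 0\}$ the Hörmander weight $e^{-\frac{2}{n}|z|+2n\re(z)}$ and the target weight $e^{-\frac{2}{n}|z|-2n\re(z)}$ differ by the factor $e^{-4n\re(z)}\le e^{4n^{2}}$, which is bounded on that strip because $|\re(z)|<n$ there; no entire correction with ``appropriate growth in the left half-plane'' is needed, and the appeal to Cauchy estimates and the mean-value property does not correspond to any actual step. The genuine obstruction is different: the solution produced with the weight $\varphi_{N}$ controls only the seminorms $|\cdot|_{m}$ with $m$ roughly below $N$, since for $m>N$ both the decay $e^{-N\re(z)}$ as $\re(z)\to+\infty$ and the radial factor $e^{-\frac{1}{N}|z|}$ (needed as $|\im(z)|\to\infty$, on the left strips as much as on the right half-plane) are too weak. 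Second --- and this is the real gap --- the Mittag--Leffler patching that you correctly identify as the remedy rests on a Runge-type density statement: entire functions satisfying \emph{all} the weighted $L^{2}$ bounds must be dense, in each fixed seminorm, among the entire functions satisfying that one bound, so that the telescoping corrections $h_{n}$ exist. You explicitly leave this unproved and call it ``the main obstacle''; since the entire point of the patching is this density, the proof is incomplete precisely at its crux. That step is the content of the remainder of Langenbruch's Lemma~6.1 (beyond its part (a)), which the paper invokes wholesale and which your proposal does not replace.
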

\begin{proof}
This is just \cite[Lemma 6.1, p.\ 58]{langenbruch2011} with the closure $\overline{V}_{0}$ instead of $V_{0}$. 
The proof of \cite[Lemma 6.1, p.\ 58]{langenbruch2011} does not change since the condition on the support is only used 
in part (a) of the proof in the estimate  
\begin{align*}
 \int_{\C}|g_{n}(z)|^{2}e^{-\frac{2|z|}{n}}(1+|z|^{2})^{-2}\d z 
&\leq\int_{\C}|f(z)|^{2}e^{-\frac{2|z|}{n}+2j|\re(z)|}\d z \\
&=\int_{V_{0}}|f(z)|^{2}e^{-\frac{2|z|}{n}+2j|\re(z)|}\d z 
\end{align*}
for $n\in\N$ which does not change if $V_{0}$ is replaced by $\overline{V}_{0}$, 
where $g_{n}\in L^{2}_{\operatorname{loc}}(\C)$ is a solution of $\overline{\partial}g_{n}(z)=f(z)e^{jz}$ 
for a fixed $j\in\N$. 
\end{proof}

\begin{cor}\label{cor:surj_dbar_E_scalar}
The Cauchy-Riemann operator
\[
\overline{\partial}\colon \mathcal{E}_{\overline{\partial}\overline{V}_{0}}(\C)\to \mathcal{E}_{\overline{V}_{0}}(\C)
\]
is surjective.
\end{cor}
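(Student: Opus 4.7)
The plan is to lift the $L^{2}$ solvability of \prettyref{lem:surj_dbar_L2} to the $\mathcal{E}_{\infty}$--setting by interior elliptic regularity for $\overline{\partial}$. Given $f \in \mathcal{E}_{\overline{V}_{0}}(\C)$, I would first verify that $f$ lies in $L^{2}(\mathfrak{L}\mathcal{G}_{\{\infty\}})$: using the pointwise bound $|f(z)| \leq |f|_{m,0}^{\mathcal{E}_{\infty}} e^{\frac{1}{m}|z|-m|\re(z)|}$ valid on $V_{m}$, the integrand $|f(z)|^{2} e^{-\frac{2}{n}|z|+2n|\re(z)|}$ on $V_{n}$ is dominated, for any $m>n$, by $C\,e^{(\frac{2}{m}-\frac{2}{n})|z|+(2n-2m)|\re(z)|}$. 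On $V_{n}\cap\{\re(z)\geq 0\}$ both exponents are negative, and on the strip $-n<\re(z)<0$ the $|\re(z)|$--term is bounded while the $|z|$--term gives exponential decay, so $f\in L^{2}(\mathfrak{L}\mathcal{G}_{\{\infty\}})$.

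Then I would apply \prettyref{lem:surj_dbar_L2} to produce $g_{0}\in L^{2}(\mathfrak{L}\mathcal{G}_{\{\infty\}})$ with $\overline{\partial}g_{0}=f$ in $\mathcal{D}'(\C)$. Since $\Delta g_{0}=4\partial f\in C^{\infty}(\C)$, Weyl's lemma for $\Delta$ upgrades $g_{0}$ to a $C^{\infty}$ function on $\C$. Because $\operatorname{supp}\overline{\partial}g_{0}=\operatorname{supp}f\subset\overline{V}_{0}$, the support condition of $\mathcal{E}_{\overline{\partial}\overline{V}_{0}}(\C)$ holds for free; what remains is to establish the weighted seminorm bounds $|g_{0}|_{n,m}^{\mathcal{E}_{\infty}}<\infty$ for all $n,m\in\N$.

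For this I would use the local interior estimate
\[
\|g_{0}\|_{C^{m}(B(z_{0},1/2))}\leq C_{m}\bigl(\|g_{0}\|_{L^{2}(B(z_{0},1))}+\|\partial f\|_{C^{m-1}(B(z_{0},1))}\bigr),
\]
coming from interior $H^{m+2}$--regularity for $\Delta$ together with the Sobolev embedding $H^{s}\hookrightarrow C^{k}$ on $\R^{2}$ for $s>k+1$. The weights $e^{-\frac{1}{n}|z|+n|\re(z)|}$ and $e^{-\frac{2}{n}|z|+2n|\re(z)|}$ vary only by factors of the form $e^{O(1/n)}e^{O(n)}$ on a unit ball, so restricting the global norms yields
\[
\|g_{0}\|_{L^{2}(B(z_{0},1))}\leq C_{n}\,e^{\frac{1}{n}|z_{0}|-n|\re(z_{0})|}\,|g_{0}|_{n}
\]
for $z_{0}\in V_{n}$, and similarly $\|f\|_{C^{m}(B(z_{0},1))}\leq C_{n,m}\,e^{\frac{1}{n+1}|z_{0}|-(n+1)|\re(z_{0})|}\,|f|_{n+1,m}^{\mathcal{E}_{\infty}}$ (using $B(z_{0},1)\subset V_{n+1}$). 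Combining these bounds with the elliptic estimate gives $|g_{0}|_{n,m}^{\mathcal{E}_{\infty}}<\infty$ for all $n,m$, hence $g_{0}\in\mathcal{E}_{\overline{\partial}\overline{V}_{0}}(\C)$ and the map is surjective.

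The main obstacle is the final step: tracking the weights through the interior elliptic estimate and ensuring that the multiplicative constants, shifts from $n$ to $n+1$, and the loss of exponential factors on unit balls all fit inside the seminorm structure of $\mathcal{E}_{\infty}(\C)$. Fortunately the $L^{2}$ and $L^{\infty}$ weights have exactly the same exponential shape (up to a factor of $2$), so the matching is clean once one passes from index $n$ to a slightly larger index, and the remaining bookkeeping reduces to straightforward computations on unit balls.
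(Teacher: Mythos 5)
Your proposal is correct and follows essentially the same route as the paper: both pass from $\mathcal{E}_{\overline{V}_{0}}(\C)$ into $L^{2}(\mathfrak{L}\mathcal{G}_{\{\infty\}})$, solve $\overline{\partial}g=f$ there via Lemma \ref{lem:surj_dbar_L2}, and then upgrade the solution to $\mathcal{E}_{\overline{\partial}\overline{V}_{0}}(\C)$ by elliptic regularity --- the paper outsources the equivalence of the weighted sup- and $L^{2}$-seminorm systems to cited lemmas on the weights, while you re-derive precisely that step by hand through interior estimates on unit balls, using the moderate variation of the weights there. The only blemish is an off-by-one in the order of the norm of $\partial f$ in your interior estimate (the chain $\Delta g_{0}\in H^{m}\Rightarrow g_{0}\in H^{m+2}\hookrightarrow C^{m}$ asks for $\|\partial f\|_{C^{m}}$ rather than $\|\partial f\|_{C^{m-1}}$), which is immaterial since the $\mathcal{E}_{\infty}$-seminorms control derivatives of $f$ of every order.
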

\begin{proof}
(i) We note that the system of seminorms 
$(\sup_{\beta\in\N_{0}^{2},|\beta|\leq m}|\partial^{\beta}\cdot|_{n})_{n\in\N,m\in\N_{0}}$ with 
the seminorms $(|\cdot|_{n})_{n\in\N}$ from $L^{2}(\mathfrak{L}\mathcal{G}_{\{\infty\}})$ induces the
same topology on $\mathcal{E}_{\infty}(\C)$ as the system $(|\cdot|_{n,m}^{\mathcal{E}_{\infty}})_{n\in\N,m\in\N_{0}}$ 
by \cite[Lemma 3.6 (b), p.\ 9]{kruse2018_5} with $\Omega_{n}:=V_{n}$ and $\nu_{n}:=\widehat{\nu}_{n}\widetilde{\nu}_{n}$ 
since \cite[Condition (PN), p.\ 8]{kruse2018_5} is fulfilled for every $q\in\N$ 
by \cite[Remark 2.3 (a), p.\ 177]{kruse2018_4} with $\widehat{\nu}_{n}(z):=e^{-\frac{1}{n}|z|}$ 
and $\widetilde{\nu}_{n}(z):=e^{n|\re(z)|}$ due to \cite[Example 3.7 (b)(ii), p.\ 10-11]{kruse2018_5} 
for $\widehat{\nu}_{n}$ and \cite[Example 2.8 (i), p.\ 179]{kruse2018_4} for $\widetilde{\nu}_{n}$.
 
(ii) Let $f\in\mathcal{E}_{\overline{V}_{0}}(\C)$. It follows from part (i) 
that $f\in L^{2}(\mathfrak{L}\mathcal{G}_{\{\infty\}})$. Due to \prettyref{lem:surj_dbar_L2} there 
is $g\in L^{2}(\mathfrak{L}\mathcal{G}_{\{\infty\}})$ such that $\overline{\partial}g=f$. Since 
$\overline{\partial}$ is hypoelliptic and $f\in\mathcal{E}_{\overline{V}_{0}}(\C)$, 
we obtain that $g\in\mathcal{C}^{\infty}(\C)$ and $\operatorname{supp}\overline{\partial}g\subset\overline{V}_{0}$. 
From $|g|_{n}<\infty$ and $\sup_{\beta\in\N_{0}^{2},|\beta|\leq m}|\partial^{\beta}\overline{\partial}g|_{n}
=\sup_{\beta\in\N_{0}^{2},|\beta|\leq m}|\partial^{\beta}f|_{n}<\infty$ for all $n\in\N$ and $m\in\N_{0}$ we deduce 
that $g\in\mathcal{E}_{\infty}(\C)$ by \cite[Lemma 3.6 (a), p.\ 9]{kruse2018_5}. 
Hence we have $g\in\mathcal{E}_{\overline{\partial}\overline{V}_{0}}(\C)$, 
implying the surjectivity of $\overline{\partial}$.
\end{proof}

\begin{cor}\label{cor:surj_dbar_E_frechet}
Let $E$ be a $\C$-Fr\'echet space. Then the Cauchy-Riemann operator
\[
\overline{\partial}\colon \mathcal{E}_{\overline{\partial}\overline{V}_{0}}(E)\to \mathcal{E}_{\overline{V}_{0}}(E)
\]
is surjective.
\end{cor}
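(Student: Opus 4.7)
The plan is to reduce the Fréchet-valued statement to the scalar case (Corollary A.3) by an $\varepsilon$-product lifting argument. Since $E$ is Fréchet and the seminorms defining $\mathcal{E}_{\infty}(E)$ are of the form $\sup_{z}p_{\alpha}(\partial^{\beta}f(z))\nu_{n}(z)$ with a weight $\nu_{n}(z)=e^{-|z|/n+n|\re(z)|}$ depending only on $z$, the same kernel theorems used in \cite[Remark 3.4]{kruse2019_5} for weighted holomorphic spaces yield topological isomorphisms
\[
\mathcal{E}_{\infty}(E)\cong\mathcal{E}_{\infty}(\C)\,\varepsilon\, E.
\]
Since the support conditions ``$\operatorname{supp} f\subset \overline{V}_{0}$'' and ``$\operatorname{supp}\overline{\partial}f\subset \overline{V}_{0}$'' cut out closed subspaces and transfer under the $\varepsilon$-product (they amount to the vanishing of $f$ resp.\ $\overline{\partial}f$ at each $z\in\C\setminus\overline{V}_{0}$, a pointwise condition), one obtains
\[
\mathcal{E}_{\overline{V}_{0}}(E)\cong\mathcal{E}_{\overline{V}_{0}}(\C)\,\varepsilon\, E,\qquad
\mathcal{E}_{\overline{\partial}\overline{V}_{0}}(E)\cong\mathcal{E}_{\overline{\partial}\overline{V}_{0}}(\C)\,\varepsilon\, E,
\]
and the vector-valued Cauchy--Riemann operator corresponds to $\overline{\partial}\,\varepsilon\,\operatorname{id}_{E}$.

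Next I would invoke the standard preservation principle: if $T\colon F\to G$ is a continuous linear surjection between Fréchet spaces that admits a continuous linear right inverse $S\colon G\to F$, then $T\,\varepsilon\,\operatorname{id}_{E}\colon F\,\varepsilon\,E\to G\,\varepsilon\,E$ is also surjective (with right inverse $S\,\varepsilon\,\operatorname{id}_{E}$). It therefore suffices to show that the scalar map $\overline{\partial}\colon \mathcal{E}_{\overline{\partial}\overline{V}_{0}}(\C)\to\mathcal{E}_{\overline{V}_{0}}(\C)$ from Corollary A.3 splits. This follows by revisiting the proof of \prettyref{lem:surj_dbar_L2}: Hörmander's weighted $L^{2}$ construction produces the minimal-norm solution in each weighted $L^{2}$ space, which depends linearly and continuously on $f$; combining these Banach-level right inverses with the hypoellipticity estimates used in Corollary A.3 (which are already linear in $f$) yields a single continuous linear operator $S$ on $\mathcal{E}_{\overline{V}_{0}}(\C)$ with $\overline{\partial}\circ S=\operatorname{id}$ and $\operatorname{supp}S(f)\subset\overline{V}_{0}$ whenever $\operatorname{supp} f\subset\overline{V}_{0}$.

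The main obstacle is this construction of a continuous linear right inverse for the scalar $\overline{\partial}$ on the weighted $\mathcal{C}^{\infty}$ scale. If the splitting proves delicate to extract directly, I would fall back on the following alternative: apply Corollary A.3 componentwise after writing $E$ as a reduced projective limit $E=\varprojlim_{\alpha} E_{\alpha}$ of local Banach spaces, solve the Bochner-$L^{2}$ version of \prettyref{lem:surj_dbar_L2} in each $E_{\alpha}$ (the proof of \prettyref{lem:surj_dbar_L2} carries over verbatim to Hilbertian, hence by the usual trick to Banach, targets), and use a Mittag-Leffler argument on the inverse system to glue the $E_{\alpha}$-valued solutions into a single $g\in\mathcal{E}_{\overline{\partial}\overline{V}_{0}}(E)$; here the Fréchet property of $E$ is what makes the Mittag-Leffler step go through. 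Either route delivers surjectivity.
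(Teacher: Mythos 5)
Your reduction to the scalar case via the $\varepsilon$-product is the same first step the paper takes, but the way you then transfer surjectivity contains a genuine gap. You invoke the principle that $T\,\varepsilon\,\operatorname{id}_{E}$ is surjective provided the scalar surjection $T$ admits a \emph{continuous linear right inverse}, and you claim such a right inverse for $\overline{\partial}\colon\mathcal{E}_{\overline{\partial}\overline{V}_{0}}(\C)\to\mathcal{E}_{\overline{V}_{0}}(\C)$ by appealing to the minimal-norm solutions of H\"ormander's weighted $L^{2}$ method. This does not work as stated: the minimal-norm solution is linear and continuous only for a \emph{fixed} weight, and the solutions for different weights $n$ need not agree. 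Producing a single $g$ lying in the intersection of all the weighted spaces (which is what membership in $\mathcal{E}_{\overline{\partial}\overline{V}_{0}}(\C)$ requires) forces a correction procedure across the scale of weights, and that procedure is not linear in $f$. Whether $\overline{\partial}$ splits on this scale is a nontrivial question of Fr\'echet splitting theory (of $(DN)$--$(\Omega)$ type), not a routine consequence of the $L^{2}$ construction. Your fallback is also problematic: H\"ormander's weighted $L^{2}$ existence theorem is genuinely Hilbertian, and there is no ``usual trick'' transferring those estimates to arbitrary Banach targets; the Mittag--Leffler gluing over the local Banach spaces of $E$ is likewise left unexamined.

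The paper avoids the splitting question entirely. It first checks that $\mathcal{E}_{\infty}(\C)$, hence its closed subspaces $\mathcal{E}_{\overline{\partial}\overline{V}_{0}}(\C)$ and $\mathcal{E}_{\overline{V}_{0}}(\C)$, are \emph{nuclear} Fr\'echet spaces. Nuclearity lets one replace the $\varepsilon$-product by the completed projective tensor product, so the vector-valued operator becomes $\overline{\partial}\,\widehat{\otimes}_{\pi}\operatorname{id}_{E}$ acting from $\mathcal{E}_{\overline{\partial}\overline{V}_{0}}(\C)\widehat{\otimes}_{\pi}E$ to $\mathcal{E}_{\overline{V}_{0}}(\C)\widehat{\otimes}_{\pi}E$. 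For $\widehat{\otimes}_{\pi}$ the classical lifting property holds: a continuous linear surjection between Fr\'echet spaces tensored with $\operatorname{id}_{E}$, $E$ Fr\'echet, remains surjective, with no right inverse required --- every element of the target is $\sum_{n}\lambda_{n}y_{n}\otimes e_{n}$ with $(\lambda_{n})\in\ell^{1}$ and null sequences $(y_{n})$, $(e_{n})$, and the open mapping theorem lifts $(y_{n})$ to a null sequence upstairs. If you replace your splitting step by this nuclearity argument, the rest of your outline goes through.
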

\begin{proof}
$\mathcal{E}_{\infty}(\C)$ is a Fr\'echet space by \cite[Proposition 3.7, p.\ 240]{kruse2018_2} and thus its 
closed subspaces $\mathcal{E}_{\overline{\partial}\overline{V}_{0}}(\C)$ and  $\mathcal{E}_{\overline{V}_{0}}(\C)$ 
as well (this is the reason why we phrased \prettyref{lem:surj_dbar_L2} with the closure $\overline{V}_{0}$ 
instead of $V_{0}$). $\mathcal{E}_{\infty}(\C)$ is nuclear by \cite[Theorem 3.1, p.\ 188]{kruse2018_4} and part (i) 
of the proof of \prettyref{cor:surj_dbar_E_scalar} and thus its subspaces 
$\mathcal{E}_{\overline{\partial}\overline{V}_{0}}(\C)$ and  $\mathcal{E}_{\overline{V}_{0}}(\C)$ too. 

By \cite[Theorem 14 (i), p.\ 1524]{kruse2017} the maps
\[
S\colon \mathcal{E}_{\overline{\partial}\overline{V}_{0}}(\C)\varepsilon E
\to\mathcal{E}_{\overline{\partial}\overline{V}_{0}}(E),\;u\longmapsto \bigl(z\mapsto u(\delta_{z})\bigr),
\]
and 
\[
S\colon\mathcal{E}_{\overline{V}_{0}}(\C)\varepsilon E
\to\mathcal{E}_{\overline{V}_{0}}(E),\;u\longmapsto \bigl(z\mapsto u(\delta_{z})\bigr),
\]
are topological isomorphisms like in \cite[Example 16 c), p.\ 1526]{kruse2017} 
where we only have to observe in addition that 
$\operatorname{supp}\overline{\partial}S(u)=\operatorname{supp}u(\delta_{(\cdot)}\circ\overline{\partial})
\subset\overline{V}_{0}$ for 
$u\in\mathcal{E}_{\overline{\partial}\overline{V}_{0}}(\C)\varepsilon E$ resp.\ 
$\operatorname{supp}S(u)\subset\overline{V}_{0}$ for $u\in\mathcal{E}_{\overline{V}_{0}}(\C)\varepsilon E$ and 
that $\operatorname{supp}\overline{\partial}(e'\circ f)
=\operatorname{supp}(e'\circ \overline{\partial}f)\subset\overline{V}_{0}$ for 
$f\in\mathcal{E}_{\overline{\partial}\overline{V}_{0}}(E)$ resp.\ 
$\operatorname{supp}(e'\circ f)\subset\overline{V}_{0}$ for $f\in\mathcal{E}_{\overline{V}_{0}}(E)$ and $e'\in E'$. 
This observation follows from $\delta_{z}\circ\overline{\partial}=0$ 
on $\mathcal{E}_{\overline{\partial}\overline{V}_{0}}(\C)$ resp.\ 
$\delta_{z}=0$ on $\mathcal{E}_{\overline{V}_{0}}(\C)$ for $z\notin\overline{V}_{0}$. 

Now, using the nuclearity, we obtain 
$\mathcal{E}_{\overline{\partial}\overline{V}_{0}}(\C)\widehat{\otimes}_{\pi} E\cong 
\mathcal{E}_{\overline{\partial}\overline{V}_{0}}(E)$ and 
$\mathcal{E}_{\overline{V}_{0}}(\C)\widehat{\otimes}_{\pi} E\cong \mathcal{E}_{\overline{V}_{0}}(E)$ where 
$\widehat{\otimes}_{\pi}$ denotes the completion of the projective tensor product. 
Further, the Cauchy-Riemann operator 
$\overline{\partial}\colon \mathcal{E}_{\overline{\partial}\overline{V}_{0}}(\C)\to \mathcal{E}_{\overline{V}_{0}}(\C)$
is a linear, continuous and surjective map between Fr\'echet spaces in the $\C$-valued case 
by \prettyref{cor:surj_dbar_E_scalar}. It follows like in the proof of \cite[Corollary 4.3, p.\ 14]{kruse2018_5} 
that the Cauchy-Riemann operator in the $E$-valued case is surjective as well.
\end{proof}

\begin{thm}\label{thm:alt_asymptotic_laplace_equiv}
Let $E$ be a $\C$-Fr\'echet space. Then the canonical map 
\[
I_{0}\colon \mathfrak{L}\mathcal{G}_{[0,\infty]}(E)/\mathfrak{L}\mathcal{G}_{\{\infty\}}(E)\to 
\mathcal{LO}_{[0,\infty]}(E)/\mathcal{LO}_{\{\infty\}}(E),\;[F]\mapsto [F],\;
\]
is a linear isomorphism.
\end{thm}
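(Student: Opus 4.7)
The plan is to prove the isomorphism directly via the surjectivity of $\overline{\partial}$ established in \prettyref{cor:surj_dbar_E_frechet}, avoiding the detour through $\mathfrak{L}_{\mathcal{B}}$ and $\mathcal{L}^{\mathcal{B}}$. Well-definedness of $I_{0}$ is immediate from the canonical inclusions $\mathfrak{L}\mathcal{G}_{[0,\infty]}(E)\subset\mathcal{LO}_{[0,\infty]}(E)$ and $\mathfrak{L}\mathcal{G}_{\{\infty\}}(E)\subset\mathcal{LO}_{\{\infty\}}(E)$ (obtained by restricting entire functions to $V_{0}=\{\re z>0\}$ and observing that $w^{+}_{0}(-\re z)=0$ and $w^{+}_{\infty}(-\re z)=-k\re z$ for $\re z>0$).

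For injectivity, I would take $F\in\mathfrak{L}\mathcal{G}_{[0,\infty]}(E)$ such that $F_{\mid V_{0}}\in\mathcal{LO}_{\{\infty\}}(E)$ and show $F\in\mathfrak{L}\mathcal{G}_{\{\infty\}}(E)$ by splitting the supremum defining $\|F\|_{k,\alpha,\{\infty\}}^{\mathfrak{L}\mathcal{G}}$ into two regions. On $\{\re z\geq 1/k\}$ we have $|\re z|=\re z$, so the $\mathcal{LO}_{\{\infty\}}$ seminorm directly bounds the expression. On the strip $\{-k<\re z<1/k\}$ we have $|\re z|<k$, hence $e^{k|\re z|}\leq e^{k^{2}}$, and the $\mathfrak{L}\mathcal{G}_{[0,\infty]}$ bound handles the rest. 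Combining these gives the required estimate for every $k$.

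For surjectivity, starting from $F\in\mathcal{LO}_{[0,\infty]}(E)$, I would pick a cutoff $\chi\in\mathcal{C}^{\infty}(\C)$ depending only on $\re z$, with $\chi\equiv 0$ on $\{\re z\leq 1/2\}$ and $\chi\equiv 1$ on $\{\re z\geq 1\}$. The product $\chi F$ extends by zero to a smooth function on $\C$, and $f:=\overline{\partial}(\chi F)=(\overline{\partial}\chi)F$ is supported in the strip $\{1/2\leq\re z\leq 1\}\subset\overline{V}_{0}$. Using Cauchy's estimates together with the $\mathcal{LO}_{[0,\infty]}$ bounds on $F$ (which are finite on any half-plane $\{\re z\geq 1/k\}$), one checks that $f\in\mathcal{E}_{\overline{V}_{0}}(E)$: the estimate $p_{\alpha}(\partial^{\beta}f(z))e^{-|z|/n+n|\re z|}$ stays finite because $f$ vanishes outside a bounded strip in the $\re z$ direction, so the weight $e^{n|\re z|}$ is uniformly bounded on $\operatorname{supp}f$. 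Applying \prettyref{cor:surj_dbar_E_frechet} yields $g\in\mathcal{E}_{\overline{\partial}\overline{V}_{0}}(E)$ with $\overline{\partial}g=f$, and I set $\widetilde{F}:=\chi F-g$, which is entire because $\overline{\partial}\widetilde{F}=0$. Then it remains to verify (i) $\widetilde{F}\in\mathfrak{L}\mathcal{G}_{[0,\infty]}(E)$, which follows by estimating $\chi F$ against the $\mathcal{LO}_{[0,\infty]}$ seminorm of $F$ and $g$ against the $\mathcal{E}_{\infty}(E)$ bound (the weight $e^{k|\re z|}$ absorbs $e^{-|z|/k}$), and (ii) $F-\widetilde{F}_{\mid V_{0}}=(1-\chi)F+g\in\mathcal{LO}_{\{\infty\}}(E)$, noting that $(1-\chi)F$ is supported in a bounded strip in the $\re z$ direction, while $g$ satisfies the required exponential decay in $\re z$ by definition of $\mathcal{E}_{\infty}(E)$.

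The principal obstacle is the careful bookkeeping of indices in the estimates: the seminorm systems of $\mathfrak{L}\mathcal{G}_{[0,\infty]}$, $\mathfrak{L}\mathcal{G}_{\{\infty\}}$, $\mathcal{LO}_{[0,\infty]}$, $\mathcal{LO}_{\{\infty\}}$, and $\mathcal{E}_{\infty}$ all involve a parameter $k$ or $n$, and one must choose these indices consistently so that a single application of Cauchy's estimates on slightly enlarged half-planes yields the uniform bounds on the strip $\{1/2\leq\re z\leq 1\}$, and so that the $\mathcal{E}_{\infty}(E)$ bound on $g$ with the weight $e^{n|\re z|}$ translates precisely into the super-exponential decay required by $\mathcal{LO}_{\{\infty\}}(E)$. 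Once the cutoff construction is set up, the argument reduces to this estimate-tracking and the application of \prettyref{cor:surj_dbar_E_frechet}, which is the only deep input.
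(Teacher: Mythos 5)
Your proposal follows essentially the same route as the paper's own (appendix) proof: the identical splitting of the $\mathfrak{L}\mathcal{G}_{\{\infty\}}$-seminorm into a bounded strip and a right half-plane for injectivity, and for surjectivity the same cut-off construction $\chi F$, the same passage to $\overline{\partial}(\chi F)\in\mathcal{E}_{\overline{V}_{0}}(E)$ via Cauchy estimates, the same application of the $\overline{\partial}$-surjectivity corollary, and the same decomposition $F=(\chi F-g)+((1-\chi)F+g)$. The argument is correct as outlined.
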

\begin{proof}
The map $I_{0}$ is well-defined as $\mathfrak{L}\mathcal{G}_{[0,\infty]}(E)\subset \mathcal{LO}_{[0,\infty]}(E)$ and 
$\mathfrak{L}\mathcal{G}_{\{\infty\}}(E)\subset\mathcal{LO}_{\{\infty\}}(E)$.

(i) Let $[F]\in\mathfrak{L}\mathcal{G}_{[0,\infty]}(E)/\mathfrak{L}\mathcal{G}_{\{\infty\}}(E)$ such that 
$I_{0}([F])=0$, which means that $F\in\mathfrak{L}\mathcal{G}_{[0,\infty]}(E)\cap\mathcal{LO}_{\{\infty\}}(E)$. 
Let $k\in\N$, $\alpha\in\mathfrak{A}$ and 
set $n:=k+2$. Then $n-\tfrac{1}{n}\geq k+\tfrac{1}{k}$ and we obtain 
\begin{flalign*}
&\hspace{0.35cm} \|F\|_{k,\alpha,\{\infty\}}^{\mathfrak{L}\mathcal{G}}\\
&=\sup_{\re(z)\geq -k}p_{\alpha}(F(z))e^{k|\re(z)|-\frac{1}{k}|z|}\\
&\leq \sup_{-k\leq\re(z)\leq\frac{1}{n}}p_{\alpha}(F(z))e^{k|\re(z)|-\frac{1}{k}|z|}
 +\sup_{\re(z)\geq\frac{1}{n}}p_{\alpha}(F(z))e^{k|\re(z)|-\frac{1}{k}|z|}\\
&\leq e^{k^{2}}\sup_{-k\leq\re(z)\leq\frac{1}{n}}p_{\alpha}(F(z))e^{-\frac{1}{k}|z|}
 +\sup_{\re(z)\geq\frac{1}{n}}p_{\alpha}(F(z))e^{k|\re(z)|-\frac{1}{k}|\im(z)|+\frac{1}{k}|\re(z)|}\\
&\leq e^{k^{2}} \|F\|_{k,\alpha,[0,\infty]}^{\mathfrak{L}\mathcal{G}}
 +\sup_{\re(z)\geq\frac{1}{n}}p_{\alpha}(F(z))e^{(k+\frac{1}{k})|\re(z)|-\frac{1}{k}|\im(z)|}\\
&\leq e^{k^{2}} \|F\|_{k,\alpha,[0,\infty]}^{\mathfrak{L}\mathcal{G}}
 +\sup_{\re(z)\geq\frac{1}{n}}p_{\alpha}(F(z))e^{(n-\frac{1}{n})|\re(z)|-\frac{1}{n}|\im(z)|}\\
&\leq e^{k^{2}} \|F\|_{k,\alpha,[0,\infty]}^{\mathfrak{L}\mathcal{G}}
 +\sup_{\re(z)\geq\frac{1}{n}}p_{\alpha}(F(z))e^{n|\re(z)|-\frac{1}{n}|z|}\\
&=e^{k^{2}} \|F\|_{k,\alpha,[0,\infty]}^{\mathfrak{L}\mathcal{G}}+\|F\|_{n,\alpha,\{\infty\}}<\infty.
\end{flalign*}
This implies that $F\in\mathfrak{L}\mathcal{G}_{\{\infty\}}(E)$ and thus the injectivity of $I_{0}$.

(ii) Let $[F]\in\mathcal{LO}_{[0,\infty]}(E)/\mathcal{LO}_{\{\infty\}}(E)$. We choose a cut-off function 
$\varphi\in\mathcal{C}^{\infty}(\C)$ such that $0\leq\varphi\leq 1$ on $\C$, 
$\varphi=1$ near $\{z\in\C\;|\;\re(z)\geq 1\}$, $\varphi=0$ near $\{z\in\C\;|\;\re(z)\leq \tfrac{1}{2}\}$ 
and 
\[
|\partial^{\beta}\varphi(z)|\leq C_{\beta}, \; z\in\C,\,\beta\in\N_{0}^{2},
\]
where $C_{\beta}>0$ only depends on $\beta$ (see \cite[Theorem 1.4.1, Eq.\ (1.4.2), p.\ 25]{H1}). 
Then we may regard $\overline{\partial}(\varphi F)$ as an element of $\mathcal{C}^{\infty}(\C,E)$ with 
$\operatorname{supp}\overline{\partial}(\varphi F)\subset \overline{V}_{0}$ by 
setting $\overline{\partial}(\varphi F)(z):=0$ for $\re(z)\leq 0$. Further, we note that for $n\in\N$, $m\in\N_{0}$ 
and $\alpha\in\mathfrak{A}$ it follows from the Leibniz rule that
\begin{align}\label{eq:estimate_cut_off_dbar}
 |\overline{\partial}(\varphi F)|_{n,m,\alpha}^{\mathcal{E}_{\infty}}
&=\sup_{\substack{\re(z)>-n\\ \beta\in\N_{0}^{2}, |\beta|\leq m}}
 p_{\alpha}(\partial^{\beta}\overline{\partial}(\varphi F)(z))e^{-\frac{1}{n}|z|+n|\re(z)|}\notag\\
&=\sup_{\substack{\frac{1}{2}\leq\re(z)\leq 1\\ \beta\in\N_{0}^{2}, |\beta|\leq m}}
 p_{\alpha}(\partial^{\beta}(\overline{\partial}(\varphi)F)(z))e^{-\frac{1}{n}|z|+n|\re(z)|}\notag\\
&\leq e^{n}\sup_{\substack{\frac{1}{2}\leq\re(z)\leq 1\\ \beta\in\N_{0}^{2}, |\beta|\leq m}}
 \sum_{\gamma\leq\beta}\dbinom{\beta}{\gamma}|\partial^{\beta-\gamma}\overline{\partial}\varphi(z)|
 p_{\alpha}(\partial^{\gamma}F(z))e^{-\frac{1}{n}|z|}\notag\\
&\leq(m!)^{2}e^{n}\sum_{|\gamma|\leq m+1}\sup_{\tfrac{1}{2}\leq\re(z)\leq 1}|\partial^{\gamma}\varphi(z)|
 \sup_{\substack{\frac{1}{2}\leq\re(z)\leq 1\\ \beta\in\N^{2}_{0},|\beta|\leq m}}
 p_{\alpha}(\partial^{\beta}F(z))e^{-\frac{1}{n}|z|}\notag\\
&\leq (m!)^{2}e^{n}\sum_{|\gamma|\leq m+1}C_{\gamma}
 \sup_{\substack{\frac{1}{2}\leq\re(z)\leq 1\\ \beta\in\N^{2}_{0},|\beta|\leq m}}
 p_{\alpha}(\partial^{\beta}F(z))e^{-\frac{1}{n}|z|}. 
\end{align}
Next, we observe the relation 
$\partial^{\beta}F(z)=i^{\beta_{2}}\partial^{|\beta|}_{\C}F(z)$ for $\re(z)>0$ 
and $\beta=(\beta_{1},\beta_{2})\in\N_{0}^{2}$ between the real partial derivative 
$\partial^{\beta}F$ and the complex derivative $\partial^{|\beta|}_{\C}F$ 
(see \cite[Proposition 7.1, p.\ 270]{kruse2019_4}).
We fix $0<r<\tfrac{1}{2}$ and choose $k\in\N$ such that $\tfrac{1}{k}<\tfrac{1}{2}-r$ and $k\geq n$. 
We remark that
\[
-\frac{1}{n}|z|\leq -\frac{1}{n}|\zeta|+\frac{1}{n}|z-\zeta|\leq -\frac{1}{k}|\zeta|+\frac{r}{n},
\quad z,\zeta\in\C,\;|\zeta-z|=r.
\]
By Cauchy's inequality \cite[Proposition 2.5, p.\ 57]{dineen1981} 
we have for $\beta\in\N_{0}^{2}$
\[
 p_{\alpha}(\partial^{\beta}F(z))
=p_{\alpha}(\partial_{\C}^{|\beta|}F(z)) 
\leq\frac{|\beta|!}{r^{|\beta|}}\sup_{\zeta\in\C,|\zeta-z|=r}p_{\alpha}(F(\zeta)),\quad \frac{1}{2}\leq\re(z)\leq 1,
\]
which implies 
\begin{flalign*}
&\hspace{0.35cm}\sup_{\substack{\frac{1}{2}\leq\re(z)\leq 1\\ \beta\in\N^{2}_{0},|\beta|\leq m}}
 p_{\alpha}(\partial^{\beta}F(z))e^{-\frac{1}{n}|z|}
 \leq\sup_{\substack{\frac{1}{2}\leq\re(z)\leq 1\\ \beta\in\N^{2}_{0},|\beta|\leq m}}
 \frac{|\beta|!}{r^{|\beta|}}\sup_{\zeta\in\C,|\zeta-z|=r}p_{\alpha}(F(\zeta))e^{-\frac{1}{n}|z|}\\
&\leq e^{\frac{r}{n}}\frac{m!}{r^{m}}\sup_{\re(\zeta)\geq\frac{1}{k}}p_{\alpha}(F(\zeta))e^{-\frac{1}{k}|\zeta|}
 = e^{\frac{r}{n}}\frac{m!}{r^{m}}\|F\|_{k,\alpha,[0,\infty]}.
\end{flalign*}
In combination with \eqref{eq:estimate_cut_off_dbar} we derive that 
\[
 |\overline{\partial}(\varphi F)|_{n,m,\alpha}^{\mathcal{E}_{\infty}}
\leq \frac{(m!)^{3}}{r^{m}}e^{n+\frac{r}{n}}\sum_{|\gamma|\leq m+1}C_{\gamma}\|F\|_{k,\alpha,[0,\infty]}<\infty
\]
since $F\in\mathcal{LO}_{[0,\infty]}(E)$, yielding 
$\overline{\partial}(\varphi F)\in\mathcal{E}_{\overline{V}_{0}}(E)$. 
Hence there is $g\in\mathcal{E}_{\overline{\partial}\overline{V}_{0}}(E)$ such that 
$\overline{\partial}g=\overline{\partial}(\varphi F)$ on $\C$ by \prettyref{cor:surj_dbar_E_scalar}.

Now, we set $F_{1}:=\varphi F-g$ and $F_{2}:=(1-\varphi)F+g$ and show that 
$F_{1}$ may be regarded as an element of $\mathfrak{L}\mathcal{G}_{[0,\infty]}(E)$ by setting 
$(\varphi F)(z):=0$ for $\re(z)\leq 0$ and that $F_{2}\in\mathcal{LO}_{\{\infty\}}(E)$. 
We have $\overline{\partial}F_{1}=0$ on $\C$ and $\overline{\partial}F_{2}=0$ on $-i\h$, 
which implies $F_{1}\in\mathcal{O}(\C,E)$ and $F_{2}\in\mathcal{O}(-i\h,E)$. 
Let $k\in\N$ and $\alpha\in\mathfrak{A}$ and choose $n\in\N$ such that $n\geq\max(2,k)$. Then we get
\begin{align*}
 \|F_{1}\|_{k,\alpha,[0,\infty]}^{\mathfrak{L}\mathcal{G}}
&=\sup_{\re(z)\geq -k}p_{\alpha}(F_{1}(z))e^{-\frac{1}{k}|z|}\\
&\leq \sup_{\re(z)\geq \frac{1}{2}}|\varphi(z)| p_{\alpha}(F(z))e^{-\frac{1}{k}|z|}
 +\sup_{\re(z)\geq -k}p_{\alpha}(g(z))e^{-\frac{1}{k}|z|}\\
&\leq \sup_{\re(z)\geq \frac{1}{2}} p_{\alpha}(F(z))e^{-\frac{1}{k}|z|}
 +\sup_{\re(z)\geq -k}p_{\alpha}(g(z))e^{-\frac{1}{k}|z|}\\
&\leq \sup_{\re(z)\geq \frac{1}{n}} p_{\alpha}(F(z))e^{-\frac{1}{n}|z|}
 +\sup_{\re(z)> -n}p_{\alpha}(g(z))e^{-\frac{1}{n}|z|+n|\re(z)|}\\
&=\|F\|_{n,\alpha,[0,\infty]}+|g|_{n,0,\alpha}^{\mathcal{E}_{\infty}}<\infty
\end{align*}
because $F\in\mathcal{LO}_{[0,\infty]}(E)$ and $g\in\mathcal{E}_{\overline{\partial}\overline{V}_{0}}(E)$. 
Thus we get $F_{1}\in\mathfrak{L}\mathcal{G}_{[0,\infty]}(E)$. 

Let us turn to $F_{2}$. We have
\begin{flalign*}
&\hspace{0.35cm} \|F_{2}\|_{k,\alpha,\{\infty\}}\\
&=\sup_{\re(z)\geq \frac{1}{k}} p_{\alpha}(F_{2}(z))e^{-\frac{1}{k}|z|+k|\re(z)|}\\
&\leq \sup_{\re(z)\geq \frac{1}{k}} p_{\alpha}((1-\varphi(z))F(z))e^{-\frac{1}{k}|z|+k|\re(z)|}
 +\sup_{\re(z)\geq \frac{1}{k}} p_{\alpha}(g(z))e^{-\frac{1}{k}|z|+k|\re(z)|}\\
&\leq \sup_{\frac{1}{k}\leq\re(z)\leq 1}|1-\varphi(z)|p_{\alpha}(F(z))e^{-\frac{1}{k}|z|+k|\re(z)|}
 +\sup_{\re(z)> -k} p_{\alpha}(g(z))e^{-\frac{1}{k}|z|+k|\re(z)|}\\
&\leq e^{k}\sup_{\frac{1}{k}\leq\re(z)\leq 1}p_{\alpha}(F(z))e^{-\frac{1}{k}|z|}
  +|g|_{k,0,\alpha}^{\mathcal{E}_{\infty}}
 \leq e^{k}\sup_{\re(z)\geq \frac{1}{k}}p_{\alpha}(F(z))e^{-\frac{1}{k}|z|}
  +|g|_{k,0,\alpha}^{\mathcal{E}_{\infty}}\\
&= e^{k}\|F\|_{k,\alpha,[0,\infty]}+|g|_{k,0,\alpha}^{\mathcal{E}_{\infty}}<\infty
\end{flalign*}
because $F\in\mathcal{LO}_{[0,\infty]}(E)$ and $g\in\mathcal{E}_{\overline{\partial}\overline{V}_{0}}(E)$. 
Hence we have $F_{2}\in\mathcal{LO}_{\{\infty\}}(E)$. 
Altogether, we deduce that $I_{0}([F_{1}])=[F_{1}]=[F_{1}+F_{2}]=[F]$, which means that $I_{0}$ is surjective.
\end{proof}

\bibliography{biblio}
\bibliographystyle{plainnat}
\end{document}